\newtheorem{theorem}{Theorem}[section]
\newtheorem{lemma}[theorem]{Lemma}
\newtheorem{corollary}[theorem]{Corollary}
\newtheorem{remark}[theorem]{Remark}
\def\xxint#1#2#3{{\setbox0=\hbox{$#1{#2#3}{\int}$}
  \vcenter{\hbox{$#2#3$}}\kern-.5\wd0}}
\def \rd {{\mathbb R}^d}
\def\loc{\text{\rm loc}}
\def\varep{\varepsilon}
\newcommand{\average}{-\!\!\!\!\!\!\int}
\begin{document}

\title
{\bf Boundary Estimates  \\ in Elliptic Homogenization}

\author{Zhongwei Shen\thanks{Supported in part by NSF grant DMS-1161154.}}


\date{ }

\maketitle

\begin{abstract}

For a family of systems of linear elasticity with 
rapidly oscillating periodic coefficients, we establish sharp boundary estimates with either Dirichlet or Neumann conditions,
uniform down to the microscopic scale, without smoothness assumptions on the coefficients.
Under additional smoothness conditions, these estimates, combined with the corresponding 
local estimates, lead to the full Rellich type estimates in Lipschitz domains and Lipschitz estimates in $C^{1, \alpha}$ domains.
The $C^\alpha$, $W^{1,p}$, and $L^p$ estimates in $C^1$ domains for systems with VMO coefficients are also studied.
The approach is based on certain estimates on convergence rates.
As a bi-product, we obtain sharp $O(\varep)$ error estimates in $L^q(\Omega)$ for
$q=\frac{2d}{d-1}$ and a Lipschitz domain $\Omega$, with no smoothness assumption on the coefficients.

\bigskip

\noindent{\it MSC2010:} \ \ 35B27, 35J55, 74B05.

\noindent{\it Keywords:} homogenization; systems of elasticity; convergence rates,
Rellich estimates; Lipschitz estimates.

\end{abstract}

\tableofcontents

\section{Introduction}
\setcounter{equation}{0}

The purpose of this paper is to establish sharp boundary estimates with either Dirichlet or Neumann conditions, uniform down to the microscopic scale, for a family of second-order elliptic systems in divergence form with rapidly oscillating coefficients, 
without any smoothness assumption on the coefficients. Under additional smoothness conditions, these estimates,
combined with the corresponding local estimates, lead to the full Rellich type estimates in Lipschitz domains
and Lipschitz estimates in $C^{1, \alpha}$ domains.
The $C^\alpha$, $W^{1,p}$, and $L^p$ estimates in $C^1$ domains for systems with
VMO coefficients are also investigated.
To fix the idea we shall consider the systems of linear elasticity with periodic coefficients in this paper.
We mention that the same results, without the complications introduced by
 rigid displacements,  hold for general second-order elliptic systems with periodic coefficients 
satisfying the stronger ellipticity condition (\ref{s-ellipticity}) (the symmetry condition is also needed
for Rellich estimates in Lipschitz domains).
We further point out that
although we restrict ourself to the periodic case, 
our approach, which is based on certain estimates on convergence rates in $H^1$ and $L^2$,
extends to non-periodic settings, provided  that  the interior correctors or approximate correctors
satisfy certain $L^2$ conditions.
The compactness methods, which were introduced to the study of homogenization in \cite{AL-1987}
and have played an important role in establishing regularity results in the periodic setting (see e.g.  \cite{AL-1987, AL-1989-II,
KLS1, Kenig-Prange-2014}), are not used 
in this paper.  As a bi-product of our new approach, we also obtain sharp $O(\varep)$ error estimates in $L^q(\Omega)$ for
$q=\frac{2d}{d-1}$ and a Lipschitz domain $\Omega$, with no smoothness assumption on the coefficients.

More precisely, consider the systems of linear elasticity
\begin{equation}\label{operator}
\mathcal{L}_\varep =-\text{div} \left(A(x/\varep)\nabla \right)
=-\frac{\partial}{\partial x_i}
\left[ a_{ij}^{\alpha\beta} \left(\frac{x}{\varep}\right)\frac{\partial}{\partial x_j}\right], 
\quad \varep>0.
\end{equation}
We will assume that $ A(y)=\big( a_{ij}^{\alpha\beta} (y)\big)$
with $1\le i,j,\alpha,\beta\le d$ is real and satisfies the elasticity condition
\begin{equation}\label{ellipticity}
\aligned
& a_{ij}^{\alpha\beta} (y) =a_{ji}^{\beta\alpha} (y) =a_{\alpha j}^{i\beta} (y),\\
 &  \kappa_1 |\xi|^2  \le a_{ij}^{\alpha\beta} (y)
\xi_i^\alpha\xi_j^\beta \le \kappa_2 |\xi|^2 
\endaligned
\end{equation}
for $y\in \rd$ and for symmetric matrix $\xi =(\xi_i^\alpha)\in \mathbb{R}^{d\times d}$,
where $\kappa_1, \kappa_2>0$ (the summation convention is used throughout the paper).
We will also assume that $A(y)$ is 1-periodic; i.e.,
\begin{equation}\label{periodicity}
A(y+z)=A(y) \quad \text{ for } y\in \rd \text{ and } z\in \mathbb{Z}^d.
\end{equation}

\begin{theorem}\label{main-theorem-1}
Suppose that $A$ satisfies conditions (\ref{ellipticity})-(\ref{periodicity}).
Let $\Omega$ be a bounded Lipschitz domain in $\rd$.
Let $u_\varep \in H^1(\Omega; \rd)$ be the weak solution to the Dirichlet problem:
\begin{equation}\label{DP-1}
\mathcal{L}_\varep (u_\varep)=F \quad \text{ in }\Omega \quad \text{ and } \quad u_\varep =f \quad 
\text{ on } \partial\Omega,
\end{equation}
where  $F\in L^p(\Omega; \rd)$ for $p=\frac{2d}{d+1}$ and $f\in H^1(\partial\Omega; \rd)$.
Then, for $\varep \le  r< \text{diam}(\Omega)$,
\begin{equation}\label{main-estimate-1}
\left\{ \frac{1}{r}\int_{\Omega_r} |\nabla u_\varep|^2\right\}^{1/2}
\le C \Big\{ \| F\|_{L^p(\Omega)} +\| f\|_{H^1(\partial \Omega)} \Big\},
\end{equation}
where $\Omega_r =\big\{ x\in \Omega: \text{dist} (x, \partial\Omega)<r \big\}$.
The constant $C$ depends only on $d$, $\kappa_1$, $\kappa_2$, and  the Lipchitz character of $\Omega$.
\end{theorem}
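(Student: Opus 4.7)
The plan is to reduce (\ref{main-estimate-1}) to the corresponding Carleson-type boundary estimate for the solution $u_0$ of the homogenized constant-coefficient Dirichlet problem, and then to control the discrepancy $\nabla(u_\varep - u_0)$ on $\Omega_r$ by a convergence rate argument at scale $\varep$. For $r$ comparable to $\operatorname{diam}(\Omega)$, the inequality is immediate from the global energy estimate $\|\nabla u_\varep\|_{L^2(\Omega)} \leq C(\|F\|_{L^p(\Omega)} + \|f\|_{H^1(\partial\Omega)})$; the right-hand side is legitimate because $p = 2d/(d+1)$ is dual to the Sobolev exponent $2d/(d-1)$, so $L^p(\Omega) \hookrightarrow H^{-1}(\Omega)$. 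Hence I may restrict to $\varep \leq r \leq c_0 \operatorname{diam}(\Omega)$.

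Let $u_0 \in H^1(\Omega;\rd)$ solve $\mathcal{L}_0 u_0 = F$ in $\Omega$ with $u_0 = f$ on $\partial\Omega$, where $\mathcal{L}_0 = -\operatorname{div}(\widehat{A}\nabla)$ is the homogenized system and $\widehat{A}$ is constant and satisfies (\ref{ellipticity}). The classical Rellich (nontangential) estimates for constant-coefficient systems of elasticity in Lipschitz domains (Dahlberg--Kenig--Verchota and its extensions to elasticity) give $\|(\nabla u_0)^*\|_{L^2(\partial\Omega)} \leq C(\|F\|_{L^p} + \|f\|_{H^1(\partial\Omega)})$, equivalent to the Carleson bound
\begin{equation*}
\frac{1}{r}\int_{\Omega_r}|\nabla u_0|^2\,dx \leq C\bigl(\|F\|_{L^p(\Omega)}^2 + \|f\|_{H^1(\partial\Omega)}^2\bigr), \qquad 0 < r < \operatorname{diam}(\Omega).
\end{equation*}
It then suffices to show $\int_{\Omega_r}|\nabla(u_\varep - u_0)|^2\,dx \leq C r(\|F\|_{L^p}+\|f\|_{H^1(\partial\Omega)})^2$.

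For this comparison I would introduce the two-scale approximation with a boundary cutoff
\begin{equation*}
v_\varep := u_0 + \varep\, \chi^\beta(\cdot/\varep)\, K_\varep(\eta_\varep \partial_\beta u_0),
\end{equation*}
where $\chi^\beta$ are the periodic correctors, $K_\varep$ is a smoothing operator at scale $\varep$, and $\eta_\varep$ is a Lipschitz cutoff vanishing on $\Omega_\varep$ and equal to $1$ off $\Omega_{2\varep}$. The residual $w_\varep := u_\varep - v_\varep$ lies in $H^1_0(\Omega;\rd)$, and the standard flux-corrector manipulation applied to $\mathcal{L}_\varep w_\varep = (\mathcal{L}_0-\mathcal{L}_\varep)u_0 + \mathcal{L}_\varep(u_0 - v_\varep)$ yields the sharp $H^1$ convergence rate $\|\nabla w_\varep\|_{L^2(\Omega)} \leq C\sqrt{\varep}(\|F\|_{L^p}+\|f\|_{H^1(\partial\Omega)})$. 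Combined with Meyers' self-improvement $\nabla\chi \in L^{2+\delta}_{\mathrm{per}}$, a H\"older redistribution of integrability, the Carleson bound above, and the periodic averaging $\int_{\Omega_r}|\nabla\chi(\cdot/\varep)|^{2+\delta}\,dx \leq Cr$ valid once $r \geq \varep$, this controls $\nabla(v_\varep - u_0)$ and therefore $\nabla(u_\varep - u_0)$ on $\Omega_r$ as required.

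The main obstacle is precisely this last step: the $L^2(\Omega_r)$ bound on the oscillating corrector term $\nabla\chi(\cdot/\varep)\,K_\varep(\eta_\varep \nabla u_0)$ in the absence of any continuity of $A$. One has only $\nabla\chi \in L^{2+\delta}_{\mathrm{per}}$, and $\nabla u_0$ is far from $L^\infty$ on a Lipschitz domain. The resolution is to couple Meyers' self-improvement for $\chi$ with the higher integrability of $\nabla u_0$ implicit in the nontangential (Carleson) estimate, redistributing integrability via H\"older, and then to exploit periodicity at scales $\geq \varep$ to extract the linear-in-$r$ factor from $|\Omega_r|$ that precisely matches the $1/r$ on the left-hand side of (\ref{main-estimate-1}).
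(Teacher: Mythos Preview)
Your overall strategy coincides with the paper's: solve the homogenized Dirichlet problem, invoke the constant-coefficient nontangential/Carleson bound $\|\nabla u_0\|_{L^2(\Omega_r)}\le Cr^{1/2}(\|F\|_{L^p}+\|f\|_{H^1(\partial\Omega)})$, subtract the two-scale corrector approximation, and use the $O(\varep^{1/2})$ rate in $H^1_0$ for the remainder $w_\varep$. The paper does exactly this (Theorem~\ref{theorem-2.1} for the rate, then the short argument in Section~3).

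The gap is in the step you yourself flag as the ``main obstacle'': controlling $\nabla\chi(\cdot/\varep)\,K_\varep(\eta_\varep\nabla u_0)$ in $L^2(\Omega_r)$. Your proposed route---Meyers' $\nabla\chi\in L^{2+\delta}_{\mathrm{per}}$ plus a H\"older split---does not close. After H\"older the conjugate exponent on the $K_\varep(\eta_\varep\nabla u_0)$ factor is $q=2(2+\delta)/\delta$, which is arbitrarily large; on a merely Lipschitz domain neither the Carleson estimate nor any self-improvement puts $\nabla u_0$ (or its mollification) in such an $L^q$ without paying negative powers of $\varep$ via Young's inequality, and those powers do not cancel against the $r^{2/(2+\delta)}$ you gain from periodic averaging. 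So the redistribution you sketch cannot produce the required linear-in-$r$ bound.

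The paper's resolution is much simpler and avoids higher integrability entirely. The key is the elementary convolution estimate (Lemma~\ref{lemma-2.00}): for periodic $g$,
\[
\|g(x/\varep)\,K_\varep(f)\|_{L^2}\le C\,\|g\|_{L^2(Y)}\,\|f\|_{L^2},
\]
proved by Fubini using only that $K_\varep$ averages over a ball of radius $\sim\varep$. Because $\mathrm{supp}\,\varphi_\varep\subset B(0,\varep/4)$ and $\varep\le r$, this localizes: with $g=\nabla\chi$ one gets
\[
\|\nabla\chi(\cdot/\varep)\,K_\varep^2\big((\nabla u_0)\eta_\varep\big)\|_{L^2(\Omega_r)}
\le C\,\|K_\varep\big((\nabla u_0)\eta_\varep\big)\|_{L^2(\Omega_{2r})}
\le C\,\|\nabla u_0\|_{L^2(\Omega_{3r})}\le C\,r^{1/2},
\]
and similarly for the term with the extra derivative on $K_\varep$. (The paper uses $K_\varep^2$ rather than $K_\varep$ precisely so that one $K_\varep$ can absorb the periodic factor via Lemma~\ref{lemma-2.00} while the remaining $K_\varep((\nabla u_0)\eta_\varep)$ is still an honest $L^2$ function on the layer.) No Meyers, no H\"older redistribution---only $\nabla\chi\in L^2(Y)$ is used.
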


Let $\mathcal{R}$ denote the space of rigid displacements,
\begin{equation}\label{R}
\mathcal{R}= \Big\{ Mx+q: 
M^T=-M \in \mathbb{R}^{d\times d}  \text{ and } q\in \rd \Big\},
\end{equation}
where $(Mx)^\alpha =M_i^\alpha x_i$ and $M^T$ denotes the transpose of matrix $M$.
By $u\perp \mathcal{R}$ we mean $u\perp \mathcal{R}$  in $L^2(\Omega; \rd)$, i.e.,
$\int_\Omega u\cdot \phi =0$ for any $\phi\in \mathcal{R}$.
We will use $\frac{\partial u_\varep}{\partial \nu_\varep}$ to denote
the conormal derivative of $u_\varep$ associated with $\mathcal{L}_\varep$.

\begin{theorem}\label{main-theorem-2}
Suppose that $A$ and $\Omega$ satisfy the same conditions as in Theorem \ref{main-theorem-1}.
Let $u_\varep \in H^1(\Omega; \rd)$ be a weak solution to the Neumann problem:
\begin{equation}\label{NP-1}
\mathcal{L}_\varep (u_\varep)=F \quad \text{ in }\Omega \quad \text{ and }
\quad  \frac{\partial u_\varep}{\partial \nu_\varep}
 =g \quad 
\text{ on } \partial\Omega,
\end{equation}
where $F\in L^p(\Omega; \rd)$ for $p=\frac{2d}{d+1}$, $g\in L^2(\partial\Omega; \rd)$ and
$\int_\Omega F +\int_{\partial\Omega} g=0$. 
Also assume that $u_\varep \perp \mathcal{R}$.
Then, for $\varep \le  r< \text{diam}(\Omega)$,
\begin{equation}\label{main-estimate-2}
\left\{ \frac{1}{r}\int_{\Omega_r} |\nabla u_\varep|^2\right\}^{1/2}
\le C \Big\{ \| F\|_{L^p(\Omega)} +\| g\|_{L^2(\partial \Omega)} \Big\},
\end{equation}
where  $C$ depends only on $d$, $\kappa_1$, $\kappa_2$, and  the Lipschitz character of $\Omega$.
\end{theorem}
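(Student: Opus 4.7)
The plan is to run the same kind of argument as for the Dirichlet case of Theorem \ref{main-theorem-1}, adapted to the Neumann boundary condition and to the orthogonality constraint $u_\varep \perp \mathcal{R}$. The idea is to compare $u_\varep$ with the solution $u_0$ of the corresponding homogenized Neumann problem, exploiting on one hand the sharp boundary regularity available for $u_0$ (since $\mathcal{L}_0$ has constant coefficients) and on the other hand the $O(\varep)$ convergence rate flagged in the abstract.

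First I would introduce $u_0 \in H^1(\Omega; \rd)$ with $u_0 \perp \mathcal{R}$ solving the homogenized Neumann problem $\mathcal{L}_0 u_0 = F$ in $\Omega$, $\partial u_0/\partial \nu_0 = g$ on $\partial\Omega$. Because $\mathcal{L}_0$ has constant coefficients, the Rellich-type estimates of Dahlberg--Kenig--Verchota (extended to systems of elasticity) give $\|(\nabla u_0)^*\|_{L^2(\partial\Omega)} \le C(\|g\|_{L^2(\partial\Omega)} + \|F\|_{L^p(\Omega)})$, and a standard tent-space argument turns this into
\[
\frac{1}{r}\int_{\Omega_r} |\nabla u_0|^2 \le C\bigl(\|g\|_{L^2(\partial\Omega)}^2 + \|F\|_{L^p(\Omega)}^2\bigr)
\]
for all $0 < r < \text{diam}(\Omega)$. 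Thus (\ref{main-estimate-2}) reduces to the analogous bound for $u_\varep - u_0$.

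The next, and technically hardest, step is to establish the sharp $L^2$ convergence rate $\|u_\varep - u_0\|_{L^2(\Omega)} \le C\varep\bigl(\|g\|_{L^2(\partial\Omega)} + \|F\|_{L^p(\Omega)}\bigr)$. To do so I would build a first-order two-scale approximation $v_\varep = u_0 + \varep\,\chi^\beta_k(x/\varep)\, S_\varep\!\bigl(\eta_\varep\, \partial u_0^\beta/\partial x_k\bigr)$, with $\chi$ the periodic correctors for $\mathcal{L}_\varep$, $S_\varep$ a Steklov-type smoothing, and $\eta_\varep$ a cutoff vanishing in an $O(\varep)$ neighborhood of $\partial\Omega$. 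Plugging $v_\varep$ into the Neumann energy identity for $u_\varep - v_\varep$ yields an $H^1$ estimate $\|u_\varep - v_\varep\|_{H^1(\Omega)} \le C\varep^{1/2}$, in which the boundary-layer term produced by the mismatch between $\partial/\partial\nu_\varep$ and $\partial/\partial\nu_0$ is absorbed using the Rellich bound on $\nabla u_0$ just established. A duality argument in the Aubin--Nitsche style, carried out against a test Neumann problem for the adjoint operator $\mathcal{L}_\varep^*$, then upgrades this to the claimed $O(\varep)$ rate in $L^2$.

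To close the argument, I would apply a boundary Caccioppoli inequality for $u_\varep$ at scale $r \ge \varep$: for each $x_0 \in \partial\Omega$, testing the weak Neumann form against $\eta^2(u_\varep - \pi)$ with a cutoff $\eta \equiv 1$ on $B(x_0,r)$ and $\pi \in \mathcal{R}$ adapted to $B(x_0,2r)\cap\Omega$, and invoking Korn's inequality, yields
\[
\int_{B(x_0,r)\cap\Omega}|\nabla u_\varep|^2 \le \frac{C}{r^2}\int_{B(x_0,2r)\cap\Omega}|u_\varep - \pi|^2 + Cr\|g\|_{L^2(\partial\Omega\cap B(x_0,2r))}^2 + Cr^{2+d-2d/p}\|F\|_{L^p(\Omega)}^2.
\]
Summing over a finite cover of $\Omega_r$ by such balls and splitting $u_\varep - \pi = (u_\varep - u_0) + (u_0 - \pi)$, one uses the $L^2$ rate for the first piece and Poincaré--Korn together with the Rellich bound for the second, obtaining $\|u_0 - \pi\|_{L^2(\Omega_{2r})}^2 \le Cr^2\int_{\Omega_{2r}}|\nabla u_0|^2 \le Cr^3\bigl(\|g\|_{L^2(\partial\Omega)}^2 + \|F\|_{L^p(\Omega)}^2\bigr)$. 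Dividing by $r$ and using $\varep \le r$ gives (\ref{main-estimate-2}). The main obstacle is the sharp $L^2$ rate above: the standard interior two-scale expansion breaks down near $\partial\Omega$, and the Neumann boundary layer produced by $\partial/\partial\nu_\varep - \partial/\partial\nu_0$ has to be absorbed using precisely the Rellich estimate on $\nabla u_0$, so the two ingredients are tightly coupled; a subsidiary difficulty is the systematic need to work modulo rigid displacements, both globally through $u_\varep \perp \mathcal{R}$ and locally through the choice of $\pi$ in each ball, which must be uniformly controlled in terms of the data.
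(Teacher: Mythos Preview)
Your proposal has a genuine gap in the final step, and a circularity problem in the middle one.

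\textbf{The dimensional mismatch.} Grant yourself the sharp rate $\|u_\varep - u_0\|_{L^2(\Omega)} \le C\varep(\|g\|_{L^2}+\|F\|_{L^p})$ and run your Caccioppoli argument. Summing the local estimates over the cover and dividing by $r$, the $(u_\varep-u_0)$ contribution is
\[
\frac{1}{r}\cdot\frac{C}{r^2}\int_{\Omega_{2r}}|u_\varep-u_0|^2
\ \le\ \frac{C}{r^3}\,\|u_\varep-u_0\|_{L^2(\Omega)}^2
\ \le\ \frac{C\varep^2}{r^3},
\]
which for $\varep\le r$ only gives $C/r$, not a bounded quantity. The $L^2$ rate over all of $\Omega$ does not localize to $\Omega_{2r}$ with the extra factor of $r$ you would need; even the $L^q$ rate with $q=\frac{2d}{d-1}$ and H\"older's inequality falls short.

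\textbf{The circularity.} Your Aubin--Nitsche step upgrades $\|u_\varep-v_\varep\|_{H^1}\le C\varep^{1/2}$ to $\|u_\varep-u_0\|_{L^2}\le C\varep$ by testing against a dual Neumann problem for $\mathcal{L}_\varep^*$. But to gain the extra $\varep^{1/2}$ in that duality, one needs precisely a Rellich-type estimate $\varep^{-1}\|\nabla h_\varep\|_{L^2(\Omega_{c\varep})}^2\le C\|G\|_{L^p}^2$ on the dual solution, which is Theorem~\ref{main-theorem-2} itself (indeed, in the paper the sharp $O(\varep)$ rate is proved in Section~4 as a \emph{consequence} of Theorems~\ref{main-theorem-1}--\ref{main-theorem-2}).

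\textbf{The fix is already in your hands.} You obtain $\|u_\varep-v_\varep\|_{H^1(\Omega)}\le C\varep^{1/2}$; this is exactly the paper's Theorem~\ref{theorem-2.2}. Use it directly on gradients, not through Caccioppoli: write
\[
\|\nabla u_\varep\|_{L^2(\Omega_r)}
\le \|\nabla u_0\|_{L^2(\Omega_r)}
+\|\nabla(u_\varep-v_\varep)\|_{L^2(\Omega)}
+\big\|\nabla\big(\varep\,\chi(x/\varep)\,S_\varep(\eta_\varep\nabla u_0)\big)\big\|_{L^2(\Omega_r)}.
\]
The first term is $\le Cr^{1/2}$ by the Rellich bound for $u_0$; the second is $\le C\varep^{1/2}\le Cr^{1/2}$; and the third is handled by the smoothing-operator estimates (as in Lemma~\ref{lemma-2.00}) together with the same Rellich bound for $u_0$. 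No duality, no local Caccioppoli, and no $O(\varep)$ rate are needed.
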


Estimates (\ref{main-estimate-1}) and (\ref{main-estimate-2}), which are scaling-invariant,
 may be regarded as the Rellich estimates,
uniform down to the scale $\varep$, in Lipschitz domains for the elasticity operators $\mathcal{L}_\varep$.
Indeed, if the coefficient matrix $A$ is constant, then (\ref{main-estimate-1}) and (\ref{main-estimate-2})
hold for any $0<r<\text{diam}(\Omega)$. Suppose that $F=0$.
By letting $r\to 0$, one recovers the full Rellich estimates in Lipschitz domains,
\begin{equation}\label{Rellich-1}
\| \nabla u_\varep \|_{L^2(\partial\Omega)} \le C\, \|u_\varep \|_{H^1(\partial\Omega)}
\quad \text{ and } \quad \| \nabla u_\varep \|_{L^2(\partial\Omega)}
\le C\, \big\| \frac{\partial u_\varep}{\partial\nu_\varep}\big\|_{L^2(\partial\Omega)},
\end{equation}
which were proved in \cite{Fabes-1988, Dahlberg-1988} for second-order
elliptic systems with constant coefficients, using integration by parts
(see \cite{Kenig-book} for references on related work on boundary value problems in Lipschitz domains).
We should note that our proof of Theorems \ref{main-theorem-1} and \ref{main-theorem-2}
uses the nontangential maximal function estimates  in \cite{Dahlberg-1988}.
On the other hand, under certain smoothness conditions on $A$, the Rellich estimates
hold for the operator $\mathcal{L}_1$ on Lipschitz domains with
$\text{diam}(\Omega)\le 1$.
By a blow-up argument as well as some localization procedures, this implies that
\begin{equation}\label{Rellich-2}
\aligned
&\| \nabla u_\varep\|_{L^2(\partial\Omega)}
\le C\, \Big\{  \| \nabla_{\tan} u_\varep\|_{L^2(\partial\Omega)} + \varep^{-1/2} \|\nabla u_\varep\|_{L^2(\Omega_\varep)} \Big\},\\
&\| \nabla u_\varep\|_{L^2(\partial\Omega)}
\le C\, \Big\{  \big\| \frac{\partial u_\varep}{\partial \nu_\varep}\big\|_{L^2(\partial\Omega)} 
+ \varep^{-1/2} \|\nabla u_\varep\|_{L^2(\Omega_\varep)} \Big\},
\endaligned
\end{equation}
where $\nabla_{\tan} u_\varep$ denotes the tangential derivative of $u_\varep$ on $\partial\Omega$.
We emphasize that the estimates (\ref{Rellich-2}) are local and structure conditions such as periodicity are not needed.
However, with the additional periodicity condition, one may combine the local estimates (\ref{Rellich-2})
with the estimates in Theorems \ref{main-theorem-1} and \ref{main-theorem-2} to obtain 
the full Rellich estimate (\ref{Rellich-1}), uniform in $\varep$,
 for operators $\mathcal{L}_\varep$ (see Remark \ref{remark-3.1}). Thus we have been able to completely 
 separate the large-scale regularity due to homogenization from the small-scale regularity 
 due to smoothness of the coefficients.
 
Under the periodicity condition and the H\"older continuity condition on $A$,
 the  uniform Rellich estimates (\ref{Rellich-1})
were proved in \cite{Kenig-Shen-1, Kenig-Shen-2} for a family of elliptic operators $\{ \mathcal{L}_\varep\}$,
where $\mathcal{L}_\varep =-\text{div} \big(A(x/\varep)\nabla\big)$ and
$A(y) =\big(a_{ij}^{\alpha\beta} (y)\big)$
with $1\le i,j\le d$ and $1\le \alpha, \beta\le m$ satisfies the ellipticity condition
\begin{equation}\label{s-ellipticity}
\mu |\xi|^2 \le a_{ij}^{\alpha\beta}  (y)\xi_i^\alpha\xi_j^\beta \le \frac{1}{\mu} |\xi|^2
\end{equation}
for $y\in \rd$ and $\xi=(\xi_i^\alpha)\in \mathbb{R}^{d\times m}$ as well as  the symmetry condition $A^*=A$, i.e.,
$a_{ij}^{\alpha\beta} =a_{ji}^{\beta\alpha}$.
The results were used to establish the uniform solvability of
the $L^2$ Dirichlet, regularity, and Neumann problems for the system $\mathcal{L}_\varep (u_\varep)=0$
in Lipschitz domains.
It worths pointing out that the Rellich estimates (\ref{Rellich-1})
are not accessible by compactness methods.
One of the key steps in \cite{Kenig-Shen-1,  Kenig-Shen-2}
 uses integration by parts and relies on the observation
that $\mathcal{L}_1 (Q)=Q(\mathcal{L}_1)$, where
$$
Q(u)(x^\prime, x_d)= u(x^\prime, x_d+1)-u(x^\prime, x_d).
$$
As a result, the approach does not seem to apply if the coefficients are not periodic. We mention that
even with periodic coefficients, the direct extension of the methods used in \cite{Kenig-Shen-1, Kenig-Shen-2}
is problematic for systems of elasticity, due to the weaker ellipticity condition and the lack of (uniform) Korn inequalities 
on boundary layers. 

In this paper we develop a new approach to uniform boundary regularity in quantitative homogenization 
of elliptic equations and systems.
Let $u_0$ denote the solution of the boundary value problem for the homogenized system with the same data.
The basic idea is to consider the function 
\begin{equation}\label{w}
w_\varep =u_\varep -u_0 -\varep \chi_j^\beta (x/\varep) K^2_\varep \Big( \frac{\partial u_0^\beta}
{\partial x_j} \eta_\varep\Big)
\end{equation}
in $\Omega$, where $\chi=\big(\chi_j^\beta\big)$ denotes the matrix of correctors,
$K_\varep$ is a smoothing operator at scale $\varep$, and $\eta_\varep \in C_0^\infty(\Omega)$ is a
cut-off function with support in $\big\{ x\in \Omega: \text{dist}(x, \partial \Omega)\ge 3\varep\big\}$.
Using energy estimates for the operator $\mathcal{L}_\varep$
as well as sharp boundary regularity estimates for $u_0$, we are able to bound
$$
\varep^{-1/2} \| w_\varep\|_{H^1(\Omega)}
$$
by the r.h.s. of estimates (\ref{main-estimate-1}) and (\ref{main-estimate-2}), respectively.
This, together with sharp estimates for $u_0$, yields the desired estimates for
$$
r^{-1/2} \|\nabla u_\varep\|_{L^2(\Omega_r)},
$$
for $\varep\le r< \text{diam}(\Omega)$.
We mention that since $\mathcal{L}_0$ has constant coefficients, the sharp boundary estimates
in Lipschitz domains in terms of nontangential maximal functions are known \cite{Fabes-1988, Dahlberg-1988}.
Also, because of the use of the smoothing operator $K_\varep$, which is motivated by the work \cite{Pas-2006, Suslina-2013} (also see \cite{Griso-2004,Onofrei-2007,KLS2,Suslina-2013-siam}),
we only need to assume that 
$$
\sup_{x\in \rd} \int_{B(x,1)} \Big( |\chi(y)|^2 +|\nabla \chi(y)|^2\Big) \, dy < \infty,
$$
and that a similar estimate holds for a dual corrector $\phi=\big( \phi_{k i j}^{\alpha\beta} \big)$
 (see (\ref{phi}) for its definition).
As such, it is possible to extend the approach to the almost-periodic or other non-periodic settings.
We plan to carry out this study in a separate work.

As we mentioned before, the estimates in Theorems \ref{main-theorem-1} and \ref{main-theorem-2}
may be used to establish uniform solvability of $L^2$ boundary value problems for $\mathcal{L}_\varep$
in Lipschitz domains \cite{Kenig-Shen-1, Kenig-Shen-2, Geng-Shen-Song-2}.
They also can be used to obtain sharp $O(\varep)$ error estimates in $L^q(\Omega)$
for $q=\frac{2d}{d-1}$ and a Lipschitz domain $\Omega$, with no smoothness assumption on the coefficients.

\begin{theorem}\label{main-theorem-L}
Suppose that $A$ and $\Omega$ satisfy the same conditions as in Theorem \ref{main-theorem-1}.
Let $u_\varep$  be a weak solution to (\ref{DP-1}) or (\ref{NP-1}), and $u_0$  the weak solution of the homogenized
system with the same data.
Suppose that $u_0\in H^2(\Omega; \mathbb{R}^d)$.
In the case of the Neumann problem (\ref{NP-1}) we further  assume that
$u_\varep, u_0 \perp \mathcal{R}$.
Then
\begin{equation}\label{sharp-rate}
\| u_\varep - u_0\|_{L^q(\Omega)} \le C\, \varep  \| u_0\|_{H^2(\Omega)},
\end{equation}
where $q=p^\prime =\frac{2d}{d-1}$ and $C$ depends only on $d$, $\kappa_1$, $\kappa_2$, 
and $\Omega$.
\end{theorem}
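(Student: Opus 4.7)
The plan is to argue by duality, applying Theorems \ref{main-theorem-1} and \ref{main-theorem-2} to the dual problem. Since $q = p^\prime = \frac{2d}{d-1}$,
\[
\|u_\varep - u_0\|_{L^q(\Omega)} = \sup \Big\{ \Big|\int_\Omega (u_\varep - u_0)\cdot h\,dx\Big| : h \in L^p(\Omega;\rd),\ \|h\|_{L^p} = 1 \Big\},
\]
where in the Neumann case the supremum is restricted to $h$ with $\int_\Omega h = 0$ and $h\perp\mathcal{R}$ (legitimate since $u_\varep - u_0\perp\mathcal{R}$). For each such $h$, let $v_\varep$ solve the same type of boundary value problem with right-hand side $h$ and homogeneous boundary data; by the symmetry of $A$ in (\ref{ellipticity}) one has $\mathcal{L}_\varep^\ast = \mathcal{L}_\varep$, so Theorem \ref{main-theorem-1} or \ref{main-theorem-2} applied to $v_\varep$ gives
\[
\Big(\frac{1}{r}\int_{\Omega_r} |\nabla v_\varep|^2\Big)^{1/2} \le C\|h\|_{L^p(\Omega)}\quad (\varep\le r<\mathrm{diam}(\Omega));
\]
in particular, $\|\nabla v_\varep\|_{L^2(\Omega_{3\varep})} \le C\varep^{1/2}\|h\|_{L^p}$.

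Next, because $\mathcal{L}_\varep u_\varep = \mathcal{L}_0 u_0 = F$ in $\Omega$ and the boundary data of $u_\varep$ and $u_0$ coincide (Dirichlet: $u_\varep - u_0 = 0$ on $\partial\Omega$; Neumann: $\partial u_\varep/\partial\nu_\varep = \partial u_0/\partial \nu_0 = g$), a direct Green's-identity computation --- using $v_\varep = 0$ or $\partial v_\varep/\partial\nu_\varep = 0$ on $\partial\Omega$ to kill the remaining boundary terms --- yields the key identity
\[
\int_\Omega (u_\varep - u_0)\cdot h\,dx = -\int_\Omega \big(A(x/\varep) - \widehat{A}\big)\nabla u_0\cdot\nabla v_\varep\,dx,
\]
where $\widehat{A}$ denotes the homogenized matrix. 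Splitting $\nabla u_0 = \eta_\varep \nabla u_0 + (1-\eta_\varep)\nabla u_0$ with the cutoff $\eta_\varep$ from (\ref{w}), the boundary-layer piece is controlled by Cauchy--Schwarz combined with the bound on $\|\nabla v_\varep\|_{L^2(\Omega_{3\varep})}$ above and the standard trace estimate
\[
\|\nabla u_0\|_{L^2(\Omega_{3\varep})} \le C\varep^{1/2}\|u_0\|_{H^2(\Omega)},
\]
yielding a contribution $\le C\varep\|u_0\|_{H^2}\|h\|_{L^p}$.

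For the interior piece, invoke the fundamental relation $a_{ij}^{\alpha\beta}(y) + a_{ik}^{\alpha\gamma}(y)\partial_k\chi_j^{\gamma\beta}(y) - \widehat{a}_{ij}^{\alpha\beta} = \partial_k \phi_{kij}^{\alpha\beta}(y)$ with $\phi_{kij}^{\alpha\beta}$ antisymmetric in $(k,i)$ (the flux corrector of (\ref{phi})). After rescaling $y = x/\varep$ and integrating by parts against $\eta_\varep\nabla u_0\otimes\nabla v_\varep$ (compactly supported, so no boundary terms), the derivatives landing on $v_\varep$ produce $\phi_{kij}(x/\varep)\partial_i\partial_k v_\varep^\alpha$, which vanishes by antisymmetry of $\phi$ paired with the symmetric Hessian. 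The remaining terms distribute an explicit factor of $\varep$ onto $\phi(x/\varep)$ or $\chi(x/\varep)$ (controlled by the uniform local $L^2$ bounds on the correctors assumed in the paper), paired with $\nabla^2 u_0 \in L^2$, $\nabla v_\varep$, or $\nabla\eta_\varep$. Wherever $\nabla\eta_\varep$ appears, $|\nabla\eta_\varep|\lesssim \varep^{-1}$ is supported in $\Omega_{3\varep}$ and is absorbed by the bound on $\|\nabla v_\varep\|_{L^2(\Omega_{3\varep})}$; collecting gives a total bound $\le C\varep\|u_0\|_{H^2}\|h\|_{L^p}$, and taking the supremum over $h$ proves (\ref{sharp-rate}).

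The main obstacle is precisely this symmetric bookkeeping of the boundary layer terms: every integration by parts exploiting the divergence structure of $\phi$ introduces errors concentrated in the $O(\varep)$-neighborhood of $\partial\Omega$, and producing the crucial product $\|\nabla u_0\|_{L^2(\Omega_{3\varep})}\cdot\|\nabla v_\varep\|_{L^2(\Omega_{3\varep})}\lesssim\varep$ demands the uniform Rellich-type bounds of Theorems \ref{main-theorem-1}--\ref{main-theorem-2}, now applied to the \emph{dual} solution $v_\varep$. This symmetric dual use is the key input that upgrades the natural interior rate $\|w_\varep\|_{H^1}=O(\varep^{1/2})$ to the sharp $O(\varep)$ convergence in $L^q(\Omega)$ in a merely Lipschitz domain, without any smoothness assumption on $A$.
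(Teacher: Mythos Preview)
Your duality strategy and the key identity
\[
\int_\Omega (u_\varep-u_0)\cdot h\,dx = -\int_\Omega \big(A(x/\varep)-\widehat{A}\big)\nabla u_0\cdot\nabla v_\varep\,dx
\]
are correct, and the boundary--layer piece is handled exactly as in the paper, via Theorems \ref{main-theorem-1}--\ref{main-theorem-2} applied to the dual solution $v_\varep$. The gap is in the interior piece. After your integration by parts using the flux corrector you arrive at
\[
-\varep\int_\Omega \phi_{kij}^{\alpha\beta}(x/\varep)\,\partial_i v_\varep^\alpha\,\partial_k\big[\eta_\varep\partial_j u_0^\beta\big]\,dx,
\]
and an analogous term with $\chi$ in place of $\phi$. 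This integrand is a product of \emph{three} factors, each only in $L^2$: under the hypotheses of the theorem $\chi,\phi$ are merely in $H^1(Y)$ (hence $L^{2d/(d-2)}$, not $L^\infty$), $\nabla v_\varep\in L^2(\Omega)$, and $\nabla^2 u_0\in L^2(\Omega)$. A H\"older pairing closes only if $\phi\in L^\infty$, which would require smoothness of $A$ that the theorem expressly avoids. So the sentence ``controlled by the uniform local $L^2$ bounds on the correctors'' does not yield an $L^1$ bound on the integrand, and the argument does not close for $d\ge 2$.

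The paper's proof avoids this obstruction by never letting an oscillating factor $\chi(x/\varep)$ or $\phi(x/\varep)$ meet $\nabla v_\varep$ directly. It first subtracts $\varep\chi_k(x/\varep)K_\varep(\partial_k\widetilde u_0)$ from $u_\varep-u_0$ (Lemmas \ref{lemma-L-1} and \ref{lemma-L-2}), reducing matters to a function $v_\varep$ solving $\mathcal{L}_\varep(v_\varep)=0$ with explicit boundary data of size $O(\varep)$. In the duality step for this $v_\varep$, every occurrence of $\chi(x/\varep)$ or $\nabla\chi(x/\varep)$ is paired with a \emph{smoothed} quantity $K_\varep(\nabla\widetilde u_0)$, so Lemma \ref{lemma-2.00} gives $\|\chi(x/\varep)K_\varep(\nabla\widetilde u_0)\|_{L^2}\le C\|\nabla\widetilde u_0\|_{L^2}$; the remaining factor $\nabla h_\varep$ then pairs by Cauchy--Schwarz. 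In short, the smoothing operator $K_\varep$ is not cosmetic here---it is precisely what allows a periodic $L^2$ corrector to be absorbed, and your direct approach omits this mechanism.
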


We remark that if $\Omega$ is $C^2$ and $u_\varep =0$ or $\frac{\partial u_\varep}{\partial \nu_\varep}=0$
on $\partial\Omega$, the  $O(\varep)$ estimate
\begin{equation}\label{L-2}
\| u_\varep -u_0\|_{L^2(\Omega)} \le C\, \varep \| F\|_{L^2(\Omega)}
\end{equation}
was proved in \cite{Suslina-2013, Suslina-2013-siam} 
for a broader class of elliptic operators with measurable periodic coefficients, which contains the systems of
elasticity considered here (also see \cite{Griso-2004,Onofrei-2007,KLS2,KLS3} and their references for related work
on convergence rates).
Note that $q=\frac{2d}{d-1}>2$ and $\| u_0\|_{H^2(\Omega)} \le C\, \| F\|_{L^2(\Omega)}$,
if $\Omega$ is $C^2$, $\mathcal{L}_0 (u_0)=F$ in $\Omega$
with  $u_0=0$ or $\frac{\partial u_0}{\partial\nu_0} =0$
on $\partial\Omega$.
Thus our estimate (\ref{sharp-rate}) is stronger than (\ref{L-2}).
In the case of scalar elliptic equations with Dirichlet condition $u_\varep=0$ on $\partial\Omega$,
 it is known that $\| u_\varep -u_0\|_{L^q(\Omega)} \le C \varep \| F\|_{L^p(\Omega)}$,
where $1<p<d$ and $\frac{1}{q}=\frac{1}{p} -\frac{1}{d}$  (see \cite[p.1234]{KLS3}).
However, Theorem \ref{main-theorem-L} seems to be the first result on the sharp $O(\varep)$
 estimate of $u_\varep -u_0$ in $L^q(\Omega)$ with $q>2$ for elliptic systems with bounded measurable 
 periodic coefficients.

As we indicated above, the proof of Theorems \ref{main-theorem-1} and \ref{main-theorem-2}
only uses the energy estimates in $L^2$ for $\mathcal{L}_\varep$ and thus requires no smoothness assumptions
on the coefficients.
In the second part of this paper we apply the similar ideas in the $L^p$ setting for $1<p<\infty$.
To do this we first establish the $W^{1,p}$ estimates for the systems 
\begin{equation}\label{div}
\mathcal{L}_\varep (u_\varep) =\text{div} (h) \quad \text{ in } \Omega,
\end{equation}
where $h=(h_i^\alpha)\in L^p(\Omega; \mathbb{R}^{d\times d})$,
with either the Dirichlet or Neumann boundary conditions, under the additional assumptions that
$\Omega$ is $C^1$ and $A=A(y)$ belongs to $V\!M\!O(\rd)$.
As a result, the $L^p$ analogues of estimates (\ref{main-estimate-1})
and (\ref{main-estimate-2}) are proved under these additional conditions,
which are more or less sharp.
Consequently, by combining the $L^p$ estimates on the boundary layer  $\Omega_\varep$
 with local estimates for $\mathcal{L}_1$, which hold for H\"older
continuous coefficients,
we may obtain the uniform Rellich estimates in $L^p$ for solutions of $\mathcal{L}_\varep (u_\varep)=0$
in $C^1$ domains under the assumptions that $A$ is H\"older continuous and satisfies (\ref{ellipticity})-(\ref{periodicity}).
By the method of layer potentials, this will lead to the uniform solvability of the
$L^p$ Dirichlet, regularity, and Neumann problems in $C^1$ domains
(details will be provided in a separate work).
Previously, these results in $L^p$ are known only in $C^{1, \alpha}$ domains
for operators $\mathcal{L}_\varep$ with H\"older continuous coefficients
satisfying (\ref{s-ellipticity}) and $A^*=A$   \cite{KLS1}.
We remark that the $W^{1,p}$ estimates (local or global) for operators with nonsmooth coefficients in
nonsmooth domains are of interest in their own rights and
have been studied extensively in recent years (see \cite{CP-1998, Wang-2003, Byun-Wang-2004,
Shen-2005, Byun-Wang-2005, Krylov-2007,  Byun-Wang-2008,
Shen-2008, Dong-2010, KLS1, Geng-2012, GSS} and their references).
Our approach to the $W^{1,p}$ estimates is based on a real-variable argument, which originated in 
\cite{CP-1998} and further developed in \cite{Wang-2003, Shen-2005, Shen-2007}.
The required (weak) reverse H\"older estimates at the boundary are proved by combining the interior 
Lipschitz estimates down to the scale $\varep$ with boundary $C^\alpha$ 
estimates.

In the third part of this paper we will study the boundary Lipschitz estimates,
uniform down to the scale $\varep$, for solutions in $C^{1, \alpha}$ domains
with the Dirichlet or Neumann conditions.
Let 
\begin{equation}\label{D}
\aligned
D_r  & =\Big\{ (x^\prime, x_d)\in \rd: |x^\prime|<r \text{ and } \psi(x^\prime)
<x_d < \psi(x^\prime) +r \Big\},\\
\Delta_r & =\Big\{ (x^\prime, x_d)\in \rd: |x^\prime|<r \text{ and } x_d=\psi(x^\prime) \Big\},
\endaligned
\end{equation}
where $\psi:\mathbb{R}^{d-1} \to \mathbb{R}$ is a $C^{1, \alpha}$ function
for some $\alpha>0$ with $\psi (0)=0$ and $\|\nabla \psi\|_{C^\alpha(\mathbb{R}^{d-1})}\le M$.

\begin{theorem}\label{main-theorem-3}
Suppose that $A$ satisfies conditions (\ref{ellipticity})-(\ref{periodicity}).
Let $u_\varep\in H^1(D_1;\rd)$ be a weak solution to
\begin{equation}\label{DP-2}
\mathcal{L}_\varep (u_\varep)=F  \quad \text{ in } D_1 \quad \text{ and } \quad u_\varep =f \quad \text{ on } \Delta_1.
\end{equation}
Then, for $\varep \le r <1$,
\begin{equation}\label{main-estimate-3}
\left(\average_{D_r} |\nabla u_\varep|^2\right)^{1/2}
\le C \left\{ \left(\average_{D_1} |\nabla  u_\varep|^2\right)^{1/2}
+\| f\|_{C^{1, \sigma}(\Delta_1)}
+\| F\|_{L^p(D_1)} \right\},
\end{equation}
where $p>d$ and $\sigma\in (0, \alpha)$.
The constant $C$ depends only on $d$, $\kappa_1$, $\kappa_2$,
$p$, $\sigma$, and $(\alpha, M)$.
\end{theorem}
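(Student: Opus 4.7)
The plan is to establish a one-step excess-decay inequality and iterate it from scale $r=1$ down to scale $\varep$. Set $\Phi(r)=\bigl(\average_{D_r}|\nabla u_\varep|^2\bigr)^{1/2}$ and $\mathcal{D}(r)=\|f\|_{C^{1,\sigma}(\Delta_r)}+r^{1-d/p}\|F\|_{L^p(D_r)}$. The target is constants $\theta\in(0,1/8)$ and $C_1>0$, independent of $\varep$, such that
\begin{equation*}
\Phi(\theta r)\le \tfrac12\,\Phi(r)+C_1\,\mathcal{D}(r) \qquad \text{for every } 16C_1\varep\le r\le 1. \qquad (\ast)
\end{equation*}
Iterating $(\ast)$ from $r=1$ through $r=\theta^k$ until $\theta^{k+1}<16C_1\varep\le \theta^k$, and using $\mathcal{D}(\theta^k)\le \mathcal{D}(1)$, yields $\Phi(\theta^k)\le C\{\Phi(1)+\mathcal{D}(1)\}$. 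The residual range $\varep\le r<16C_1\varep$ amounts to a bounded number of Caccioppoli steps, and intermediate scales $r\in[\theta^{k+1},\theta^k]$ are stitched in by the trivial comparison $\Phi(r)\le C_\theta\Phi(\theta^k)$; together these give (\ref{main-estimate-3}).

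To prove $(\ast)$ I would first apply the classical boundary $C^{1,\sigma}$ Schauder theory to the homogenized operator $\mathcal{L}_0$, which has constant coefficients and therefore admits sharp estimates on the $C^{1,\alpha}$ chart $D_r$: if $v$ solves $\mathcal{L}_0 v=F$ in $D_r$ with $v=f$ on $\Delta_r$, then for $\sigma\in(0,\alpha)$ and $\theta\in(0,1/4)$,
\begin{equation*}
\bigl(\average_{D_{\theta r}}|\nabla v|^2\bigr)^{1/2}\le C_0\theta^\sigma \bigl(\average_{D_r}|\nabla v|^2\bigr)^{1/2}+C\,\mathcal{D}(r);
\end{equation*}
fix $\theta$ so small that $C_0\theta^\sigma\le 1/8$. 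Next, for $r\ge\varep$ let $v$ solve $\mathcal{L}_0 v=F$ in $D_r$ with $v=u_\varep$ on $\partial D_r$ (so automatically $v=f$ on $\Delta_r$). Rescaling the auxiliary function (\ref{w}) to $D_r$, namely
\begin{equation*}
w=u_\varep-v-\varep\,\chi_j^\beta(x/\varep)\,K_\varep^2\!\bigl(\tfrac{\partial v^\beta}{\partial x_j}\,\eta_\varep\bigr)
\end{equation*}
with $\eta_\varep$ a cutoff equal to $1$ on $\{x\in D_r:\text{dist}(x,\partial D_r\setminus\Delta_r)\ge 3\varep\}$, rewriting the oscillating flux as a divergence via the dual corrector $\phi$, and applying the same $H^1$ energy machinery that drove Theorems \ref{main-theorem-1} and \ref{main-theorem-2}, I expect to obtain the localized $L^2$ convergence bound
\begin{equation*}
r^{-1}\|u_\varep-v\|_{L^2(D_r)}\ls (\varep/r)^{1/2}\Phi(r)+\mathcal{D}(r).
\end{equation*}
A Caccioppoli estimate on $D_{\theta r}$ then gives $\Phi(\theta r)\ls \bigl(\average_{D_{\theta r}}|\nabla v|^2\bigr)^{1/2}+(\varep/r)^{1/2}\Phi(r)+\mathcal{D}(r)$; combining with the Schauder bound above and using $r\ge 16C_1\varep$ to absorb the $(\varep/r)^{1/2}$ term produces $(\ast)$.

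The main technical obstacle is the comparison step: producing the localized $L^2$-convergence-rate bound on the chart $D_r$ while controlling a boundary layer of width $\varep$ along the non-characteristic portion $\partial D_r\setminus\Delta_r$, where $u_\varep-v$ need not vanish. The smoothing operator $K_\varep^2$ and the cutoff $\eta_\varep$ are designed precisely to absorb this layer, and the $L^2$ bounds on the correctors $\chi$ and $\phi$ supply the needed control; the delicate point is to verify that all constants depend only on $d,\kappa_1,\kappa_2,p,\sigma,\alpha,M$ and not on the ratio $r/\varep$, which demands the same careful boundary-layer bookkeeping developed in the proofs of Theorems \ref{main-theorem-1} and \ref{main-theorem-2}.
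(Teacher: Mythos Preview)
Your one-step decay inequality $(\ast)$ is false as stated, and the error traces back to the Schauder claim you invoke for $\mathcal{L}_0$. Consider the flat case $\psi=0$, $F=0$, $f=0$, and $v(x)=x_d$ (or, for $\mathcal{L}_\varep$, take $u_\varep=x_d+\varep\chi_d(x/\varep)$). Then $\mathcal{D}(r)=0$ while $\Phi(r)\approx 1$ for every $r$, so $(\ast)$ would read $1\le \tfrac12$. The $C^{1,\sigma}$ theory for $\mathcal{L}_0$ does \emph{not} give decay of $\bigl(\average_{D_{\theta r}}|\nabla v|^2\bigr)^{1/2}$; it gives decay of the \emph{oscillation} of $\nabla v$, i.e.\ of the distance of $v$ to the best affine function. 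Your iteration quantity $\Phi$ is blind to this distinction.

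The paper's proof repairs exactly this: it works not with $\Phi(r)$ but with the excess
\[
H(r)=\frac{1}{r}\inf_{M,q}\Bigl\{\bigl(\average_{D_r}|u_\varep-Mx-q|^2\bigr)^{1/2}+\text{(data terms for }f-Mx-q,\ F)\Bigr\},
\]
which is invariant under subtraction of affines. The $C^{1,\sigma}$ estimate for $\mathcal{L}_0$ then legitimately yields $G(\theta r;v)\le\tfrac12 G(r;v)$ for the analogous quantity, and combining with the $L^2$ approximation bound (your comparison step, which is essentially correct and matches the paper's Lemma \ref{lemma-4.3}) gives
\[
H(\theta r)\le \tfrac12 H(r)+C\,(\varep/r)^{1/2}\bigl\{H(2r)+h(2r)\bigr\},\qquad h(r)=|M_r|.
\]
Because the right side now involves $H(2r)+h(2r)$ rather than just $H(r)$, a straightforward geometric iteration no longer closes; the paper handles this with a separate integral-type iteration lemma (Lemma \ref{G-lemma}) that simultaneously controls $H$ and $h$, exploiting the Dini condition $\int_0^1 t^{-1/2}\,dt<\infty$. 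Only at the very end is Caccioppoli applied once to pass from $H(r)+h(r)$ back to $\Phi(r)$. Your scheme can be salvaged along these lines, but the affine subtraction and the coupled iteration for $(H,h)$ are not optional refinements---they are the missing idea.
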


\begin{theorem}\label{main-theorem-4}
Suppose that $A$ satisfies (\ref{ellipticity})-(\ref{periodicity}).
Let $u_\varep\in H^1(D_1;\rd)$ be a weak solution to
\begin{equation}\label{NP-2}
\mathcal{L}_\varep (u_\varep)=F  \quad  \text{ in } D_1\quad
\text{ and } \quad \frac{\partial u_\varep}{\partial \nu_\varep}
 =g\quad \text{ on } \Delta_1.
\end{equation}
Then, for $\varep \le r <1$,
\begin{equation}\label{main-estimate-4}
\left(\average_{D_r} |\nabla u_\varep|^2\right)^{1/2}
\le C \left\{ \left(\average_{D_1} | \nabla u_\varep|^2\right)^{1/2}
+\| g\|_{C^{\sigma}(\Delta_1)}
+\| F\|_{L^p(D_1)} \right\},
\end{equation}
where $p>d$ and $\sigma\in (0, \alpha)$.
The constant $C$ depends only on $d$, $\kappa_1$, $\kappa_2$,
$p$, $\sigma$, and $(\alpha, M)$.
\end{theorem}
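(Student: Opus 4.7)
The plan is to follow the same scheme used for the Dirichlet case in Theorem \ref{main-theorem-3}: a Campanato-type one-step improvement at each dyadic scale $r \ge \varep$, iterated from scale one down to the microscopic scale $\varep$. The novelty compared to the Avellaneda--Lin compactness method is that we replace compactness by the quantitative $L^2$ convergence rate developed in the proof of Theorem \ref{main-theorem-2}. By the rescaling $v(x) = r^{-1} u_\varep(rx)$, matters reduce to a single improvement from scale one down to some $\theta \in (0, 1/4)$, for the operator $\mathcal{L}_{\varep/r}$ on a $C^{1,\alpha}$ region whose character is controlled by $(\alpha, M)$; so below I work as if $r = 1$ and $\varep$ is small.

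Given $u_\varep$ on $D_1$, I would compare it with the weak solution $w$ of the homogenized Neumann problem $\mathcal{L}_0 w = F$ in $D_1$, $\partial w/\partial\nu_0 = g$ on $\Delta_1$, and $w = u_\varep$ on $\partial D_1 \setminus \Delta_1$. Testing $\mathcal{L}_\varep$ against
\[
w_\varep \;=\; u_\varep - w - \varep\, \chi_j^\beta(x/\varep)\, K_\varep^2\!\Bigl( \tfrac{\partial w^\beta}{\partial x_j}\, \eta_\varep\Bigr),
\]
with $\eta_\varep$ a cut-off vanishing in an $\varep$-neighborhood of $\partial D_1 \setminus \Delta_1$, and using the dual corrector $\phi_{kij}^{\alpha\beta}$ to integrate by parts exactly as in the proof of Theorem \ref{main-theorem-2}, I would obtain
\[
\Bigl(\average_{D_1} |u_\varep - w|^2\Bigr)^{1/2}
\le C\, \varep^{\tau}\, \Bigl\{ \Bigl(\average_{D_1}|\nabla u_\varep|^2\Bigr)^{1/2}
+ \|g\|_{C^\sigma(\Delta_1)} + \|F\|_{L^p(D_1)} \Bigr\}
\]
for some $\tau > 0$. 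Classical Schauder theory for the constant-coefficient Neumann problem on the $C^{1,\alpha}$ region $D_1$ produces $w \in C^{1,\sigma}(\overline{D_{1/2}})$ with
\[
\Bigl(\average_{D_\theta} |\nabla w - \nabla w(0)|^2\Bigr)^{1/2}
\le C\, \theta^\sigma\, \Bigl\{ \Bigl(\average_{D_1}|\nabla u_\varep|^2\Bigr)^{1/2}
+ \|g\|_{C^\sigma(\Delta_1)} + \|F\|_{L^p(D_1)} \Bigr\}.
\]
Combined with a Caccioppoli estimate for $u_\varep - w$ on $D_{2\theta}$ versus $D_{4\theta}$, and choosing first $\theta$ small so that $C\theta^\sigma \le 1/4$, then $\varep \le \varep_0(\theta)$, I would obtain the one-step decay
\[
\Bigl(\average_{D_\theta}|\nabla u_\varep - q|^2\Bigr)^{1/2}
\le \tfrac12\, \Bigl(\average_{D_1}|\nabla u_\varep|^2\Bigr)^{1/2}
+ C\bigl[\|g\|_{C^\sigma(\Delta_1)} + \|F\|_{L^p(D_1)}\bigr],
\]
with $q = \nabla w(0)$. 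Iterating the rescaled statement along $r = \theta^k$ as long as $\theta^k \ge \varep$, summing the resulting geometric series, and covering the finitely many scales between $\varep$ and $\varep_0^{-1}\varep$ with elementary Caccioppoli bounds yields (\ref{main-estimate-4}).

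The main obstacle is the approximation step. The two-scale expansion does not respect the Neumann boundary data on $\Delta_1$ up to order $\varep$ automatically: the conormal derivatives $\partial/\partial\nu_\varep$ and $\partial/\partial\nu_0$ differ by an oscillatory tangential term which must be rewritten as a tangential derivative of $\phi_{kij}^{\alpha\beta}$ and integrated by parts along $\Delta_1$. The resulting boundary integrals on $\Delta_1$ are controlled by $\|g\|_{C^\sigma(\Delta_1)}$ together with sharp trace inequalities, while the boundary terms on $\partial D_1\setminus\Delta_1$ are killed by the cut-off $\eta_\varep$. A secondary subtlety is that the Schauder regularity of $w$ up to $\Delta_1$ degenerates at the endpoint $\sigma = \alpha$, which is precisely why the theorem restricts $\sigma \in (0,\alpha)$.
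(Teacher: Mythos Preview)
Your overall scheme---approximate $u_\varep$ by a homogenized solution, use Schauder for the latter, and iterate down to scale $\varep$---is the paper's scheme as well. But the execution you propose has a genuine gap at the step where you invoke ``a Caccioppoli estimate for $u_\varep - w$ on $D_{2\theta}$ versus $D_{4\theta}$.'' The difference $u_\varep-w$ does \emph{not} satisfy a homogeneous or nearly-homogeneous equation: since $\mathcal{L}_0(w)=F$ while $\mathcal{L}_\varep(u_\varep)=F$, one has
\[
\mathcal{L}_\varep(u_\varep-w)=\text{div}\big((A(x/\varep)-\widehat A)\nabla w\big),
\]
whose right-hand side is of size $\|\nabla w\|$, not $O(\varep^\tau)$. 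Caccioppoli then produces a term $\|\nabla w\|_{L^2(D_{4\theta})}$ on the right, which is comparable to the quantity you are trying to bound, and the iteration does not close at the gradient level. The same obstruction blocks subtracting $q=\nabla w(0)$ directly, since $u_\varep-qx$ is not a solution of $\mathcal{L}_\varep$ either.

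The paper avoids this by running the iteration at the $L^2$ level of $u_\varep-Mx-q$ rather than at the gradient level: it defines
\[
J(r)=\frac{1}{r}\inf_{M,q}\left\{\Big(\average_{D_r}|u_\varep-Mx-q|^2\Big)^{1/2}+\text{(data terms)}\right\},
\]
and proves $J(\theta r)\le \tfrac12 J(r)+C(\varep/r)^{1/2}\Psi(2r)$ directly from the $L^2$ bound $\|u_\varep-w\|_{L^2}\le C(\varep/r)^{1/2}\{\cdots\}$ and the $C^{1,\sigma}$ estimate for $w$, with no Caccioppoli on $u_\varep-w$ at all. A general iteration lemma (Lemma~\ref{G-lemma}) then controls $J(r)+|M_r|$ uniformly for $\varep\le r\le 1$, and Caccioppoli is applied only once at the end, to $u_\varep-q$ (constant $q$), which \emph{does} solve the original equation with the same Neumann data.

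There is also a secondary difference in how $w$ is constructed. Your $w$ solves a mixed Dirichlet/Neumann problem on $D_1$ and you plan to handle the mismatch $\partial/\partial\nu_\varep-\partial/\partial\nu_0$ on $\Delta_1$ via the flux corrector. The paper takes a cleaner route: by the co-area formula it selects $t\in[5/4,3/2]$ with $\|\nabla u_\varep\|_{L^2(\partial D_t\setminus\Delta_2)}$ under control, then solves a \emph{pure} Neumann problem for $\mathcal{L}_0$ on $D_t$ with data $\partial u_\varep/\partial\nu_\varep$ on all of $\partial D_t$, and quotes the $O(\varep^{1/2})$ convergence rate already established in Remark~\ref{remark-2.1}. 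This modular use of the full-domain rate sidesteps the boundary manipulation you describe.
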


As in the case of Rellich estimates, under additional smoothness conditions on $A$, 
using local Lipschitz estimates for $\mathcal{L}_1$ and a blow-up argument,
one may derive  from Theorems \ref{main-theorem-3} and
\ref{main-theorem-4} the full boundary Lipschitz estimates
\begin{equation}\label{Lip-estimate-1}
\|\nabla u_\varep\|_{L^\infty(D_{1/2})}
\le C \left\{ \left(\average_{D_1} | u_\varep|^2\right)^{1/2}
+\| f\|_{C^{1, \sigma}(\Delta_1)}
+\| F\|_{L^p(D_1)} \right\}
\end{equation}
for solutions of (\ref{DP-2}), and
\begin{equation}\label{Lip-estimate-2}
\|\nabla u_\varep\|_{L^\infty(D_{1/2})}
\le C \left\{ \left(\average_{D_1} | u_\varep|^2\right)^{1/2}
+\| g\|_{C^{ \sigma}(\Delta_1)}
+\| F\|_{L^p(D_1)} \right\}
\end{equation}
for solutions of (\ref{NP-2}).
We remark that for elliptic systems satisfying the ellipticity condition
(\ref{s-ellipticity}), the periodicity condition (\ref{periodicity}) and
the H\"older continuity condition,
the estimate (\ref{Lip-estimate-1}) was proved in \cite{AL-1987}, while
(\ref{Lip-estimate-2}) was established  in \cite{KLS1} under the additional symmetry condition 
$A^*=A$. This symmetry condition was removed recently in \cite{Armstrong-Shen-2014}

Our proof of Theorems \ref{main-theorem-3} and \ref{main-theorem-4}
also uses the function $w_\varep$, given by (\ref{w}). As a consequence of its estimates in 
$L^2$, for each $r \in (\varep, 1/4)$,
we are able to construct a function $v$ such that 
$\mathcal{L}_0 (v)=F$ in $ D_r$ with the same (Dirichlet or Neumann) data on $\Delta_r$ as $u_\varep$, and
$$
\left(\average_{D_r} |u_\varep -v|^2\right)^{1/2}
\le C \left(\frac{\varep}{r} \right)^{1/2}
\left\{ \left(\average_{D_{2r}} |u_\varep|^2\right)^{1/2}
+\text{terms involving given data} \right\}.
$$
This allows us to use a general scheme for establishing  Lipschitz estimates down to the scale $\varep$,
which was formulated recently in \cite{Armstrong-Smart-2014} and used for interior estimates
in stochastic homogenization with random coefficients 
(also see \cite{Armstrong-Mourrat-2015, Armstrong-Daniel-2015} as well as related work 
in \cite{Otto-2011, Otto-2012, Otto-2015, Otto-2014}).
 Our argument is similar to (and somewhat simpler than) that in \cite{Armstrong-Shen-2014},
 where the scheme was adapted to prove the full boundary Lipschitz estimates for second-order
 elliptic systems with almost-periodic 
 and H\"older continuous coefficients.
 As indicated earlier, we have been able to completely avoid the use of compactness methods
 (even in the case of $C^\alpha$ estimates).
 Although it is possible to prove the interior Lipschitz estimates as well as the boundary
 $C^\alpha$ estimates, down to the scale $\varep$ without smoothness,
 by the compactness methods, as demonstrated in \cite{AL-1987, Gu-Shen-2015},
 the compactness methods for boundary Lipschitz estimates require the same estimates for boundary correctors,
 which are not easy to establish \cite{AL-1987, KLS1}.
 
 The paper is organized as follows.
 In Section 2 we establish some key convergence results in $H^1$.
 These results are used in Section 3 to prove Theorems \ref{main-theorem-1} and \ref{main-theorem-2}.
 In Section 4 we study the convergence rates in $L^q$ for $q=\frac{2d}{d-1}$ and
 give the proof of Theorem \ref{main-theorem-L}, which uses the estimates in Theorems \ref{main-theorem-1}
 and \ref{main-theorem-2} as well as a duality argument.
 In Sections 5 and 6 we obtain the boundary $C^\alpha$ and $W^{1,p}$ estimates, 
 respectively, in $C^1$ domains for operators with VMO coefficients.
 These estimates are used in Section 7 to establish the $L^p$ analogues of
 (\ref{main-estimate-1}) and (\ref{main-estimate-2}) in $C^1$ domains.
  Finally, Theorem \ref{main-theorem-3} is proved in Section 8, and
  Section 9 contains the proof of Theorem \ref{main-theorem-4}.

Throughout the paper we use $\average_E u =\frac{1}{|E|}\int_E u$
 to denote the average of $u$ over the set $E$.
 We will use $C$ and $c$ to denote constants that may depend on $d$, $\kappa_1$,
 $\kappa_2$, $A$ and $\Omega$, but never on $\varep$.

\noindent{\bf Acknowledgement.}
The author thanks Carlos E. Kenig for several very helpful discussions regarding this work.
The author also would like to thank Scott N. Armstrong for insightful conversations and discussions regarding 
the work \cite{Armstrong-Smart-2014}.



\section{Convergence rates in $H^1$}
\setcounter{equation}{0}

In this section we establish certain results on convergence rates in $H^1$, which will play a crucial role
in the proof of our main results. Throughout the section we assume that
$A=A(y)$ satisfies (\ref{ellipticity}-(\ref{periodicity}) and
$\Omega$ is a bounded Lipschitz domain in $\rd$.

Let $\chi =\big( \chi_j^\beta (y)\big) =\big( \chi_j^{\alpha\beta} (y) \big)$ denote the matrix of correctors for 
$\mathcal{L}_\varep$, where  $1\le j, \alpha, \beta \le d$.
This means that $\chi_j^\beta \in H^1_{\loc}(\rd;\rd)$ is 1-periodic, $\int_Y \chi_j^\beta=0$, and
\begin{equation}\label{corrector-def}
\mathcal{L}_1 (\chi_j^\beta) =-\mathcal{L}_1 (P_j^\beta) \quad \text{ in } \rd,
\end{equation}
where $Y=[0,1)^d$ and $P_j^\beta =y_j (0, \dots, 1, \dots, 0)$ with $1$ in the $\beta^{th}$ position.
The homogenized operator is given by $\mathcal{L}_0=-\text{\rm div} \big(\widehat{A}\nabla \big)$, where
 $\widehat{A}= \big(\widehat{a}_{ij}^{\alpha\beta}\big)$ is the matrix of effective coefficients with
\begin{equation}\label{hat}
\widehat{a}_{ij}^{\alpha\beta}
 =\average_Y \Big\{ 
a_{ij}^{\alpha\beta}  + a_{ik}^{\alpha\gamma} \frac{\partial}{\partial y_k} \Big(\chi_j^{\gamma\beta}\Big)
\Big\}.
\end{equation}
It is known that the constant matrix $\widehat{A}$ satisfies the elasticity condition (\ref{ellipticity}) \cite{Oleinik-1992,Jikov-1994}.
Define
\begin{equation}\label{b}
b_{ij}^{\alpha\beta} (y)
 =a_{ij}^{\alpha\beta}  + a_{ik}^{\alpha\gamma} \frac{\partial}{\partial y_k} \Big(\chi_j^{\gamma\beta}\Big)
-\widehat{a}_{ij}^{\alpha\beta}.
\end{equation}
By the definition of $\widehat{A}$ and (\ref{corrector-def}),
\begin{equation}\label{c-1}
\int_{Y} b_{ij}^{\alpha\beta} =0 \quad \text{ and } \quad \frac{\partial}{\partial y_i} 
\Big( b_{ij}^{\alpha\beta} \Big) =0.
\end{equation}
It follows that there exist $\phi_{k i j}^{\alpha\beta}\in H^1_{\loc}(\rd)$ such that
$\phi_{k i j}^{\alpha\beta}$ is 1-periodic, 
\begin{equation}\label{phi}
b_{ij}^{\alpha\beta} =\frac{\partial}{\partial y_k} \Big(\phi_{k i j}^{\alpha\beta} \Big)
\quad \text{ and } \quad \phi_{k i j}^{\alpha\beta}
=-\phi_{ i k j}^{\alpha\beta}
\end{equation}
(see e.g. \cite{Jikov-1994, KLS2}).

Fix $\varphi \in C_0^\infty(B(0,1/4))$ such that $\varphi\ge 0$ and $\int_{\rd} \varphi=1$.
Define
\begin{equation}\label{K}
K_\varep (f)(x) =f *\varphi_\varep (x)=\int_{\rd} f(x-y) \, \varphi_\varep (y)\, dy,
\end{equation}
where $\varphi_\varep (y)=\varep^{-d} \varphi (y/\varep)$.

\begin{lemma}\label{lemma-2.00}
Let $f\in L^p(\rd)$ for some $1\le p<\infty$.
Then for any $g\in L^p_{\loc}(\rd)$,
\begin{equation}\label{2.00-1}
\| g(x/\varep) K_\varep (f)\|_{L^p(\rd)} \le C \sup_{x\in \rd} \left(\average_{B(x,1)} |g|^p\right)^{1/p}
\| f\|_{L^p(\rd)},
\end{equation}
where $C$ depends only on $d$.
\end{lemma}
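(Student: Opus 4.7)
The plan is to exploit two facts: that $\varphi_\varepsilon$ is a probability density (so Jensen's inequality moves the $p$-th power inside $K_\varepsilon$), and that $\varphi$ has compact support of size $1/4$ at scale $\varepsilon$ (so after rescaling, averages of $|g|^p$ only see balls of radius $1/4$). With these in hand, a Fubini swap and a linear change of variables will finish the proof. There is no serious obstacle; this is a standard estimate for convolution against a compactly supported mollifier paired with a bounded-mean-oscillation-type function.

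Concretely, I would first observe that $\varphi_\varepsilon \ge 0$ and $\int \varphi_\varepsilon = 1$, so Jensen's inequality yields the pointwise bound
\begin{equation*}
|K_\varepsilon(f)(x)|^p \;\le\; \int_{\mathbb{R}^d} |f(x-y)|^p \, \varphi_\varepsilon(y)\, dy.
\end{equation*}
Multiplying by $|g(x/\varepsilon)|^p$, integrating in $x$, applying Fubini, and then substituting $x \mapsto x+y$ in the inner integral gives
\begin{equation*}
\int_{\mathbb{R}^d} |g(x/\varepsilon)|^p \, |K_\varepsilon(f)(x)|^p\, dx
\;\le\; \int_{\mathbb{R}^d} |f(x)|^p \, I_\varepsilon(x)\, dx,
\end{equation*}
where
\begin{equation*}
I_\varepsilon(x) \;=\; \int_{\mathbb{R}^d} |g((x+y)/\varepsilon)|^p \, \varphi_\varepsilon(y)\, dy.
\end{equation*}

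Next I would evaluate $I_\varepsilon(x)$ by the substitution $u = (x+y)/\varepsilon$, which converts it into
\begin{equation*}
I_\varepsilon(x) \;=\; \int_{\mathbb{R}^d} \varphi(u - x/\varepsilon) \, |g(u)|^p\, du.
\end{equation*}
Because $\varphi$ is supported in $B(0,1/4)$ and $|\varphi|\le \|\varphi\|_\infty$, this integrand vanishes outside $B(x/\varepsilon, 1/4)$, so
\begin{equation*}
I_\varepsilon(x) \;\le\; \|\varphi\|_\infty \int_{B(x/\varepsilon,\, 1/4)} |g(u)|^p\, du
\;\le\; C(d)\, \sup_{z\in\mathbb{R}^d} \average_{B(z,1)} |g|^p,
\end{equation*}
where the last inequality uses $B(x/\varepsilon,1/4)\subset B(x/\varepsilon,1)$ and absorbs the volume factor into $C(d)$. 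Plugging this uniform-in-$x$ bound back in gives
\begin{equation*}
\int_{\mathbb{R}^d} |g(x/\varepsilon)|^p \, |K_\varepsilon(f)(x)|^p\, dx
\;\le\; C(d)\, \Big(\sup_{z\in\mathbb{R}^d} \average_{B(z,1)} |g|^p\Big) \, \|f\|_{L^p(\mathbb{R}^d)}^p,
\end{equation*}
and taking $p$-th roots yields \eqref{2.00-1}. The only point requiring care is the bookkeeping in the change of variables; everything else is direct.
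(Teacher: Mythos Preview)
Your proof is correct and follows essentially the same approach as the paper: a Jensen/H\"older bound to move the $p$-th power inside $K_\varepsilon$, followed by Fubini and a change of variables to extract the supremum of local $L^p$ averages of $g$. The paper is terser---it immediately replaces $\varphi_\varepsilon$ by $C|B(0,\varepsilon)|^{-1}\chi_{B(0,\varepsilon)}$ in the pointwise bound and then says the rest follows from Fubini---but your more explicit bookkeeping with $I_\varepsilon(x)$ amounts to the same computation.
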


\begin{proof}
By H\"older's inequality,
$$
|K_\varep (f) (x)|^p \le \frac{C}{|B(0, \varep)|} \int_{\rd} |f(y)|^p \chi_{B(x,\varep)} (y)\, dy,
$$
from which the estimate (\ref{2.00-1}) follows readily by Fubini's Theorem.
\end{proof}

It follows from (\ref{2.00-1}) that if $g\in L^p_{\loc} (\rd)$ and is 1-periodic, then
\begin{equation} \label{2.00-0}
\| g(x/\varep) K_\varep (f) \|_{L^p(\rd)}
\le C\, \| g\|_{L^p(Y)} \| f\|_{L^p(\rd)}.
\end{equation}

\begin{lemma}\label{lemma-p}
Let $f\in W^{1, q}(\rd)$ for some $1<q<\infty$. Then
\begin{equation}\label{L-p}
\| K_\varep (f) -f \|_{L^q(\rd)} \le C  \varep \, \| \nabla f\|_{L^q (\rd)}.
\end{equation}
Moreover, if  $p=\frac{2d}{d+1}$,
\begin{equation}\label{p}
\aligned
\| K_\varep (f)\|_{L^2(\rd)}
 & \le C \varep^{-1/2} \| f\|_{L^p(\rd)},\\
 \| f -K_\varep (f)\|_{L^2(\rd)}
 & \le C\, \varep^{1/2} \| \nabla f\|_{L^p(\rd)}.
 \endaligned
\end{equation}
The constant $C$ depends only on $d$.
\end{lemma}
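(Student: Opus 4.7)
The plan is to handle the three bounds by combining the fundamental theorem of calculus, which expresses $f - K_\varep f$ as an average of directional derivatives of $f$, with Young's convolution inequality, which converts control of the mollifier in suitable Lebesgue norms into a scaling factor in $\varep$. The key exponent identity that makes the $L^p\to L^2$ estimates work is $1 + \tfrac{1}{2} = \tfrac{1}{p} + \tfrac{1}{r}$ for $p = \tfrac{2d}{d+1}$, which forces $r = \tfrac{2d}{2d-1}$ and, crucially, $d(1 - \tfrac{1}{r}) = \tfrac{1}{2}$.

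For (\ref{L-p}), I would write
\[
K_\varep f(x) - f(x) = \int_{\rd} [f(x-y) - f(x)]\,\varphi_\varep(y)\,dy
\]
and expand $f(x-y) - f(x) = -\int_0^1 y\cdot \nabla f(x-ty)\,dt$. Minkowski's integral inequality in $L^q$, together with translation invariance of Lebesgue measure and the support condition $|y|\le \varep/4$ on $\varphi_\varep$, yield $\|K_\varep f - f\|_{L^q} \le C\,\varep\,\|\nabla f\|_{L^q}$. For the first inequality in (\ref{p}), since $K_\varep f = f * \varphi_\varep$, Young's inequality gives $\|K_\varep f\|_{L^2} \le \|f\|_{L^p}\|\varphi_\varep\|_{L^r}$, and a direct rescaling yields $\|\varphi_\varep\|_{L^r} = \varep^{-d(1 - 1/r)}\|\varphi\|_{L^r} = \varep^{-1/2}\|\varphi\|_{L^r}$ by the exponent identity above.

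For the second inequality in (\ref{p}), I reuse the integral representation from (\ref{L-p}):
\[
f(x) - K_\varep f(x) = \int_0^1 \int_{\rd} y\cdot \nabla f(x-ty)\,\varphi_\varep(y)\,dy\,dt.
\]
Substituting $z = ty$ in the inner integral recasts this as $\sum_j \int_0^1 (\partial_j f * \Psi^j_{t,\varep})(x)\,dt$, where $\Psi^j_{t,\varep}(z) = t^{-d-1} z_j\, \varphi_\varep(z/t)$. Young's inequality with the same triple $(p,r,2)$, followed by the rescalings $z = tw$ and $w = \varep u$, gives $\|\Psi^j_{t,\varep}\|_{L^r} = C\, t^{-1/2}\,\varep^{1/2}$; the extra factor of $|z|$ in the kernel (compared to the previous step) flips the power of $\varep$ from $\varep^{-1/2}$ to $\varep^{+1/2}$. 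Since the singularity $t^{-1/2}$ is integrable on $[0,1]$, Minkowski in $t$ closes the estimate. The main technical point to get right is tracking how the $|z|$ factor interacts with the $(t,\varep)$-rescaling so that the final $\varep$-power comes out positive; once the exponent bookkeeping is done, parts (\ref{L-p}) and the first inequality of (\ref{p}) are essentially textbook, and the interest of the lemma lies entirely in the final bound, which will be the one exploited in the later convergence-rate estimates.
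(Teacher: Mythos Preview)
Your argument is correct. For the first inequality (\ref{L-p}) your proof and the paper's coincide: both use the fundamental theorem of calculus bound $\|f(\cdot-y)-f(\cdot)\|_{L^q}\le |y|\,\|\nabla f\|_{L^q}$ followed by Minkowski's inequality.

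For the two inequalities in (\ref{p}), however, you take a genuinely different route. The paper works on the Fourier side: for the first bound it writes $\|K_\varep f\|_{L^2}^2=\int|\hat\varphi(\varep\xi)|^2|\hat f(\xi)|^2\,d\xi$, applies H\"older with exponent $d$ to split off $\big(\int|\hat\varphi(\varep\xi)|^{2d}d\xi\big)^{1/d}\sim\varep^{-1}$, and controls the remaining factor by the Hausdorff--Young inequality $\|\hat f\|_{L^{p'}}\le\|f\|_{L^p}$; the second bound is handled the same way, using $|\hat\varphi(\varep\xi)-\hat\varphi(0)|\le C\varep|\xi|$ to absorb a factor of $|\xi|$ into $\widehat{\nabla f}$. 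Your argument stays entirely in physical space, using Young's convolution inequality with the exponent triple $(p,r,2)$ and the scaling identity $d(1-1/r)=1/2$; for the second bound you combine this with the integral representation from (\ref{L-p}) and the observation that the extra $|z|$ in the rescaled kernel converts $\varep^{-1/2}$ to $\varep^{+1/2}$ while producing an integrable $t^{-1/2}$. Your approach is more elementary in that it avoids Fourier analysis altogether, and it would transfer unchanged to settings (weighted spaces, domains) where Parseval and Hausdorff--Young are not directly available; the paper's approach, on the other hand, makes the role of the exponent $p=\frac{2d}{d+1}$ transparent as the Hausdorff--Young dual of $p'=\frac{2d}{d-1}$.
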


\begin{proof}
To see (\ref{L-p}), we note that
$$
\| f(\cdot-y) -  f (\cdot) \|_{L^q(\rd)} \le  |y|\,  \| \nabla f\|_{L^q(\rd)}
$$
for any $y\in \rd$.
Thus, by Minkowski's inequality,
$$
\aligned
\| K_\varep (f) -f \|_{L^q(\rd)} & \le \int_{\rd} \varphi_\varep (y) 
\| f(\cdot-y) -  f (\cdot) \|_{L^q(\rd)} dy \\
& \le  \int_{\rd} \varphi_\varep (y) |y|\, dy \, \|\nabla  f\|_{L^q(\rd)}\\
&= C\varep\, \|\nabla f\|_{L^q(\rd)}.
\endaligned
$$

Next, by Parseval's Theorem and H\"older's inequality,
$$
\aligned
\int_{\rd} |K_\varep (f)|^2\, dx
&= \int_{\rd} |\hat{\varphi} (\varep \xi)|^2 |\hat{f} (\xi)|^2\, d\xi\\
&\le \left(\int_{\rd} |\hat{\varphi}(\varep \xi)|^{2d}\, d\xi\right)^{1/d}
\| \hat{f}\|_{L^{p^\prime}(\rd)}^2\\
& \le  C\, \varep^{-1} \| f\|^2_{L^p(\rd)},
\endaligned
$$
where $\hat{f}$ denotes the Fourier transform of $f$, and
we have used the  Hausdorff-Young inequality $\| \hat{f}\|_{L^{p^\prime}(\rd)} \le  \| f\|_{L^p(\rd)}$.
This gives the first inequality in (\ref{p}).
To see the second inequality, we note that $\hat{\varphi} (0)=\int_{\rd} \varphi =1$.
It follows that
$$
\aligned
\| f-K_\varep (f)\|_{L^2(\rd)}
&\le C \left\{\int_{\rd} |\hat{\varphi} (\varep \xi) -\hat{\varphi} (0)|^{2d} |\xi|^{-2d}\, d\xi\right\}^{1/(2d)}
\|\widehat{\nabla f } \|_{L^{p^\prime} (\rd)}\\
&\le C \varep^{1/2} \|\nabla f\|_{L^p(\rd)},
\endaligned
$$
where we have used $|\hat{\varphi} (\xi) -\hat{\varphi} (0)|\le C |\xi|$ for the last step.
\end{proof}

\begin{lemma}\label{main-lemma}
Let $u_\varep, u_0\in H^1(\Omega; \rd)$.
Suppose that $\mathcal{L}_\varep (u_\varep )=\mathcal{L}_0 (u_0)$ in $\Omega$
and either $u_\varep = u_0$ or $\frac{\partial u_\varep}{\partial\nu_\varep}
=\frac{\partial u_0}{\partial \nu_0}$ on $\partial\Omega$.
Let
$$
w^\alpha_\varep= u_\varep^\alpha -{u}^\alpha_0 -\varep \chi^{\alpha\beta}_j (x/\varep) K^2_\varep \Big(\frac{\partial 
{u}^\beta_0}{\partial x_j} \eta_\varep \Big),
$$
where
$\eta_\varep \in C_0^\infty(\Omega)$ and supp$(\eta_\varep)
\subset \big\{ x\in \Omega: \, \text{dist}(x, \partial\Omega) \ge 3 \varep \big\}$.
Then
\begin{equation}\label{f-0}
\aligned
\int_\Omega A(x/\varep)\nabla w_\varep \cdot \nabla w_\varep\, dx
=& \int_\Omega \Big[ \widehat{A} - A(x/\varep) \Big]
\Big[ \nabla u_0 - K_\varep^2 \big( (\nabla u_0)\eta_\varep \big) \Big] \cdot \nabla w_\varep\, dx\\
& \qquad -\int_\Omega B(x/\varep) K_\varep^2 \big((\nabla u_0)\eta_\varep \big) \cdot \nabla w_\varep\, dx\\
&\qquad -\varep \int_\Omega A(x/\varep) \chi(x/\varep) \nabla K_\varep^2 \big( (\nabla u_0) \eta_\varep \big)\cdot 
\nabla w_\varep\, dx,
\endaligned
\end{equation}
where $B(y)=(b_{ij}^{\alpha\beta}(y))$ is defined in (\ref{b}).
\end{lemma}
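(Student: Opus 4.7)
The proof is a direct algebraic computation; the three terms on the right-hand side of (\ref{f-0}) arise naturally from testing the equation $\mathcal{L}_\varep u_\varep=\mathcal{L}_0 u_0$ against $w_\varep$ and then reorganizing, via the definition of $B$, the contribution coming from the corrector term.

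First I would verify that $w_\varep$ is an admissible test function under each of the two boundary conditions. Writing $v^\alpha=\varep\chi_j^{\alpha\beta}(x/\varep)K_\varep^2\bigl((\partial u_0^\beta/\partial x_j)\eta_\varep\bigr)$ for the corrector term, the support of $K_\varep^2\bigl((\nabla u_0)\eta_\varep\bigr)$ is contained in $\mathrm{supp}(\eta_\varep)+B(0,\varep/2)$ because $\varphi\in C_0^\infty(B(0,1/4))$; since $\mathrm{supp}(\eta_\varep)\subset\{\mathrm{dist}(\cdot,\partial\Omega)\ge 3\varep\}$, we see that $v\equiv 0$ in a neighborhood of $\partial\Omega$. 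Hence in the Dirichlet case $w_\varep=(u_\varep-u_0)-v\in H^1_0(\Omega;\rd)$, and in the Neumann case the cancellation of the conormal data on $\partial\Omega$ means that integrating the difference of the two weak formulations against $w_\varep$ produces no boundary term. In either situation one obtains the basic identity
\begin{equation*}
\int_\Omega A(x/\varep)\nabla u_\varep\cdot\nabla w_\varep\,dx
=\int_\Omega \widehat{A}\,\nabla u_0\cdot\nabla w_\varep\,dx.
\end{equation*}

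Next, I would split
\begin{equation*}
\int_\Omega A(x/\varep)\nabla w_\varep\cdot\nabla w_\varep\,dx
=\int_\Omega A(x/\varep)\nabla u_\varep\cdot\nabla w_\varep\,dx
-\int_\Omega A(x/\varep)\nabla u_0\cdot\nabla w_\varep\,dx
-\int_\Omega A(x/\varep)\nabla v\cdot\nabla w_\varep\,dx,
\end{equation*}
substitute the identity above, and compute $\nabla v$ by the product rule:
\begin{equation*}
\partial_i v^\alpha=(\partial_{y_i}\chi_j^{\alpha\beta})(x/\varep)K_\varep^2\!\left(\tfrac{\partial u_0^\beta}{\partial x_j}\eta_\varep\right)
+\varep\chi_j^{\alpha\beta}(x/\varep)\,\partial_i K_\varep^2\!\left(\tfrac{\partial u_0^\beta}{\partial x_j}\eta_\varep\right).
\end{equation*}
The second piece contributes exactly the last term on the right-hand side of (\ref{f-0}). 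The first piece, when contracted with $A(x/\varep)$, produces an expression of the form $a_{ij}^{\alpha\beta}(x/\varep)(\partial_{y_j}\chi_k^{\beta\gamma})(x/\varep)$, which by the definition (\ref{b}) of $B$ equals $b_{ik}^{\alpha\gamma}(x/\varep)-a_{ik}^{\alpha\gamma}(x/\varep)+\widehat{a}_{ik}^{\alpha\gamma}$. Substituting this and collecting terms combines two of the resulting pieces with $\int[\widehat{A}-A(x/\varep)]\nabla u_0\cdot\nabla w_\varep$ into the first term of (\ref{f-0}), and leaves the remaining $-\int B(x/\varep)K_\varep^2((\nabla u_0)\eta_\varep)\cdot\nabla w_\varep$ as the middle term.

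There is essentially no analytic difficulty; the only real care required is in the bookkeeping of the indices $i,j,k,\alpha,\beta,\gamma$ when applying the identity $a_{i\ell}^{\alpha\delta}\partial_{y_\ell}\chi_k^{\delta\gamma}=b_{ik}^{\alpha\gamma}-a_{ik}^{\alpha\gamma}+\widehat{a}_{ik}^{\alpha\gamma}$ inside the product-rule expansion, and in confirming (as above) that the cut-off $\eta_\varep$ and the mollification radius $\varep/4$ of $\varphi$ are indeed compatible so that $v$ vanishes near $\partial\Omega$. Once those points are in place, the identity (\ref{f-0}) follows by inspection.
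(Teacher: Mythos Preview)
Your proof is correct and follows essentially the same approach as the paper: establish the identity $\int_\Omega A(x/\varep)\nabla u_\varep\cdot\nabla w_\varep=\int_\Omega \widehat{A}\nabla u_0\cdot\nabla w_\varep$ using the admissibility of $w_\varep$ as a test function, expand $\nabla v$ by the product rule, and then regroup using the definition of $B$. If anything, you are more explicit than the paper about the index bookkeeping in the step $a_{ij}^{\alpha\beta}\partial_{y_j}\chi_k^{\beta\gamma}=b_{ik}^{\alpha\gamma}-a_{ik}^{\alpha\gamma}+\widehat{a}_{ik}^{\alpha\gamma}$ and about why the support of $\eta_\varep$ and the mollification radius conspire to make $v$ vanish near $\partial\Omega$.
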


\begin{proof}
We first note that if $u_\varep =u_0$ on $\partial\Omega$, then $w_\varep \in H^1_0(\Omega; \rd)$,
as $K_\varep^2 \big( (\nabla u_0) \eta_\varep)\in C_0^\infty(\Omega)$.
Since $\mathcal{L}_\varep (u_\varep)=\mathcal{L}_0 (u_0)$ in $\Omega$,
it follows that
\begin{equation}\label{f-01}
\int_\Omega A(x/\varep)\nabla u_\varep \cdot \nabla w_\varep\, dx
=\int_\Omega \widehat{A} \nabla u_0 \cdot \nabla w_\varep\, dx.
\end{equation}
In the case of the Neumann condition $\frac{\partial u_\varep}{\partial\varep}
=\frac{\partial u_0}{\partial \nu_0}$ on $\partial \Omega$, the equation (\ref{f-01})
continues to hold. This is because $w_\varep\in H^1(\Omega; \rd)$ and both sides  of (\ref{f-01}) equal to
$$
\langle \mathcal{L}_0 (u_0), w_\varep\rangle_{\big(H^1(\Omega)\big)^\prime \times H^1 (\Omega)}
+\big\langle \frac{\partial u_0}{\partial \nu_0}, w_\varep \big\rangle_{H^{-1/2}(\partial\Omega)\times 
H^{1/2}(\partial\Omega)}.
$$
Using (\ref{f-01}), we obtain 
$$
\aligned
\int_\Omega A(x/\varep)\nabla w_\varep \cdot \nabla w_\varep\, dx
&=\int_\Omega \Big[ \widehat{A} -A(x/\varep) \Big] \nabla u_0 \cdot \nabla w_\varep\, dx\\
&\qquad -\int_\Omega A(x/\varep) \nabla \chi (x/\varep) K_\varep^2 \big( (\nabla u_0)\eta_\varep\big) 
\cdot \nabla w_\varep\, dx\\
& \qquad -\varep \int_\Omega A(x/\varep) \chi(x/\varep) \nabla K_\varep^2 \big( (\nabla u_0) \eta_\varep \big)\cdot 
\nabla w_\varep\, dx,
\endaligned
$$
from which the formal (\ref{f-0}) follows by the definition of $B(y)$.
\end{proof}

\begin{lemma}\label{main-lemma-1}
Let $\phi  (y)=\big( \phi_{k i j}^{\alpha\beta} (y) \big)$ be defined by (\ref{phi}).
Then
\begin{equation}\label{f-02}
\int_\Omega B(x/\varep) K_\varep^2 \big((\nabla u_0)\eta_\varep\big) \cdot \nabla w_\varep\, dx
=-\varep \int_\Omega
\phi_{k i j }^{\alpha\beta} (x/\varep) \frac{\partial w_\varep^\alpha}{\partial x_i}\cdot 
\frac{\partial}{\partial x_k} K_\varep^2 \Big( \frac{\partial u_0^\beta}{\partial x_j} \eta_\varep\Big)\, dx.
\end{equation}
\end{lemma}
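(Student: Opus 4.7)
The plan is a direct integration-by-parts argument that exploits the two structural properties of $\phi$ in (\ref{phi}): the primitive relation $b_{ij}^{\alpha\beta}(y)=\partial_{y_k}\phi_{kij}^{\alpha\beta}(y)$ and the antisymmetry $\phi_{kij}^{\alpha\beta}=-\phi_{ikj}^{\alpha\beta}$.

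First, abbreviate $\psi_j^\beta := K_\varep^2\big((\partial_j u_0^\beta)\eta_\varep\big)$. Because $\eta_\varep$ is supported in $\{\mathrm{dist}(\cdot,\partial\Omega)\ge 3\varep\}$ and each application of $K_\varep$ (whose kernel $\varphi_\varep$ is supported in $B(0,\varep/4)$) enlarges the support by at most $\varep/4$, the function $\psi_j^\beta$ is smooth with compact support in $\Omega$. Using the chain rule $b_{ij}^{\alpha\beta}(x/\varep)=\varep\,\partial_{x_k}\{\phi_{kij}^{\alpha\beta}(x/\varep)\}$ together with the product rule,
$$
b_{ij}^{\alpha\beta}(x/\varep)\,\psi_j^\beta
=\varep\,\frac{\partial}{\partial x_k}\big\{\phi_{kij}^{\alpha\beta}(x/\varep)\,\psi_j^\beta\big\}
-\varep\,\phi_{kij}^{\alpha\beta}(x/\varep)\,\frac{\partial \psi_j^\beta}{\partial x_k}.
$$
Multiplying by $\partial_i w_\varep^\alpha$ and integrating over $\Omega$, the second term produces exactly the right-hand side of (\ref{f-02}). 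Hence the lemma reduces to showing, for each fixed triple $(\alpha,\beta,j)$ and summed over $i,k$,
$$
\int_\Omega \frac{\partial}{\partial x_k}\big\{\phi_{kij}^{\alpha\beta}(x/\varep)\,\psi_j^\beta\big\}\,\frac{\partial w_\varep^\alpha}{\partial x_i}\,dx = 0.
$$

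To prove this, fix $(\alpha,\beta,j)$ and set $T_{ki}(x):=\phi_{kij}^{\alpha\beta}(x/\varep)\,\psi_j^\beta(x)$. Then $T_{ki}=-T_{ik}$, and since $\phi_{kij}^{\alpha\beta}(\cdot/\varep)\in H^1_{\loc}(\rd)$ and $\psi_j^\beta\in C_0^\infty(\Omega)$, we have $T_{ki}\in H^1(\Omega)$ with compact support in $\Omega$. For any $v\in C^\infty(\overline\Omega)$,
$$
\int_\Omega \partial_k T_{ki}\,\partial_i v\,dx
= \int_\Omega \partial_k(T_{ki}\,\partial_i v)\,dx - \int_\Omega T_{ki}\,\partial_k\partial_i v\,dx = 0,
$$
since the first integral vanishes by the divergence theorem and the compact support of $T_{ki}$, while the second vanishes by pairing the antisymmetric tensor $T_{ki}$ against the symmetric tensor $\partial_k\partial_i v$. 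The linear functional $v\mapsto \int_\Omega \partial_k T_{ki}\,\partial_i v\,dx$ is continuous on $H^1(\Omega)$ (by Cauchy--Schwarz, since $\partial_k T_{ki}\in L^2(\Omega)$ with compact support), so by density of $C^\infty(\overline\Omega)$ in $H^1(\Omega)$ on Lipschitz $\Omega$ the identity extends to $v=w_\varep^\alpha$, which yields the claim.

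The only subtle point is this last density step, which is necessary because $w_\varep\in H^1(\Omega)$ admits no pointwise second derivatives, so the formal cancellation $\int_\Omega T_{ki}\,\partial_k\partial_i w_\varep^\alpha\,dx=0$ has no direct meaning. Everything else reduces to the definition of $B$, the identities in (\ref{phi}), and the smooth compactly supported nature of $\psi_j^\beta$.
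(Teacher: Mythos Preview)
Your proof is correct and uses the same ingredients as the paper's: the chain rule $b_{ij}^{\alpha\beta}(x/\varep)=\varep\,\partial_{x_k}\{\phi_{kij}^{\alpha\beta}(x/\varep)\}$, the antisymmetry $\phi_{kij}^{\alpha\beta}=-\phi_{ikj}^{\alpha\beta}$, and the compact support of $\psi_j^\beta=K_\varep^2\big((\partial_j u_0^\beta)\eta_\varep\big)$ in $\Omega$. The only difference is organizational: the paper groups $\phi_{kij}^{\alpha\beta}(x/\varep)$ with $\partial_i w_\varep^\alpha$, writing $\partial_k\big(\phi_{kij}^{\alpha\beta}(x/\varep)\big)\,\partial_i w_\varep^\alpha = \partial_k\big\{\phi_{kij}^{\alpha\beta}(x/\varep)\,\partial_i w_\varep^\alpha\big\}$ (a formal identity using antisymmetry) and then integrates by parts against $\psi_j^\beta$; you instead group $\phi_{kij}^{\alpha\beta}(x/\varep)$ with $\psi_j^\beta$, which places the second-derivative cancellation on the side where regularity is not an issue. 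Your density argument to extend from smooth $v$ to $w_\varep^\alpha\in H^1(\Omega)$ is a valid and slightly more rigorous way of handling the fact that $w_\varep$ has no pointwise second derivatives, which the paper leaves implicit.
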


\begin{proof}
Using (\ref{phi}), we see that
$$
\aligned
 B(x/\varep) K_\varep^2 \big((\nabla u_0)\eta_\varep\big) \cdot \nabla w_\varep
&=b_{ij}^{\alpha\beta}(x/\varep) K_\varep^2 \Big(\frac{\partial u_0^\beta}{\partial x_j} \eta_\varep\Big)
\cdot \frac{\partial w_\varep^\alpha}{\partial x_i} \\
&=\varep \frac{\partial}{\partial x_k} \Big( \phi_{kij}^{\alpha\beta} (x/\varep)\Big)
K_\varep^2 \Big(\frac{\partial u_0^\beta}{\partial x_j} \eta_\varep\Big)
\cdot \frac{\partial w_\varep^\alpha}{\partial x_i} \\
&=\varep \frac{\partial}{\partial x_k} 
\left\{ \phi_{kij}^{\alpha\beta} (x/\varep) \frac{\partial w_\varep^\alpha}{\partial x_i} \right\}
K_\varep^2 \Big(\frac{\partial u_0^\beta}{\partial x_j} \eta_\varep\Big),
\endaligned
$$
from which the equation (\ref{f-02}) follows readily.
\end{proof}

\begin{lemma}\label{main-lemma-2}
Let $u_\varep$ $(\varep\ge 0)$ be a solution to the Dirichlet problem (\ref{DP-1}) or the Neumann problem 
(\ref{NP-1}).
Let $w_\varep$ be defined as in Lemma \ref{main-lemma} with $\eta_\varep$ satisfying
\begin{equation}\label{eta}
\left\{ \aligned
&\eta_\varep \in C_0^\infty (\Omega), \ 0\le \eta\le 1,\\
&\text{supp} (\eta_\varep) \subset \big\{ x\in \Omega: \text{dist}(x, \partial \Omega)\ge 3\varep \big\},\\
&\eta_\varep =1 \text{ on } \big\{ x\in \Omega: \text{dist}(x,\partial\Omega)\ge 4\varep\big\} ,\\
& |\nabla \eta_\varep |\le C \varep^{-1}.
\endaligned
\right.
\end{equation}
Then
\begin{equation}\label{f-03}
\aligned
&\Big| \int_\Omega A(x/\varep)\nabla w_\varep \cdot \nabla w_\varep\, dx \Big|\\
& \le C\,  \|\nabla w_\varep\|_{L^2(\Omega)}
\Big\{ \|\nabla u_0\|_{L^2(\Omega_{4\varep})}
+\| (\nabla u_0)\eta_\varep -K_\varep \big((\nabla u_0)\eta_\varep\big)\|_{L^2(\Omega)}\\
&\qquad\qquad\qquad\qquad\qquad\qquad\qquad
+\varep \| K_\varep \big( (\nabla^2 u_0)\eta_\varep\big) \|_{L^2(\Omega)}\Big\}.
\endaligned
\end{equation}
\end{lemma}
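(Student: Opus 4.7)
The plan is to start from the identity (\ref{f-0}) in Lemma \ref{main-lemma}, substitute the reformulation (\ref{f-02}) from Lemma \ref{main-lemma-1} into the middle term, and then estimate the three resulting integrals separately by Cauchy--Schwarz, pulling out a common factor of $\|\nabla w_\varep\|_{L^2(\Omega)}$ in each case. The smoothing operator $K_\varep$ will be used as a buffer that absorbs the periodic factors $A(x/\varep) \chi(x/\varep)$ and $\phi(x/\varep)$ via the bound (\ref{2.00-0}) from Lemma \ref{lemma-2.00}.

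For the first term on the right of (\ref{f-0}), bound $|\widehat{A} - A(x/\varep)|$ by $\kappa_2$ and split
\[
\nabla u_0 - K_\varep^2\!\bigl((\nabla u_0)\eta_\varep\bigr)
= (\nabla u_0)(1-\eta_\varep) + \Bigl[(\nabla u_0)\eta_\varep - K_\varep^2\!\bigl((\nabla u_0)\eta_\varep\bigr)\Bigr].
\]
By (\ref{eta}), the support of $1-\eta_\varep$ lies in $\Omega_{4\varep}$, which produces the term $\|\nabla u_0\|_{L^2(\Omega_{4\varep})}$. For the second piece, the identity $f - K_\varep^2 f = (f-K_\varep f) + K_\varep(f - K_\varep f)$ and the $L^2$-contractivity of $K_\varep$ (convolution by a probability density) reduce $K_\varep^2$ to $K_\varep$ up to a factor of $2$, giving exactly the second term in (\ref{f-03}).

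For the third term on the right of (\ref{f-0}), apply $\nabla K_\varep^2 = K_\varep^2 \nabla$ and the product rule:
\[
\nabla K_\varep^2\!\bigl((\nabla u_0)\eta_\varep\bigr)
= K_\varep^2\!\bigl((\nabla^2 u_0)\eta_\varep\bigr) + K_\varep^2\!\bigl((\nabla u_0)\otimes \nabla \eta_\varep\bigr).
\]
Writing $K_\varep^2 = K_\varep\circ K_\varep$, Cauchy--Schwarz together with (\ref{2.00-0}) applied to $\chi(x/\varep) K_\varep(\cdot)$ (using $\|\chi\|_{L^2(Y)} < \infty$ and boundedness of $A$) converts the first piece into the desired $\varep \|K_\varep((\nabla^2 u_0)\eta_\varep)\|_{L^2}$ contribution. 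For the second piece, the hypothesis (\ref{eta}) gives $|\nabla \eta_\varep|\le C\varep^{-1}$ on a set of width $\sim\varep$ inside $\Omega_{4\varep}$, so its $L^2$ norm is $\le C\varep^{-1}\|\nabla u_0\|_{L^2(\Omega_{4\varep})}$; multiplied by the prefactor $\varep$, this is absorbed into the first term of (\ref{f-03}). The middle term is handled identically once (\ref{f-02}) is substituted, with $\phi(x/\varep)$ (whose $L^2(Y)$ norm is finite by (\ref{phi}) and the corrector bounds) playing the role of $A(x/\varep)\chi(x/\varep)$.

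The main obstacle is purely organizational: one must keep careful track of how many copies of $K_\varep$ appear on each factor, ensuring that at least one is always available to be paired with a periodic coefficient via (\ref{2.00-0}), while simultaneously separating the boundary-layer contribution coming from $\nabla \eta_\varep$ (a $\varep^{-1}$ spike on a strip of width $\sim\varep$) from the main interior term involving $K_\varep((\nabla^2 u_0)\eta_\varep)$. Once this bookkeeping is arranged, summing the three estimates yields (\ref{f-03}).
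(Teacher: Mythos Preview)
Your proposal is correct and follows essentially the same approach as the paper: apply Cauchy--Schwarz to the three terms coming from Lemmas \ref{main-lemma} and \ref{main-lemma-1}, use the smoothing estimate (\ref{2.00-0}) to absorb the periodic factors $\chi(x/\varep)$ and $\phi(x/\varep)$, and split $\nabla u_0 - K_\varep^2\bigl((\nabla u_0)\eta_\varep\bigr)$ via $f - K_\varep^2 f = (f-K_\varep f) + K_\varep(f-K_\varep f)$ together with the support condition on $1-\eta_\varep$. In fact your write-up is slightly more complete than the paper's, since you explicitly carry out the product-rule step $\nabla K_\varep\bigl((\nabla u_0)\eta_\varep\bigr) = K_\varep\bigl((\nabla^2 u_0)\eta_\varep\bigr) + K_\varep\bigl((\nabla u_0)\nabla\eta_\varep\bigr)$ and absorb the $\nabla\eta_\varep$ contribution into $\|\nabla u_0\|_{L^2(\Omega_{4\varep})}$, a step the paper leaves implicit.
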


\begin{proof}
It follows from Lemmas \ref{main-lemma} and \ref{main-lemma-1} by the Cauchy inequality that
$$
\aligned
& \Big| \int_\Omega A(x/\varep)\nabla w_\varep \cdot \nabla w_\varep\, dx \Big|\\
&\le C\, \|\nabla w_\varep\|_{L^2(\Omega)}
\Big\{ \| \nabla u_0 - K_\varep^2 \big((\nabla u_0)\eta_\varep\big)\|_{L^2(\Omega)}
+\varep \| \chi(x/\varep) \nabla K_\varep^2 \big( (\nabla u_0)\eta_\varep\big)\|_{L^2(\Omega)}\\
& \qquad\qquad\qquad\qquad\qquad\qquad\qquad\qquad\qquad
+\varep \| \phi (x/\varep) \nabla K_\varep^2 \big( (\nabla u_0)\eta_\varep\big)\|_{L^2(\Omega)}\Big\}\\
&\le  C\, \|\nabla w_\varep\|_{L^2(\Omega)}
\Big\{ \| \nabla u_0 - K_\varep^2 \big((\nabla u_0)\eta_\varep\big)\|_{L^2(\Omega)}
+\varep \| \nabla K_\varep \big( (\nabla u_0)\eta_\varep\big)\|_{L^2(\Omega)}\Big\},
\endaligned
$$
where we have used Lemma \ref{lemma-2.00} as well as the fact that $\chi, \phi \in L^2_{\loc}(\rd)$
and are 1-periodic for the last inequality.
Finally, we observe that
$$
\aligned
\| \nabla u_0 - K_\varep^2 \big((\nabla u_0)\eta_\varep\big)\|_{L^2(\Omega)}
 &\le \|(\nabla u_0) (1-\eta_\varep)\|_{L^2(\Omega)}
+\| (\nabla u_0)\eta_\varep - K_\varep \big( (\nabla u_0)\eta_\varep\big) \|_{L^2(\Omega)}\\
&\qquad\qquad \qquad
+\| K_\varep \Big( \big(\nabla u_0)\eta_\varep -K_\varep \big((\nabla u_0)\eta_\varep\big) \Big)\|_{L^2(\Omega)}\\
\\
&\le \|\nabla u_0\|_{L^2(\Omega_{4\varep})}
+C\, \| (\nabla u_0)\eta_\varep - K_\varep \big( (\nabla u_0)\eta_\varep\big) \|_{L^2(\Omega)}.
\endaligned
$$
This completes the proof.
\end{proof}

Finally, we are in a position to state and prove the main results of this section.

\begin{theorem}\label{theorem-2.1}
Suppose that $A(y)$satisfies (\ref{ellipticity})-(\ref{periodicity}).
Let $\Omega$ be a bounded Lipschitz domain.
Let $u_\varep$ $(\varep\ge 0)$ be the solutions to the Dirichlet problem (\ref{DP-1})
in $\Omega$ with $f\in H^1(\partial\Omega; \rd)$ and $F\in L^p(\Omega; \rd)$, where
$p=\frac{2d}{d+1}$. Then
\begin{equation}\label{main-estimate-2.1}
\|  u_\varep -u_0 - \varep \chi_j^\beta (x/\varep) K^2_\varep \Big(\frac{\partial {u}_0^\beta}{\partial x_j} \eta_\varep\Big)\|_{H^1_0(\Omega)}
\le C \varep^{1/2} 
\Big\{ \| f\|_{H^1(\partial\Omega)} +\| F\|_{L^p(\Omega)} \Big\},
\end{equation}
where $\eta_\varep\in C_0^\infty(\Omega)$ satisfies (\ref{eta}).
The constant $C$ depends only on $d$, $\kappa_1$, $\kappa_2$, and the Lipschitz character of
$\Omega$.
\end{theorem}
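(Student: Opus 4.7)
The plan is to combine Lemma \ref{main-lemma-2} with Korn's inequality and Dahlberg's nontangential maximal function estimates for the constant-coefficient homogenized system $\mathcal{L}_0$ on Lipschitz domains. The initial observation is that $w_\varep \in H^1_0(\Omega;\rd)$: the Dirichlet condition $u_\varep=u_0=f$ on $\partial\Omega$ gives $u_\varep-u_0=0$ on $\partial\Omega$, while the corrector term $\varep\chi(x/\varep)K_\varep^2\bigl((\nabla u_0)\eta_\varep\bigr)$ is compactly supported inside $\Omega$ by the support condition on $\eta_\varep$. Because $w_\varep$ vanishes on the boundary, Korn's inequality (applicable precisely because (\ref{ellipticity}) is formulated on symmetric matrices) combined with the lower bound in (\ref{ellipticity}) yields
\[
c\|\nabla w_\varep\|_{L^2(\Omega)}^2 \le \int_\Omega A(x/\varep)\nabla w_\varep \cdot \nabla w_\varep\, dx.
\]
Feeding this into Lemma \ref{main-lemma-2} and cancelling one factor of $\|\nabla w_\varep\|_{L^2}$ reduces the theorem to showing that each of
\[
T_1 = \|\nabla u_0\|_{L^2(\Omega_{4\varep})}, \quad T_2 = \|(\nabla u_0)\eta_\varep - K_\varep((\nabla u_0)\eta_\varep)\|_{L^2(\Omega)}, \quad T_3 = \varep\|K_\varep((\nabla^2 u_0)\eta_\varep)\|_{L^2(\Omega)}
\]
is bounded by $C\varep^{1/2}(\|f\|_{H^1(\partial\Omega)} + \|F\|_{L^p(\Omega)})$.

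For $T_1$, I would use the layer estimate $\|\nabla u_0\|_{L^2(\Omega_r)} \le Cr^{1/2}\|(\nabla u_0)^*\|_{L^2(\partial\Omega)}$, obtained by integrating $|\nabla u_0(y)| \le (\nabla u_0)^*(\pi(y))$ along boundary normals, together with Dahlberg's estimate $\|(\nabla u_0)^*\|_{L^2(\partial\Omega)} \le C(\|f\|_{H^1(\partial\Omega)} + \|F\|_{L^p(\Omega)})$ from \cite{Fabes-1988, Dahlberg-1988}. For $T_2$ and $T_3$, the plan is to split $u_0 = u_0^{(1)} + u_0^{(2)}$ into the harmonic part (data $f$, zero source) and the particular solution (zero boundary data, source $F$). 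On $\mathrm{supp}(\eta_\varep) \subset \{\delta(x) \ge 3\varep\}$ the function $u_0^{(1)}$ satisfies the pointwise interior estimate $|\nabla^2 u_0^{(1)}(x)|^2 \le C\delta(x)^{-2}\fint_{B(x, \delta/2)}|\nabla u_0^{(1)}|^2$; dominating the ball average by the nontangential maximal function and integrating the weight $\delta^{-2}$ along the boundary normal gives $\|(\nabla^2 u_0^{(1)})\eta_\varep\|_{L^2(\Omega)} \le C\varep^{-1/2}\|(\nabla u_0^{(1)})^*\|_{L^2(\partial\Omega)}$. Since $\|K_\varep g\|_{L^2} \le \|g\|_{L^2}$, this bounds the $u_0^{(1)}$-contribution to $T_3$ by $C\varep^{1/2}\|f\|_{H^1(\partial\Omega)}$, and applying (\ref{L-p}) at $q=2$ handles its contribution to $T_2$ by the same token. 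For $u_0^{(2)}$, interior Calder\'on--Zygmund regularity for the constant-coefficient operator $\mathcal{L}_0$ yields $\|(\nabla^2 u_0^{(2)})\eta_\varep\|_{L^p(\Omega)} \le C\|F\|_{L^p(\Omega)}$, and the first inequality in (\ref{p}) then supplies the factor $\varep^{-1/2}$ that cancels the outer $\varep$, giving $\varep^{1/2}\|F\|_{L^p(\Omega)}$.

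The main obstacle is controlling the contribution of the cutoff derivative $(\nabla u_0)\nabla\eta_\varep$ to $T_2$: since $|\nabla\eta_\varep|\le C\varep^{-1}$ concentrates on the thin shell $\Omega_{4\varep}\setminus\Omega_{3\varep}$ of width $\varep$, one might fear a loss. The resolution is that the layer estimate $\|\nabla u_0\|_{L^2(\Omega_{4\varep})}\le C\varep^{1/2}\|(\nabla u_0)^*\|_{L^2(\partial\Omega)}$ exactly compensates the singular $\varep^{-1}$ factor, producing $\|(\nabla u_0)\nabla\eta_\varep\|_{L^2}\le C\varep^{-1/2}\|(\nabla u_0)^*\|_{L^2(\partial\Omega)}$; the $\varep$ factor from (\ref{L-p}) then yields the required $\varep^{1/2}$. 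Once the gradient bound is in hand, the Poincar\'e inequality on $H^1_0(\Omega;\rd)$ upgrades it to the full $H^1_0$ norm appearing in (\ref{main-estimate-2.1}).
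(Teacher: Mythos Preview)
Your overall architecture matches the paper's: Korn plus Lemma \ref{main-lemma-2} reduces matters to bounding $T_1,T_2,T_3$, and the harmonic part is handled via nontangential maximal function estimates. The gap is in your treatment of the source term $F$.

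Your splitting $u_0=u_0^{(1)}+u_0^{(2)}$ with $u_0^{(2)}\in H^1_0$ solving $\mathcal{L}_0 u_0^{(2)}=F$ does not yield the claimed bound $\|(\nabla^2 u_0^{(2)})\eta_\varep\|_{L^p(\Omega)}\le C\|F\|_{L^p(\Omega)}$ uniformly in $\varep$. Interior Calder\'on--Zygmund estimates on balls $B(x,r)\subset B(x,2r)\subset\Omega$ carry a lower-order term scaling like $r^{-1}\|\nabla u_0^{(2)}\|_{L^p(B(x,2r))}$; since $\operatorname{supp}(\eta_\varep)$ reaches within $3\varep$ of $\partial\Omega$, summing over a covering (even a Whitney one) forces a weighted integral $\int_\Omega \delta(x)^{-p}|\nabla u_0^{(2)}|^p\,dx$ that you have no independent control of. A global $W^{2,p}$ estimate for the Dirichlet problem in a merely Lipschitz domain is not available, so this cannot be repaired by citing boundary regularity. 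The same issue undermines your $T_1$ argument: the estimates in \cite{Fabes-1988,Dahlberg-1988} are for $\mathcal{L}_0$-harmonic functions, and $(\nabla u_0)^*\in L^2(\partial\Omega)$ is not a direct consequence of $F\in L^p(\Omega)$.

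The paper fixes both problems with a different splitting: $u_0=v+h$ where $v(x)=\int_\Omega\Gamma_0(x-y)F(y)\,dy$ is the Newtonian potential and $h$ is $\mathcal{L}_0$-harmonic with boundary data $f-v$. The point is that $v$ enjoys the \emph{global} Calder\'on--Zygmund bound $\|\nabla^2 v\|_{L^p(\mathbb{R}^d)}\le C\|F\|_{L^p(\Omega)}$, so $\|(\nabla^2 v)\eta_\varep\|_{L^p}\le C\|F\|_{L^p}$ holds trivially with no dependence on $\varep$ or on $\Omega$. The layer estimate $\|\nabla v\|_{L^2(\widetilde{\Omega}_r)}\le Cr^{1/2}\|F\|_{L^p}$ is obtained directly from the divergence theorem and the bound $\|\nabla v\|_{L^{p'}}+\|\nabla^2 v\|_{L^p}\le C\|F\|_{L^p}$, not via a nontangential maximal function. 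The harmonic remainder $h$ then carries boundary data $f-v|_{\partial\Omega}$, whose $H^1(\partial\Omega)$ norm is controlled by $\|f\|_{H^1(\partial\Omega)}+\|F\|_{L^p(\Omega)}$ via the same divergence-theorem computation, and Dahlberg's estimate applies legitimately to $h$.
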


\begin{proof}
Let $w_\varep$ denote the function in the l.h.s. of (\ref{main-estimate-2.1}).
Since $w_\varep \in H_0^1(\Omega; \rd)$, it follows from (\ref{f-03}) 
by the first Korn inequality that
\begin{equation}\label{2.1-0}
\aligned
\| w_\varep\|_{H^1_0(\Omega)}
\le C 
\Big\{ \|\nabla u_0\|_{L^2(\Omega_{4\varep})}
& +\| (\nabla u_0)\eta_\varep -K_\varep \big((\nabla u_0)\eta_\varep\big)\|_{L^2(\Omega)}\\
&+\varep \| K_\varep \big( (\nabla^2 u_0)\eta_\varep\big) \|_{L^2(\Omega)}\Big\}.
\endaligned
\end{equation}
To bound the r.h.s. of (\ref{2.1-0}),
we write $u_0= v+h$, where 
$$
v(x)=\int_\Omega \Gamma_0 (x-y) F(y)\, dy
$$
and $\Gamma_0 (x)$ denotes the matrix of fundamental solutions for the homogenized operator $\mathcal{L}_0$ in 
$\rd$, with pole at the origin. Note that $\mathcal{L}_0 (v)=F$ in $\Omega$, and by the well known
singular integral and fractional integral estimates,
\begin{equation}\label{2.1-2}
\| \nabla^2 v\|_{L^p(\rd)}  +\| \nabla v\|_{L^{p^\prime}(\rd)}\le C_p\, \| F\|_{L^p(\Omega)},
\end{equation}
where we have used the observation $\frac{1}{p^\prime} =\frac{1}{p}-\frac{1}{d}$.
Let $\mathbf{e}=(e_1, \dots, e_d)\in C_0^1(\rd;\rd)$ be a vector field such that 
$\langle \mathbf{e}, n\rangle\ge c_0>0$ on $\partial\Omega$ and $|\nabla \mathbf{e}|\le C r_0^{-1}$,
where $r_0=\text{diam}(\Omega)$ and $n$ denotes the outward unit normal to $\partial\Omega$.
It follows from the divergence theorem that
\begin{equation}\label{2.1-3}
\aligned
c_0\int_{\partial\Omega} |\nabla v|^2\, d\sigma 
&\le \int_{\partial\Omega} |\nabla v|^2 \langle \mathbf{e}, n\rangle\, d\sigma \\
&=\int_\Omega |\nabla v|^2\,  \text{div} (\mathbf{e})\, dx 
+\int_\Omega e_i \frac{\partial}{\partial  x_i} \nabla v \cdot \nabla v\, dx\\
& \le C \left\{ r_0^{-1} \int_\Omega |\nabla v|^2\, dx
+\int_\Omega |\nabla v|\,  |\nabla^2 v|\, dx \right\}\\
& \le C \Big\{ r_0^{-1} \|\nabla v\|_{L^2(\Omega)}^2 +\| \nabla v\|_{L^{p^\prime}(\Omega)}
\|\nabla^2 v\|_{L^{p}(\Omega)} \Big\}\\
&\le C\, \| F\|^2_{L^p(\Omega)},
\endaligned
\end{equation}
where we have used (\ref{2.1-2}) for the last step.
Note that the same argument also gives
$\| \nabla v\|_{L^2(S_t)} \le C\, \|F\|_{L^p(\Omega)}$, where
$S_t =\big\{ x\in \rd: \text{dist}(x, \partial\Omega)=t \big\}$ for $0<t<cr_0$.
Consequently, by the co-area formula, we obtain 
\begin{equation}\label{2.1-4}
\left\{ \frac{1}{r} \int_{\widetilde{\Omega}_r} |\nabla v|^2 \, dx \right\}^{1/2}
\le C\, \| F\| _{L^p(\Omega)},
\end{equation}
where $0<r< \text{diam}(\Omega)$ and $\widetilde{\Omega}_r =\{ x\in \rd: \text{dist} (x, \partial\Omega)<r \}$.

Next, we observe that $\mathcal{L}_0 (h)=0$ in $\Omega$ and
$$
\aligned
\| h\|_{H^1(\partial\Omega)}
&\le \| f\|_{H^1(\partial\Omega)} +\| v\|_{H^1(\partial\Omega)}\\
&\le \| f\|_{H^1(\partial\Omega)} +C\, \| F\|_{L^p(\Omega)},
\endaligned
$$
where we have used (\ref{2.1-3}) for the last inequality.
It follows from the estimates for solutions of the $L^2$ regularity problem in Lipschitz domains
for the operator $\mathcal{L}_0$ in \cite{Dahlberg-1988, Verchota-1986} that
\begin{equation}\label{2.1-5}
\|(\nabla h)^*\|_{L^2(\partial\Omega)}
\le C\Big\{ 
\| f\|_{H^1(\partial\Omega)} +\| F\|_{L^p(\Omega)} \Big\},
\end{equation}
where $(\nabla h)^*$ denotes the nontangential maximal function of $\nabla h$.
This, together with (\ref{2.1-4}), gives
\begin{equation}\label{2.1-6}
\| \nabla u_0\|_{L^2(\Omega_r)}
\le C\, r^{1/2} 
\Big\{ 
\| f\|_{H^1(\partial\Omega)} +\| F\|_{L^p(\Omega)} \Big\}.
\end{equation}
for any $0<r<\text{diam}(\Omega)$. 
As a result, the first term in the r.h.s. of (\ref{2.1-0}) is bounded by
$C\varep^{1/2} \big\{ \| f\|_{H^1(\partial\Omega)} +\| F\|_{L^p(\Omega)} \big\}$.

To handle the third term in the r.h.s. of (\ref{2.1-0}),
we use Lemma \ref{lemma-p} to obtain 
\begin{equation}\label{2.1-8}
\aligned
\varep \| K_\varep\big((\nabla^2 u_0)\eta_\varep \big)\|_{L^2(\Omega)}
&\le \varep \| K_\varep\big((\nabla^2 v)\eta_\varep \big)\|_{L^2(\Omega)}
+\varep \| K_\varep\big((\nabla^2 h)\eta_\varep \big)\|_{L^2(\Omega)}\\
&\le C \varep^{1/2} \| (\nabla^2 v) \eta_\varep\|_{L^p(\Omega)}
+ C\varep \| (\nabla^2 h) \eta_\varep \|_{L^2(\Omega)} \\
&\le C \varep^{1/2} \| F\|_{L^p(\Omega)}
+C\varep \|\nabla^2 h\|_{L^2(\Omega\setminus \Omega_{3\varep})}.
\endaligned
\end{equation}
Since $\mathcal{L}_0 (\nabla h)=0$ in $\Omega$, we may use the interior estimate for $\mathcal{L}_0$,
$$
|\nabla^2 h(x)|\le \frac{C}{\delta (x) } \left(\average_{B(x, \delta(x)/8)} |\nabla h|^2\right)^{1/2},
$$
where $\delta (x)=\text{dist}(x, \partial\Omega)$, to show that
\begin{equation}\label{square-estimate}
\aligned
\|\nabla^2 h\|_{L^2(\Omega\setminus \Omega_{3\varep})}
 &\le C \|(\nabla h) [\delta(x)]^{-1} \|_{L^2(\Omega\setminus \Omega_\varep)}\\
&\le C \varep^{-1/2} \Big\{ \| f\|_{H^1(\partial\Omega)} +\| F\|_{L^p(\Omega)} \Big\},
\endaligned
\end{equation}
where the last inequality follows from (\ref{2.1-5}).
This, together with (\ref{2.1-8}), gives
\begin{equation}\label{2.1-8-1}
\varep \| K_\varep\big((\nabla^2 u_0)\eta_\varep \big)\|_{L^2(\Omega)}
\le C\varep^{1/2} \Big\{ \| f\|_{H^1(\partial\Omega)} +\| F\|_{L^p(\Omega)} \Big\}.
\end{equation}

Finally, to bound the second term in  the r.h.s. of (\ref{2.1-0}), we again write $u_0=v +h$ as before.
Note that by Lemma \ref{lemma-p},
$$
\aligned
\| (\nabla v)\eta_\varep -K_\varep \big((\nabla v)\eta_\varep\big)\|_{L^2(\Omega)}
&\le \|\nabla v -K_\varep (\nabla v)\|_{L^2(\rd)}
+\|(\nabla v)(1-\eta_\varep)\|_{L^2(\Omega)}\\
&\qquad\qquad\qquad
+\| K_\varep \big( (\nabla v)(1-\eta_\varep)\big)\|_{L^2(\Omega)}\\
&\le C\varep^{1/2}  \|\nabla^2 v\|_{L^p(\rd)}
+ C\, \| \nabla v\|_{L^2(\widetilde{\Omega}_{8\varep})}\\
&\le C \varep^{1/2} \| F\|_{L^p(\Omega)},
\endaligned
$$
where we have used (\ref{2.1-2}) and (\ref{2.1-4}) for the last inequality.
Also, by Lemma \ref{lemma-2.00},
$$
\aligned
\| (\nabla h) \eta_\varep -K_\varep \big( (\nabla h)\eta_\varep\big) \|_{L^2(\Omega)}
&\le C\varep \|\nabla \big( (\nabla h) \eta_\varep) \|_{L^2(\Omega)}\\
&\le C \Big\{ \varep \|\nabla^2 h\|_{L^2(\Omega \setminus \Omega_{3\varep})}
+\|\nabla h\|_{L^2 (\Omega_{4\varep})} \Big\}\\
&\le C \varep^{1/2}
\Big\{ \| f\|_{H^1(\partial\Omega)} +\| F\|_{L^p(\Omega)} \Big\}.
\endaligned
$$
Consequently, the second term in the r.h.s. of (\ref{2.1-0}) is dominated by
the r.h.s. of (\ref{main-estimate-2.1}).
This completes the proof of Theorem \ref{theorem-2.1}.
\end{proof}

The next theorem is an analogue of Theorem \ref{theorem-2.1} for the Neumann boundary conditions.

\begin{theorem}\label{theorem-2.2}
Suppose that $A=A(y)$ satisfies (\ref{ellipticity})-(\ref{periodicity}).
Let $\Omega$ be a bounded Lipschitz domain.
Let $u_\varep$ $(\varep\ge 0)$ be the solutions to the Neumann problem (\ref{NP-1})
in $\Omega$ with $g\in L^2(\partial\Omega; \rd)$ and $F\in L^p(\Omega; \rd)$, where
$p=\frac{2d}{d+1}$. Also assume that $ u_\varep, u_0 \perp \mathcal{R}$. Then
\begin{equation}\label{main-estimate-2.2}
\|  u_\varep -u_0 - \varep \chi_j^\beta (x/\varep) K^2_\varep \Big(\frac{\partial {u}_0^\beta}{\partial x_j} \eta_\varep\Big)\|_{H^1(\Omega)}
\le C \varep^{1/2} 
\Big\{ \| g\|_{L^2(\partial\Omega)} +\| F\|_{L^p(\Omega)} \Big\},
\end{equation}
where $\eta_\varep\in C_0^\infty(\Omega)$ satisfies (\ref{eta}).
The constant $C$ depends only on $d$, $\kappa_1$, $\kappa_2$, and the Lipschitz character of
$\Omega$.
\end{theorem}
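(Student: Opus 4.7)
My plan is to follow the proof of Theorem~\ref{theorem-2.1} very closely, with two modifications forced by the Neumann setting: the function $w_\varepsilon$ no longer lies in $H^1_0(\Omega;\rd)$, so Korn's \emph{second} inequality will replace the first, and the required boundary regularity of the homogenized solution $u_0$ will come from the $L^2$ Neumann theory for constant-coefficient elasticity systems in Lipschitz domains instead of the $L^2$ Dirichlet regularity used before.

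First I would apply Lemma~\ref{main-lemma-2} exactly as stated to obtain (\ref{f-03}). The symmetry and ellipticity conditions in (\ref{ellipticity}) imply that the bilinear form on the left controls $\kappa_1 \|e(w_\varepsilon)\|_{L^2(\Omega)}^2$ from below, where $e(w)=\tfrac12(\nabla w+(\nabla w)^T)$. Since $u_\varepsilon, u_0 \perp \mathcal{R}$ by hypothesis, the orthogonal projection of $w_\varepsilon$ onto $\mathcal{R}$ in $L^2(\Omega;\rd)$ comes only from the corrector term:
\[
P_{\mathcal{R}} w_\varepsilon = -P_{\mathcal{R}}\bigl( \varepsilon\, \chi(x/\varepsilon)\, K_\varepsilon^2\bigl((\nabla u_0)\eta_\varepsilon\bigr)\bigr).
\]
By Lemma~\ref{lemma-2.00} applied to the $1$-periodic $\chi\in L^2_{\loc}(\rd)$, and by the fact that $\mathcal{R}$ is finite-dimensional on the bounded domain $\Omega$, this gives $\|P_{\mathcal{R}} w_\varepsilon\|_{H^1(\Omega)}\le C\varepsilon\|\nabla u_0\|_{L^2(\Omega)}$. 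Setting $\tilde w_\varepsilon = w_\varepsilon - P_{\mathcal{R}}w_\varepsilon \perp \mathcal{R}$, one has $e(\tilde w_\varepsilon)=e(w_\varepsilon)$, and Korn's second inequality yields $\|\tilde w_\varepsilon\|_{H^1(\Omega)}\le C\|e(w_\varepsilon)\|_{L^2(\Omega)}$. Combining with (\ref{f-03}) and absorbing via Young's inequality produces
\[
\|w_\varepsilon\|_{H^1(\Omega)} \le C\bigl( M_\varepsilon + \varepsilon\|\nabla u_0\|_{L^2(\Omega)} \bigr),
\]
where $M_\varepsilon$ denotes the expression in braces on the right-hand side of (\ref{f-03}).

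It then remains to bound $M_\varepsilon$ and $\varepsilon\|\nabla u_0\|_{L^2(\Omega)}$ by $C\varepsilon^{1/2}\{\|g\|_{L^2(\partial\Omega)}+\|F\|_{L^p(\Omega)}\}$. Decomposing $u_0=v+h$ with $v(x)=\int_\Omega \Gamma_0(x-y)F(y)\,dy$ as before, the singular-integral bound (\ref{2.1-2}) and the divergence-theorem argument (\ref{2.1-3}) carry over verbatim and give $\|\partial v/\partial\nu_0\|_{L^2(\partial\Omega)}\le C\|F\|_{L^p(\Omega)}$. Consequently $h$ solves $\mathcal{L}_0(h)=0$ in $\Omega$ with Neumann data $g-\partial v/\partial\nu_0$ whose $L^2(\partial\Omega)$-norm is at most $C\{\|g\|_{L^2(\partial\Omega)}+\|F\|_{L^p(\Omega)}\}$. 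Invoking the $L^2$ solvability of the Neumann problem for $\mathcal{L}_0$ in Lipschitz domains from \cite{Dahlberg-1988, Verchota-1986} yields
\[
\|(\nabla h)^*\|_{L^2(\partial\Omega)} \le C\bigl\{ \|g\|_{L^2(\partial\Omega)} + \|F\|_{L^p(\Omega)} \bigr\},
\]
which is the analogue of (\ref{2.1-5}). From here the co-area step leading to (\ref{2.1-6}), the interior Hessian estimate (\ref{square-estimate}) applied to $h$, and the three-term analysis using Lemma~\ref{lemma-p} all transport without change, with $\|f\|_{H^1(\partial\Omega)}$ replaced throughout by $\|g\|_{L^2(\partial\Omega)}$; this gives $M_\varepsilon \le C\varepsilon^{1/2}\{\|g\|_{L^2(\partial\Omega)}+\|F\|_{L^p(\Omega)}\}$, and the trivial bound on $\varepsilon\|\nabla u_0\|_{L^2(\Omega)}$ (via the energy estimate for $\mathcal{L}_0$) finishes the proof.

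The only genuinely new point, and the main obstacle to formalize, is the rigid-displacement bookkeeping: the elasticity tensor controls only the symmetric gradient, so one must verify that the $\mathcal{R}$-component of $w_\varepsilon$—which is nonzero because the corrector term is not in $\mathcal{R}^\perp$—is already of size $O(\varepsilon)\|\nabla u_0\|_{L^2(\Omega)}$. Once this is in place, Korn's second inequality closes the loop and the remainder of the argument mirrors the Dirichlet case of Theorem~\ref{theorem-2.1}.
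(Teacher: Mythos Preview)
Your proposal is correct and follows essentially the same route as the paper: apply Lemma~\ref{main-lemma-2}, handle the loss of $H^1_0$ by invoking the second Korn inequality together with the observation that $P_{\mathcal{R}} w_\varepsilon$ comes only from the corrector term and is therefore $O(\varepsilon)\|\nabla u_0\|_{L^2(\Omega)}$, then repeat the $u_0=v+h$ decomposition with the Neumann nontangential-maximal-function estimate replacing the regularity estimate. The one small point you glossed over is that the Neumann estimate for $\mathcal{L}_0$ on $h$ requires a normalization; the paper records this as an extra term $\sum_j\big|\int_\Omega h\cdot\phi_j\big|$ in (\ref{2.2-5}) and bounds it via $u_0\perp\mathcal{R}$ and $\|v\|_{L^2}\le C\|F\|_{L^p}$, but this is routine and does not affect your argument.
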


\begin{proof}
The proof, which uses the estimate in Lemma \ref{main-lemma-2},
 is similar to that of Theorem \ref{theorem-2.1}.
We will only point out the differences and leave the details to the reader.

Let $w_\varep$ denote the function in the left hand side of (\ref{main-estimate-2.2}).
Let 
$$
\Big\{ \phi_j:  j=1, \dots, J=\frac{d(d+1)}{2}\Big\}
$$
 be an orthonormal basis of $\mathcal{R}$, as a subspace of 
$L^2(\Omega; \rd)$. By the second Korn inequality,
\begin{equation}\label{2.2-1}
\| w_\varep \|_{H^1(\Omega)}
 \le C \, \Big| \int_\Omega A(x/\varep)\nabla w_\varep\cdot \nabla w_\varep \, dx \Big|
+C \sum_{j=1}^{J} \Big| \int_\Omega w_\varep \cdot \phi_j \, dx \Big|.
\end{equation}
Since $u_\varep, u_0 \perp \mathcal{R}$, it follows that
$$
\aligned
\Big| \int_\Omega w_\varep \cdot \phi_j \, dx \Big|
 &\le C \varep\, \|\chi(x/\varep) K_\varep^2 \big( (\nabla u_0)\eta_\varep \big) \|_{L^2(\Omega)}\\
 &\le C \varep \, \| \nabla u_0 \|_{L^2(\Omega)}.
 \endaligned
 $$
This, together with (\ref{2.2-1}) and Lemma \ref{main-lemma-2}, shows that
\begin{equation}\label{2.2-3}
\aligned
\| w_\varep\|_{H^1(\Omega)}
 \le C 
\Big\{ \|\nabla u_0\|_{L^2(\Omega_{4\varep})} +\varep \|\nabla u_0\|_{L^2(\Omega)} 
& +\| (\nabla u_0)\eta_\varep -K_\varep \big((\nabla u_0)\eta_\varep\big)\|_{L^2(\Omega)}\\
&+\varep \| K_\varep \big( (\nabla^2 u_0)\eta_\varep\big) \|_{L^2(\Omega)}\Big\}.
\endaligned
\end{equation}

To bound the r.h.s. of (\ref{2.2-3}), we write $u_0 =v +h$, where
$v$ is the same as in the proof of Theorem \ref{theorem-2.1}.
Since $\mathcal{L}_0 (h)=0$ in $\Omega$ and
$$
\aligned
\| \frac{\partial h}{\partial \nu_0} \|_{L^2(\partial\Omega)}
& \le \| \frac{\partial u_0}{\partial \nu_0} \|_{L^2(\partial\Omega)}
+\| \frac{\partial v}{\partial \nu_0} \|_{L^2(\partial\Omega)}\\
&\le C \Big\{ \| g\|_{L^2(\partial\Omega)} +\| F\|_{L^p(\Omega)} \Big\},
\endaligned
$$
we may use the estimates  in \cite {Dahlberg-1988, Verchota-1986} 
for solutions of the $L^2$ Neumann problem for $\mathcal{L}_0$ in Lipschitz
domains to obtain
\begin{equation}\label{2.2-5}
\aligned
\|(\nabla h)^*\|_{L^2(\partial\Omega)}
&\le C \Big\{ \| g\|_{L^2(\partial \Omega)} +\| F\|_{L^p(\Omega)}
+ \sum_{j=1}^J \Big| \int_\Omega h \cdot \phi_j \Big| \Big\}\\
& \le C \Big\{ \| g\|_{L^2(\partial \Omega)} +\| F\|_{L^p(\Omega)} \Big\},
\endaligned
\end{equation}
where we have used the assumption $u_0\perp \mathcal{R}$.
With the nontangential maximal function estimate (\ref{2.2-5}) at our disposal,
the rest of the proof is  exactly the same as that of Theorem \ref{theorem-2.1}.
\end{proof}

\begin{remark}\label{remark-2.1}
{\rm
Since
$$
\|\chi(x/\varep) K_\varep^2 \big( (\nabla u_\varep) \eta_\varep\big) \|_{L^2(\Omega)}
\le C\, \|\nabla u_\varep\|_{L^2(\Omega)},
$$
it follows from the  estimate (\ref{main-estimate-2.1}) that
\begin{equation}\label{L-2-D}
\| u_\varep -u_0\|_{L^2(\Omega)}
\le C \, \varep^{1/2} 
\Big\{ \| f\|_{H^1(\partial\Omega)} + \| F\|_{L^2(\Omega)} \Big\},
\end{equation} 
where $\mathcal{L}_\varep (u_\varep)=\mathcal{L}_0(u_0)=F$ in $\Omega$ and
$u_\varep=u_0=f$ on $\partial\Omega$.
Similarly, the estimate (\ref{main-estimate-2.2}) implies that
\begin{equation}\label{L-2-N}
\| u_\varep -u_0\|_{L^2(\Omega)}
\le C \, \varep^{1/2} 
\Big\{ \| g\|_{L^2(\partial\Omega)} + \| F\|_{L^2(\Omega)} \Big\},
\end{equation} 
where $u_\varep, u_0$ are given in Theorem \ref{theorem-2.2}.
These $O(\varep^{1/2})$ estimate in $L^2$
are not sharp (see Section 4); but they will be sufficient  for us to establish the boundary $C^\alpha$ and Lipschitz estimates.
}
\end{remark}



\section{Proof of Theorems \ref{main-theorem-1} and \ref{main-theorem-2}}
\setcounter{equation}{0}

Theorems \ref{main-theorem-1} and \ref{main-theorem-2} are consequences of Theorems \ref{theorem-2.1}
and \ref{theorem-2.2}, respectively.
We give the proof of Theorem \ref{main-theorem-1}. Theorem \ref{main-theorem-2} follows from Theorem 
\ref{theorem-2.2} in the same manner.

Without loss of generality we may assume that
$$
\| f\|_{H^1(\partial\Omega)} + \| F\|_{L^p(\Omega)} =1.
$$
Let $w_\varep$ denote the function in the left hand side of (\ref{main-estimate-2.1}).
By Theorem \ref{theorem-2.1}, for $\varep\le r< \text{diam}(\Omega)$,
$$
\aligned
\|\nabla u_\varep\|_{L^2(\Omega_r)}
&\le \|\nabla u_0\|_{L^2(\Omega_r)}
+\|\nabla w_\varep\|_{L^2(\Omega)}
+\varep \|\nabla \big\{ \chi(x/\varep) K_\varep^2 \big( (\nabla u_0)\eta_\varep\big) \|_{L^2(\Omega_r)} \\
&\le C\, r^{1/2} 
+\|\nabla \chi(x/\varep) K_\varep^2 \big( (\nabla u_0)\eta_\varep\big)\|_{L^2(\Omega_r)}
+\varep\,  \| \chi(x/\varep) \nabla K_\varep^2 \big( (\nabla u_0)\eta_\varep\big) \|_{L^2(\Omega_r)}\\
&\le C\, r^{1/2} 
+ C\, \| K_\varep \big( (\nabla u_0)\eta_\varep\big)\|_{L^2(\Omega_{2r})}
+C \varep\,  \|  \nabla K_\varep \big( (\nabla u_0)\eta_\varep\big) \|_{L^2(\Omega_{2r})},
\endaligned
$$
where we have used (\ref{2.1-6}) and Lemma \ref{lemma-2.00} as well as the fact that the
operator $K_\varep$ is a convolution with a kernel supported in $B(0, \varep/4)$.
Note that by (\ref{2.1-6})  and (\ref{2.1-8-1}),

$$
\| K_\varep \big( (\nabla u_0)\eta_\varep\big)\|_{L^2(\Omega_{2r})}
\le C \, \| \nabla u_0\|_{L^2(\Omega_{3r})} \le C \, r^{1/2},
$$
and 
$$
\aligned
\varep \, \| \nabla K_\varep \big ((\nabla u_0)\eta_\varep\big) \|_{L^2(\Omega_{2r})}
&\le \varep\,  \|K_\varep \big((\nabla^2 u_0)\eta_\varep \big) \|_{L^2(\Omega_{2r})}
+ \varep \,  \| K_\varep \big((\nabla u_0) (\nabla \eta_\varep) \big) \|_{L^2(\Omega_{2r})} \\
& \le \varep\,  \|K_\varep \big((\nabla^2 u_0)\eta_\varep \big) \|_{L^2(\Omega_{2r})}
+C\, \|\nabla u_0\|_{L^2(\Omega_{3r})}\\
&\le C\, r^{1/2} .
\endaligned
$$
The proof of Theorem \ref{main-theorem-1} is complete.

\begin{remark}\label{remark-3.1}
{\rm
Under certain smoothness condition on $A$, it is possible to extend the Rellich estimates in \cite{Dahlberg-1988}
for the Lam\'e systems with constant coefficients to the operator $\mathcal{L}_1$ with variable coefficients
satisfying the condition (\ref{ellipticity}). 
We refer the reader to \cite{Kenig-Shen-2}, where this is done for the case  that the coefficients 
satisfy the ellipticity condition (\ref{s-ellipticity}).
It follows that if $\mathcal{L}_1 (u)=0$ in $D_2$, 
where $D_r$ is defined by (\ref{D}) with $\psi (0)=0$ and
$\|\nabla \psi\|_\infty \le M$, then
\begin{equation}\label{re-2.2}
\left\{
\aligned
\int_{\partial D_r} |\nabla u|^2 \, d\sigma 
& \le C \int_{\partial D_r} \Big|\frac{\partial u}{\partial \nu} \Big|^2\, d\sigma
+ \int_{D_r} |\nabla u|^2\, dx,\\
\int_{\partial D_r} |\nabla u|^2 \, d\sigma
& \le C \int_{\partial D_r} |\nabla_{\tan}  u|^2\, d\sigma  +\int_{D_r} |\nabla u|^2\, dx,
\endaligned
\right.
\end{equation}
for any $r\in (1, 3/2)$, where $C$ depends only on  $d$, $A$, and $M $.
By integrating both sides of the inequalities in (\ref{re-2.2}) with respect to $r$ over $(1,3/2)$,
we obtain
\begin{equation}\label{re-2.3}
\left\{
\aligned
\int_{\Delta_1} |\nabla u|^2 \, d\sigma 
& \le C \int_{\Delta_2 } \Big|\frac{\partial u}{\partial \nu} \Big|^2\, d\sigma
+ \int_{D_2} |\nabla u|^2\, dx,\\
\int_{\Delta_1} |\nabla u|^2 \, d\sigma
& \le C \int_{\Delta_2} |\nabla_{\tan}  u|^2\, d\sigma
+ C \int_{D_2} |\nabla u|^2\, dx,
\endaligned
\right.
\end{equation}
where $\Delta_r =\big\{ (x^\prime, \psi (x^\prime))\in \rd: |x^\prime|<r \text{ and } 
x_d =\psi(x^\prime) \big\}$.
We now take advantage of the fact that the dependence of $C$ on $\psi$
is only through $M$.
Since $\mathcal{L}_\varep (u_\varep)=0$
implies $\mathcal{L}_1 \big\{ u_\varep (\varep x) \big\}=0$,
 one may deduce from (\ref{re-2.3}) that
if $\mathcal{L}_\varep (u_\varep)=0$ in $D_{2\varep}$, then 
\begin{equation}\label{re-2.4}
\left\{
\aligned
\int_{\Delta_\varep} |\nabla u_\varep |^2 \, d\sigma 
& \le C \int_{\Delta_{2\varep} } \Big|\frac{\partial u_\varep }{\partial \nu_\varep} \Big|^2\, d\sigma
+ \frac{C}{\varep} \int_{D_{2\varep}} |\nabla u_\varep |^2\, dx,\\
\int_{\Delta_\varep} |\nabla u_\varep |^2 \, d\sigma
& \le C  \int_{\Delta_{2\varep}} |\nabla_{\tan}  u_\varep |^2\, d\sigma
+ \frac{C}{\varep} \int_{D_{2\varep}} |\nabla u_\varep |^2\, dx.
\endaligned
\right.
\end{equation}
Now, suppose that $u_\varep \in H^1(\Omega; \rd)$ and
$\mathcal{L}_\varep (u_\varep)=0$ in $\Omega$, where
$\Omega$ is a bounded Lipschitz domain in $\rd$.
By covering $\partial\Omega$ with a finite number of
suitable balls of size $c \varep$, it follows from (\ref{re-2.4}) that
\begin{equation}\label{re-2.5}
\left\{
\aligned
\int_{\partial\Omega} |\nabla u_\varep |^2 \, d\sigma 
& \le C \int_{\partial\Omega} \Big|\frac{\partial u_\varep }{\partial \nu_\varep} \Big|^2\, d\sigma
+ \frac{C}{\varep} \int_{\Omega_{c\varep}} |\nabla u_\varep |^2\, dx,\\
\int_{\partial\Omega} |\nabla u_\varep |^2 \, d\sigma
& \le C  \int_{\partial\Omega} |\nabla_{\tan}  u_\varep |^2\, d\sigma
+ \frac{C}{\varep} \int_{\Omega_{c\varep}} |\nabla u_\varep |^2\, dx.
\endaligned
\right.
\end{equation}
Notice that up to this point, we have only used the smoothness condition of $A$, not the periodicity of $A$.
With the additional periodicity condition we may invoke the estimates in Theorems 
\ref{main-theorem-1} and \ref{main-theorem-2} to bound the volume integrals of $|\nabla u_\varep|^2$
over the boundary layer $\Omega_{c\varep}$.
This yields the full Rellich estimates,
\begin{equation}\label{re-2.6}
\int_{\partial\Omega} |\nabla u_\varep |^2 \, d\sigma 
 \le C \int_{\partial\Omega} \Big|\frac{\partial u_\varep }{\partial \nu_\varep} \Big|^2\, d\sigma
\end{equation}
if $u_\varep \perp \mathcal{R}$, and 
\begin{equation}\label{re-2.7}
\int_{\partial\Omega} |\nabla u_\varep |^2 \, d\sigma
 \le C  \int_{\partial\Omega} |\nabla_{\tan}  u_\varep |^2\, d\sigma +C r_0^{-2} \int_{\partial\Omega} |u_\varep|^2\, d\sigma.
\end{equation}
It is well known that estimates (\ref{re-2.6})-(\ref{re-2.7}) may be used to solve the $L^2$ boundary value problems 
in Lipschitz domains by the method of layer potentials.
We refer the reader to \cite{Kenig-Shen-2} for the case where $A(y)$ satisfies (\ref{s-ellipticity}).
The details for the systems of elasticity will be carried out in a separate work \cite{Geng-Shen-Song-2}.
}
\end{remark}



\section{Convergence rates in $L^q$ for $q=\frac{2d}{d-1}$}
\setcounter{equation}{0}

In this section we establish sharp $O(\varep)$ estimates for $\| u_\varep -u_0\|_{L^q(\Omega)}$
with $q=\frac{2d}{d-1}$, using
Theorems \ref{main-theorem-1} and \ref{main-theorem-2} and a duality argument.
Throughout this section we will assume that $\Omega$ is a bounded Lipschitz domain and
$A=A(y)$ satisfies (\ref{ellipticity})-(\ref{periodicity}).

We start with the Dirichlet boundary condition.

\begin{lemma}\label{lemma-L-1}
Let $u_\varep$ ($\varep\ge 0$) be the solution of (\ref{DP-1}).
Suppose that $u_0\in H^2(\Omega; \mathbb{R}^d)$.
Then
\begin{equation}\label{L-1.0}
\| u_\varep -u_0 -\varep \chi_k(x/\varep) K_\varep \Big(\frac{\partial\widetilde{u}_0}{\partial x_k}\Big)
-v_\varep \|_{H^1_0(\Omega)}
\le C \, \varep \| \nabla^2 \widetilde{u}_0 \|_{L^2(\mathbb{R}^d)},
\end{equation}
where $\widetilde{u}_0 \in H^2(\mathbb{R}^d; \mathbb{R}^d)$ is an extension of $u_0$ and
$v_\varep \in H^1(\Omega; \mathbb{R}^d)$ is the weak solution to 
\begin{equation}\label{L-1.1}
\mathcal{L}_\varep (v_\varep)=0 \quad \text{ in } \Omega \quad \text{ and }\quad
v_\varep =-\varep \chi_k(x/\varep) K_\varep \Big(\frac{\partial\widetilde{u}_0}{\partial x_k}\Big)
\quad \text{ on }  \partial\Omega.
\end{equation}
\end{lemma}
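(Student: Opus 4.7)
The plan is to set $\widetilde w_\varep = u_\varep - u_0 - z_\varep - v_\varep$ with $z_\varep = \varep \chi_k(x/\varep) K_\varep(\partial \widetilde u_0/\partial x_k)$, and to establish the claimed $O(\varep)$ bound by running an energy argument in the spirit of Lemmas \ref{main-lemma}--\ref{main-lemma-2}, but without the interior cut-off $\eta_\varep$. The crucial structural observation is that $\widetilde w_\varep$ vanishes on $\partial\Omega$: indeed $u_\varep = u_0 = f$ on $\partial\Omega$ and, by construction (\ref{L-1.1}), $v_\varep = -z_\varep$ there. Hence $\widetilde w_\varep \in H^1_0(\Omega;\rd)$, which makes it a legitimate test function and, via the first Korn inequality, lets us bound its $H^1_0$ norm by $\int_\Omega A(x/\varep)\nabla \widetilde w_\varep\cdot\nabla\widetilde w_\varep$.

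Next, since $\mathcal{L}_\varep(u_\varep) = F = \mathcal{L}_0(u_0)$ in $\Omega$ and $\mathcal{L}_\varep(v_\varep) = 0$, testing against $\widetilde w_\varep\in H^1_0$ and rearranging yields exactly the analogue of (\ref{f-0}),
\begin{equation*}
\aligned
\int_\Omega A(x/\varep)\nabla \widetilde w_\varep\cdot\nabla \widetilde w_\varep\,dx
=&\int_\Omega [\widehat{A}-A(x/\varep)]\bigl[\nabla u_0 - K_\varep(\nabla \widetilde u_0)\bigr]\cdot\nabla \widetilde w_\varep\,dx\\
&\qquad -\int_\Omega B(x/\varep) K_\varep(\nabla \widetilde u_0)\cdot\nabla \widetilde w_\varep\,dx\\
&\qquad -\varep\int_\Omega A(x/\varep)\chi(x/\varep)\,K_\varep(\nabla^2\widetilde u_0)\cdot\nabla \widetilde w_\varep\,dx,
\endaligned
\end{equation*}
using that $K_\varep$ commutes with differentiation and that $\nabla u_0 = \nabla\widetilde u_0$ on $\Omega$.

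The main obstacle is the second (``$B$-term'') integral, where I would like to apply the dual corrector identity (\ref{phi}) exactly as in Lemma \ref{main-lemma-1}. Without a cut-off, writing $b_{ij}^{\alpha\beta}(x/\varep) = \varep\,\partial_{x_k}[\phi_{kij}^{\alpha\beta}(x/\varep)]$ and integrating by parts over $\Omega$ produces both an interior term and a boundary term. The interior term contains the factor $\phi_{kij}^{\alpha\beta}(x/\varep)\,\partial_k\partial_i\widetilde w_\varep^\alpha$, which vanishes in the distributional sense by the antisymmetry $\phi_{kij}^{\alpha\beta} = -\phi_{ikj}^{\alpha\beta}$ paired against the symmetry of $\partial_k\partial_i$. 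The boundary term takes the form
\begin{equation*}
\varep \int_{\partial\Omega} \phi_{kij}^{\alpha\beta}(x/\varep)\,K_\varep(\partial_j\widetilde u_0^\beta)\,(\partial_i \widetilde w_\varep^\alpha)\,n_k\,d\sigma;
\end{equation*}
since $\widetilde w_\varep \equiv 0$ on $\partial\Omega$, the tangential derivatives vanish there and $\partial_i\widetilde w_\varep^\alpha = n_i\,\partial_\nu \widetilde w_\varep^\alpha$ on $\partial\Omega$, so the integrand carries the factor $\phi_{kij}^{\alpha\beta} n_i n_k$, which again vanishes by the antisymmetry of $\phi$ in $(k,i)$. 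This is the key point that allows one to absorb the boundary mismatch of the corrector into $v_\varep$ alone.

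After these cancellations, the $B$-term reduces to $-\varep\int_\Omega \phi(x/\varep)\,K_\varep(\nabla^2\widetilde u_0)\cdot\nabla \widetilde w_\varep$, and Cauchy--Schwarz combined with Lemma \ref{lemma-2.00} (applied to the $1$-periodic $L^2_{\loc}$ functions $\chi$ and $\phi$) bounds the right-hand side of the energy identity by
\begin{equation*}
C\,\|\nabla \widetilde w_\varep\|_{L^2(\Omega)}\Bigl\{\|\nabla\widetilde u_0 - K_\varep(\nabla\widetilde u_0)\|_{L^2(\mathbb{R}^d)} + \varep\,\|K_\varep(\nabla^2\widetilde u_0)\|_{L^2(\mathbb{R}^d)}\Bigr\}.
\end{equation*}
The first bracketed term is controlled by $C\varep\,\|\nabla^2\widetilde u_0\|_{L^2(\mathbb{R}^d)}$ via Lemma \ref{lemma-p}, and the second by $\varep\,\|\nabla^2\widetilde u_0\|_{L^2(\mathbb{R}^d)}$ since $K_\varep$ is bounded on $L^2$. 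Invoking ellipticity of $A$ together with the first Korn inequality on $H^1_0(\Omega;\rd)$, and then Poincar\'e, delivers the desired estimate $\|\widetilde w_\varep\|_{H^1_0(\Omega)}\le C\varep\,\|\nabla^2\widetilde u_0\|_{L^2(\mathbb{R}^d)}$, completing the proof.
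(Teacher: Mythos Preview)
Your argument is essentially the paper's proof, recast as an energy identity rather than a computation of $\mathcal{L}_\varep(\widetilde w_\varep)$ in $H^{-1}(\Omega)$ followed by the energy estimate; the core ingredients (the corrector algebra, the antisymmetry of $\phi$, and Lemmas~\ref{lemma-2.00}--\ref{lemma-p}) are identical.

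One point deserves more care. Your integration by parts on the $B$-term splits into an interior piece containing $\phi_{kij}^{\alpha\beta}(x/\varep)\,\partial_k\partial_i\widetilde w_\varep^\alpha$ and a boundary integral involving $\partial_i\widetilde w_\varep^\alpha$ on $\partial\Omega$; but $\widetilde w_\varep$ is only known to lie in $H^1_0(\Omega)$, so neither of these objects is well-defined on its own (the second derivative is only in $H^{-1}$, and the trace of $\nabla\widetilde w_\varep$ need not lie in $L^2(\partial\Omega)$). The cleanest fix is to argue by density: for $\psi\in C_c^\infty(\Omega;\rd)$ there is no boundary term, the antisymmetry cancellation is rigorous pointwise, and both functionals $\psi\mapsto\int_\Omega B(x/\varep)K_\varep(\nabla\widetilde u_0)\cdot\nabla\psi$ and $\psi\mapsto-\varep\int_\Omega\phi(x/\varep)K_\varep(\nabla^2\widetilde u_0)\cdot\nabla\psi$ extend continuously to $H^1_0$ by Lemma~\ref{lemma-2.00}. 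Alternatively, proceed as the paper does and observe that $\partial_i\partial_k\{\phi_{kij}^{\alpha\beta}(x/\varep)K_\varep(\partial_j\widetilde u_0^\beta)\}=0$ is a rigorous distributional identity in $\Omega$, which gives (\ref{L-1.3}) directly and lets one bound $\|\mathcal{L}_\varep(\widetilde w_\varep)\|_{H^{-1}(\Omega)}$ without ever integrating by parts against $\widetilde w_\varep$.
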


\begin{proof}
Let
$$
w_\varep =u_\varep -u_0 -\varep \chi_k(x/\varep) K_\varep \Big(\frac{\partial\widetilde{u}_0}{\partial x_k}\Big)
-v_\varep.
$$
Using $\mathcal{L}_\varep (u_\varep) =\mathcal{L}_0 (u_0)$ and $\mathcal{L}_\varep (v_\varep)=0$
in $\Omega$,
a direct computation shows that
\begin{equation}\label{L-1.2}
\aligned
\mathcal{L}_\varep (w_\varep)
&=-\frac{\partial}{\partial x_i} 
\left\{ \Big[ \widehat{a}_{ij}^{\alpha\beta} -a_{ij}^{\alpha\beta} (x/\varep) \Big]
\frac{\partial u_0^\beta}{\partial x_j} \right\}
-\mathcal{L}_\varep \left\{ \varep \chi_k (x/\varep) K_\varep \Big(\frac{\partial \widetilde{u}_0}{\partial x_k} \Big) \right\}\\
&=-\frac{\partial }{\partial x_i}
\left\{ \Big[ \widehat{a}_{ij}^{\alpha\beta} -a_{ij}^{\alpha\beta} (x/\varep) \Big]
\Big[ \frac{\partial u_0^\beta}{\partial x_j}  -K_\varep \Big(\frac{\partial \widetilde{u}_0^\beta}{\partial x_j} \Big) \Big]\right\}\\
& \qquad +\frac{\partial}{\partial x_i} \left\{ b_{ij}^{\alpha\beta} (x/\varep)
K_\varep \Big( \frac{\partial \widetilde{u}_0^\beta}{\partial x_j}\Big) \right\}\\
&\qquad +\varep\, \frac{\partial}{\partial x_i}
\left\{ a_{ij}^{\alpha\beta} (x/\varep) \chi_k^{\beta\gamma} (x/\varep)
K_\varep \Big(\frac{\partial^2 \widetilde{u}_0^\gamma}{\partial x_j\partial x_k} \Big) \right\},
\endaligned
\end{equation}
where $b_{ij}^{\alpha\beta}$ is defined by (\ref{b}).
Using (\ref{phi}), we see that
\begin{equation}\label{L-1.3}
\frac{\partial}{\partial x_i}
\left\{ b_{ij}^{\alpha\beta} (x/\varep)
K_\varep \Big( \frac{\partial \widetilde{u}_0^\beta}{\partial x_j}\Big) \right\}
=-\varep\, \frac{\partial}{\partial x_i}
\left\{ \phi_{kij}^{\alpha\beta} (x/\varep) K_\varep \Big( \frac{\partial^2 \widetilde{u}_0^\beta}{\partial x_k
\partial x_j} \Big) \right\}.
\end{equation}
It follows from (\ref{L-1.2}) and (\ref{L-1.3}) by Lemmas \ref{lemma-2.00} and \ref{lemma-p} that
$$
\|\mathcal{L}_\varep (w_\varep) \|_{H^{-1}(\Omega)} 
\le C \, \| \nabla^2 \widetilde{u}_0 \|_{L^2(\mathbb{R}^d)},
$$
where $C$ depends only on $d$, $\kappa_1$, $\kappa_2$, and $\Omega$.
Since $w_\varep \in H_0^1(\Omega; \mathbb{R}^d)$,
this gives the estimate (\ref{L-1.0}) by the energy estimate.
\end{proof}

The following theorem establishes the sharp $O(\varep)$ estimate in $L^q$ with $q=\frac{2d}{d-1}$ for the
Dirichlet boundary condition.

\begin{theorem}\label{theorem-L-1}
Suppose that $A$ satisfies (\ref{ellipticity})-(\ref{periodicity}).
Let $\Omega$ be a bounded Lipschitz domain in $\mathbb{R}^d$.
Let $u_\varep$ ($\varep \ge 0$) be the weak solution to Dirichet problem (\ref{DP-1}).
Assume that $u_0 \in H^2(\Omega; \mathbb{R}^d)$. Then
\begin{equation}\label{L-2.0}
\| u_\varep -u_0 \|_{L^q(\Omega)} \le C\, \varep\,  \| u_0 \|_{H^2(\Omega)},
\end{equation}
where $q=\frac{2d}{d-1}$ and $C$ depends only on $d$, $\kappa_1$, $\kappa_2$, and $\Omega$.
\end{theorem}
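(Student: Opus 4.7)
My plan is to combine the $H^1$-convergence machinery of Lemmas \ref{main-lemma}--\ref{main-lemma-2} with a duality argument against the boundary-layer estimate of Theorem \ref{main-theorem-1}. I work with the cutoff version of the corrector expansion from Lemma \ref{main-lemma} rather than the one in Lemma \ref{lemma-L-1}: set
\[
w_\varepsilon = u_\varepsilon - u_0 - \varepsilon \chi_k^\beta (x/\varepsilon)\, K_\varepsilon^2\!\Big(\frac{\partial u_0^\beta}{\partial x_k}\,\eta_\varepsilon\Big) \in H^1_0(\Omega;\mathbb{R}^d),
\]
with $\eta_\varepsilon$ as in (\ref{eta}). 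Since $u_0\in H^2(\Omega)$ gives $\nabla u_0\in H^1(\Omega)\hookrightarrow L^q(\Omega)$ (Sobolev, because $q=\tfrac{2d}{d-1}<\tfrac{2d}{d-2}$) and likewise $\chi\in H^1(Y)\hookrightarrow L^q(Y)$, the $L^q$-analogue of Lemma \ref{lemma-2.00} (same proof as in the $L^p$ case) yields
\[
\bigl\|\varepsilon \chi_k(x/\varepsilon)\, K_\varepsilon^2\bigl((\partial_k u_0)\eta_\varepsilon\bigr)\bigr\|_{L^q(\Omega)} \le C\varepsilon \,\|u_0\|_{H^2(\Omega)},
\]
so it suffices to prove $\|w_\varepsilon\|_{L^q(\Omega)}\le C\varepsilon\|u_0\|_{H^2(\Omega)}$.

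For this I use duality. Since $A^*=A$, for any $\varphi\in L^p(\Omega;\mathbb{R}^d)$ with $\|\varphi\|_{L^p}\le 1$ the problem $\mathcal{L}_\varepsilon z_\varepsilon=\varphi$ in $\Omega$, $z_\varepsilon=0$ on $\partial\Omega$ admits $z_\varepsilon\in H^1_0$ with $\|\nabla z_\varepsilon\|_{L^2(\Omega)}\le C\|\varphi\|_{L^p}$ (energy estimate plus $L^p\hookrightarrow H^{-1}$). Applying Theorem \ref{main-theorem-1} to $z_\varepsilon$ upgrades this to the sharp boundary-layer bound
\[
\|\nabla z_\varepsilon\|_{L^2(\Omega_r)}\le C r^{1/2}\|\varphi\|_{L^p(\Omega)},\qquad \varepsilon\le r<\text{diam}(\Omega).
\]
Because $w_\varepsilon\in H^1_0$, testing gives $\int_\Omega w_\varepsilon\cdot\varphi=\int_\Omega A(x/\varepsilon)\nabla w_\varepsilon\cdot\nabla z_\varepsilon$, and repeating the derivation of (\ref{f-0}) with $\nabla z_\varepsilon$ in place of the second $\nabla w_\varepsilon$, followed by Lemma \ref{main-lemma-1} to rewrite the $B$-term, produces
\[
\int_\Omega w_\varepsilon\cdot\varphi = \int_\Omega [\widehat A-A(x/\varepsilon)]\bigl[\nabla u_0 - K_\varepsilon^2((\nabla u_0)\eta_\varepsilon)\bigr]\cdot\nabla z_\varepsilon + \varepsilon\int_\Omega \bigl[\phi(x/\varepsilon)-A(x/\varepsilon)\chi(x/\varepsilon)\bigr]\nabla K_\varepsilon^2((\nabla u_0)\eta_\varepsilon)\cdot\nabla z_\varepsilon.
\]

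Each term is bounded by $C\varepsilon\|u_0\|_{H^2}\|\varphi\|_{L^p}$ by splitting the integration into the interior $\{\text{dist}(x,\partial\Omega)>5\varepsilon\}$, where $\eta_\varepsilon\equiv 1$ and $K_\varepsilon^2((\nabla u_0)\eta_\varepsilon)=K_\varepsilon^2(\nabla u_0)$, and the boundary layer $\Omega_{5\varepsilon}$. In the interior, Lemma \ref{lemma-p} supplies $L^2$ smoothing errors of size $\varepsilon\|u_0\|_{H^2}$ and $\|K_\varepsilon^2(\nabla^2 u_0\,\eta_\varepsilon)\|_{L^2}\le C\|u_0\|_{H^2}$, both pairing against the global $L^2$ bound of $\nabla z_\varepsilon$ to give $O(\varepsilon)$. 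In the boundary layer, the trace-style estimate
\[
\|\nabla u_0\|_{L^2(\Omega_t)}\le C t^{1/2}\|u_0\|_{H^2(\Omega)},\qquad 0<t<\text{diam}(\Omega),
\]
(valid for any $u_0\in H^2$ on a Lipschitz $\Omega$ by integrating the trace inequality over parallel surfaces, exactly as in the proof of Theorem \ref{theorem-2.1}) combines with the boundary-layer control of $\nabla z_\varepsilon$ from Theorem \ref{main-theorem-1} to absorb the $|\nabla\eta_\varepsilon|\sim \varepsilon^{-1}$ blowup: the accounting $\varepsilon\cdot\varepsilon^{-1}\cdot\varepsilon^{1/2}\cdot\varepsilon^{1/2}=\varepsilon$ is what produces the sharp rate.

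The main technical point is the extraction of the full $\varepsilon$ factor in the $B$-term via Lemma \ref{main-lemma-1}: this uses (i) the support of $K_\varepsilon^2((\nabla u_0)\eta_\varepsilon)$ sitting a distance $\gtrsim\varepsilon$ from $\partial\Omega$ so the integration by parts converting $B=\varepsilon\,\partial_k\phi_k$ produces no boundary term, and (ii) the mixed second-derivative term $\phi_{kij}^{\alpha\beta}(x/\varepsilon)\,K_\varepsilon^2((\partial_j u_0)\eta_\varepsilon)\,\partial_k\partial_i z_\varepsilon^\alpha$ vanishing by the antisymmetry $\phi_{kij}^{\alpha\beta}=-\phi_{ikj}^{\alpha\beta}$, which must be justified weakly (via one further integration by parts) since $z_\varepsilon$ has only $H^1$ regularity. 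Once this antisymmetric cancellation and the delicate $\varepsilon$-accounting in the boundary layer are in place, the sharp $O(\varepsilon)$ rate in $L^q$ follows by duality.
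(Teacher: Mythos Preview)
Your proposal is correct and reaches the sharp $O(\varepsilon)$ rate, but it follows a different route from the paper's own proof. The paper works with the \emph{uncut} corrector expansion of Lemma \ref{lemma-L-1}: using an $H^2$ extension $\widetilde{u}_0$ it writes $u_\varepsilon-u_0-\varepsilon\chi_k(x/\varepsilon)K_\varepsilon(\partial_k\widetilde{u}_0)=w_\varepsilon+v_\varepsilon$, where $w_\varepsilon\in H^1_0$ already satisfies the sharp bound $\|w_\varepsilon\|_{H^1_0}\le C\varepsilon\|\nabla^2\widetilde u_0\|_{L^2}$, and $v_\varepsilon$ is an explicit boundary corrector solving $\mathcal L_\varepsilon v_\varepsilon=0$ with data $-\varepsilon\chi_k(x/\varepsilon)K_\varepsilon(\partial_k\widetilde u_0)$ on $\partial\Omega$. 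The duality argument and Theorem \ref{main-theorem-1} are then applied only to $v_\varepsilon$: pairing $\int_\Omega v_\varepsilon\cdot G$ against the solution $h_\varepsilon$ of $\mathcal L_\varepsilon h_\varepsilon=G$, $h_\varepsilon=0$ on $\partial\Omega$, the divergence theorem converts this to a boundary integral which is then pushed back to a volume integral supported in $\Omega_{4\varepsilon}$ via the factor $(\eta_\varepsilon-1)$. You instead keep the \emph{cutoff} expansion of Lemma \ref{main-lemma} (so $w_\varepsilon\in H^1_0$ automatically but only $\|w_\varepsilon\|_{H^1_0}=O(\varepsilon^{1/2})$), and apply the duality directly to $w_\varepsilon$, recovering the missing $\varepsilon^{1/2}$ by pairing $\|\nabla u_0\|_{L^2(\Omega_{c\varepsilon})}=O(\varepsilon^{1/2})$ with the boundary-layer bound $\|\nabla z_\varepsilon\|_{L^2(\Omega_{c\varepsilon})}=O(\varepsilon^{1/2})$ from Theorem \ref{main-theorem-1}. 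Both arguments hinge on the same two ingredients---the antisymmetry of $\phi$ to extract the full factor $\varepsilon$, and Theorem \ref{main-theorem-1} for the dual solution---and do essentially the same $\varepsilon^{1/2}\cdot\varepsilon^{1/2}$ bookkeeping in the layer; your version avoids introducing the boundary corrector $v_\varepsilon$ and is in that sense more streamlined, while the paper's decomposition isolates the boundary contribution more transparently. One small point: your interior estimate $\|\nabla u_0-K_\varepsilon^2((\nabla u_0)\eta_\varepsilon)\|_{L^2(\Omega\setminus\Omega_{5\varepsilon})}\le C\varepsilon\|u_0\|_{H^2}$ implicitly requires an $H^2$ extension of $u_0$ (as in (\ref{Hardy})) to apply Lemma \ref{lemma-p} cleanly, so you do not entirely escape the extension step.
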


\begin{proof}
We begin by choosing $\widetilde{u}_0\in H^2(\mathbb{R}^d; \mathbb{R}^d)$ such that
$\widetilde{u}_0 =u_0$ in $\Omega$ and $\| \widetilde{u}_0\|_{H^2(\mathbb{R}^d)}
\le C \| u_0\|_{H^2(\Omega)}$, where $C$ depends only on $\Omega$.
Since $\Omega$ is Lipschitz, this is possible by an extension theorem due to A. Calder\'on.
Next, since $H_0^1(\Omega) \subset L^q(\Omega)$ and
$$
\| \chi_k(x/\varep) K_\varep \Big(\frac{\partial \widetilde{u}_0 }{\partial x_k} \Big) \|_{L^q(\Omega)}
\le C\, \| \nabla \widetilde{u}_0 \|_{L^q(\mathbb{R}^d)}
\le C\, \| u_0 \|_{H^2(\Omega)},
$$
in view of Lemma \ref{lemma-L-1}, it suffices to show that
\begin{equation}\label{L-2.1}
\| v_\varep\|_{L^q(\Omega)} \le C\, \varep\, \| u_0\|_{H^2(\Omega)},
\end{equation}
where $v_\varep$ is given by (\ref{L-1.1}).

To this end we fix $G\in L^p(\Omega; \mathbb{R}^d)$, where $p=q^\prime=\frac{2d}{d+1}$, and let
$h_\varep \in H_0^1(\Omega; \mathbb{R}^d)$ be the weak solution to
\begin{equation}\label{DP-L}
\mathcal{L}_\varep (h_\varep) =G \quad \text{ in } \Omega \quad \text{ and } \quad 
h_\varep =0 \quad \text{ on } \partial\Omega.
\end{equation}
It follows from (\ref{L-1.1}), (\ref{DP-L}), and the divergence theorem that
$$
\aligned
\int_\Omega v_\varep \cdot G \, dx
&= -\int_{\partial\Omega} v_\varep \cdot \frac{\partial h_\varep}{\partial \nu_\varep}\, d\sigma \\
&=\varep \int_{\partial\Omega}  \chi_k (x/\varep) K_\varep 
\Big( \frac{\partial \widetilde{u}_0}{\partial x_k} \Big)
\cdot \frac{\partial h_\varep}{\partial \nu_\varep} (\eta_\varep-1) \, d\sigma \\
&=\int_\Omega \frac{\partial \chi_k^{\alpha\gamma}}{\partial x_i} (x/\varep)
K_\varep \Big( \frac{\partial \widetilde{u}_0^\gamma}{\partial x_k} \Big)a_{ij}^{\alpha\beta} (x/\varep) 
\frac{\partial h_\varep^\beta}{\partial x_j} (\eta_\varep-1)\, dx\\
&\qquad  +\varep \int_\Omega \chi_k^{\alpha\gamma} (x/\varep) K_\varep \Big( \frac{\partial^2 \widetilde{u}_0^\gamma}
{\partial x_i\partial x_k} \Big) a_{ij}^{\alpha\beta}(x/\varep)  
\frac{\partial h_\varep^\beta}{\partial x_j} (\eta_\varep-1)\, dx\\
&\qquad  -\varep \int_\Omega \chi_k^{\alpha\gamma} (x/\varep) K_\varep \Big( \frac{\partial \widetilde{u}_0^\gamma}
{\partial x_k}\Big) G^\alpha (\eta_\varep-1)\, dx\\
&\qquad +\varep \int_\Omega \chi_k^{\alpha\gamma} (x/\varep) K_\varep \Big( \frac{\partial \widetilde{u}_0^\gamma}
{\partial x_k} \Big) a_{ij}^{\alpha\beta}(x/\varep)  
\frac{\partial h_\varep^\beta}{\partial x_j} \frac{\partial \eta_\varep}{\partial x_i} \, dx,
\endaligned
$$
where $\eta_\varep \in C_0^\infty(\Omega)$ satisfies (\ref{eta}).
This implies that
\begin{equation}\label{L-2.2}
\aligned
\Big| \int_\Omega v_\varep \cdot G\, dx \Big|
&\le C \int_\Omega |\nabla \chi (x/\varep)|\,  |K_\varep (\nabla \widetilde{u}_0)|\,  |\nabla h_\varep|\,  |\eta_\varep-1|\, dx\\
&\qquad +C\varep  \int_\Omega | \chi (x/\varep)|\,  |K_\varep (\nabla^2 \widetilde{u}_0)| \, |\nabla h_\varep|\, |\eta_\varep-1|\, dx\\
& \qquad +C\varep  \int_\Omega | \chi (x/\varep)| \, |K_\varep (\nabla \widetilde{u}_0)| \, |G |\,  |\eta_\varep-1|\, dx\\
&\qquad+C\varep \int_\Omega | \chi (x/\varep)| \, |K_\varep (\nabla \widetilde{u}_0)|\,  |\nabla h_\varep|\,  |\nabla \eta_\varep|\, dx.
\endaligned
\end{equation}
Note that by Cauchy inequality and (\ref{eta}), the first and forth terms in the r.h.s. of (\ref{L-2.2}) are bounded by
$$
\aligned
& C \left( \int_{\Omega_{4\varep}} |(|\nabla \chi (x/\varep)| +|\chi (x/\varep)| ) K_\varep (\nabla \widetilde{u}_0)|^2\, dx\right)^{1/2}
\left( \int_{\Omega_{4\varep}} |\nabla h_\varep|^2\, dx \right)^{1/2}\\
&\qquad  \le C \left( \int_{\widetilde{\Omega}_{5\varep}}  |\nabla \widetilde{u}_0|^2\, dx\right)^{1/2}
\left( \int_{\Omega_{4\varep} }|\nabla h_\varep|^2 \, dx \right)^{1/2},
\endaligned
$$
where $\Omega_r =\big\{ x\in \Omega: \, \text{dist}(x, \partial\Omega)<r \big\}$,
$\widetilde{\Omega}_r =\big\{ x\in \mathbb{R}^d: \, \text{dist}(x, \partial\Omega)<r \big\}$, and
we have used Lemma \ref{lemma-2.00} for the last inequality.
Using the divergence theorem, as in (\ref{2.1-3}), one may prove that 
$$
\|\nabla \widetilde{u}_0 \|_{L^2(S_r)} \le C\,  \| \widetilde{u}_0 \|_{H^1(\mathbb{R}^d)}^{1/2}
\| \widetilde{u}_0 \|_{H^2(\mathbb{R}^d)}^{1/2},
$$
where $S_r=\big\{ x\in \mathbb{R}^d: \text{dist}(x, \partial\Omega)=r \big\}$.
It follows by the co-area formula that
\begin{equation}\label{Hardy}
\|\nabla \widetilde{u}_0 \|_{L^2(\widetilde{\Omega}_r)} \le C\,  r^{1/2}  
 \| \widetilde{u}_0 \|_{H^1(\mathbb{R}^d)}^{1/2}
\| \widetilde{u}_0 \|_{H^2(\mathbb{R}^d)}^{1/2}.
\end{equation}
This, together with the estimate in Theorem \ref{main-theorem-1} for $h_\varep$, shows that
the first and forth terms in the r.h.s. of (\ref{L-2.2}) are bounded by
$$
C\, \varep\, \| u_0\|_{H^2(\Omega)} \| G\|_{L^p(\Omega)},
$$
where $p=q^\prime =\frac{2d}{d+1}$.
Finally, we note that the second and third term in the r.h.s. of (\ref{L-2.2}) are 
bounded by
$$
\aligned
 &C\, \varep \, \| \nabla^2 \widetilde{u}_0 \|_{L^2(\mathbb{R}^d)} \|\nabla h_\varep\|_{L^2(\Omega)}
+C \, \varep\, \|\nabla \widetilde{u}_0 \|_{L^q(\mathbb{R}^d)} \| G\|_{L^p(\Omega)}\\
&\qquad \le C\, \varep\, \| u_0\|_{H^2(\Omega)} \| G\|_{L^p(\Omega)}.
\endaligned
$$
As a result, we have proved that
$$
\Big| \int_\Omega v_\varep \cdot G\, dx \Big|
 \le C\, \varep\, \| u_0\|_{H^2(\Omega)} \| G\|_{L^p(\Omega)},
$$
which, by duality, gives the estimate (\ref{L-2.1}) and completes the proof.
\end{proof}

Next we consider the solutions with the Neumann boundary conditions.

\begin{lemma}\label{lemma-L-2}
Let $u_\varep$ ($\varep\ge 0$) be the solutions of (\ref{NP-1}) such that 
$u_\varep \perp \mathcal{R}$.
Suppose that $u_0 \in  H^2(\Omega; \mathbb{R}^d)$.
Then
\begin{equation}\label{L-3.0}
\| u_\varep -u_0 -\varep \chi_k (x/\varep) K_\varep \Big( \frac{\partial \widetilde{u}_0}
{\partial x_k} \Big) -v_\varep \|_{H^1(\Omega)}
\le C\, \varep \Big\{ \| \nabla ^2 \widetilde{u}_0 \|_{L^2(\mathbb{R}^d)} +
\| \nabla  \widetilde{u}_0 \|_{L^2(\mathbb{R}^d)}\Big\},
\end{equation}
where
$\widetilde{u}_0$ is an extension of $u_0$ and $v_\varep\in H^1(\Omega; \mathbb{R}^d)$
is the weak solution to 
\begin{equation}\label{NP-L}
\left\{
\aligned
 &\mathcal{L}_\varep (v_\varep) =0 \quad \text{ in } \Omega,\\
& \frac{\partial v_\varep}{\partial \nu_\varep} 
= \frac{\varep}{2} \Big( n_k \frac{\partial}{\partial x_i}
-n_i \frac{\partial}{\partial x_k} \Big)
\left\{ \phi_{kij} (x/\varep) K_\varep \Big( \frac{\partial \widetilde{u}_0}{\partial x_j} \Big)\right\}
\quad \text{ on  } \partial\Omega,\\
&  v_\varep \perp \mathcal{R}.
\endaligned
\right.
\end{equation}
\end{lemma}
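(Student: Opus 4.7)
The plan is to mirror the proof of Lemma~\ref{lemma-L-1}, but with two essential changes. In the absence of a Dirichlet condition I replace the boundary correction by the Neumann correction $v_\varep$, and I close the $H^1$ estimate via the second Korn inequality (as in the proof of Theorem~\ref{theorem-2.2}) rather than via the first. Let $w_\varep = u_\varep - u_0 - \varep \chi_k(x/\varep) K_\varep(\partial_k \widetilde{u}_0) - v_\varep$. Since $\mathcal{L}_\varep(u_\varep) = F = \mathcal{L}_0(u_0)$ and $\mathcal{L}_\varep(v_\varep) = 0$, the interior identity (\ref{L-1.2})--(\ref{L-1.3}) for $\mathcal{L}_\varep(w_\varep)$ continues to hold. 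What is new is that I now test against an arbitrary $\varphi \in H^1(\Omega; \mathbb{R}^d)$ and must carry surface integrals coming from $\partial u_\varep / \partial\nu_\varep = \partial u_0/\partial\nu_0$ and from the prescribed $\partial v_\varep/\partial\nu_\varep$.

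The key structural observation is a boundary cancellation. Writing out $\int_\Omega A(x/\varep)\nabla w_\varep\cdot\nabla\varphi$ and using $a_{ij}^{\alpha\beta}(y)\partial_j\chi_k^{\beta\gamma}(y) = b_{ik}^{\alpha\gamma}(y) + \widehat{a}_{ik}^{\alpha\gamma} - a_{ik}^{\alpha\gamma}(y)$, the only ``large'' term to handle is $\int_\Omega B(x/\varep) K_\varep(\nabla\widetilde{u}_0)\cdot\nabla\varphi$. Writing $b_{ij}^{\alpha\beta}(x/\varep) = \varep\,\partial_{x_k}\phi_{kij}^{\alpha\beta}(x/\varep)$ and integrating by parts, the term containing $\partial_k\partial_i\varphi^\alpha$ is annihilated by the antisymmetry $\phi_{kij}=-\phi_{ikj}$; what remains is a bulk residual $\varep\int_\Omega\phi(x/\varep)K_\varep(\nabla^2\widetilde{u}_0)\cdot\nabla\varphi$ together with a boundary residual $\varep\int_{\partial\Omega} n_k\phi_{kij}^{\alpha\beta}(x/\varep) K_\varep(\partial_j\widetilde{u}_0^\beta)\partial_i\varphi^\alpha\, d\sigma$. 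The data for $v_\varep$ is precisely the tangential antisymmetric derivative $\tfrac{\varep}{2}(n_k\partial_i - n_i\partial_k)\{\phi_{kij}(x/\varep)K_\varep(\partial_j\widetilde{u}_0)\}$. Since the operator $T_{ki} = n_k\partial_i - n_i\partial_k$ is $L^2$-antisymmetric on the closed Lipschitz surface $\partial\Omega$, and $\phi$ is antisymmetric in $(k,i)$, surface integration by parts gives $\int_{\partial\Omega}(\partial v_\varep/\partial\nu_\varep)\cdot\varphi\, d\sigma = -\varep\int_{\partial\Omega} n_k\phi_{kij}(x/\varep) K_\varep(\partial_j\widetilde{u}_0)\partial_i\varphi\, d\sigma$, which cancels the boundary residual exactly.

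What remains is the bulk identity $\int_\Omega A(x/\varep)\nabla w_\varep\cdot\nabla\varphi = -\int_\Omega[A(x/\varep)-\widehat{A}]\bigl[\nabla u_0 - K_\varep(\nabla\widetilde{u}_0)\bigr]\cdot\nabla\varphi + \varep\int_\Omega\phi(x/\varep)K_\varep(\nabla^2\widetilde{u}_0)\cdot\nabla\varphi - \varep\int_\Omega A(x/\varep)\chi(x/\varep)K_\varep(\nabla^2\widetilde{u}_0)\cdot\nabla\varphi$, each of whose terms is bounded by $C\varep\|\nabla^2\widetilde{u}_0\|_{L^2(\mathbb{R}^d)}\|\nabla\varphi\|_{L^2(\Omega)}$ via Lemma~\ref{lemma-p} for the first summand and via Lemma~\ref{lemma-2.00} (together with $\chi,\phi\in L^2_{\mathrm{loc}}$ and $1$-periodic) for the other two. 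Choosing $\varphi = w_\varep$ and applying the second Korn inequality in the form (\ref{2.2-1}) bounds $\|w_\varep\|_{H^1(\Omega)}^2$ by the quadratic form above plus $\sum_j |\int_\Omega w_\varep\cdot\phi_j|^2$. Because $u_\varep, u_0, v_\varep \perp \mathcal{R}$, only the corrector contributes to each $\mathcal{R}$-projection, giving $|\int_\Omega w_\varep\cdot\phi_j| \le C\varep\|\nabla\widetilde{u}_0\|_{L^2(\mathbb{R}^d)}$ by Lemma~\ref{lemma-2.00}; absorbing $\|w_\varep\|_{H^1}$ via Cauchy then yields the desired estimate. The main technical obstacle I anticipate lies not in these calculations, which are direct once the correct $v_\varep$ is in hand, but rather in verifying that the prescribed Neumann data is compatible, i.e.\ orthogonal to $\mathcal{R}$: for constants this follows immediately from $\int_{\partial\Omega} T_{ki}f\,d\sigma = 0$, while for rigid rotations it must be extracted from the elasticity symmetries of $A$ propagated through the definitions of $\chi$ and $\phi$. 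The tangential integration by parts on the Lipschitz surface is classical but requires some care.
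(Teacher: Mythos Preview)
Your proposal is correct and follows essentially the same route as the paper. The paper computes $\partial w_\varep/\partial\nu_\varep$ and $\mathcal{L}_\varep(w_\varep)$ explicitly (showing via the antisymmetry $\phi_{kij}=-\phi_{ikj}$ that the choice of $\partial v_\varep/\partial\nu_\varep$ makes the conormal data equal to $n\cdot h$ for the same flux $h$ appearing in $\mathcal{L}_\varep(w_\varep)=-\text{div}(h)$), then applies the energy estimate and the second Korn inequality with the $\mathcal{R}$-projection handled exactly as you describe; your variational presentation with the tangential integration by parts on $\partial\Omega$ is the weak-form version of the same cancellation.
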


\begin{proof}
Let
$$
w_\varep =u_\varep -u_0 -\varep \chi_k(x/\varep) K_\varep \Big(\frac{\partial\widetilde{u}_0}{\partial x_k}\Big)
-v_\varep.
$$
Using $\frac{\partial u_\varep}{\partial\nu_\varep}
=\frac{\partial u_0}{\partial \nu_0}$ on $\partial\Omega$,
a direct computation shows that
\begin{equation}\label{L-3.1}
\aligned
\frac{\partial w_\varep}{\partial \nu_\varep}
&= \frac{\partial u_0}{\partial \nu_0} -\frac{\partial u_0}{\partial\nu_\varep}
-\frac{\partial}{\partial \nu_\varep} 
\left\{ \varep \chi_k (x/\varep) K_\varep \Big( \frac{\partial \widetilde{u}_0}{\partial x_k} \Big)\right\}
-\frac{\partial v_\varep}{\partial \nu_\varep}\\
&=n_i \Big[ \widehat{a}_{ij}^{\alpha\beta} -a_{ij}^{\alpha\beta} (x/\varep) \Big]
\Big[ \frac{\partial u_0^\beta}{\partial x_j} - K_\varep \Big( \frac{\partial u_0^\beta}{\partial x_j} \Big) \Big]\\
&\qquad
-n_i b_{ij}^{\alpha\beta}(x/\varep) K_\varep \Big( \frac{\partial u_0^\beta}{\partial x_j} \Big)\\
&\qquad  -n_i a_{ij}^{\alpha\beta} (x/\varep) \cdot \varep \chi_k^{\beta\gamma} (x/\varep)
K_\varep \Big( \frac{\partial^2 \widetilde{u}_0^\gamma}{\partial x_j \partial x_k} \Big) 
-\frac{\partial v_\varep}{\partial \nu_\varep}.
\endaligned
\end{equation}
Using (\ref{phi}), we also see that
\begin{equation}\label{L-3.2}
\aligned
 n_i b_{ij}^{\alpha\beta} & (x/\varep) K_\varep \Big( \frac{\partial \widetilde{u}_0^\beta}{\partial x_j} \Big)
+\frac{\partial v_\varep}{\partial \nu_\varep}\\
&=\varep n_i \frac{\partial}{\partial x_k} \Big[ \phi_{kij}^{\alpha\beta} (x/\varep) \Big]
K_\varep \Big( \frac{\partial \widetilde{u}_0^\beta}{\partial x_j} \Big)
+\frac{\partial v_\varep}{\partial \nu_\varep}\\
&=\frac{\varep}{2}\Big( n_i \frac{\partial}{\partial x_k} - n_k\frac{\partial}{\partial x_i} \Big)
\Big[ \phi_{kij}^{\alpha\beta} (x/\varep) \Big]
K_\varep \Big( \frac{\partial \widetilde{u}_0^\beta}{\partial x_j} \Big)
+\frac{\partial v_\varep}{\partial \nu_\varep}\\
&=-\varep n_i \phi_{kij}^{\alpha\beta} (x/\varep) K_\varep \Big( \frac{\partial^2 \widetilde{u}_0^\beta}{\partial x_k\partial x_j} \Big).
\endaligned
\end{equation}
As a result, we obtain 
\begin{equation}\label{L-3.3}
\aligned
\frac{\partial w_\varep}{\partial \nu_\varep}=
&n_i \Big[ \widehat{a}_{ij}^{\alpha\beta} -a_{ij}^{\alpha\beta} (x/\varep) \Big]
\Big[ \frac{\partial u_0^\beta}{\partial x_j} - K_\varep \Big( \frac{\partial u_0^\beta}{\partial x_j} \Big) \Big]\\
&\qquad
+\varep n_i \phi_{kij}^{\alpha\beta} (x/\varep) K_\varep \Big( \frac{\partial^2 \widetilde{u}_0^\beta}{\partial x_k\partial x_j} \Big)\\
&\qquad  -n_i a_{ij}^{\alpha\beta} (x/\varep) \cdot \varep \chi_k^{\beta\gamma} (x/\varep)
K_\varep \Big( \frac{\partial^2 \widetilde{u}_0^\gamma}{\partial x_j \partial x_k} \Big) .
\endaligned
\end{equation}

Next, we note that as in the proof of Lemma \ref{lemma-L-1},
\begin{equation}\label{L-3.4}
\aligned
\mathcal{L}_\varep (w_\varep)
&=-\frac{\partial }{\partial x_i}
\left\{ \Big[ \widehat{a}_{ij}^{\alpha\beta} -a_{ij}^{\alpha\beta} (x/\varep) \Big]
\Big[ \frac{\partial u_0^\beta}{\partial x_j}  -K_\varep \Big(\frac{\partial \widetilde{u}_0^\beta}{\partial x_j} \Big) \Big]\right\}\\
& \qquad -\varep\, \frac{\partial}{\partial x_i}
\left\{ \phi_{kij}^{\alpha\beta} (x/\varep) K_\varep \Big( \frac{\partial^2 \widetilde{u}_0^\beta}{\partial x_k
\partial x_j} \Big) \right\}
\\
&\qquad +\varep\, \frac{\partial}{\partial x_i}
\left\{ a_{ij}^{\alpha\beta} (x/\varep) \chi_k^{\beta\gamma} (x/\varep)
K_\varep \Big(\frac{\partial^2 \widetilde{u}_0^\gamma}{\partial x_j\partial x_k} \Big) \right\}.
\endaligned
\end{equation}
Thus, by (\ref{ellipticity}) and the energy estimate,
$$
\aligned
\|\nabla w_\varep  & +(\nabla w_\varep)^T \|_{L^2(\Omega)}\\
& \le C\, \|\nabla w_\varep\|_{L^2(\Omega)}
\Big\{ \|\nabla u_0 -K_\varep (\nabla \widetilde{u}_0)\|_{L^2(\Omega)}
+\varep \|\phi (x/\varep) K_\varep (\nabla^2 \widetilde{u}_0)\|_{L^2(\Omega)}\\
& \qquad \qquad \qquad  \qquad \qquad 
+\varep \| \chi (x/\varep) K_\varep (\nabla^2 u_0)\|_{L^2(\Omega)} \Big\}\\
&\le C \varep \|\nabla w_\varep\|_{L^2(\Omega)}
\| \nabla^2 \widetilde{u}_0\|_{L^2(\mathbb{R}^d)},
\endaligned
$$
where we have used Lemmas \ref{lemma-2.00} and \ref{lemma-p} for the last step.
By the second Korn inequality, this implies that
$$
\aligned
\| w_\varep\|_{H^1(\Omega)}
&\le C \, \varep \| \nabla^2 \widetilde{u}_0 \|_{L^2(\mathbb{R}^d)}
+ C \sum_{j=1}^J \Big| \int_\Omega w_\varep \cdot \phi_j \, dx \Big|\\
& \le C \, \varep \| \nabla^2 \widetilde{u}_0 \|_{L^2(\mathbb{R}^d)}
+ C \, \varep \| \chi(x/\varep) K_\varep (\nabla \widetilde{u}_0) \|_{L^2(\Omega)}\\
& \le C \, \varep \Big\{ \|\nabla^2 \widetilde{u}_0 \|_{L^2(\mathbb{R}^d)}
+\|\nabla \widetilde{u}_0 \|_{L^2(\mathbb{R}^d)}\Big\},
\endaligned
$$
where $\{ \phi_j: j=1, \dots, J \}$ forms an orthonormal basis of $\mathcal{R}$, as a subspace of $L^2(\Omega; \mathbb{R}^d)$.
The proof is complete.
\end{proof}

The next theorem is an analogue of Theorem \ref{theorem-L-1} for the Neumann boundary conditions.

\begin{theorem}\label{theorem-L-2}
Suppose that $A$ satisfies (\ref{ellipticity})-(\ref{periodicity}).
Let $\Omega$ be a bounded Lipschitz domain in $\mathbb{R}^d$.
Let $u_\varep$ ($\varep \ge 0$) be the weak solutions to the Neumann problem (\ref{NP-1})
with the property $u_\varep \perp \mathcal{R}$.
Assume that $u_0 \in H^2(\Omega; \mathbb{R}^d)$. Then
\begin{equation}\label{L-4.0}
\| u_\varep -u_0 \|_{L^q(\Omega)} \le C\, \varep\,  \| u_0 \|_{H^2(\Omega)},
\end{equation}
where $q=\frac{2d}{d-1}$ and $C$ depends only on $d$, $\kappa_1$, $\kappa_2$, and $\Omega$.
\end{theorem}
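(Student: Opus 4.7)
The proof of Theorem \ref{theorem-L-2} follows the scheme of Theorem \ref{theorem-L-1}, with the Dirichlet auxiliary problem replaced by the Neumann auxiliary problem (\ref{NP-L}) and with Theorem \ref{main-theorem-1} replaced by Theorem \ref{main-theorem-2}. The new wrinkle is that the Neumann datum of $v_\varep$ is a tangential skew derivative of the flux corrector $\phi(x/\varep)K_\varep(\nabla\widetilde{u}_0)$, and the analysis hinges on exploiting the skew-symmetry $\phi_{kij}^{\alpha\beta}=-\phi_{ikj}^{\alpha\beta}$ twice. First, extend $u_0$ to $\widetilde{u}_0\in H^2(\rd;\rd)$ with $\|\widetilde{u}_0\|_{H^2(\rd)}\le C\|u_0\|_{H^2(\Omega)}$ by Calder\'on's theorem. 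Since $\|\chi_k(x/\varep)K_\varep(\partial \widetilde{u}_0/\partial x_k)\|_{L^q(\Omega)}\le C\|u_0\|_{H^2(\Omega)}$ by Lemma \ref{lemma-2.00} and Sobolev embedding, and since $H^1(\Omega)\hookrightarrow L^q(\Omega)$, Lemma \ref{lemma-L-2} reduces the theorem to showing
\begin{equation*}
\|v_\varep\|_{L^q(\Omega)}\le C\,\varep\,\|u_0\|_{H^2(\Omega)}
\end{equation*}
for the solution $v_\varep$ of (\ref{NP-L}).

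Because $v_\varep\perp\mathcal{R}$, $\|v_\varep\|_{L^q(\Omega)}$ equals the supremum of $|\int_\Omega v_\varep\cdot G\, dx|$ over $G\in L^p(\Omega;\rd)$ with $G\perp\mathcal{R}$ and $\|G\|_{L^p}=1$. For such $G$ the compatibility condition $\int G=0$ is automatic, so let $h_\varep\in H^1(\Omega;\rd)$ solve $\mathcal{L}_\varep h_\varep=G$ in $\Omega$ with $\partial h_\varep/\partial\nu_\varep=0$ on $\partial\Omega$ and $h_\varep\perp\mathcal{R}$. Write $\Psi_{ki}^{\alpha\beta}=\phi_{kij}^{\alpha\beta}(x/\varep)K_\varep(\partial \widetilde{u}_0^\beta/\partial x_j)$. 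The skew-symmetry of $\phi_{kij}$ in $(k,i)$ collapses $(n_k\partial_i-n_i\partial_k)\Psi_{ki}=2n_k\partial_i\Psi_{ki}$ on $\partial\Omega$, and the distributional identity $\partial_k\partial_i\Psi_{ki}=0$ (another consequence of the same skew-symmetry combined with Schwarz's theorem) converts the weak form of (\ref{NP-L}) tested against $h_\varep$ into
\begin{equation*}
\int_\Omega v_\varep\cdot G\, dx=\varep\int_\Omega\frac{\partial\Psi_{ki}^{\alpha\beta}}{\partial x_i}\frac{\partial h_\varep^\alpha}{\partial x_k}\, dx.
\end{equation*}

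Splitting the integrand by $1=(1-\eta_\varep)+\eta_\varep$ and integrating the $\eta_\varep$ piece by parts in $\Omega$ (the boundary term vanishes since $\eta_\varep=0$ on $\partial\Omega$), the resulting second-order contribution $-\varep\int\Psi_{ki}^{\alpha\beta}\partial_i\partial_k h_\varep^\alpha\,\eta_\varep\, dx$ vanishes identically by the skew-symmetry of $\Psi_{ki}$ paired with Schwarz, leaving two integrals supported in $\overline{\Omega_{4\varep}}$. For the $(1-\eta_\varep)$-term, the expansion $\partial_i\Psi_{ki}=\varep^{-1}(\partial_i\phi_{kij})(x/\varep)K_\varep(\partial_j\widetilde{u}_0)+\phi_{kij}(x/\varep)K_\varep(\partial_i\partial_j\widetilde{u}_0)$, Lemma \ref{lemma-2.00}, and the Hardy-type bound (\ref{Hardy}) applied with $r=c\varep$ give $\|\partial_i\Psi_{ki}\|_{L^2(\Omega_{4\varep})}\le C\varep^{-1/2}\|u_0\|_{H^2}$; for the $\nabla\eta_\varep$-term, (\ref{Hardy}) and Lemma \ref{lemma-2.00} give $\|\Psi\|_{L^2(\Omega_{4\varep})}\le C\varep^{1/2}\|u_0\|_{H^2}$ while $\|\nabla\eta_\varep\|_{L^\infty}\le C\varep^{-1}$. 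Pairing each with the boundary-layer bound $\|\nabla h_\varep\|_{L^2(\Omega_{c\varep})}\le C\varep^{1/2}\|G\|_{L^p}$ from Theorem \ref{main-theorem-2}, every term is majorized by $C\varep\|u_0\|_{H^2}\|G\|_{L^p}$, and the duality delivers (\ref{L-4.0}).

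The main obstacle I anticipate is the derivation of the volume weak form in the preceding paragraph: rewriting $\partial v_\varep/\partial\nu_\varep$ as a pure volume right-hand side $\varep\,\partial_i\Psi_{ki}$ requires the distributional vanishing of $\partial_k\partial_i\Psi_{ki}$, and the subsequent elimination of $\partial_i\partial_k h_\varep$ requires $\Psi_{ki}$ skew-symmetric plus Schwarz. Both identities are structural consequences of $\phi_{kij}^{\alpha\beta}=-\phi_{ikj}^{\alpha\beta}$ from (\ref{phi}) and survive even though $\phi(x/\varep)$ is only $H^1_{\loc}$; were either to fail, closing the estimate would require $H^{3/2}$-boundary regularity or $L^2$-control of $\nabla^2 h_\varep$ near $\partial\Omega$, neither of which is available at the regularity level supplied by Theorem \ref{main-theorem-2}. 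Technically, these distributional manipulations should be justified by smoothing $\phi$ and passing to the limit, exactly as in the derivation of the classical corrector identities.
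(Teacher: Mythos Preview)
Your proposal is correct and follows essentially the same approach as the paper: both reduce to estimating $v_\varep$ by duality against the Neumann solution $h_\varep$, exploit the skew-symmetry of $\phi_{kij}^{\alpha\beta}$ twice to pass to a volume integral supported in the boundary layer $\Omega_{4\varep}$, and then close using Lemma~\ref{lemma-2.00}, the trace-type bound (\ref{Hardy}), and Theorem~\ref{main-theorem-2}. The only cosmetic difference is the order of operations---the paper performs the tangential integration by parts on $\partial\Omega$ and inserts $(1-\eta_\varep)$ \emph{before} pushing to the interior via the divergence theorem, whereas you go to the volume first via $\partial_k\partial_i\Psi_{ki}=0$ and only afterward introduce the cutoff; both routes yield the same three terms and the same estimates.
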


\begin{proof}
As in the proof of Theorem \ref{theorem-L-1}, it suffices to show that
\begin{equation}\label{L-4.1}
\| v_\varep\|_{L^q(\Omega)} 
\le C\, \varep\,  \| u_0 \|_{H^2(\Omega)},
\end{equation}
where $v_\varep$ is given by (\ref{NP-L}).
To this end we fix $G\in L^p(\Omega; \mathbb{R}^d)$ with $G\perp \mathcal{R}$ and
let $h_\varep\in H^1(\Omega; \mathbb{R}^d)$ be the weak solution to
\begin{equation}\label{L-4.2}
\mathcal{L}_\varep (h_\varep)=G \quad \text{ in } \Omega \quad \text{ and } \quad 
\frac{\partial h_\varep}{\partial \nu_\varep} =0 \quad \text{ on } \partial\Omega,
\end{equation}
with the property $h_\varep \perp \mathcal{R}$.
It follows from (\ref{L-4.2}), (\ref{NP-L}), and the Green's formula that
$$
\aligned
\int_\Omega v_\varep \cdot G\, dx
&=\int_\Omega A(x/\varep) \nabla v_\varep \cdot \nabla h_\varep\, dx
=\int_{\partial \Omega} \frac{\partial v_\varep}{\partial \nu_\varep} \cdot h_\varep\, d\sigma\\
&=\frac{\varep}{2}
\int_{\partial \Omega} \Big( n_k \frac{\partial}{\partial x_i}
-n_i \frac{\partial}{\partial x_k} \Big)
\left\{ \phi_{kij}^{\alpha\beta} (x/\varep) K_\varep \Big( \frac{\partial \widetilde{u}_0^\beta}{\partial x_j}\Big) \right\}
\cdot h_\varep^\alpha \, d\sigma\\
&=-\frac{\varep}{2}
\int_{\partial \Omega}\phi_{kij}^{\alpha\beta} (x/\varep) K_\varep \Big( \frac{\partial \widetilde{u}_0^\beta}{\partial x_j}\Big)
\cdot \Big( n_k \frac{\partial}{\partial x_i}
-n_i \frac{\partial}{\partial x_k} \Big) h_\varep^\alpha \cdot (1-\eta_\varep)\, d\sigma\\
&=-{\varep}\int_\Omega
\frac{\partial}{\partial x_k}
\left\{ \phi_{kij}^{\alpha\beta} (x/\varep) K_\varep \Big( \frac{\partial \widetilde{u}_0^\beta}{\partial x_j}\Big) (1-\eta_\varep) \right\}
\cdot \frac{\partial h_\varep^\alpha}{\partial x_i} \, dx,
\endaligned
$$
where $\eta_\varep \in C_0^\infty(\Omega)$ satisfies (\ref{eta}) and we have used the divergence theorem
as well as (\ref{phi}) for the last inequality.
This leads to
\begin{equation}\label{L-4.3}
\aligned
\Big| \int_\Omega v_\varep \cdot G\, dx \Big|
&\le C \int_{\Omega_{4\varep}} |\nabla \phi (x/\varep)|\, | K_\varep (\nabla \widetilde{u}_0)|\, |\nabla h_\varep|\, dx\\
& \qquad +C\, \varep \int_{\Omega_{4\varep}} |\phi(x/\varep)|\, |K_\varep (\nabla^2 \widetilde{u}_0)|\, |\nabla h_\varep|\, dx \\
& \qquad + C \,\varep \int_{\Omega_{4\varep}} |\phi(x/\varep)|\, |K_\varep (\nabla \widetilde{u}_0)|
\, |\nabla \eta_\varep|\, |\nabla h_\varep|\, dx.
\endaligned
\end{equation}
Note that by the Cauchy inequality, the first and third term in the r.h.s. of (\ref{L-4.3}) are bounded by
$$
\aligned
& C \| \Big( |\nabla \phi(x/\varep)| + |\phi(x/\varep)| \Big) K_\varep (\nabla \widetilde{u}_0) \|_{L^2(\Omega_{4\varep})}
\| \nabla h_\varep \|_{L^2(\Omega_{4\varep})} \\
&\qquad \le C \, \|\nabla \widetilde{u}_0 \|_{L^2(\widetilde{\Omega}_{5\varep})} \|\nabla h_\varep \|_{L^2(\Omega_{4\varep})}\\
&\qquad \le C\, \varep\,  \| u_0\|_{H^2(\Omega)} \| G\|_{L^p(\Omega)},
\endaligned
$$
where we have used Lemma \ref{lemma-p} for the first inequality and Theorem \ref{main-theorem-2} as well as
estimate (\ref{Hardy}) for the second.
Also, the second term in the r.h.s. of (\ref{L-4.3}) is bounded by
$$
\aligned
& C\, \varep\, \|\phi(x/\varep) K_\varep (\nabla^2 \widetilde{u}_0)\|_{L^2(\Omega)} \|\nabla h_\varep\|_{L^2(\Omega)}\\
&\qquad \le C\, \varep\, \| u_0\|_{H^2(\Omega)} \| G\|_{L^p(\Omega)}.
\endaligned
$$
Hence we have proved that for any $G\in L^p(\Omega; \mathbb{R}^d)$ with the property $G\perp \mathcal{A}$,
$$
\Big| \int_\Omega v_\varep \cdot G\, dx \Big|
\le C\, \varep \, \| u_0\|_{H^2(\Omega)} \| G\|_{L^p(\Omega)}.
$$
Since $v_\varep \perp \mathcal{A}$,
this gives the estimate (\ref{L-4.1}) by duality and completes the proof.
\end{proof}

Note that by combining Theorems \ref{theorem-L-1} and \ref{theorem-L-2},
one obtains Theorem \ref{main-theorem-L}.



\section{$C^\alpha$ estimates in $C^1$ domains}
\setcounter{equation}{0}

In this section we investigate uniform boundary $C^\alpha$ estimates in
$C^1$ domains. The results will be used in the next section to establish uniform boundary
$W^{1,p}$ estimates in $C^1$ domains. 
Throughout the section we will assume that the defining function $\psi$ in $D_r$ and $\Delta_r$
is $C^1$ and $\psi(0)=0$. To quantify the $C^1$ condition we further assume that
\begin{equation}\label{tau}
\sup \Big\{ |\nabla \psi (x^\prime) -\nabla \psi (y^\prime)|: x^\prime, y^\prime \in \mathbb{R}^{d-1} 
\text{ and } |x^\prime -y^\prime|\le t \Big\}
\le \tau (t),
\end{equation} 
where $\tau (t)\to 0$ as $t\to 0^+$.

The rescaling argument is used frequently in this paper. 
Suppose that $\mathcal{L}_\varep (u_\varep)=F$ in $D_{2r}$ and $u_\varep=f$ on $\Delta_{2r}$.
Let  $w(x)= u_\varep (rx)$. Then 
$$
\mathcal{L}_{\frac{\varep}{r}} (w)= G \quad \text{ in } \widetilde{D}_2 \quad 
\text{ and } \quad w=g \quad \text{ on } \widetilde{\Delta}_2,
$$
where $G(x)=r^2 F(rx)$, $g(x)= f(rx)$, and
$$
\aligned
\widetilde{D}_2  &=\big\{ (x^\prime, x_d) \in \rd: |x^\prime|<2 \text{ and } \psi_r (x^\prime)<x_d <\psi_r (x^\prime) +2 \big\},\\
\widetilde{\Delta}_2 &=\big\{ (x^\prime, x_d) \in \rd: |x^\prime|<2 \text{ and } x_d =\psi_r (x^\prime) \big\},
\endaligned
$$
with $\psi_r (x^\prime)=r^{-1} \psi(rx^\prime)$.
Note that $\psi_r (0)=0$ and $\|\nabla \psi_r\|_\infty=\|\nabla \psi\|_\infty$.
Moreover, if $\psi$ is $C^1$ and satisfies (\ref{tau}), then
$\psi_r$ satisfies (\ref{tau}) uniformly in $r$ for $0<r\le 1$.

\begin{lemma}\label{lemma-6.1}
Let $0<\varep\le r\le 1$.
Let $u_\varep \in H^1(D_{2r};\rd)$ be a weak solution of $\mathcal{L}_\varep (u_\varep) =0$
in $D_{2r}$ with $u_\varep =0$ on $\Delta_{2r}$.
Then there exists $v\in H^1(D_r; \rd)$ such that $\mathcal{L}_0 (v)=0$ in $D_r$,
$v=0$ on $\Delta_r$, and
\begin{equation}\label{6.1-0}
\left(\average_{D_r} |u_\varep -v|^2\right)^{1/2}
\le C\, \left(\frac{\varep}{r} \right)^{1/2}
\left(\average_{D_{2r}} |u_\varep|^2 \right)^{1/2},
\end{equation}
where $\|\nabla \psi\|_\infty\le M$, and
$C$ depends only on $d$, $\kappa_1$, $\kappa_2$, and $M$.
\end{lemma}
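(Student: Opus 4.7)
The plan is to reduce to the case $r=1$ by a rescaling and then apply the $L^2$ convergence estimate from Remark \ref{remark-2.1}. Set $w(x)=u_\varep(rx)$ and $\tilde\varep=\varep/r\in(0,1]$, so that $\mathcal{L}_{\tilde\varep}(w)=0$ in $\widetilde D_2$ and $w=0$ on $\widetilde\Delta_2$, where $\widetilde D_s$, $\widetilde\Delta_s$ are the cylinders defined by the rescaled profile $\psi_r(x')=r^{-1}\psi(rx')$, which satisfies $\|\nabla\psi_r\|_\infty=\|\nabla\psi\|_\infty\le M$ independently of $r$. I will construct $\tilde v$ with $\mathcal{L}_0(\tilde v)=0$ in a domain containing $\widetilde D_1$, with $\tilde v=0$ on $\widetilde\Delta_1$, satisfying
$$\|w-\tilde v\|_{L^2(\widetilde D_1)}\le C\,\tilde\varep^{1/2}\,\|w\|_{L^2(\widetilde D_2)},$$
and then set $v(x)=\tilde v(x/r)$. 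Since volumes on both sides of (\ref{6.1-0}) scale by the same factor $r^d$, this rescaled bound is equivalent to the claim.

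The first step is to select an intermediate Lipschitz domain on which the Dirichlet trace of $w$ is controlled in $H^1$ by $\|w\|_{L^2(\widetilde D_2)}$. Consider the one-parameter family $\{\widetilde D_s\}_{s\in(1,3/2)}$. Caccioppoli's inequality (which applies since $w=0$ on $\widetilde\Delta_{3/2}$) yields $\int_{\widetilde D_{3/2}}|\nabla w|^2\le C\int_{\widetilde D_2}|w|^2$. A Fubini argument applied to the sweep of $\partial\widetilde D_s\setminus\widetilde\Delta_s$ through the annular region $\widetilde D_{3/2}\setminus\overline{\widetilde D_1}$ then gives
$$\int_1^{3/2}\Big\{\|\nabla w\|_{L^2(\partial\widetilde D_s)}^2+\|w\|_{L^2(\partial\widetilde D_s)}^2\Big\}\,ds\le C\int_{\widetilde D_{3/2}}\big(|\nabla w|^2+|w|^2\big)\,dx\le C\|w\|_{L^2(\widetilde D_2)}^2,$$
so some $s_0\in(1,3/2)$ realizes $\|w\|_{H^1(\partial\widetilde D_{s_0})}\le C\|w\|_{L^2(\widetilde D_2)}$. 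The Lipschitz character of $\widetilde D_{s_0}$ is uniformly controlled by $M$.

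Now let $\tilde v\in H^1(\widetilde D_{s_0};\mathbb{R}^d)$ be the weak solution of the homogenized Dirichlet problem $\mathcal{L}_0(\tilde v)=0$ in $\widetilde D_{s_0}$ with boundary data $\tilde v=w$ on $\partial\widetilde D_{s_0}$. Because $w$ vanishes on $\widetilde\Delta_{s_0}\supset\widetilde\Delta_1$, so does $\tilde v$ on $\widetilde\Delta_1$. Applying the Dirichlet $L^2$-rate estimate (\ref{L-2-D}) of Remark \ref{remark-2.1} to the pair $(w,\tilde v)$ on the Lipschitz domain $\widetilde D_{s_0}$ (with $F=0$ and $f=w|_{\partial\widetilde D_{s_0}}$) gives
$$\|w-\tilde v\|_{L^2(\widetilde D_{s_0})}\le C\,\tilde\varep^{1/2}\,\|w\|_{H^1(\partial\widetilde D_{s_0})}\le C\,\tilde\varep^{1/2}\,\|w\|_{L^2(\widetilde D_2)}.$$
Restricting to $\widetilde D_1\subset\widetilde D_{s_0}$ and unscaling yields (\ref{6.1-0}).

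The main technical point is the Fubini step producing $s_0$. Since $w$ vanishes identically on $\widetilde\Delta_2$, its tangential derivatives vanish on every $\widetilde\Delta_s$ as well, so only the lateral face $\{|x'|=s,\ \psi_r(x')<x_d<\psi_r(x')+s\}$ and the top $\{|x'|<s,\ x_d=\psi_r(x')+s\}$ contribute to $\|w\|_{H^1(\partial\widetilde D_s)}$. Parametrizing each of these faces as $s$ varies in $(1,3/2)$ defines a change of variables onto an open subset of $\widetilde D_{3/2}\setminus\overline{\widetilde D_1}$ whose Jacobian is bounded in terms of $M$, which converts the integrated boundary norm into the interior norm appearing above. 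This is routine but is the only place the geometry of the cylinders $\widetilde D_s$ actually enters the argument.
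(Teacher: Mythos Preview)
Your proof is correct and follows essentially the same route as the paper's: rescale to $r=1$, use Caccioppoli plus a co-area/Fubini selection to find an intermediate cylinder $D_{s_0}$ on which the $H^1$ boundary trace of $u_\varep$ is controlled by $\|u_\varep\|_{L^2(D_2)}$, solve the homogenized Dirichlet problem on $D_{s_0}$ with data $u_\varep|_{\partial D_{s_0}}$, and invoke the $O(\varep^{1/2})$ $L^2$ rate from Remark~\ref{remark-2.1}. Your write-up is in fact more explicit about the rescaling and the co-area step than the paper's, and your choice $s_0\in(1,3/2)$ guarantees $\widetilde D_1\subset\widetilde D_{s_0}$ cleanly.
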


\begin{proof}
By rescaling we may assume $r=1$.
By Cacciopoli's inequality, 
\begin{equation}\label{C-4-D}
\left(\average_{D_{3/2} } |\nabla u_\varep|^2\right)^{1/2}
\le C \left(\average_{D_2} |u_\varep|^2 \right)^{1/2}.
\end{equation}
It follows from (\ref{C-4-D}) and the co-area formula that there exists $t\in [4/5, 3/2]$ such that
\begin{equation}\label{6.1-1}
\| \nabla u_\varep \|_{L^2(\partial D_t\setminus \Delta_2)}  +\| u_\varep \|_{L^2(\partial D_t\setminus D_2)}
\le C \, \| u_\varep\|_{L^2(D_2)}.
\end{equation}
Let $v$ be the solution to the Dirichlet problem:
$\mathcal{L}_0 (v)=0$ in $D_t$ and $v=u_\varep$ on $\partial D_t$.
Note that $v=0$ on $\Delta_1$, and by Remark \ref{remark-2.1},
\begin{equation}\label{6.1-2}
\| u_\varep -v\|_{L^2(D_t)}
\le C\varep^{1/2} \, \| u_\varep \|_{H^1(\partial D_t)}.
\end{equation}
This, together with (\ref{6.1-1}), gives
$$
\| u_\varep - v\|_{L^2(D_1)}
\le \| u_\varep -v\|_{L^2(D_t)}
\le C \varep^{1/2}\,  \| u_\varep\|_{L^2(D_2)},
$$
and completes the proof.
\end{proof}

\begin{theorem}\label{theorem-6.1}
Suppose that $A=A(y)$ satisfies (\ref{ellipticity})-(\ref{periodicity}).
Let $u_\varep$ be a weak solution of $\mathcal{L}_\varep (u_\varep) =0$ in $D_1$
with $u_\varep =0$ on $\Delta_1$, where the defining function $\psi$ in $D_1$ and $\Delta_1$
is $C^1$. Then, for any $\alpha \in (0,1)$ and $\varep\le r\le (1/2)$,
\begin{equation}\label{6.2-0}
\left(\average_{D_r} |\nabla u_\varep|^2 \right)^{1/2}
\le C_\alpha\, r^{\alpha -1} \left(\average_{D_1} |u_\varep|^2\right)^{1/2},
\end{equation}
where $C_\alpha$ depends only on $d$, $\alpha$, $\kappa_1$, $\kappa_2$, and 
the function $\tau (t)$ in (\ref{tau}).
\end{theorem}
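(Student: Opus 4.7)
The plan is to prove (\ref{6.2-0}) by combining the approximation result of Lemma \ref{lemma-6.1} with sharp boundary H\"older regularity for the homogenized constant-coefficient system $\mathcal{L}_0$ in $C^1$ domains, run through a Campanato-type iteration in the spirit of the Armstrong-Smart scheme alluded to in the introduction. Set
\[
H(r)=\left(\average_{D_r}|u_\varepsilon|^2\right)^{1/2}.
\]
The main target is to show $H(r)\le C_\alpha\, r^\alpha H(1)$ for every $r\in[\varepsilon,1/2]$. Once this is in hand, Cacciopoli's inequality applied to $u_\varepsilon$ (which satisfies $\mathcal{L}_\varepsilon u_\varepsilon=0$ in $D_r$ and $u_\varepsilon=0$ on $\Delta_r$) gives $\bigl(\average_{D_{r/2}}|\nabla u_\varepsilon|^2\bigr)^{1/2}\le C r^{-1}H(r)\le C r^{\alpha-1}H(1)$, which is exactly (\ref{6.2-0}) after a harmless adjustment in the radius.

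First I would isolate a one-step excess-decay inequality. Fix $\alpha_1\in(\alpha,1)$. Because $\mathcal{L}_0$ has constant coefficients and $\psi$ is $C^1$ with modulus $\tau$, the classical boundary H\"older estimate for constant-coefficient elliptic systems in $C^1$ domains yields
\[
\left(\average_{D_{\theta s}}|v|^2\right)^{1/2}\le C_\ast\,\theta^{\alpha_1}\left(\average_{D_s}|v|^2\right)^{1/2}
\]
for every $\theta\in(0,1/2)$ and every weak solution $v$ of $\mathcal{L}_0 v=0$ in $D_s$ with $v=0$ on $\Delta_s$; the constant $C_\ast$ is independent of the scale $s\in(0,1]$ because the rescaling at the start of the section preserves $\|\nabla\psi\|_\infty$ and the modulus $\tau$. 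Choose $\theta\in(0,1/4)$ so that $C_\ast \theta^{\alpha_1}\le \tfrac14\theta^\alpha$. For $r\in[\varepsilon,1/2]$, apply Lemma \ref{lemma-6.1} at scale $r$ to produce $v$, use the $L^2$ triangle inequality on $D_{\theta r}$ (together with $|D_r|/|D_{\theta r}|\le C\theta^{-d}$), and combine with the decay for $v$ to obtain
\begin{equation}\label{one-step-6}
H(\theta r)\le \tfrac12\theta^\alpha H(r)+C_\theta\, (\varepsilon/r)^{1/2}\,H(2r),\qquad r\in[\varepsilon,1/2].
\end{equation}

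The core of the argument is to iterate (\ref{one-step-6}) at the geometric scales $r_k=\theta^k$. Let $a_k=H(r_k)$; the elementary doubling $H(\rho)\le C H(2\rho)$ gives $H(2 r_k)\le Ca_{k-1}$ once $\theta\le 1/4$, so after rescaling $b_k=\theta^{-k\alpha}a_k$ one lands on the two-step linear recursion
\[
b_{k+1}\le \tfrac12 b_k+C\theta^{-2\alpha}\,\omega_k\,b_{k-1},\qquad \omega_k:=(\varepsilon/\theta^k)^{1/2},
\]
valid for $0\le k\le k_\ast:=\lfloor \log\varepsilon/\log\theta\rfloor$. The decisive observation is that $\omega_k=\theta^{(k_\ast-k)/2}$ is geometric, so $\sum_{k\le k_\ast}\omega_k\le (1-\theta^{1/2})^{-1}$ is bounded independently of $\varepsilon$. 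A standard Gronwall-type estimate for such recursions then gives $\max_{k\le k_\ast}b_k\le C(b_0+b_1)\le C H(1)$, that is $H(\theta^k)\le C\theta^{k\alpha}H(1)$, and doubling interpolates to arbitrary $r\in[\varepsilon,1]$.

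I expect the main obstacle to lie precisely in this iteration step. The approximation rate $(\varepsilon/r)^{1/2}$ delivered by Lemma \ref{lemma-6.1} is sub-critical and is of order $1$ at the smallest admissible scale $r\simeq\varepsilon$, so a coarse estimate would force $b_k$ to amplify through the $\simeq\log(1/\varepsilon)$ iterated scales and would yield only exponents $\alpha<1/2$. The resolution, which is the Armstrong-Smart insight exploited here, is that the errors at successive scales form a geometric sequence whose sum telescopes to a constant; this is what permits every $\alpha<1$. The boundary $C^{\alpha_1}$ regularity for $\mathcal{L}_0$ in $C^1$ domains and Cacciopoli's inequality are standard ingredients, and Lemma \ref{lemma-6.1} supplies the approximation input; the genuine new content is that the scheme succeeds under a sub-critical approximation rate.
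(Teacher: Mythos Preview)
Your proposal is correct and follows essentially the same strategy as the paper: both arguments derive the identical one-step inequality by combining Lemma~\ref{lemma-6.1} with the boundary $C^{\alpha_1}$ estimate for $\mathcal{L}_0$ in $C^1$ domains, and then iterate across scales exploiting the summability of the error $(\varep/r)^{1/2}$. The only cosmetic difference is in the iteration mechanism: you run a discrete recursion at the scales $r_k=\theta^k$ and close it with a Gronwall-type bound (which is indeed easy, e.g.\ via $B_k=\max(b_k,b_{k-1})$ and $B_{k+1}\le (1+c_k)B_k$), whereas the paper sets $\phi(r)=r^{-\alpha}H(r)$, integrates the one-step inequality in $\frac{dr}{r}$ over $[a,1/2]$, and absorbs the resulting integrals---both procedures encode the same telescoping of a geometric error series and yield the same conclusion.
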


\begin{proof} Fix $\beta \in (\alpha,1)$.
For each $r\in [\varep, 1/2]$, let $v=v_r$ be the function given by Lemma \ref{lemma-6.1}.
By the boundary $C^\beta$ estimates in $C^1$ domains for the operator $\mathcal{L}_0$,
$$
\left(\average_{D_{\theta r} } |v|^2 \right)^{1/2}
\le C_0 \theta^\beta \left(\average_{D_r} |v|^2\right)^{1/2},
$$
for any $\theta \in (0,1)$, where $C_0$ depends only on $d$, $\kappa_1$, $\kappa_2$, $\beta$ and $\tau (t)$.
It follows that
$$
\aligned
\left(\average_{D_{\theta r}} |u_\varep|^2 \right)^{1/2}
&\le \left(\average_{D_{\theta r}} | v|^2 \right)^{1/2}
+ C \left(\average_{D_{\theta r}} |u_\varep -v |^2 \right)^{1/2}\\
&\le C\, \theta^\beta  \left(\average_{D_{ r}} | v|^2 \right)^{1/2}
+ C\theta^{-\frac{d}{2}} \left(\average_{D_{r}} |u_\varep -v |^2 \right)^{1/2}\\
 & \le  C_1\, \theta^\beta \left(\average_{D_{ r}} | u_\varep |^2 \right)^{1/2}
+ C_1\, \theta^{-\frac{d}{2}} \left(\frac{\varep}{r}\right)^{1/2}\left(\average_{D_{2r}} |u_\varep  |^2 \right)^{1/2},
\endaligned
$$
for any $\varep\le r\le 1/2$.
We now choose $\theta\in (0,1/4)$ so  small that $C_1\theta^{\beta-\alpha}<(1/4)$.
With $\theta$ fixed, choose $N>1 $ large so that 
$$
C_1 2^\alpha \theta^{-\frac{d}{2}-\alpha } N^{-1/2} \le (1/4).
$$
It follows that if $r\ge N\varep$,
\begin{equation}\label{6.2-10}
\phi (\theta r) \le \frac14 \Big\{ \phi (r) +\phi (2r) \Big\},
\end{equation}
where
$$
\phi (r) =r^{-\alpha} \left(\average_{D_r} |u_\varep|^2 \right)^{1/2}.
$$
By integration we may deduce from (\ref{6.2-10}) that
$$
\int_{\theta a}^{\theta/2} \phi (r) \frac{dr}{r}
\le \frac14 \int_{a}^{1/2}  \phi (r) \frac{dr}{r}
+\frac14  \int_{2a}^{1}  \phi (r) \frac{dr}{r},
$$
where $N\varep \le a<  (1/2)$. This implies that
$$
\int_{\theta a}^{1} \phi (r) \, \frac{dr}{r}
\le C \int_{ \theta/2}^{1} \phi( r)\, \frac{dr}{r}\le C \, \phi (1).
$$
Hence, $\phi (r)\le C\, \phi (1)$ for any $r\in [\varep, 1]$, and 
the estimate (\ref{6.2-0}) now follows by Cacciopoli's inequality.
\end{proof}

\begin{remark} 
{\rm
Under the stronger assumption that the defining function $\phi$ for $D_1$ is $C^{1, \sigma}$
for some $\sigma>0$, we will show in Section 8 that the estimate (\ref{6.2-0}) holds for $\alpha=1$.
In particular, it follows from the argument in Section 7 that if $\mathcal{L}_\varep (u_\varep)=0$
in $B(0,1)$, then
\begin{equation}\label{I-6}
\left(\average_{B(0,r)} |\nabla u_\varep|^2\right)^{1/2}
\le C \, \left(\average_{B(0,1)} |\nabla u_\varep|^2\right)^{1/2}
\end{equation}
for any $\varep\le r< 1$.
This is the interior Lipschitz estimate down the scale $\varep$.
}
\end{remark}

A function $A$ is said to belong to $V\!M\!O(\rd)$ if the l.h.s. of
(\ref{VMO})  goes to zero as $t\to 0^+$. To quantify this assumption we assume that
\begin{equation}\label{VMO}
\sup_{\substack{x\in \rd\\ 0<r<t}}
\average_{B(x,r)} \Big| A (y) -\average_{B(x,r)} A \Big|\, dy \le \rho (t),
\end{equation}
where $\rho (t) \to 0$ as $t \to 0^+$.

The following corollary was essentially proved in \cite{AL-1987} by a compactness method.

\begin{corollary}\label{cor-6-1}
Suppose that $A$ satisfies (\ref{ellipticity})-(\ref{periodicity}).
Also assume that $A\in V\!M\!O(\rd)$. 
Let $u_\varep\in H^1(D_1;\rd)$ be a weak solution of $\mathcal{L}_\varep (u_\varep)=0$
in $D_1$ with $u_\varep =0$ on $\Delta_1$.
Then, for any $\alpha\in (0,1)$,
\begin{equation}\label{6.3-0}
\| u_\varep\|_{C^\alpha (D_{1/2})}
\le C_\alpha  \left(\average_{D_1} |u_\varep|^2 \right)^{1/2},
\end{equation}
where $C_\alpha$ depends only on $d$, $\kappa_1$, $\kappa_2$, $\alpha$, and 
the functions $\tau (t)$, $\rho (t)$.
\end{corollary}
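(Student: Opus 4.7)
The plan is a two-scale splicing argument: Theorem \ref{theorem-6.1} supplies the $C^\alpha$-type decay on scales $r \in [\varep, 1/4]$ without any smoothness assumption on $A$, while the classical boundary $C^\alpha$ theory for the fixed operator $\mathcal{L}_1$ with VMO coefficients on $C^1$ domains supplies it on scales $r \in (0, \varep)$. The VMO hypothesis (\ref{VMO}) is used only in the latter regime, and the two regimes are glued at the microscopic scale $\varep$. By Campanato's characterization of $C^\alpha$, it suffices to prove, for every $x_0 \in \overline{D_{1/2}}$ and every $0 < r < c$, the Morrey/Campanato estimate
$$\left(\average_{D_r(x_0) \cap D_1} |u_\varep - c_{x_0,r}|^2\right)^{1/2} \le C_\alpha\, r^\alpha \left(\average_{D_1} |u_\varep|^2\right)^{1/2},$$
with $c_{x_0,r} = 0$ when $x_0 \in \Delta_1$ and $c_{x_0,r}$ the average otherwise.

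For $x_0 \in \Delta_{1/2}$, translate so that $x_0$ lies at the origin; the modulus $\tau$ in (\ref{tau}) is preserved. When $\varep \le r \le c$, Theorem \ref{theorem-6.1} yields $(\average_{D_r} |\nabla u_\varep|^2)^{1/2} \le C_\alpha r^{\alpha-1}(\average_{D_1} |u_\varep|^2)^{1/2}$, and since $u_\varep$ vanishes on $\Delta_r$, Poincar\'e's inequality upgrades this to the desired $L^2$-decay with exponent $\alpha$. When $0 < r < \varep$, rescale: $w(z) := u_\varep(\varep z)$ satisfies $\mathcal{L}_1(w) = 0$ on the dilated domain, whose defining function $\psi_\varep(z') = \varep^{-1}\psi(\varep z')$ has $C^1$-modulus bounded by the original $\tau$, and the coefficient matrix $A(z)$ retains the VMO modulus $\rho$ (the argument of $A$ is untouched by the dilation). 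The classical boundary $C^\alpha$ estimate for $\mathcal{L}_1$ with VMO coefficients on $C^1$ domains with zero Dirichlet data (valid for any $\alpha \in (0,1)$ with constant depending on $d$, $\kappa_1$, $\kappa_2$, $\rho$, $\tau$, $\alpha$), applied to $w$ and translated back, gives
$$\left(\average_{D_r(x_0)} |u_\varep|^2\right)^{1/2} \le C_\alpha (r/\varep)^\alpha \left(\average_{D_\varep(x_0)} |u_\varep|^2\right)^{1/2}.$$
Splicing at $r = \varep$ completes the boundary case. The interior case (for $x_0$ away from $\Delta_1$) is handled in parallel using the interior analogue of Theorem \ref{theorem-6.1} together with interior VMO $C^\alpha$ regularity for $\mathcal{L}_1$, subtracting averages instead of invoking Poincar\'e.

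The main obstacle is the small-scale input: the classical boundary $C^\alpha$ regularity for $\mathcal{L}_1$ with VMO coefficients on $C^1$ domains with zero Dirichlet data, applied uniformly in $\varep$. The uniformity requires two compatibility checks under the $z \mapsto \varep z$ dilation, both of which are immediate: the $C^1$-modulus of the rescaled boundary, $\tau(\varep \cdot)$, is dominated by $\tau$ for $\varep \le 1$; and the VMO modulus of the \emph{unrescaled} coefficient $A(z)$ is unaffected by the dilation. Once these are in place, the splicing produces the full estimate $\|u_\varep\|_{C^\alpha(D_{1/2})} \le C_\alpha(\average_{D_1} |u_\varep|^2)^{1/2}$.
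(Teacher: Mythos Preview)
Your proposal is correct and follows essentially the same approach as the paper: both splice the large-scale estimate from Theorem \ref{theorem-6.1} (valid for $r\ge\varep$) with a small-scale $C^\alpha$ estimate obtained by rescaling to $\mathcal{L}_1$ and invoking classical VMO regularity on $C^1$ domains (valid for $0<r<\varep$), then conclude via Campanato's characterization. The only cosmetic difference is that the paper carries out the splicing at the level of gradient averages (showing \eqref{6.2-0} holds for all $0<r<1/2$ and then deducing \eqref{6.3-0}), whereas you phrase it directly in terms of $L^2$-oscillations of $u_\varep$ with a Poincar\'e step; the content is the same.
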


\begin{proof}
We may assume that $0<\varep<(1/2)$, as the case of $\varep\ge (1/2)$ is local.
Since $\mathcal{L}_1\big( u_\varep (\varep x) \big)=0$,
it follows from the boundary $C^\alpha$ estimates in $C^1$ domains
for the operator $\mathcal{L}_1$ by rescaling that if  $\alpha  \in (0,1)$ and $0<r<\varep$,
$$
\left(\average_{D_r} |\nabla u_\varep|^2\right)^{1/2}
\le C \left(\frac{r}{\varep}\right)^{\alpha -1} \left(\average_{D_\varep} |\nabla u_\varep|^2\right)^{1/2},
$$
where $C$ depends only on $d$, $\kappa_1$, $\kappa_2$, $\alpha$,
$\tau (t) $ and $\rho (t)$.
This, together with Theorem \ref{theorem-6.1}, shows that the estimate
(\ref{6.2-0}) holds for any $0<r<(1/2)$.
By combining (\ref{6.2-0}) with a similar interior estimate, we obtain 
\begin{equation}\label{6.3-1}
r^{\alpha-1} \left(\average_{B(x, r)\cap D_{1/2}} |\nabla u_\varep|^2\right)^{1/2}
\le C\,  \| u_\varep\|_{L^2(D_1)},
\end{equation}
for any $0<r<c$ and $x\in D_{1/2}$.
The estimate (\ref{6.3-0}) follows from (\ref{6.3-1}) by  Campanato's characterization of H\"older spaces.
\end{proof}

The rest of this section is devoted to the boundary $C^\alpha$
estimates for solutions with the Neumann boundary conditions.

\begin{lemma}\label{lemma-6.4}
Let $0<\varep\le r\le 1$.
Let $u_\varep\in H^1(D_{2r};\rd)$ be a weak solution of $\mathcal{L}_\varep (u_\varep)=0$
in $D_{2r}$ with $\frac{\partial u_\varep}{\partial \nu_\varep} =0$ on $\Delta_{2r}$.
Then there exists a function $w\in H^1(D_r; \rd)$ such that
$\mathcal{L}_0 (w)=0$, $\frac{\partial w}{\partial \nu_0}=0$ in $\Delta_r$,
and
\begin{equation}\label{6.4}
\left(\average_{D_r} |u_\varep -w|^2\right)^{1/2}
\le C \left(\frac{\varep}{r} \right)^{1/2}
\left(\average_{D_{2r}} |u_\varep|^2 \right)^{1/2},
\end{equation}
where $\|\psi\|_\infty\le M$, and
$C$ depends only on $d$, $\kappa_1$, $\kappa_2$, and $M$.
\end{lemma}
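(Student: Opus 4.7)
The plan follows the Dirichlet argument of Lemma \ref{lemma-6.1} closely, but with the $L^2$ convergence estimate replaced by its Neumann counterpart \eqref{L-2-N} from Remark \ref{remark-2.1}, and with a subtraction of a rigid displacement to handle the compatibility conditions inherent to the elasticity Neumann problem. By rescaling I may reduce to $r=1$. Caccioppoli's inequality gives $\int_{D_{3/2}} |\nabla u_\varep|^2 \le C \int_{D_2} |u_\varep|^2$, and the co-area formula supplies a radius $t\in[1,3/2]$ such that
$$
\int_{\partial D_t \setminus \Delta_t} \bigl(|u_\varep|^2 + |\nabla u_\varep|^2\bigr)\, d\sigma
\le C \int_{D_2} |u_\varep|^2 \, dx,
$$
so that $\frac{\partial u_\varep}{\partial \nu_\varep}$ exists in $L^2(\partial D_t)$ as a trace and vanishes on $\Delta_t\subset \Delta_2$.

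Next I would pick $m\in\mathcal{R}$ so that $\tilde{u}_\varep:=u_\varep - m$ is perpendicular to $\mathcal{R}$ in $L^2(D_t;\rd)$. Because $\nabla m$ is antisymmetric, the elasticity symmetries $a_{ij}^{\alpha\beta}=a_{ji}^{\beta\alpha}=a_{\alpha j}^{i\beta}$ imply that $a_{ij}^{\alpha\beta}\partial_j m^\beta\equiv 0$ as a matrix in $(i,\alpha)$; hence $\mathcal{L}_\varep m=0$ in $D_t$ and $\tfrac{\partial m}{\partial \nu_\varep}=0$ on all of $\partial D_t$. Consequently $\tilde{u}_\varep$ is a weak solution of $\mathcal{L}_\varep \tilde{u}_\varep=0$ in $D_t$ with Neumann data $g:=\tfrac{\partial \tilde{u}_\varep}{\partial \nu_\varep}=\tfrac{\partial u_\varep}{\partial \nu_\varep}$, which vanishes on $\Delta_t$ and satisfies $\|g\|_{L^2(\partial D_t)}\le C \|u_\varep\|_{L^2(D_2)}$. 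The required compatibility $\int_{\partial D_t} g\cdot\phi\, d\sigma=0$ for each $\phi\in\mathcal{R}$ follows from integration by parts and the same antisymmetry argument applied to $\nabla\phi$.

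I then let $w_0\in H^1(D_t;\rd)$ solve $\mathcal{L}_0 w_0=0$ in $D_t$ with $\tfrac{\partial w_0}{\partial \nu_0}=g$ on $\partial D_t$ and $w_0\perp \mathcal{R}$. The Lipschitz character of $D_t$ is controlled uniformly by $M$ for $t\in[1,3/2]$, so \eqref{L-2-N} in Remark \ref{remark-2.1} yields
$$
\|\tilde{u}_\varep - w_0\|_{L^2(D_t)} \le C\,\varep^{1/2}\,\|g\|_{L^2(\partial D_t)}
\le C\,\varep^{1/2}\,\|u_\varep\|_{L^2(D_2)}.
$$
Defining $w:=w_0+m$ on $D_1\subset D_t$, I have $\mathcal{L}_0 w=0$ in $D_1$ and $\tfrac{\partial w}{\partial \nu_0}=0$ on $\Delta_1$ (again since $\nabla m$ is antisymmetric), while $u_\varep - w = \tilde{u}_\varep - w_0$, which delivers the estimate \eqref{6.4}.

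The main obstacle is the bookkeeping with rigid displacements: the conormal derivatives and perpendicularity constraints appearing in Remark \ref{remark-2.1} must be aligned, and the Neumann compatibility on $D_t$ must be checked. Both points reduce to the fact that rigid displacements are transparent to the elasticity operator in the bulk and at the boundary, which is a direct consequence of the minor and major symmetries in \eqref{ellipticity}. A secondary technical point is verifying that the constants in \eqref{L-2-N} for $D_t$ are uniform in $t\in[1,3/2]$, which follows from the fact that the defining function of $D_t$ is the same $\psi$ whose Lipschitz norm is bounded by $M$.
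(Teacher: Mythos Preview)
Your argument is correct and is essentially the same as the paper's: both rescale to $r=1$, use Caccioppoli plus the co-area formula to select a good radius $t$, subtract a rigid displacement to impose $u_\varep-\phi_\varep\perp\mathcal{R}$ in $L^2(D_t)$, solve the homogenized Neumann problem on $D_t$ with the same conormal data, apply the $O(\varep^{1/2})$ estimate \eqref{L-2-N} from Remark~\ref{remark-2.1}, and then add the rigid displacement back. Your extra care in justifying that rigid displacements have vanishing conormal derivative and that the Neumann compatibility holds is welcome but does not change the route.
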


\begin{proof}
By rescaling we may assume $r=1$.
As in the proof of Lemma \ref{lemma-6.1}, there exists $t\in [4/5, 3/2]$ such that
$$
\| u_\varep\|_{L^2(\partial D_t\setminus \Delta_2 )} +\|\nabla u_\varep\|_{L^2(\partial D_t\setminus \Delta_2)}
\le C\, \| u_\varep\|_{L^2(D_2)}.
$$
Let $\phi_\varep$ be a function in $\mathcal{R}$ such that
$u_\varep -\phi_\varep \perp \mathcal{R}$ in $L^2(D_t; \rd)$.
Let $v$ be the solution to the Neumann problem:
$\mathcal{L}_0 (v)=0$ in $D_t$ and $\frac{\partial v}{\partial \nu_0}=
\frac{\partial u_\varep}{\partial\nu_\varep}$ on $\partial D_t$, with
$v \perp \mathcal{R}$.
It follows from Remark \ref{remark-2.1} that
$$
\aligned
\|u_\varep -\phi_\varep -v\|_{L^2(D_1)}
&\le \|u_\varep -\phi_\varep -v\|_{L^2(D_t)}\\
&\le C \varep^{1/2} \|\frac{\partial u_\varep}{\partial\nu_\varep} \|_{L^2(\partial D_t)}\\
&\le C\varep^{1/2} \| u_\varep\|_{L^2(D_2)}.
\endaligned
$$
It is easy to see that the function $w=v+\phi_\varep$ satisfies the desired conditions.
\end{proof}

\begin{theorem}\label{theorem-6.2}
Suppose that $A=A(y)$ satisfies (\ref{ellipticity})-(\ref{periodicity}).
Let $u_\varep$ be a weak solution of $\mathcal{L}_\varep (u_\varep) =0$ in $D_1$
with $\frac{\partial u_\varep}{\partial\nu_\varep} =0$ on $\Delta_1$, where the defining function $\psi$ in $D_1$ and $\Delta_1$
is $C^1$. Then, for any $\alpha \in (0,1)$ and $\varep\le r\le 1$,
\begin{equation}\label{6.5-0}
\left(\average_{D_r} |\nabla u_\varep|^2 \right)^{1/2}
\le C_\alpha \, r^{\alpha -1} \left(\average_{D_1} |\nabla u_\varep|^2\right)^{1/2},
\end{equation}
where $C$ depends only on $d$, $\alpha$, $\kappa_1$, $\kappa_2$, and the function $\tau (t)$.
\end{theorem}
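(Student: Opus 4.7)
The proof follows the strategy of Theorem \ref{theorem-6.1}, with Lemma \ref{lemma-6.4} replacing Lemma \ref{lemma-6.1} and with the space $\mathcal{R}$ of rigid displacements playing the role played by $0$ in the Dirichlet setting. Since $\mathcal{L}_\varep(v)=0$ and $\frac{\partial v}{\partial \nu_\varep}=0$ on $\Delta_1$ are preserved under the substitution $v\mapsto v-E$ for every $E\in\mathcal{R}$, the natural quantity to iterate is
$$
\Phi(r):= r^{-\alpha}\inf_{E\in\mathcal{R}}\left(\average_{D_r}|u_\varep -E|^2\right)^{1/2},
$$
which is the Neumann analogue of $\phi(r) = r^{-\alpha}(\average_{D_r}|u_\varep|^2)^{1/2}$ used in Theorem \ref{theorem-6.1}.

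Fix $\beta\in(\alpha,1)$ and, given $r\in[\varep,1/2]$, choose $E^{**}\in\mathcal{R}$ realizing the infimum defining $\Phi(2r)$. Applying Lemma \ref{lemma-6.4} to $u_\varep-E^{**}$, which satisfies the same homogeneous Neumann problem, produces $w\in H^1(D_r;\rd)$ with $\mathcal{L}_0(w)=0$, $\frac{\partial w}{\partial\nu_0}=0$ on $\Delta_r$, and
$$
\left(\average_{D_r}|u_\varep-E^{**}-w|^2\right)^{1/2}\le C\left(\frac{\varep}{r}\right)^{1/2}\left(\average_{D_{2r}}|u_\varep-E^{**}|^2\right)^{1/2}= C\left(\frac{\varep}{r}\right)^{1/2}(2r)^{\alpha}\Phi(2r).
$$
Using the $C^\beta$ boundary regularity modulo $\mathcal{R}$ for the constant-coefficient Neumann problem $\mathcal{L}_0(w)=0$ in the $C^1$ domain $D_r$, there exists $E'\in\mathcal{R}$ such that for every $\theta\in(0,1/4)$,
$$
\left(\average_{D_{\theta r}}|w-E'|^2\right)^{1/2}\le C_0\, \theta^{\beta} \inf_{E\in\mathcal{R}}\left(\average_{D_r}|w-E|^2\right)^{1/2}.
$$
Inserting these two displays into the triangle inequality and noting that $E^{**}+E'$ is an admissible competitor for the infimum defining $\Phi(\theta r)$, I arrive at
$$
\Phi(\theta r)\le C_1\, \theta^{\beta-\alpha}\Phi(r)+C_1\,\theta^{-d/2-\alpha}\left(\frac{\varep}{r}\right)^{1/2}\Phi(2r).
$$

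Choosing first $\theta$ so small that $C_1\theta^{\beta-\alpha}\le 1/4$ and then $N$ so large that $C_1\, 2^\alpha \theta^{-d/2-\alpha} N^{-1/2}\le 1/4$ reduces this to $\Phi(\theta r)\le \tfrac14\{\Phi(r)+\Phi(2r)\}$ for all $r\ge N\varep$, precisely the recursion (\ref{6.2-10}) that drove Theorem \ref{theorem-6.1}. The same integration in $r$ then gives $\Phi(r)\le C\Phi(1)$ for $\varep\le r\le 1$, and a boundary Caccioppoli inequality for the Neumann problem (applied to $u_\varep-E^{**}$ and combined with the second Korn inequality) converts this oscillation bound into (\ref{6.5-0}). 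The principal obstacle is the $C^\beta$ decay modulo $\mathcal{R}$ for the homogenized operator in a $C^1$ domain; this is a standard boundary regularity fact, following from the $W^{1,p}$ theory for $\mathcal{L}_0$ in $C^1$ domains applied to $w$ minus its $L^2(D_r)$-projection onto $\mathcal{R}$, together with Morrey's embedding, and requires no smoothness of $\nabla\psi$ beyond the modulus $\tau(t)$.
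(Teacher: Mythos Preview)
Your argument is essentially the same as the paper's, with one difference that introduces a small gap at the end. The paper iterates
\[
\phi(r)=r^{-\alpha}\inf_{q\in\rd}\left(\average_{D_r}|u_\varep-q|^2\right)^{1/2},
\]
taking the infimum only over constants $q\in\rd$, whereas you take the infimum over the full space $\mathcal{R}$ of rigid displacements. Both choices produce the same recursion and hence the bound $\Phi(r)\le C\Phi(1)$ for $\varep\le r\le 1$. The advantage of the paper's choice shows up in the final step: since $\nabla(u_\varep-q)=\nabla u_\varep$ for constant $q$, the boundary Caccioppoli inequality together with the second Korn inequality gives
\[
\left(\average_{D_{r}}|\nabla u_\varep|^2\right)^{1/2}\le \frac{C}{r}\inf_{q\in\rd}\left(\average_{D_{2r}}|u_\varep-q|^2\right)^{1/2}
\]
directly, and (\ref{6.5-0}) follows at once from $\phi(r)\le C\phi(1)$. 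In your formulation, applying Caccioppoli and Korn to $u_\varep-E^{**}$ with $E^{**}=M^{**}x+q^{**}\in\mathcal{R}$ controls only $(\average_{D_r}|\nabla u_\varep-M^{**}|^2)^{1/2}$, since the antisymmetric matrix $M^{**}$ does not drop out under differentiation. To reach (\ref{6.5-0}) you would still need a bound $|M^{**}_r|\le Cr^{\alpha-1}\Phi(1)$ on the scale-dependent minimizing matrix; this is obtainable by a standard telescoping comparison of the minimizers $E_r$ across dyadic scales (in the spirit of Lemma~\ref{G-lemma}), but that step is absent from your sketch.
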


\begin{proof}
Fix $\beta\in (\alpha,1)$.
For each $r\in [\varep, 1/2]$, let $w=w_r$ be the function given by Lemma \ref{lemma-6.4}.
By the boundary $C^\beta$ estimates in $C^1$ domains for the operator $\mathcal{L}_0$,
$$
\inf_{q\in \rd} \left(\average_{D_{\theta r}} | w-q|^2 \right)^{1/2}
\le C_0 \theta^\beta  \inf_{q\in \rd} 
\left(\average_{D_r} |w -q|^2 \right)^{1/2},
$$
where $C_0$ depends only on $d$, $\beta$, $\kappa_1$, $\kappa_2$, and $\tau (t)$.
This, together with Lemma \ref{lemma-6.4}, gives
$$
\aligned
\inf_{q\in \rd}  & \left(\average_{D_{\theta r}} | u_\varep-q|^2 \right)^{1/2}\\
&\le C \inf_{q\in \rd} \left(\average_{D_{\theta r}} | w -q |^2\right)^{1/2}
+\left(\average_{D_{\theta r}} | u_\varep -w |^2 \right)^{1/2}\\
&\le C\theta^\beta  \inf_{q\in \rd} \left(\average_{D_{r}} | w -q |^2\right)^{1/2}
+C_0 \theta^{-\frac{d}{2}} 
\left(\average_{D_{r}} |u_\varep -w|^2\right)^{1/2}\\
& \le C\theta^\beta  \inf_{q\in \rd} \left(\average_{D_{r}} | u_\varep  -q |^2\right)^{1/2}
+C \theta^{-\frac{d}{2}} \left(\frac{\varep}{r}\right)^{1/2}
\left(\average_{D_{2r}} |u_\varep |^2\right)^{1/2}
\endaligned
$$
By replacing $u_\varep$ with $u_\varep-q$, we obtain 
$$
\phi (\theta r) \le C \theta^{\beta -\alpha} \phi (r) + C \theta^{-\alpha-\frac{d}{2} } (\varep/r)^{1/2} \phi (2r)
$$
for any $r\in [\varep, 1/2]$,
where
$$
\phi(r)= r^{-\alpha} \inf_{q\in \rd}   \left(\average_{D_{ r}} | u_\varep-q|^2 \right)^{1/2}.
$$
By the integration argument used in the proof of Theorem \ref{theorem-6.1},
we may conclude that $\phi (r)\le C\phi (1) $ for $r\in[\varep, 1/2]$, which yields (\ref{6.5-0})
by Cacciopoli's inequality.
\end{proof}

\begin{remark}
{\rm 
Under the stronger condition that the defining function for $D_1$ and $\Delta_1$ is $C^{1, \sigma}$
for some $\sigma>0$, we will show in Section 9 that the estimate (\ref{6.5-0}) holds for $\alpha=1$.
}
\end{remark}

The following corollary was essentially proved in \cite{KLS1} by a compactness method.

\begin{corollary}\label{cor-6-2}
Suppose that $A$ satisfies (\ref{ellipticity})-(\ref{periodicity}).
Also assume that $A\in V\!M\!O(\rd)$. 
Let $u_\varep\in H^1(D_1; \rd)$ be a weak solution of $\mathcal{L}_\varep (u_\varep)=0$
in $D_1$ with $\frac{\partial u_\varep}{\partial \nu_\varep} =0$ on $\Delta_1$.
Then, for any $\alpha\in (0,1)$,
\begin{equation}\label{6.6-0}
\| u_\varep\|_{C^\alpha (D_{1/2})}
\le C_\alpha  \left(\average_{D_1} |u_\varep|^2 \right)^{1/2},
\end{equation}
where $C_\alpha$ depends only on $d$, $\kappa_1$, $\kappa_2$, $\alpha$,
and the functions $\tau (t)$, $\rho (t)$.
\end{corollary}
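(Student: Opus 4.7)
The plan is to mimic the proof of Corollary \ref{cor-6-1}, replacing Theorem \ref{theorem-6.1} with its Neumann analogue Theorem \ref{theorem-6.2}, and taking care that for Neumann problems the natural Campanato quantity involves subtracting a constant.

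First, I would reduce to the case $0<\varepsilon<1/2$, since for $\varepsilon\ge 1/2$ the statement is a purely local boundary $C^\alpha$ estimate for $\mathcal{L}_1$ in a $C^1$ domain with VMO coefficients and Neumann data, which is classical. Next, for $0<r<\varepsilon$, observe that $v(x)=u_\varepsilon(\varepsilon x)$ satisfies $\mathcal{L}_1(v)=0$ with $\partial v/\partial\nu_1=0$ on the rescaled portion of the boundary, and the rescaled defining function $\psi_\varepsilon$ is still $C^1$ uniformly in $\varepsilon$ (with the same modulus $\tau$). The local boundary $C^\alpha$ theory for $\mathcal{L}_1$ in $C^1$ domains with VMO coefficients then yields, after rescaling,
\[
\left(\average_{D_r}|\nabla u_\varepsilon|^2\right)^{1/2}
\le C\Big(\frac{r}{\varepsilon}\Big)^{\alpha-1}
\left(\average_{D_\varepsilon}|\nabla u_\varepsilon|^2\right)^{1/2},
\qquad 0<r<\varepsilon,
\]
with $C$ depending on $d,\kappa_1,\kappa_2,\alpha,\tau,\rho$.

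Combining this with Theorem \ref{theorem-6.2}, which covers the range $\varepsilon\le r\le 1/2$, gives the gradient Morrey bound
\[
\left(\average_{D_r}|\nabla u_\varepsilon|^2\right)^{1/2}
\le C\, r^{\alpha-1}\left(\average_{D_1}|\nabla u_\varepsilon|^2\right)^{1/2}
\quad\text{for all }0<r<1/2.
\]
A completely parallel combination of interior $C^\alpha$ estimates down to scale $\varepsilon$ (compactness or rescaling plus VMO) with the analogue of Theorem \ref{theorem-6.2} at an interior point (which is in fact easier, since no boundary layer is involved) yields
\[
r^{\alpha-1}\left(\average_{B(x,r)\cap D_{1/2}}|\nabla u_\varepsilon|^2\right)^{1/2}
\le C\left(\average_{D_1}|\nabla u_\varepsilon|^2\right)^{1/2}
\]
for every $x\in D_{1/2}$ and $0<r<c$. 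By Campanato's characterization of H\"older spaces, this translates into the $C^\alpha$ seminorm estimate
\[
[u_\varepsilon]_{C^\alpha(D_{1/2})}\le C\left(\average_{D_1}|\nabla u_\varepsilon|^2\right)^{1/2}.
\]

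The last step is to pass from the $\nabla u_\varepsilon$ bound to the $L^2$ bound on $u_\varepsilon$ asserted in \eqref{6.6-0}. Since the Neumann problem is invariant under addition of rigid motions (in particular constants), one may apply Caccioppoli's inequality after subtracting the average $c=\average_{D_1} u_\varepsilon$, giving
\[
\left(\average_{D_{3/4}}|\nabla u_\varepsilon|^2\right)^{1/2}
\le C\left(\average_{D_1}|u_\varepsilon-c|^2\right)^{1/2}
\le C\left(\average_{D_1}|u_\varepsilon|^2\right)^{1/2}.
\]
Combining this with the seminorm bound (applied on $D_{3/4}$ via a routine covering/rescaling) and the trivial $L^\infty$--$L^2$ bound $\|u_\varepsilon\|_{L^\infty(D_{1/2})}\le [u_\varepsilon]_{C^\alpha(D_{1/2})}\, r_0^\alpha + C(\average_{D_{1/2}}|u_\varepsilon|^2)^{1/2}$ yields \eqref{6.6-0}.

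The only real technical point is the very first step: the local boundary $C^\alpha$ estimate at scale below $\varepsilon$ requires the VMO hypothesis on $A$ (through the modulus $\rho$) and the $C^1$ hypothesis on $\psi$ (through $\tau$), and it must be applied after a rescaling that preserves both moduli uniformly. Once that local ingredient is in place, the rest is a routine combination of Theorem \ref{theorem-6.2}, Caccioppoli, and Campanato, exactly as in the Dirichlet case.
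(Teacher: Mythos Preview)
Your proposal is correct and follows essentially the same approach as the paper's proof, which simply states that the argument of Corollary~\ref{cor-6-1} carries over with Theorem~\ref{theorem-6.2} replacing Theorem~\ref{theorem-6.1}. You have correctly identified the one extra wrinkle: since the right-hand side of \eqref{6.5-0} involves $\nabla u_\varepsilon$ rather than $u_\varepsilon$ (unlike \eqref{6.2-0}), a Caccioppoli step is needed at the end to recover the $L^2$ norm of $u_\varepsilon$ itself, and you handle this appropriately.
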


\begin{proof}
As in the case of the Dirichlet boundary condition, the additional smoothness assumption $A\in V\!M\!O(\rd)$
ensures that the estimates (\ref{6.5-0}) holds for any $r\in (0,1/2)$.
This, together with  the interior estimates,
gives the estimate (\ref{6.6-0}) by the use of Campanato's characterization of H\"older spaces.
\end{proof}



\section{$W^{1,p}$ estimates in $C^1$ domains}
\setcounter{equation}{0}

In this section we study the uniform $W^{1,p}$ estimates in $C^1$ domains.
Throughout the section we will assume that
$A=A(y)$ satisfies (\ref{ellipticity})-(\ref{periodicity}), $A\in V\!M\!O(\rd)$, and
$\Omega$ is $C^1$.
Our goal is to prove the following two theorems.

\begin{theorem}\label{theorem-7.1}
Suppose that $A$ satisfies (\ref{ellipticity})-(\ref{periodicity}).
Also assume that $A\in V\!M\!O(\rd)$.
Let $1<p<\infty$ and $\Omega$ be a bounded $C^1$ domain in $\rd$.
Let $u_\varep\in W^{1, p}(\Omega; \mathbb{R}^d)$ be a weak solution to the Dirichlet problem
\begin{equation}\label{DP-7}
\mathcal{L}_\varep (u_\varep) =\text{div} (f) \quad \text{ in } \Omega
\quad \text{ and } \quad u_\varep =0 \quad  \text{ on } \partial\Omega,
\end{equation}
where $f=(f_i^\alpha)\in L^p(\Omega; \mathbb{R}^{d\times d})$.
Then
\begin{equation}\label{7.1-0}
\| u_\varep\|_{W^{1,p}(\Omega)} \le C_p\, \| f\|_{L^p(\Omega)}
\end{equation}
where $C_p$ depends only on $d$, $p$, $A$, and $\Omega$.
\end{theorem}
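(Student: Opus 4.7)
The plan is to apply the real-variable framework developed in \cite{Shen-2005, Shen-2007} (originating in \cite{CP-1998}), which reduces the global $W^{1,p}$ estimate to a weak reverse H\"older inequality for gradients of local solutions of the homogeneous equation. By a standard duality argument, applied to the adjoint operator $\mathcal{L}_\varep^*$ (which has the same structure, so the estimate for it is obtained by the same argument), it suffices to prove $\|\nabla u_\varep\|_{L^p(\Omega)}\le C\|f\|_{L^p(\Omega)}$ for $p>2$; the case $p=2$ is the usual energy estimate.

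The real-variable criterion I would invoke says: if for every ball $B=B(x_0,r)$ with $x_0\in\overline{\Omega}$ and $r<c_0\,\text{diam}(\Omega)$ one can decompose $u_\varep = v+w$ on $4B\cap\Omega$ such that
$$
\left(\average_{2B\cap\Omega}|\nabla w|^2\right)^{1/2}
\le C\left(\average_{4B\cap\Omega}|f|^2\right)^{1/2}
\quad\text{and}\quad
\left(\average_{B\cap\Omega}|\nabla v|^{q_0}\right)^{1/q_0}
\le C\left(\average_{4B\cap\Omega}|\nabla v|^2\right)^{1/2}
$$
for some fixed $q_0>p$, then $\|\nabla u_\varep\|_{L^p(\Omega)}\le C\|f\|_{L^p(\Omega)}$. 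I would construct $w$ as the solution of $\mathcal{L}_\varep(w)=\text{div}(f\chi_{4B\cap\Omega})$ with zero Dirichlet data, whence the first inequality follows from the energy estimate for $\mathcal{L}_\varep$. The function $v=u_\varep-w$ then satisfies $\mathcal{L}_\varep(v)=0$ in $4B\cap\Omega$ with $v=0$ on $\partial\Omega\cap 4B$ when $B$ is a boundary ball, and $\mathcal{L}_\varep(v)=0$ in $4B$ when $4B\subset\Omega$.

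The interior reverse H\"older estimate for $\nabla v$ (with $q_0$ arbitrarily large) is standard: it follows by combining the interior Lipschitz estimate down to scale $\varep$ (noted in the remark after Theorem \ref{theorem-6.1}) with the small-scale Lipschitz estimate for $\mathcal{L}_1$ with VMO coefficients. For boundary balls, after flattening via the $C^1$ graph, I work on $D_r$ with $v=0$ on $\Delta_r$ and apply Corollary \ref{cor-6-1}: for any $\alpha\in(0,1)$,
$$
\| v\|_{C^\alpha(D_{r/2})}\le C\, r^{-\alpha}\bigl(\average_{D_r}|v|^2\bigr)^{1/2}.
$$
Since $v=0$ on $\Delta_{r/2}$, Cacciopoli's inequality applied to $v$ on dyadic subballs of radius $t\le r/4$ centered on $\Delta_{r/2}$ yields $(\average_{B(x,t)\cap D_r}|\nabla v|^2)^{1/2}\le C\, t^{\alpha-1} r^{-\alpha}(\average_{D_r}|v|^2)^{1/2}$, and combining with the interior reverse H\"older estimate (via a covering argument using balls of size $\sim \text{dist}(x,\Delta_r)$) converts this into the $L^{q_0}$ control on $\nabla v$ for any $q_0<\frac{d}{1-\alpha}$; by choosing $\alpha$ close to $1$ we exceed any fixed $p$.

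The main obstacle, I expect, is the two-scale nature of the reverse H\"older inequality: the boundary $C^\alpha$ estimate of Corollary \ref{cor-6-1} holds down to arbitrarily small scales because of the VMO assumption, but it is only a H\"older (not Lipschitz) estimate, so extracting $L^{q_0}$ control on $\nabla v$ for large $q_0$ requires the careful Cacciopoli/Campanato interpolation sketched above, together with a matching interior piece valid on scales below, at, and above $\varep$. Once the decomposition and reverse H\"older estimate are in place, the real-variable machinery of \cite{Shen-2005, Shen-2007} finishes the proof without further ingredients.
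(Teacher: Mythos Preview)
Your approach is correct and matches the paper's: reduce to $p>2$ by duality, then invoke the real-variable criterion of \cite{CP-1998, Shen-2005, Shen-2007}, with the required weak reverse H\"older inequalities supplied by combining the interior Lipschitz estimate down to scale $\varep$ with the boundary $C^\alpha$ estimate via a Whitney-type covering (the paper packages this as Lemmas~\ref{lemma-7.0} and~\ref{lemma-7.3}). One small correction: VMO coefficients do not give a small-scale \emph{Lipschitz} estimate for $\mathcal{L}_1$, only $W^{1,q}$ for every finite $q$---but that is all you need here, and it is exactly what the paper's Lemma~\ref{lemma-7.0} uses.
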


\begin{theorem}\label{theorem-7.2}
Suppose that $A$ satisfies the same conditions as in Theorem \ref{theorem-7.1}.
Let $1<p<\infty$ and $\Omega$ be a bounded $C^1$ domain in $\rd$.
Let $u_\varep\in W^{1,p}(\Omega; \rd)$ be a weak solution to the Neumann problem
\begin{equation}\label{NP-7}
\mathcal{L}_\varep (u_\varep) =\text{div} (f) \quad \text{ in } \Omega
\quad \text{ and } \quad  \frac{\partial u_\varep}{\partial \nu_\varep}
=-n\cdot f \quad \text{ on } \partial\Omega,
\end{equation}
where $f=(f_i^\alpha)\in L^p(\Omega; \mathbb{R}^{d\times d})$.
Assume that $u_\varep \perp \mathcal{R}$.
Then
\begin{equation}\label{7.2-0}
\|u_\varep\|_{W^{1,p}(\Omega)} \le C_p\,  \| f\|_{L^p(\Omega)},
\end{equation}
where $C_p$ depends only on $d$, $p$, $A$, and $\Omega$.
\end{theorem}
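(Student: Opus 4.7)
The plan is to follow the strategy of Theorem~\ref{theorem-7.1} (the Dirichlet case), with the modifications needed for the Neumann condition and the kernel $\mathcal{R}$ of rigid displacements. The main tool is the real-variable argument of Shen (\cite{Shen-2007}), which reduces an $L^p$ bound on $\nabla u_\varep$ to a weak reverse H\"older inequality at each admissible boundary scale, with the reverse H\"older input supplied by the uniform boundary $C^\alpha$ estimate of Corollary~\ref{cor-6-2}. I would first reduce to $2\le p<\infty$ by duality: since $a_{ij}^{\alpha\beta}=a_{ji}^{\beta\alpha}$ the operator $\mathcal{L}_\varep$ is self-adjoint, so an $L^{p'}$ Neumann estimate dualizes to an $L^p$ estimate for $p<2$, with the orthogonality $u_\varep\perp\mathcal{R}$ matching the analogous requirement on the test datum.

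For $2\le p<\infty$ and a ball $B=B(x_0,r)$ with $x_0\in\partial\Omega$ and $r\le r_0$ (interior balls being easier), I would decompose $u_\varep=v_B+w_B$ on $4B\cap\Omega$, where $v_B\in H^1(\Omega;\rd)$ with $v_B\perp\mathcal{R}$ is the weak solution of the global Neumann problem with data $f\chi_{4B\cap\Omega}$. Then $\|\nabla v_B\|_{L^2(\Omega)}\le C\|f\|_{L^2(4B\cap\Omega)}$ by the energy estimate together with the second Korn inequality, while $w_B=u_\varep-v_B$ solves $\mathcal{L}_\varep w_B=0$ in $4B\cap\Omega$ with vanishing conormal derivative on $4B\cap\partial\Omega$. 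Shen's lemma then delivers $\|\nabla u_\varep\|_{L^p(\Omega)}\le C_p\|f\|_{L^p(\Omega)}$ once one establishes the scale-invariant weak reverse H\"older inequality
\[
\left(\average_{B\cap\Omega}|\nabla w_B|^q\right)^{1/q}\le C\left(\average_{2B\cap\Omega}|\nabla w_B|^2\right)^{1/2}
\]
for some $q>p$; the $L^p$ bound on $u_\varep$ itself then follows by Korn--Sobolev together with $u_\varep\perp\mathcal{R}$.

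The crux is the reverse H\"older estimate for $\nabla w_B$, which I would derive from Corollary~\ref{cor-6-2} as follows. Fix $\alpha\in(0,1)$, rescale the corollary to radius $r$, and subtract an element $\phi\in\mathcal{R}$ chosen so that $w_B-\phi\perp\mathcal{R}$ on $2B\cap\Omega$; the second Korn inequality then gives
\[
\|w_B-\phi\|_{C^\alpha(B\cap\Omega)}\le C_\alpha r^{1-\alpha}\left(\average_{2B\cap\Omega}|\nabla w_B|^2\right)^{1/2}.
\]
Combining this H\"older control with the interior Caccioppoli inequality on balls $B(y,\delta(y)/4)$, where $\delta(y)=\text{dist}(y,\partial(4B\cap\Omega))$, yields the averaged gradient bound
\[
\left(\average_{B(y,\delta(y)/4)}|\nabla w_B|^2\right)^{1/2}\le C\,\delta(y)^{\alpha-1}r^{1-\alpha}\left(\average_{2B\cap\Omega}|\nabla w_B|^2\right)^{1/2}.
\]
A standard covering argument combined with the classical $W^{1,q}$ self-improvement for elliptic operators with VMO coefficients upgrades this $L^2$-average bound to an $L^q$-average on the same ball; integrating the weight $\delta(y)^{q(\alpha-1)}$, which is integrable whenever $q(1-\alpha)<1$, over $B\cap\Omega$ then produces the reverse H\"older at exponent $q$. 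Since $\alpha$ can be taken arbitrarily close to $1$, any $q<\infty$ is admissible. The principal obstacle is verifying that both Corollary~\ref{cor-6-2} and the interior self-improvement are available uniformly in $\varep$ at all relevant scales: for $r\ge\varep$ the uniform Corollary~\ref{cor-6-2} applies directly, while for $r<\varep$ one additionally rescales by $\varep$, whereupon the problem becomes $\mathcal{L}_1$ with VMO coefficients and the classical theory supplies the needed bounds.
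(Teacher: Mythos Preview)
Your overall strategy coincides with the paper's: duality to reduce to $p>2$, then Shen's real-variable argument, which reduces matters to a weak reverse H\"older inequality for solutions of $\mathcal{L}_\varep(w)=0$ with vanishing conormal data on a boundary patch. The paper packages that reverse H\"older as Lemma~\ref{lemma-7.3}, obtained by combining the boundary $C^\alpha$ estimate (Theorem~\ref{theorem-6.2}) with the interior $L^q$-to-$L^2$ reverse H\"older (Lemma~\ref{lemma-7.0}); your route via Corollary~\ref{cor-6-2} plus Caccioppoli plus an ``interior self-improvement'' is the same structure.

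There is, however, a real gap at the interior self-improvement step. You invoke ``the classical $W^{1,q}$ self-improvement for elliptic operators with VMO coefficients'' to pass from the $L^2$ average on $B(y,\delta(y)/4)$ to an $L^q$ average, and your final paragraph handles the regime $\delta(y)<\varep$ by rescaling to $\mathcal{L}_1$. But for $\delta(y)\ge\varep$ the classical VMO theory does \emph{not} give $\varep$-uniform constants: the VMO modulus of $A(\cdot/\varep)$ at scale $t$ equals that of $A$ at scale $t/\varep$, which is small only for $t\lesssim\varep$, not for $t\sim\delta(y)$. What is actually required at these large scales is the uniform interior Lipschitz estimate down to scale $\varep$ (estimate~(\ref{I-6}), equivalently Theorem~\ref{theorem-4.1}). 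This is exactly how the paper proves Lemma~\ref{lemma-7.0}: it bridges from scale $\delta(y)$ down to scale $\varep$ via~(\ref{7.0-2}), and only then applies the local VMO $W^{1,q}$ estimate at scale $\varep$. The interior Lipschitz estimate is itself a nontrivial ingredient, established in Section~8 by the same large-scale iteration scheme (Lemma~\ref{G-lemma}) used for the boundary Lipschitz estimates; your sketch neither invokes it nor provides a substitute, and without it the constants in your reverse H\"older inequality blow up as $\varep\to 0$.
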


Recall that a function $u_\varep$ is called a weak solution of (\ref{DP-7}) if 
$u_\varep\in W_0^{1,p}(\Omega; \rd)$ and
\begin{equation}\label{weak-solution}
\int_\Omega a_{ij}^{\alpha\beta} (x/\varep) \frac{\partial u_\varep^\beta}
{\partial x_j}\cdot \frac{\partial \varphi^\alpha}{\partial x_i}\, dx
=-\int_\Omega f_i^\alpha \cdot \frac{\partial \varphi^\alpha}{\partial x_i} \, dx
\end{equation}
for any $\varphi =(\varphi^\alpha) \in C_0^\infty(\Omega; \rd)$.
Similarly, $u_\varep$ is called a weak solution of (\ref{NP-7})
if $u_\varep \in W^{1,p}(\Omega; \rd)$
and  (\ref{weak-solution}) holds for any $\varphi =(\varphi^\alpha) \in C_0^\infty(\rd; \rd)$.
Under the assumptions that $A\in V\!M\!O(\rd)$ and $\Omega$ is $C^1$,
the existence and uniqueness of solutions of (\ref{DP-7}) and (\ref{NP-7})
are more or less well known (see e.g. \cite{Byun-Wang-2004,Byun-Wang-2005} for references).
The main interest here is that the constants $C$ in the $W^{1, p}$ estimates
(\ref{7.1-0}) and (\ref{7.2-0}) are independent of  $\varep$.
We mention that for $\mathcal{L}_\varep$ with coefficients satisfying (\ref{periodicity}), (\ref{s-ellipticity})
and H\"older continuity condition, estimates (\ref{7.1-0}) and
(\ref{7.2-0}) were established in \cite{AL-1987, AL-1991, Shen-2008, KLS1}.
The results were extended to the case of almost-periodic coefficients in \cite{Armstrong-Shen-2014}.
Also,
for $\mathcal{L}_\varep$ with coefficients satisfying (\ref{ellipticity})-(\ref{periodicity})
in Lipschitz domains, some partial results may be found in \cite{GSS}.

Theorems \ref{theorem-7.1} and \ref{theorem-7.2} are proved by a real-variable argument.
The required weak reverse H\"older inequalities  (\ref{7.0-0})
and (\ref{7.1-0}) for $p>2$
are established by combining local estimates for $\mathcal{L}_1$ and
boundary H\"older estimates in Section 4 with the interior Lipschitz estimates,
up to the scale $\varep$.

\begin{lemma}\label{lemma-7.0}
Let $u_\varep\in H^1(B(x_0,2r); \rd)$ be a weak solution to $\mathcal{L}_\varep (u_\varep)=0$
in $B(x_0, 2r)$ for some $x_0\in \rd$ and $r>0$.
Then, for any $2<p<\infty$,
\begin{equation}\label{7.0-0}
\left(\average_{B(x_0,r)} |\nabla u_\varep|^p\right)^{1/p}
\le C_p \left(\average_{B(x_0,2r)} |\nabla u_\varep|^2\right)^{1/2},
\end{equation}
where $C_p$ depends only on $d$,  $p$, $\kappa_1$, $\kappa_2$, and the function $\rho(t)$ in (\ref{VMO}).
\end{lemma}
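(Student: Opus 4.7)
The proof naturally splits at the microscopic scale $\varep$: I would prove the small-ball case $r \le \varep$ by rescaling to the fixed-coefficient operator $\mathcal{L}_1$, and then bootstrap to large balls $r \ge \varep$ via the interior $L^2$ bound (\ref{I-6}) and a covering argument.

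For the small-ball case, set $v(y) = u_\varep(x_0 + \varep y)$, so that $\mathcal{L}_1(v) = 0$ on $B(0, 2r/\varep) \subset B(0, 2)$. Under the VMO assumption (\ref{VMO}) together with (\ref{ellipticity}), the classical local interior $W^{1,p}$ estimate for elliptic systems with VMO coefficients on balls of bounded radius (Chiarenza--Frasca--Longo et al.) yields
$$\left(\average_{B(0, r/\varep)} |\nabla v|^p\right)^{1/p} \le C_p \left(\average_{B(0, 2r/\varep)} |\nabla v|^2\right)^{1/2}$$
for every $2 < p < \infty$. Undoing the rescaling cancels the common factor of $\varep$ on both sides (both averages pick up the same power of $\varep$ from $\nabla v(y) = \varep(\nabla u_\varep)(x_0 + \varep y)$, and normalized integrals are unaffected by the Jacobian), and yields (\ref{7.0-0}).

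For the large-ball case $r \ge \varep$, I would cover $B(x_0, r)$ by a Vitali family $\{B(y_j, \varep/2)\}_j$ with $y_j \in B(x_0, r)$ and the concentric doublings $\{B(y_j, \varep)\}_j$ of bounded overlap. The small-ball case applied on each $B(y_j, \varep)$ gives
$$\left(\average_{B(y_j, \varep/2)} |\nabla u_\varep|^p\right)^{1/p} \le C_p \left(\average_{B(y_j, \varep)} |\nabla u_\varep|^2\right)^{1/2},$$
and the $L^2$-to-$L^2$ bound (\ref{I-6}), applied to $u_\varep$ on the enlarged ball $B(y_j, 3r/2) \subset B(x_0, 2r)$, yields
$$\left(\average_{B(y_j, \varep)} |\nabla u_\varep|^2\right)^{1/2} \le C \left(\average_{B(x_0, 2r)} |\nabla u_\varep|^2\right)^{1/2}.$$
Raising to the $p$-th power, summing the integrals $\int_{B(y_j, \varep/2)} |\nabla u_\varep|^p$ across the cover, and using $\sum_j |B(y_j, \varep/2)| \lesssim |B(x_0, r)|$ together with the finite overlap of the doublings then produces (\ref{7.0-0}) in this regime.

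The main technical point is securing (\ref{I-6}) under only the VMO hypothesis on $A$: since solutions of $\mathcal{L}_1$ admit no pointwise Lipschitz bound when $A$ is merely VMO, the Avellaneda--Lin iteration cannot close at the microscopic scale in $L^\infty$. The remedy is to run the iteration in $L^2$, using the $O(\varep^{1/2})$ approximation in Remark \ref{remark-2.1} at each dyadic scale above $\varep$, Caccioppoli at the stopping scale, and the interior Lipschitz/Campanato decay of the constant-coefficient homogenized limit. This interior iteration mirrors the boundary arguments in Theorems \ref{theorem-6.1}--\ref{theorem-6.2} with the boundary data pieces dropped, and crucially its constants depend on $A$ only through $\kappa_1$, $\kappa_2$, and the VMO modulus $\rho$ in (\ref{VMO}), so they remain uniform in $\varep$.
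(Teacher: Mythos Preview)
Your proposal is correct and follows essentially the same strategy as the paper: reduce the estimate at scale $\varep$ to the local $W^{1,p}$ theory for $\mathcal{L}_1$ via rescaling, bridge from scale $\varep$ to scale $r$ by the interior Lipschitz estimate down to scale $\varep$ (the paper's (\ref{7.0-2}), i.e.\ your (\ref{I-6})), and then conclude by covering $B(x_0,r)$ with $\varep$-balls. The only cosmetic difference is that the paper first normalizes to $r=1$ and splits on $\varep<1/4$ versus $\varep\ge 1/4$, whereas you split on $r\lessgtr\varep$; after rescaling these are the same dichotomy. Two minor remarks: in your covering step the enlarged ball $B(y_j,3r/2)$ need not sit inside $B(x_0,2r)$ when $y_j$ is near $\partial B(x_0,r)$, so use $B(y_j,r)\subset B(x_0,2r)$ instead (as the paper does with $B(y,1)\subset B(0,2)$); and the interior $L^2$ estimate (\ref{I-6}) is proved in Section~8 (Theorem~\ref{theorem-4.1}) with constants depending only on $d,\kappa_1,\kappa_2$---no dependence on the VMO modulus $\rho$ is needed there, since that argument uses only periodicity and ellipticity.
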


\begin{proof}
By translation and dilation we may assume that $x_0=0$ and $r=1$.
We may also assume that $0<\varep<(1/4)$.
The case $\varep\ge (1/4)$ for $B(0,1)$ is local, since $A(x/\varep)$ satisfies the smoothness 
condition (\ref{VMO}) uniformly in $\varep$.
For each $y\in B(0,1)$, we use the local $W^{1,p}$ estimates for
the operator $\mathcal{L}_1$ and a blow-up argument to show that
\begin{equation}\label{7.0-1}
\left(\average_{B(y, \varep/2)} |\nabla u_\varep|^p\right)^{1/p}
\le C \left(\average_{B(y, \varep)} |\nabla u_\varep|^2 \right)^{1/2}.
\end{equation}
By the interior Lipschitz estimate, up to the scale $\varep$, we have
\begin{equation}\label{7.0-2}
\left(\average_{B(y, \varep)} |\nabla u_\varep|^2 \right)^{1/2}
\le C \left(\average_{B(y, 1)} |\nabla u_\varep|^2 \right)^{1/2}.
\end{equation}
We point out that the estimate (\ref{7.0-2}) will be proved in Section 8
with no smoothness assumption on $A$ (see Theorem \ref{theorem-4.1}).
Hence, for any $y\in B(0,1)$,
\begin{equation}\label{7.0-3}
\aligned
\left(\average_{B(y, \varep/2)} |\nabla u_\varep|^p \right)^{1/p}
 &\le C \left(\average_{B(y, 1)} |\nabla u_\varep|^2 \right)^{1/2}\\
 &\le C\, \| \nabla u_\varep\|_{L^2(B(0,2))}.
 \endaligned
\end{equation}
By covering $B(0,1)$ with balls of radius $\varep/2$,
we may deduce (\ref{7.0-0}) readily from (\ref{7.0-3}).
\end{proof}

\begin{lemma}\label{lemma-7.3}
Let $u_\varep\in H^1(D_{2r}; \rd)$ be a weak solution to $\mathcal{L}_\varep (u_\varep)=0$
in $D_{2r}$ with either $u_\varep=0$  or $\frac{\partial u_\varep}{\partial \nu_\varep}=0$
in $\Delta_{2r}$, where $0<r\le 1$.
Then, for any $2<p<\infty$,
\begin{equation}\label{7.3-0}
\left(\average_{D_r} |\nabla u_\varep|^p\right)^{1/p}
\le C_p \left(\average_{D_{2r}} |\nabla u_\varep|^2\right)^{1/2},
\end{equation}
where $C$ depends only on $d$,  $p$, $\kappa_1$, $\kappa_2$, $\tau(t)$ 
in (\ref{tau}), and $\rho(t)$ in (\ref{VMO}).
\end{lemma}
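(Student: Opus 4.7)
The plan is to mirror the structure of Lemma~\ref{lemma-7.0}: a microscopic $W^{1,p}$ reverse H\"older on the $\varepsilon$-scale obtained by blow-up to $\mathcal{L}_1$, combined with a transfer that controls the $L^2$ average on the $\varepsilon$-scale by a macroscopic $L^2$ quantity. The novelty at the boundary is that the analogue of the interior Lipschitz estimate (\ref{7.0-2}) is unavailable for $C^1$ domains with $V\!M\!O$ coefficients; one replaces it by the boundary $C^\alpha$ estimates (Corollaries~\ref{cor-6-1} and~\ref{cor-6-2}) combined with a Caccioppoli inequality, at the cost of picking up a $\delta(y)^{\alpha-1}$ factor that must be absorbed by careful accounting.

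By the rescaling argument at the beginning of Section~5 reduce to $r=1$; the case $\varepsilon\ge 1/4$ is a classical local boundary $W^{1,p}$ estimate for $\mathcal{L}_1$ on a $C^1$ domain with $V\!M\!O$ coefficients, uniform in $\varepsilon$, so assume $\varepsilon<1/4$ and normalize $M:=\|\nabla u_\varepsilon\|_{L^2(D_2)}=1$. For each $y\in D_1$, the blow-up $v(z):=u_\varepsilon(y+\varepsilon z)$ solves $\mathcal{L}_1 v=0$ on a $C^1$ domain of unit size with the same (Dirichlet or Neumann) homogeneous condition on a piece of its boundary, so the classical boundary $W^{1,p}$ estimate for $\mathcal{L}_1$, after scaling back, yields
\[
\left(\average_{D_{\varepsilon/2}(y)}|\nabla u_\varepsilon|^p\right)^{1/p}
\le C\left(\average_{D_\varepsilon(y)}|\nabla u_\varepsilon|^2\right)^{1/2}.
\]

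Split $D_1$ into three layers according to $\delta(y):=\operatorname{dist}(y,\partial D_2)$: deep interior ($\delta\ge c_0$), intermediate ($c\varepsilon\le\delta<c_0$), and boundary layer ($\delta<c\varepsilon$), for suitable constants $c_0,c>0$. In the deep interior the argument of Lemma~\ref{lemma-7.0} applies verbatim via Theorem~\ref{theorem-4.1}. In the intermediate layer, Corollary~\ref{cor-6-1} (Dirichlet) or Corollary~\ref{cor-6-2} (Neumann) with $\alpha\in(0,1)$ gives
\[
\|u_\varepsilon-q_y\|_{L^\infty(B(y,\delta(y)/2))}\le C\,\delta(y)^\alpha M,
\]
with $q_y=0$ in the Dirichlet case and $q_y$ a suitable rigid displacement in the Neumann case; Caccioppoli then yields
\[
\left(\average_{B(y,\delta(y)/4)}|\nabla u_\varepsilon|^2\right)^{1/2}\le C\,\delta(y)^{\alpha-1}M,
\]
and Theorem~\ref{theorem-4.1} transfers this bound from scale $\delta(y)/4$ down to scale $\varepsilon$. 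The boundary layer is handled identically after moving $y$ to a nearby point of $\Delta_1$ and replacing $\delta(y)$ by $\varepsilon$. Covering each layer by bounded-overlap balls of radius $\varepsilon/2$ and grouping the intermediate layer by dyadic depth $\delta(y)\sim 2^{-k}$, the $k^{\text{th}}$ dyadic layer contributes $C\,2^{-k(1+(\alpha-1)p)}M^p$ to $\int_{D_1}|\nabla u_\varepsilon|^p$, and the boundary layer contributes $C\varepsilon^{1+(\alpha-1)p}M^p$. Choosing $\alpha\in(1-1/p,\,1)$, which is permitted by Corollaries~\ref{cor-6-1} and~\ref{cor-6-2}, makes the dyadic geometric series convergent uniformly in $\varepsilon$ and the boundary contribution bounded, yielding $\int_{D_1}|\nabla u_\varepsilon|^p\le CM^p$ as desired.

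The main obstacle is precisely this $\delta(y)^{\alpha-1}$ factor: because true boundary Lipschitz is unavailable in the $C^1$-$V\!M\!O$ regime, the naive pointwise bound on $(\average_{B(y,\varepsilon)}|\nabla u_\varepsilon|^2)^{1/2}$ near the boundary is singular as $\delta(y)\to 0$, and the crux of the proof is to see that this singularity is exactly absorbed by the measure of the corresponding dyadic annulus, once $\alpha$ is chosen sufficiently close to $1$ relative to $p$.
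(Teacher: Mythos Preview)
Your proposal is correct and follows essentially the same strategy as the paper: obtain a $\delta(y)^{\alpha-1}$ control on local gradient averages via the boundary $C^\alpha$ estimates, then choose $\alpha\in(1-1/p,1)$ so that $\delta(y)^{(\alpha-1)p}$ is integrable over $D_1$. The paper's execution is somewhat leaner: rather than covering by $\varepsilon$-balls, splitting into layers, and summing dyadically, it applies Lemma~\ref{lemma-7.0} directly at the interior scale $\delta(y)/4$ (which already packages the $\varepsilon$-scale blow-up and the interior Lipschitz transfer), invokes Theorems~\ref{theorem-6.1}/\ref{theorem-6.2} to get the $\delta(y)^{\alpha-1}$ factor for the $L^2$ gradient average without passing through Corollaries~\ref{cor-6-1}/\ref{cor-6-2} and Caccioppoli, and then integrates the resulting pointwise-in-$y$ inequality over $D_1$ with a Fubini swap to conclude; your three-layer dyadic decomposition is the discrete analogue of that integration and gives the same bound.
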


\begin{proof}
Note that the function $r^{-1}\psi (rx^\prime)$  satisfies the condition
(\ref{tau}) uniformly for $0<r\le 1$. Thus, by rescaling, it suffices to prove the lemma for $r=1$.
Using Lemma \ref{lemma-7.0}, Theorem \ref{theorem-6.1} and Theorem \ref{theorem-6.2},
we obtain 
\begin{equation}\label{7.3-2}
\aligned
\left(\average_{B(y, \delta (y)/8)} |\nabla u_\varep|^p\right)^{1/p}
&\le C \left(\average_{B(y, \delta (y)/4 )} |\nabla u_\varep|^2\right)^{1/2}\\
&\le C_\alpha \big[\delta(y) \big]^{\alpha -1} \|\nabla u_\varep\|_{L^2(D_2)},
\endaligned
\end{equation}
for any $\alpha\in (0,1)$,
where $y\in D_1$ and $\delta (y)=\text{dist}(y, \partial D_2)$.
We now fix $\alpha\in (1-\frac{1}{p}, 1)$.
It follows from (\ref{7.3-2}) that
\begin{equation}\label{7.3-3}
\int_{D_1} \left(\average_{B(y, \delta (y)/8)} |\nabla u_\varep|^p\, dx \right)\, dy
\le C\,  \|\nabla u_\varep \|^p_{L^2(D_2)}.
\end{equation}
Using the fact that $\delta(x)\approx \delta (y)$ if $y\in D_1$ and
$|y-x|<\frac{\delta(y)}{8}$,
it is not hard to verify that (\ref{7.3-3}) implies (\ref{7.3-0}).
\end{proof}

\begin{proof}[\bf Proof of Theorems \ref{theorem-7.1} and \ref{theorem-7.2}]
By duality and a density argument it suffices to consider the case where
$p>2$ and $f=(f_i^\alpha) \in C_0^1(\Omega; \mathbb{R}^{d\times d})$.
Furthermore, by a real-variable argument, which originated in \cite{ CP-1998} 
and further developed in \cite{Shen-2005, Shen-2007},
one only needs to establish weak reverse H\"older inequalities for
solutions of $\mathcal{L}_\varep (u_\varep)=0$ in $B(x_0, r)\cap \Omega$
with either $u_\varep=0$ or $\frac{\partial u_\varep}{\partial \nu_\varep}=0$
on $B(x_0, r)\cap \partial\Omega$, where $x_0\in \overline{\Omega}$
and $0<r<c_0\text{diam}(\Omega)$.
These inequalities are exactly those given by Lemmas \ref{lemma-7.0}
and \ref{lemma-7.3}. 
We omit the details and refer the reader to \cite{Shen-2005, Shen-2008, Geng-2012} for details 
 in the case of scalar elliptic equations.
\end{proof}

\begin{remark}\label{remark-7.1}
{\rm Suppose that $A$ and $\Omega$ satisfy the same conditions as in Theorem \ref{theorem-7.1}.
By some fairly standard extension and duality arguments  (see e.g. \cite{KLS1}),
one may deduce from Theorem \ref{theorem-7.1} that the solution of the Dirichlet 
problem,
$$
\mathcal{L}_\varep (u_\varep) =\text{div} (h) +F \quad \text{ in } \Omega
\quad \text{ and } \quad u_\varep =f \quad \text{ on } \partial\Omega,
$$
satisfies
$$
\|u_\varep\|_{W^{1, p}(\Omega)}
\le C_p \, \Big\{ \| h\|_{L^p(\Omega)} +\| F\|_{L^p(\Omega)} +\| f\|_{B^{\frac{1}{p}, p}(\partial\Omega)} \Big\},
$$
for any $1<p<\infty$, where $B^{\alpha, p}(\partial\Omega)$ denotes the Besov space on $\partial\Omega$
of order $\alpha$ with exponent $p$.
Similarly, the solutions of the Neumann problem,
$$
\mathcal{L}_\varep (u_\varep) =\text{div} (h) +F \quad \text{ in } \Omega
\quad \text{ and } \quad \frac{\partial u_\varep}{\partial \nu_\varep}
 =-n\cdot h +g  \quad \text{ on } \partial\Omega,
$$
with $u_\varep \perp \mathcal{R}$,
satisfies
$$
\|u_\varep\|_{W^{1, p}(\Omega)}
\le C_p \, \Big\{ \| h\|_{L^p(\Omega)} +\| F\|_{L^p(\Omega)} +\| g\|_{B^{-\frac{1}{p}, p}(\partial\Omega)} \Big\},
$$
where $B^{-1/p, p}(\partial\Omega)$ is the dual of $B^{1/p, p^\prime}(\partial\Omega)$ 
}
\end{remark}



\section{$L^p$ estimates in $C^1$ domains}
\setcounter{equation}{0}

The $W^{1, p}$ estimates in the last section allow us to establish the Rellich type estimates in  $L^p$, 
down to the scale $\varep$, in $C^1$ domains under the additional assumption that $A$ belongs to $V\!M\!O(\rd)$.

\begin{theorem}\label{theorem-8.1}
Suppose that $A=A(y)$ satisfies (\ref{ellipticity})-(\ref{periodicity}).
Also assume that $A\in V\!M\!O(\rd)$.
Let $1<p<\infty$ and $\Omega$ be a bounded $C^1$ domain in $\rd$.
Let $u_\varep\in W^{1,p}(\Omega; \rd)$ be a weak solution to the Dirichlet problem
\begin{equation} \label{DP-8}
\mathcal{L}_\varep (u_\varep) =F \quad \text{ in } \Omega \quad \text{ and } \quad
u_\varep =f \quad \text{ in } \partial\Omega,
\end{equation}
where $F\in L^p(\Omega; \rd)$ and $f\in W^{1, p}(\partial\Omega; \rd)$.
Then, for any $\varep\le r <\text{diam}(\Omega)$,
\begin{equation}\label{8.1-0}
\left\{ \frac{1}{r} \int_{\Omega_r} |\nabla u_\varep|^p \right\}^{1/p}
\le C_p \, \Big\{ \| F\|_{L^p(\Omega)} +\| f\|_{W^{1, p}(\partial\Omega)} \Big\},
\end{equation}
where $\Omega_r =\big\{ x\in \rd: \text{dist} (x, \partial\Omega) <r \big\}$.
The constant $C_p$ depends only on $d$, $p$, $A$ and $\Omega$.
\end{theorem}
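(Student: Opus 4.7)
The plan is to mirror the proof of Theorem~\ref{main-theorem-1} (Section~3), replacing $L^2$-norms by $L^p$-norms throughout and invoking two new ingredients: the uniform $W^{1,p}$ estimate of Theorem~\ref{theorem-7.1} (in its general form, Remark~\ref{remark-7.1}) and the sharp $L^p$ boundary theory for the constant-coefficient homogenized operator $\mathcal{L}_0$ in $C^1$ domains.

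\emph{Step 1 (boundary-layer estimate for $u_0$).} I first establish the $L^p$ analogue of (\ref{2.1-6}):
$$
\|\nabla u_0\|_{L^p(\Omega_r)}\le C\,r^{1/p}\big\{\|F\|_{L^p(\Omega)}+\|f\|_{W^{1,p}(\partial\Omega)}\big\},\qquad 0<r<\text{diam}(\Omega).
$$
The decomposition in the proof of Theorem~\ref{theorem-2.1} carries over: write $u_0=v+h$ with $v(x)=\int_\Omega\Gamma_0(x-y)F(y)\,dy$ (so $\nabla^2 v\in L^p(\mathbb{R}^d)$ by Calder\'on--Zygmund), control $\|\nabla v\|_{L^p(S_t)}$ by the divergence-theorem computation (\ref{2.1-3}) adapted to $L^p$, and bound $\|(\nabla h)^*\|_{L^p(\partial\Omega)}$ via the $L^p$ solvability of the regularity problem for the constant-coefficient Lam\'e system in $C^1$ domains (classical layer-potential theory, cf.\ the references in \cite{Kenig-book}). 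Integrating in the transverse direction through the co-area formula gives the claim.

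\emph{Step 2 ($W^{1,p}$ convergence rate).} Next I prove the $W^{1,p}$ analogue of Theorem~\ref{theorem-2.1}: with $w_\varep$ defined by (\ref{w}) and $\eta_\varep$ satisfying (\ref{eta}),
$$
\|\nabla w_\varep\|_{L^p(\Omega)}\le C\,\varep^{1/p}\big\{\|F\|_{L^p(\Omega)}+\|f\|_{W^{1,p}(\partial\Omega)}\big\}.
$$
Arguing as in Lemma~\ref{lemma-L-1} while retaining the cutoff $\eta_\varep$, the equation for $w_\varep$ can be put in the divergence form $\mathcal{L}_\varep(w_\varep)=\text{div}(G_\varep)$ in $\Omega$ with $w_\varep=0$ on $\partial\Omega$ (using the antisymmetry $\phi^{\alpha\beta}_{kij}=-\phi^{\alpha\beta}_{ikj}$ from (\ref{phi})), where $G_\varep$ collects three families of terms of the type $(\widehat{A}-A(x/\varep))[\nabla u_0-K_\varep^2((\nabla u_0)\eta_\varep)]$, $\varep\,\phi(x/\varep)\nabla K_\varep^2((\nabla u_0)\eta_\varep)$, and $\varep\, A(x/\varep)\chi(x/\varep)\nabla K_\varep^2((\nabla u_0)\eta_\varep)$, plus commutator pieces supported in $\Omega_{4\varep}\setminus\Omega_{3\varep}$. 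Theorem~\ref{theorem-7.1}/Remark~\ref{remark-7.1} yields $\|\nabla w_\varep\|_{L^p(\Omega)}\le C\|G_\varep\|_{L^p(\Omega)}$, and each piece of $G_\varep$ is then estimated by the $L^p$ version of Lemma~\ref{lemma-2.00} together with Step~1 applied to $\|\nabla u_0\|_{L^p(\Omega_{4\varep})}\le C\varep^{1/p}\{\text{data}\}$. The only delicate piece is the control of $\varep\|K_\varep^2(\nabla^2 u_0\,\eta_\varep)\|_{L^p(\Omega)}$: since $\Omega$ is merely $C^1$ there is no global $W^{2,p}$ bound on $u_0$, so I use the pointwise interior estimate $|\nabla^2 u_0(x)|\le C\,\delta(x)^{-1}(\nabla u_0)^*(x)$ (as in (\ref{square-estimate})) combined with the weighted bound
$$
\int_{\Omega\setminus\Omega_\varep}\frac{|\nabla u_0|^p}{\delta(x)^p}\,dx\le \int_\varep^{\text{diam}(\Omega)}t^{-p}\|\nabla u_0\|_{L^p(S_t)}^p\,dt\le C\,\varep^{1-p}\{\text{data}\}^p,
$$
derived from the $L^p$ nontangential maximal function estimate on $\nabla u_0$. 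The prefactor $\varep$ then recovers the rate $\varep^{1/p}$.

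\emph{Step 3 (conclusion).} For $\varep\le r<\text{diam}(\Omega)$ the triangle inequality gives
$$
\|\nabla u_\varep\|_{L^p(\Omega_r)}\le \|\nabla u_0\|_{L^p(\Omega_r)}+\|\nabla w_\varep\|_{L^p(\Omega)}+\big\|\nabla\big[\varep\chi(x/\varep)K_\varep^2((\nabla u_0)\eta_\varep)\big]\big\|_{L^p(\Omega_r)},
$$
and each term is $\le C r^{1/p}\{\text{data}\}$: the first by Step~1, the second by Step~2 and $\varep\le r$, and the third by the $L^p$ analogue of the computation in Section~3 (using that $K_\varep$ has kernel supported in $B(0,\varep/4)$, so its action stays inside $\Omega_{2r}$, and then Step~1 once more). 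The principal obstacle I anticipate is the weighted $L^p$ Hardy-type estimate in Step~2, which couples the $L^p$ nontangential maximal function theory for $\mathcal{L}_0$ on $C^1$ domains with the interior second-derivative estimate; once that bound is in place, the rest of the argument is a faithful $L^p$ translation of the proofs of Theorem~\ref{theorem-2.1} and Theorem~\ref{main-theorem-1}.
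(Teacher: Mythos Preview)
Your proposal is essentially the paper's own argument (Lemma~\ref{lemma-8.1} followed by the short proof of Theorem~\ref{theorem-8.1}), including the decomposition $u_0=v+h$, the appeal to the $L^p$ regularity problem for $\mathcal{L}_0$ in $C^1$ domains, and the weighted estimate on $\nabla^2 h$ away from the boundary. One small correction in your Step~2: the pointwise interior bound $|\nabla^2 u_0(x)|\le C\,\delta(x)^{-1}(\nabla u_0)^*$ is not valid for $u_0$ itself because $\mathcal{L}_0(u_0)=F\neq 0$; as in the paper you must keep the splitting $u_0=v+h$, handle $\varep\|\nabla^2 v\|_{L^p}$ directly by Calder\'on--Zygmund, and apply the interior estimate only to $h$ (for which $\mathcal{L}_0(h)=0$). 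Also note that in this section the paper defines $w_\varep$ with a single $K_\varep$ rather than $K_\varep^2$ (see (\ref{w-8})), which makes the divergence-form identity (\ref{key-f}) slightly cleaner, but this is purely cosmetic.
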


\begin{theorem}\label{theorem-8.2}
Suppose that $A$ and $\Omega$ satisfy the same conditions as in Theorem \ref{theorem-8.1}.
Let $1<p<\infty$.
Let $u_\varep\in W^{1,p}(\Omega; \rd)$ be a weak solution to the Neumann problem
\begin{equation} \label{NP-8}
\mathcal{L}_\varep (u_\varep) =F \quad \text{ in } \Omega \quad \text{ and } \quad
\frac{\partial u_\varep}{\partial \nu_\varep} =g \quad \text{ in } \partial\Omega,
\end{equation}
where $F\in L^p(\Omega; \rd)$ , $g\in L^p (\partial\Omega; \rd)$ and $\int_\Omega F+\int_{\partial\Omega} g=0$.
Also assume that $u_\varep \perp \mathcal{R}$.
Then, for any $\varep\le r <\text{diam}(\Omega)$,
\begin{equation}\label{8.2-0}
\left\{ \frac{1}{r} \int_{\Omega_r} |\nabla u_\varep|^p \right\}^{1/p}
\le C_p \, \Big\{ \| F\|_{L^p(\Omega)} +\| g\|_{L^p (\partial\Omega)} \Big\},
\end{equation}
where  $C_p$ depends only on $d$, $p$, $A$ and $\Omega$.
\end{theorem}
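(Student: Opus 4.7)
The plan is to mimic the proof of Theorem \ref{main-theorem-2} (Section 3), replacing the $L^2$ energy estimate for $\mathcal{L}_\varepsilon$ by the $W^{1,p}$ estimate of Theorem \ref{theorem-7.2} together with Remark \ref{remark-7.1}, and replacing Theorem \ref{theorem-2.2} by its $L^p$ analogue. Let $u_0$ denote the solution of the homogenized Neumann problem with the same data $F,g$, normalized by $u_0 \perp \mathcal{R}$, and set
\[
w_\varepsilon := u_\varepsilon - u_0 - \varepsilon\chi_j^\beta(x/\varepsilon)\, K_\varepsilon^2\!\left(\frac{\partial u_0^\beta}{\partial x_j}\,\eta_\varepsilon\right),
\]
where $\eta_\varepsilon$ is as in (\ref{eta}). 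Since $\eta_\varepsilon$ is supported in $\{\text{dist}(\cdot,\partial\Omega)\ge 3\varepsilon\}$ and $K_\varepsilon^2$ convolves with a kernel supported in $B(0,\varepsilon/2)$, the corrector term vanishes in a neighborhood of $\partial\Omega$.

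The main step is to prove the $L^p$-analogue of Theorem \ref{theorem-2.2}:
\[
\|w_\varepsilon\|_{W^{1,p}(\Omega)} \le C\,\varepsilon^{1/p}\bigl\{\|F\|_{L^p(\Omega)} + \|g\|_{L^p(\partial\Omega)}\bigr\}.
\]
A direct computation using (\ref{b}), the dual-corrector identity (\ref{phi}), and the skew-symmetry $\phi_{kij}^{\alpha\beta} = -\phi_{ikj}^{\alpha\beta}$ puts the system for $w_\varepsilon$ in the divergence form $\mathcal{L}_\varepsilon w_\varepsilon = \operatorname{div}(H_\varepsilon)$ in $\Omega$ with $\partial w_\varepsilon/\partial\nu_\varepsilon = -n\cdot H_\varepsilon$ on $\partial\Omega$ (no extra boundary source, because the Neumann trace $n_i[\widehat{a}_{ij}^{\alpha\beta}-a_{ij}^{\alpha\beta}(x/\varepsilon)]\partial_j u_0^\beta$ coincides with $-n\cdot H_\varepsilon|_{\partial\Omega}$), where $H_\varepsilon$ is the sum of
\[
(\widehat{A}-A(x/\varepsilon))\bigl[K_\varepsilon^2((\nabla u_0)\eta_\varepsilon) - \nabla u_0\bigr],\ \ -\varepsilon\phi(x/\varepsilon)\nabla K_\varepsilon^2((\nabla u_0)\eta_\varepsilon),\ \ \varepsilon A(x/\varepsilon)\chi(x/\varepsilon)\nabla K_\varepsilon^2((\nabla u_0)\eta_\varepsilon).
\]
After subtracting a rigid motion from $w_\varepsilon$ to enforce $w_\varepsilon\perp\mathcal{R}$ (which does not alter the equation, since $\mathcal{R}\subset\ker\mathcal{L}_\varepsilon$ and $\partial\varphi/\partial\nu_\varepsilon=0$ for $\varphi\in\mathcal{R}$), Theorem \ref{theorem-7.2} together with Remark \ref{remark-7.1} reduces the target inequality to $\|H_\varepsilon\|_{L^p(\Omega)} \le C\varepsilon^{1/p}\{\ldots\}$. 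Each piece of $H_\varepsilon$ is then estimated exactly as in the proof of Theorem \ref{theorem-2.1} via Lemmas \ref{lemma-2.00} and \ref{lemma-p} (whose first assertion is already stated for general $L^q$), ultimately reducing matters to controlling $\|\nabla u_0\|_{L^p(\Omega_{4\varepsilon})}$ and $\varepsilon\|\nabla^2 u_0\|_{L^p(\Omega\setminus\Omega_{3\varepsilon})}$.

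Both of these are controlled by the $L^p$ boundary-layer estimate
\[
\|\nabla u_0\|_{L^p(\Omega_r)} \le C\,r^{1/p}\bigl\{\|F\|_{L^p(\Omega)} + \|g\|_{L^p(\partial\Omega)}\bigr\},\qquad 0 < r < \operatorname{diam}(\Omega),
\]
combined with the standard interior bound $|\nabla^2 u_0(x)| \le C\,\operatorname{dist}(x,\partial\Omega)^{-1}\bigl(\average_{B(x,\delta(x)/8)}|\nabla u_0|^2\bigr)^{1/2}$ for the constant-coefficient operator $\mathcal{L}_0$. The boundary-layer estimate, which is the $L^p$-analogue of (\ref{2.1-6}), follows via the coarea formula from the classical nontangential-maximal-function bound $\|(\nabla u_0)^*\|_{L^p(\partial\Omega)}\le C\{\|F\|_{L^p(\Omega)}+\|g\|_{L^p(\partial\Omega)}\}$ on $C^1$ domains (Fabes--Kenig--Verchota-type theory \cite{Fabes-1988,Verchota-1986}), together with a divergence-theorem argument for the volume contribution of $F$ modeled on (\ref{2.1-3}). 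Once the $W^{1,p}$-estimate on $w_\varepsilon$ is in hand, the conclusion (\ref{8.2-0}) follows, for $\varepsilon \le r < \operatorname{diam}(\Omega)$, by the splitting
\[
\nabla u_\varepsilon = \nabla w_\varepsilon + \nabla u_0 + \nabla\bigl[\varepsilon\chi(x/\varepsilon)K_\varepsilon^2((\nabla u_0)\eta_\varepsilon)\bigr],
\]
integration of $|\nabla u_\varepsilon|^p$ over $\Omega_r$, and bounding each piece by $C\,r\{\|F\|_{L^p(\Omega)}+\|g\|_{L^p(\partial\Omega)}\}^p$ via the $W^{1,p}$ bound on $w_\varepsilon$, the boundary-layer estimate above, and Lemma \ref{lemma-2.00}, exactly as in Section 3. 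The principal obstacle I foresee is the $L^p$ boundary-layer estimate for the full range $1<p<\infty$: while on $C^1$ domains the Fabes-type nontangential bound is classical for scalar equations and for the Laplacian, for the elasticity system one must handle the rigid-motion null-space carefully and, for $p$ near the endpoints, invoke the full $C^1$ theory of Calder\'on--Zygmund layer potentials on such surfaces.
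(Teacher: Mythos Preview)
Your proposal is correct and follows essentially the same approach as the paper (see Lemma \ref{lemma-8.2} and the proof of Theorem \ref{theorem-8.1}): derive a divergence-form equation for $w_\varep$ with matching Neumann data, subtract a rigid motion, apply Theorem \ref{theorem-7.2}, and reduce to $L^p$ boundary-layer bounds for $u_0$ via the nontangential maximal function estimate for the homogenized Neumann problem on $C^1$ domains. The only minor differences are cosmetic: the paper uses a single $K_\varep$ in (\ref{w-8}) rather than $K_\varep^2$, and for the $L^p$ Neumann theory on $C^1$ domains it cites \cite{Fabes-1978, Lewis-1993, Hofmann-C-1} (precisely the layer-potential $C^1$ theory you correctly identify as needed) rather than the $L^2$ Lipschitz references you mention.
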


The proof of Theorems \ref{theorem-8.1} and \ref{theorem-8.2} follows a
similar line of argument as for Theorems \ref{main-theorem-1} and \ref{main-theorem-2},
by considering 
\begin{equation}\label{w-8}
w_\varep = u_\varep -u_0 -\varep \chi_j^\beta (x/\varep) K_\varep \big(\frac{\partial u_0^\beta}{\partial x_j}
\eta_\varep\big),
\end{equation}
where $u_0$ is the solution of the homogenized problem, $K_\varep$ is a smoothing operator
defined by (\ref{K}), and $\eta_\varep\in C_0^\infty(\Omega)$ is a cut-off function  satisfying (\ref{eta}).

Throughout this section we will assume that $\Omega$ is $C^1$ and 
$A$ satisfies (\ref{ellipticity})-(\ref{periodicity}) and (\ref{VMO}).

\begin{lemma}\label{lemma-8.1}
Let $u_\varep$ $(\varep\ge 0)$ be the solutions of the Dirichlet problems (\ref{DP-8}).
Let $w_\varep$ be defined by (\ref{w-8}).
Then
\begin{equation}\label{8.3-0}
\| w_\varep \|_{W^{1,p} (\Omega)}
\le C_p\, \varep^{1/p}  \Big\{ \| f\|_{W^{1, p}(\partial\Omega)} +\| F\|_{L^p(\Omega)} \Big\},
\end{equation}
where $C_p$ depends only on $d$, $p$, $A$ and $\Omega$.
\end{lemma}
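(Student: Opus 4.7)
The plan is to mimic the proof of Theorem \ref{theorem-2.1}, replacing the $H^1$ energy estimate with the $W^{1,p}$ estimate supplied by Theorem \ref{theorem-7.1} (in its divergence-data form, per Remark \ref{remark-7.1}). Because $u_\varep-u_0=0$ on $\partial\Omega$ and the corrector tail $\varep\chi(x/\varep) K_\varep^2((\nabla u_0)\eta_\varep)$ is supported strictly inside $\Omega$ (the cutoff $\eta_\varep$ is supported in $\{\text{dist}(\cdot,\partial\Omega)\ge 3\varep\}$ and the kernel of $K_\varep$ lives in $B(0,\varep/4)$), the function $w_\varep$ lies in $W_0^{1,p}(\Omega;\mathbb{R}^d)$.

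A direct computation (combining the identity in Lemma \ref{main-lemma}, the rewriting of the $B$-term via $b_{ij}^{\alpha\beta}=\partial_{y_k}\phi_{kij}^{\alpha\beta}$ from \eqref{phi}, and the antisymmetry $\phi_{kij}^{\alpha\beta}=-\phi_{ikj}^{\alpha\beta}$) shows that $\mathcal{L}_\varep w_\varep=\text{div}(H)$ in $\Omega$, where
\begin{equation*}
H=[\widehat{A}-A(x/\varep)]\bigl[K_\varep^2((\nabla u_0)\eta_\varep)-\nabla u_0\bigr]
-\varep\,\phi(x/\varep)\nabla K_\varep^2((\nabla u_0)\eta_\varep)
-\varep A(x/\varep)\chi(x/\varep)\nabla K_\varep^2((\nabla u_0)\eta_\varep).
\end{equation*}
Applying Theorem \ref{theorem-7.1} to $w_\varep \in W_0^{1,p}(\Omega;\mathbb{R}^d)$ then yields $\|w_\varep\|_{W^{1,p}(\Omega)}\le C_p\|H\|_{L^p(\Omega)}$, so everything reduces to establishing $\|H\|_{L^p(\Omega)}\le C\,\varep^{1/p}\{\|f\|_{W^{1,p}(\partial\Omega)}+\|F\|_{L^p(\Omega)}\}$.

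The bound on $\|H\|_{L^p(\Omega)}$ is the $L^p$ transcription of the argument in Section 2. Using Lemma \ref{lemma-2.00} together with the 1-periodicity and local $L^p$-integrability of $A$, $\chi$, and $\phi$, and the (routine) $L^p$-version of Lemma \ref{lemma-p} giving $\|K_\varep g-g\|_{L^p(\mathbb{R}^d)}\le C\varep\|\nabla g\|_{L^p(\mathbb{R}^d)}$, each of the three pieces of $H$ is controlled by the two boundary-layer quantities $\|\nabla u_0\|_{L^p(\Omega_{c\varep})}$ and $\varep\|\nabla^2 u_0\|_{L^p(\Omega\setminus\Omega_{3\varep})}$. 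I would then split $u_0=v+h$, where $v(x)=\int_\Omega \Gamma_0(x-y)F(y)\,dy$ satisfies $\|\nabla^2 v\|_{L^p(\mathbb{R}^d)}\le C\|F\|_{L^p(\Omega)}$ by Calder\'on--Zygmund theory, and $h$ solves a homogeneous Dirichlet problem for the constant-coefficient system $\mathcal{L}_0$ with boundary data bounded in $W^{1,p}(\partial\Omega)$. Since $\mathcal{L}_0$ has constant coefficients and $\Omega$ is $C^1$, the classical layer-potential theory (à la Fabes--Jodeit--Rivière, extended to elliptic systems) supplies the $L^p$ regularity estimate $\|(\nabla h)^*\|_{L^p(\partial\Omega)}\le C\{\|f\|_{W^{1,p}(\partial\Omega)}+\|F\|_{L^p(\Omega)}\}$.

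From this nontangential maximal bound, the two boundary-layer estimates follow by standard means: the co-area formula gives $\|\nabla u_0\|_{L^p(\Omega_r)}\le Cr^{1/p}\{\cdots\}$, and the pointwise interior inequality $|\nabla^2 h(x)|\lesssim |\nabla h(x)|/\delta(x)$ combined with the Hardy-type bound $\int_{\Omega\setminus\Omega_\varep}|\nabla h|^p/\delta^p\,dx\le C\varep^{1-p}\|(\nabla h)^*\|_{L^p(\partial\Omega)}^p$ (valid for $p>1$ by co-area) yields $\varep\|\nabla^2 h\|_{L^p(\Omega\setminus\Omega_{3\varep})}\le C\varep^{1/p}\{\cdots\}$, exactly paralleling the $L^2$ calculation around \eqref{2.1-8} and \eqref{square-estimate}. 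The main obstacle I anticipate is purely bookkeeping: invoking cleanly the $L^p$ regularity theory for the constant-coefficient system $\mathcal{L}_0$ in a $C^1$ domain with both $L^p$ volume data and $W^{1,p}$ boundary data, and verifying that the divergence-form reformulation of $\mathcal{L}_\varep w_\varep$ produces coefficients in $L^p$ with the stated $\varep$-dependence; beyond that, the argument is a direct $L^p$ analog of the $p=2$ proof of Theorem \ref{theorem-2.1}.
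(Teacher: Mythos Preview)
Your proposal is correct and follows essentially the same approach as the paper: compute $\mathcal{L}_\varep(w_\varep)=\operatorname{div}(H)$, apply the uniform $W^{1,p}$ estimate of Theorem \ref{theorem-7.1}, reduce to $\|\nabla u_0\|_{L^p(\Omega_{c\varep})}+\varep\|(\nabla^2 u_0)\eta_\varep\|_{L^p(\Omega)}$, then split $u_0=v+h$ and invoke the $L^p$ regularity problem for $\mathcal{L}_0$ in $C^1$ domains together with the interior gradient bound for $\nabla^2 h$. One minor point: the function $w_\varep$ in \eqref{w-8} uses a single $K_\varep$ rather than $K_\varep^2$, but the argument is unchanged; also, the application of Lemma \ref{lemma-2.00} to $\chi(x/\varep)$ and $\phi(x/\varep)$ in $L^p$ for large $p$ is justified here because the standing VMO hypothesis on $A$ (in force throughout Section 7) guarantees $\chi,\phi\in L^\infty$.
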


\begin{proof}
A direct computation shows that
$$
\aligned
\mathcal{L}_\varep (w_\varep)
= & -\frac{\partial}{\partial x_i}
\left\{ \Big[ \widehat{a}_{ij}^{\alpha\beta} -a_{ij}^{\alpha\beta} (x/\varep) \Big]
\Big[ \frac{\partial u_0^\beta}{\partial x_j} -K_\varep \Big(\frac{\partial u_0^\beta}{\partial x_j}
\eta_\varep \Big)\Big] \right\}\\
&+\frac{\partial}{\partial x_i} \left\{ b_{ij}^{\alpha\beta} (x/\varep) K_\varep \Big(\frac{\partial u_0^\beta}
{\partial x_j} \eta_\varep \Big) \right\}\\
& +\varep \frac{\partial}{\partial x_i}
\left\{ a_{ij}^{\alpha\beta}(x/\varep) \chi_k^{\beta\gamma} (x/\varep)
\frac{\partial}{\partial x_j} \Big(K_\varep \Big(\frac{\partial u_0^\gamma}
{\partial x_k} \eta_\varep\Big)\Big)\right\},
\endaligned
$$
where $b_{ij}^{\alpha\beta}(y)$ is defined by (\ref{b}).
Using (\ref{phi}), we obtain 
$$
\frac{\partial}{\partial x_i} \left\{ b_{ij}^{\alpha\beta} (x/\varep) K_\varep \Big(\frac{\partial u_0^\beta}
{\partial x_j} \eta_\varep \Big) \right\}
=-\varep \frac{\partial}{\partial x_i}
\left\{ \phi^{\alpha\beta}_{kij}(x/\varep) \frac{\partial}{\partial x_k} \Big( K_\varep \Big( \frac{\partial u_0^\beta}{\partial x_j}
\eta_\varep\Big)\Big) \right\}.
$$
It follows that
\begin{equation}\label{key-f}
\aligned
\mathcal{L}_\varep (w_\varep)
= & -\frac{\partial}{\partial x_i}
\left\{ \Big[ \widehat{a}_{ij}^{\alpha\beta} -a_{ij}^{\alpha\beta} (x/\varep) \Big]
\Big[ \frac{\partial u_0^\beta}{\partial x_j} -K_\varep \Big(\frac{\partial u_0^\beta}{\partial x_j}
\eta_\varep \Big)\Big] \right\}\\
&
-\varep \frac{\partial}{\partial x_i}
\left\{ \phi^{\alpha\beta}_{kij}(x/\varep) \frac{\partial}{\partial x_k} \Big( K_\varep \Big( \frac{\partial u_0^\beta}{\partial x_j}
\eta_\varep\Big)\Big) \right\}\\
& +\varep \frac{\partial}{\partial x_i}
\left\{ a_{ij}^{\alpha\beta}(x/\varep) \chi_k^{\beta\gamma} (x/\varep)\frac{\partial}{\partial x_j}
\Big(K_\varep \Big(\frac{\partial u_0^\gamma}
{\partial x_k} \eta_\varep\Big)\Big)\right\}.
\endaligned
\end{equation}
Since $w_\varep=0$ on $\partial\Omega$,
we may apply the $W^{1,p}$ estimate in Theorem \ref{theorem-7.1} to obtain 
\begin{equation}\label{8.3-2}
\aligned
\| w_\varep\|_{W^{1, p}(\Omega)}
\le  &C \Big\{ \|\nabla u_0 - K_\varep \big( (\nabla u_0)\eta_\varep \big) \|_{L^p(\Omega)}
 +\varep \|\phi(x/\varep) \nabla K_\varep \big((\nabla u_0)\eta_\varep\big) \|_{L^p(\Omega)}\\
& \qquad\qquad +\varep \| \chi(x/\varep) \nabla K_\varep \big( (\nabla u_0)\eta_\varep \big) \|_{L^p(\Omega)} \Big\}\\
& \le C \Big\{ \|\nabla u_0 - K_\varep \big( (\nabla u_0)\eta_\varep \big) \|_{L^p(\Omega)}
 +\varep \|\nabla  \big((\nabla u_0)\eta_\varep\big) \|_{L^p(\Omega)} \Big\}\\
 &\le C \Big\{ \|\nabla u_0\|_{L^p(\Omega_{4\varep})}
 +\varep\, \|(\nabla^2 u_0)\eta_\varep\|_{L^2(\Omega)} \Big\},
\endaligned
\end{equation}
where we have used Lemma \ref{lemma-2.00} for the second and third inequalities.

We now write $u_0=v +w$, where
\begin{equation}\label{f-rep}
v(x) =\int_\Omega \Gamma_0 (x-y) F(y)\, dy
\end{equation}
and $\Gamma_0 (x-y)$ denotes the matrix of fundamental solutions for the operator $\mathcal{L}_0$
in $\rd$, with pole at the origin.
Note that $\| v\|_{W^{2, p}(\rd)} \le C_p\,  \| F\|_{L^p(\Omega)}$ and
$$
\| \nabla v\|_{L^p(S_t)} \le C_p\,  \| F\|_{L^p(\Omega)},
$$
where $S_t =\{ x\in \rd: \text{dist}(x, \partial\Omega) =t\}$ for $t$ small
(see the proof of  Theorem \ref{theorem-2.1}). It follows that
\begin{equation}\label{8.3-3}
\|\nabla v\|_{L^p(\Omega_{4\varep})}
+\varep \|\nabla^2 v \|_{L^p(\Omega)} \le C\, \varep^{1/p} \| F\|_{L^p(\Omega)}.
\end{equation}

Finally, we observe that $\mathcal{L}_0 (w)=0$ in $\Omega$ and 
$$
\aligned
\| w\|_{W^{1, p}(\partial\Omega)} & \le \| f\|_{W^{1,p}(\partial\Omega)}
+ \| v\|_{W^{1, p}(\partial\Omega)} \\
&\le C \Big\{ \| f\|_{W^{1, p}(\partial\Omega)}
+\| F\|_{L^p(\Omega)} \Big \}.
\endaligned
$$
It follows from the solvability of the $L^p$ regularity problem for the operator $\mathcal{L}_0$ in
$C^1$ domain $\Omega$, which follows from  \cite{Fabes-1978, Lewis-1993, Hofmann-C-1}, that
$$
\| (\nabla w)^*\|_{L^p(\partial\Omega)}
\le C\,  \Big\{ \| f\|_{W^{1, p}(\partial\Omega)}
+\| F\|_{L^p(\Omega)} \Big \}.
$$
Also, using the interior estimate
$$
|\nabla^2 w (x)|\le \frac{C}{\delta (x)} \left(\average_{B(x, \delta (x)/8)} |\nabla w|^p\right)^{1/p},
$$
where $\delta(x)=\text{dist}(x, \partial\Omega)$, we may show that
$$
\aligned
\int_{\Omega\setminus\Omega_{3\varep}} |\nabla^2 w|^p \, dx
& \le C \int_{\Omega\setminus \Omega_{2\varep}}
|\nabla w(x)|^p \big[\delta(x)\big]^{-p}\, dx \\
&\le C\varep^{1-p} \|(\nabla w)^*\|_{L^p(\partial\Omega)}^p\\
&\le C\varep^{1-p} 
\Big\{ \| f\|^p_{W^{1, p}(\partial\Omega)}
+\| F\|^p_{L^p(\Omega)} \Big \}.
\endaligned
$$
As a result, we obtain 
$$
\| \nabla w\|_{L^p(\Omega_{4\varep})} 
+\varep \|(\nabla^2 w)\eta_\varep\|_{L^p(\Omega)}
\le C\, \varep^{1/p} 
\Big\{ \| f\|_{W^{1, p}(\partial\Omega)}
+\| F\|_{L^p(\Omega)} \Big \}.
$$
This, together with the estimate (\ref{8.3-3}) for $v$, gives
\begin{equation}\label{8.3-7}
\| \nabla u_0\|_{L^p(\Omega_{4\varep})} 
+\varep \|(\nabla^2 u_0)\eta_\varep\|_{L^p(\Omega)}
\le C\, \varep^{1/p} 
\Big\{ \| f\|_{W^{1, p}(\partial\Omega)}
+\| F\|_{L^p(\Omega)} \Big \},
\end{equation}
which, in view of (\ref{8.3-2}), completes the proof.
\end{proof}

\begin{proof}[\bf Proof of Theorem \ref{theorem-8.1}]
Without loss of generality we may assume that
$$
\| f\|_{W^{1,p}(\partial\Omega)} +\| F\|_{L^p(\Omega)} =1.
$$
Let $\varep\le r <\text{diam}(\Omega)$.
It follows from Lemma \ref{lemma-8.1} that
\begin{equation}\label{8.4-1}
\aligned
\|\nabla u_\varep \|_{L^p(\Omega_r)}
&\le \|\nabla u_0\|_{L^p(\Omega_r)}
+C  \| \nabla \chi (x/\varep) K_\varep \big( (\nabla u_0)\eta_\varep \big) \|_{L^p(\Omega_r)}\\
& \qquad\qquad
+C\varep \|\chi(x/\varep) \nabla K_\varep \big( (\nabla u_0)\eta_\varep\big) \|_{L^p(\Omega_r)}
+C \varep^{1/p}\\
&\le  C\,  \| \nabla u_0\|_{L^p(\Omega_{2r})} 
+ C \varep \|\nabla \big( (\nabla u_0)\eta_\varep \big)\|_{L^p(\Omega)}
+C \varep^{1/p}\\
& \le C\,  \| \nabla u_0\|_{L^p(\Omega_{2r})}  + C \varep^{1/p},
\endaligned
\end{equation}
where we have used Lemma \ref{lemma-2.00} for the second inequality and (\ref{8.3-7}) for the third.
An inspection of the proof of Lemma \ref{lemma-8.1} shows that
$$
\|\nabla u_0\|_{L^p(\Omega_{2r})} \le C \, r^{1/p},
$$
which, in view of (\ref{8.4-1}), gives 
$$
\|\nabla u_\varep\|_{L^p(\Omega_r)} \le C \, r^{1/p}.
$$
This completes the proof.
\end{proof}

To prove Theorem \ref{theorem-8.2}, we need the following lemma.

\begin{lemma}\label{lemma-8.2}
Let $u_\varep$ $(\varep\ge 0)$ be solutions of the Neumann problem (\ref{NP-8}).
Also assume that $u_\varep, u_0 \perp \mathcal{R}$.
Let $w_\varep$ be defined by (\ref{w-8}).
Then 
\begin{equation}\label{8.5-0}
\| w_\varep\|_{W^{1, p}(\Omega)} 
\le C_p\,  \varep^{1/p} \Big\{ \| g\|_{L^p(\partial\Omega)} +\| F\|_{L^p(\Omega)} \Big\},
\end{equation}
where $C_p$ depends only on $d$, $p$, $A$ and $\Omega$.
\end{lemma}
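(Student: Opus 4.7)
\smallskip

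The plan is to follow the strategy of Lemma \ref{lemma-8.1}, with two adjustments dictated by the Neumann setting: (i) rather than $w_\varep=0$ on $\partial\Omega$, we must compute $\partial w_\varep/\partial\nu_\varep$ explicitly and show it fits the hypothesis of the Neumann $W^{1,p}$ estimate of Theorem \ref{theorem-7.2}; and (ii) we must accommodate the orthogonality constraint $u_\varep,u_0\perp\mathcal{R}$ by modifying $w_\varep$ by a rigid displacement. The key identity (\ref{key-f}) for $\mathcal{L}_\varep(w_\varep)$ requires no boundary conditions and remains valid here, so $\mathcal{L}_\varep(w_\varep)=\operatorname{div}(h)$ with
$$
h_i^\alpha=-\Big[\widehat a_{ij}^{\alpha\beta}-a_{ij}^{\alpha\beta}(x/\varep)\Big]\Big[\tfrac{\partial u_0^\beta}{\partial x_j}-K_\varep\big(\tfrac{\partial u_0^\beta}{\partial x_j}\eta_\varep\big)\Big]
-\varep\,\phi_{kij}^{\alpha\beta}(x/\varep)\tfrac{\partial}{\partial x_k}K_\varep\big(\tfrac{\partial u_0^\beta}{\partial x_j}\eta_\varep\big)
+\varep\,a_{ij}^{\alpha\beta}(x/\varep)\chi_k^{\beta\gamma}(x/\varep)\tfrac{\partial}{\partial x_j}K_\varep\big(\tfrac{\partial u_0^\gamma}{\partial x_k}\eta_\varep\big).
$$

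Next I would compute $\partial w_\varep/\partial\nu_\varep$ on $\partial\Omega$. Since $\partial u_\varep/\partial\nu_\varep=\partial u_0/\partial\nu_0=g$, the same algebraic rearrangement used in (\ref{L-3.1})--(\ref{L-3.3}), together with the antisymmetry $\phi_{kij}^{\alpha\beta}=-\phi_{ikj}^{\alpha\beta}$, yields exactly $\partial w_\varep/\partial\nu_\varep=-n\cdot h$ on $\partial\Omega$ with the same $h$. Let $\tilde w_\varep=w_\varep-Pw_\varep$, where $P$ denotes the $L^2(\Omega;\rd)$-projection onto $\mathcal{R}$. Since elements of $\mathcal{R}$ have zero symmetric gradient, they lie in the kernel of both $\mathcal{L}_\varep$ and $\partial/\partial\nu_\varep$, so $\tilde w_\varep$ still solves $\mathcal{L}_\varep(\tilde w_\varep)=\operatorname{div}(h)$ in $\Omega$ with $\partial\tilde w_\varep/\partial\nu_\varep=-n\cdot h$ on $\partial\Omega$ and satisfies $\tilde w_\varep\perp\mathcal{R}$. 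Theorem \ref{theorem-7.2} then gives $\|\tilde w_\varep\|_{W^{1,p}(\Omega)}\le C_p\|h\|_{L^p(\Omega)}$. Using Lemma \ref{lemma-2.00} to absorb $\chi(x/\varep)$ and $\phi(x/\varep)$ and an argument analogous to (\ref{8.3-2}), one gets
$$
\|\tilde w_\varep\|_{W^{1,p}(\Omega)}\le C\Big\{\|\nabla u_0\|_{L^p(\Omega_{4\varep})}+\varep\,\|(\nabla^2 u_0)\eta_\varep\|_{L^p(\Omega)}\Big\}.
$$
The projection $Pw_\varep$ is handled directly: since $u_\varep,u_0\perp\mathcal{R}$, only the corrector term contributes, and $\|Pw_\varep\|_{W^{1,p}(\Omega)}\le C\varep\,\|\chi(x/\varep)K_\varep((\nabla u_0)\eta_\varep)\|_{L^p(\Omega)}\le C\varep\,\|\nabla u_0\|_{L^p(\Omega)}\le C\varep^{1/p}\{\|g\|_{L^p(\partial\Omega)}+\|F\|_{L^p(\Omega)}\}$.

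It remains to show
$$
\|\nabla u_0\|_{L^p(\Omega_{4\varep})}+\varep\,\|(\nabla^2 u_0)\eta_\varep\|_{L^p(\Omega)}\le C\varep^{1/p}\Big\{\|g\|_{L^p(\partial\Omega)}+\|F\|_{L^p(\Omega)}\Big\}.
$$
I would split $u_0=v+h$ as in (\ref{f-rep}), so $v$ inherits the bound (\ref{8.3-3}) unchanged. Then $h$ satisfies $\mathcal{L}_0(h)=0$ in $\Omega$ and $\partial h/\partial\nu_0=g-\partial v/\partial\nu_0$ on $\partial\Omega$, with $\|\partial h/\partial\nu_0\|_{L^p(\partial\Omega)}\le C\{\|g\|_{L^p(\partial\Omega)}+\|F\|_{L^p(\Omega)}\}$ by the trace estimate for $v$. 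After subtracting off its projection onto $\mathcal{R}$, the $L^p$ solvability of the Neumann problem for the constant-coefficient operator $\mathcal{L}_0$ in $C^1$ domains (via Fabes--Jodeit--Rivi\`ere and its extension to the Lam\'e system by the method of layer potentials) yields $\|(\nabla h)^*\|_{L^p(\partial\Omega)}\le C\{\|g\|_{L^p(\partial\Omega)}+\|F\|_{L^p(\Omega)}\}$. The boundary-layer $L^p$-bound $\|\nabla h\|_{L^p(\Omega_{4\varep})}\le C\varep^{1/p}\|(\nabla h)^*\|_{L^p(\partial\Omega)}$ and the interior Hessian bound $\varep\|(\nabla^2 h)\eta_\varep\|_{L^p(\Omega)}\le C\varep^{1/p}\|(\nabla h)^*\|_{L^p(\partial\Omega)}$ (proved exactly as in Lemma \ref{lemma-8.1}) then complete the estimate.

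The main technical obstacle is the coupling between the Neumann datum identification $\partial w_\varep/\partial\nu_\varep=-n\cdot h$ and the orthogonality constraint: one must verify that the same tensor $h$ appearing in $\mathcal{L}_\varep(w_\varep)=\operatorname{div}(h)$ also produces the boundary term after invoking the skew-symmetry of $\phi$, and that projecting onto $\mathcal{R}$ preserves both the PDE and the Neumann condition. Beyond this, the argument is a routine transcription of the Dirichlet proof, substituting Theorem \ref{theorem-7.2} for Theorem \ref{theorem-7.1} and the $L^p$ Neumann estimate for $\mathcal{L}_0$ in $C^1$ domains for the corresponding regularity estimate.
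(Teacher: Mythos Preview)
Your proposal is correct and follows essentially the same approach as the paper: project $w_\varep$ onto $\mathcal{R}^\perp$, apply the Neumann $W^{1,p}$ estimate of Theorem \ref{theorem-7.2} via the identity (\ref{key-f}), bound the rigid-displacement piece using $u_\varep,u_0\perp\mathcal{R}$, and control the $u_0$ terms by splitting $u_0=v+h$ and invoking the $L^p$ Neumann nontangential estimate for $\mathcal{L}_0$ in $C^1$ domains. One simplification you overlooked: since $\eta_\varep$ is supported in $\{\text{dist}(x,\partial\Omega)\ge 3\varep\}$ and $K_\varep$ has kernel supported in $B(0,\varep/4)$, the corrector term $\varep\chi(x/\varep)K_\varep((\nabla u_0)\eta_\varep)$ and all its derivatives vanish identically near $\partial\Omega$, so $\partial w_\varep/\partial\nu_\varep=-n\cdot h$ follows immediately without recourse to the algebra of (\ref{L-3.1})--(\ref{L-3.3}) or the antisymmetry of $\phi$.
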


\begin{proof}
The proof is similar to that of Lemma \ref{lemma-8.1}.
Let $\phi_\varep$ be a function in $\mathcal{R}$ such that
$w_\varep -\phi_\varep \perp \mathcal{R}$ in $L^2(\Omega; \rd)$.
It follows from the formula (\ref{key-f}) and the $W^{1, p}$ estimates in Theorem \ref{theorem-7.2}
that
\begin{equation}\label{8.5-1}
\| w_\varep -\phi_\varep\|_{W^{1,p}(\Omega)}
\le C \Big\{ \|\nabla u_0\|_{L^p(\Omega_{4\varep})}
+ \varep \| (\nabla^2 u_0)\eta_\varep\|_{L^2(\Omega)} \Big\}.
\end{equation}
To estimate the right hand side of (\ref{8.5-1}),
we proceed as in the proof of Lemma \ref{lemma-8.1}, but use the nontangential maximal function 
estimate \cite{Fabes-1978, Lewis-1993, Hofmann-C-1},
$$
\| (\nabla w)^*\|_{L^p(\partial\Omega)} \le C\, \|\frac{\partial w}{\partial \nu_0} \|_{L^p(\partial\Omega)},
$$
where $\mathcal{L}_0 (w)=0$ in $\Omega$ and $ w \perp \mathcal{R}$ in $L^2(\Omega; \rd)$.
As a result, we obtain 
\begin{equation}\label{8.5-2}
\| w_\varep -\phi_\varep\|_{W^{1, p}(\Omega)}
\le C\, \varep^{1/p} \Big\{ \| g\|_{L^p(\partial\Omega)} +\| F\|_{L^p(\Omega)} \Big\}.
\end{equation}
Finally, note that since $u_\varep -u_0 \perp \mathcal{R}$, 
$$
\aligned
\|\phi_\varep \|_{W^{1, p}(\Omega)}
 & \le C\varep \| \chi(x/\varep) K_\varep \big( (\nabla u_0)\eta_\varep\big) \|_{L^p(\Omega)}\\
 &\le C\varep \|\nabla u_0\|_{L^p(\Omega)}.
 \endaligned
 $$
 This, together with (\ref{8.5-2}), yields the estimate (\ref{8.5-0}).
\end{proof}

\begin{proof}[\bf Proof of Theorem \ref{theorem-8.2}]
The estimate (\ref{8.2-0}) follows from (\ref{8.5-0}), as in the case of the Dirichlet conditions.
We omit the details.
\end{proof}

\begin{remark}\label{remark-8.1}
{\rm
Under certain  smoothness condition on $A$, such as H\"older continuity,
it is possible to solve the $L^p$ Dirichlet,  regularity, and Neumann problems for 
$\mathcal{L}_1 (u)=0$ in $C^1$ domains for any $1<p<\infty$.
By the same  localization procedure and blow-up argument as in Remark \ref{remark-3.1},
this  implies that
\begin{equation}\label{re-8.1-1}
\left\{
\aligned
&\int_{\partial \Omega} |\nabla u_\varep|^p\, d\sigma
\le C \int_{\partial\Omega} \Big|\frac{\partial u_\varep}{\partial \nu_\varep} \Big|^p\, d\sigma
+\frac{C}{\varep} \int_{\Omega_{c\varep } }  |\nabla u_\varep|^p\, dx,\\
&\int_{\partial \Omega} |\nabla u_\varep|^p\, d\sigma
\le C \int_{\partial\Omega} \Big|\nabla_{\tan} u_\varep \Big|^p\, d\sigma
+\frac{C}{\varep} \int_{\Omega_{c\varep}} |\nabla u_\varep|^p\, dx,
\endaligned
\right.
\end{equation}
where $\mathcal{L}_\varep (u_\varep)=0$ in $\Omega$.
It then follows from Theorems \ref{theorem-8.1} and \ref{theorem-8.2} that
\begin{equation}\label{re-8.1-2}
\int_{\partial\Omega} |\nabla u_\varep|^p\, d\sigma
\le C \int_{\partial\Omega} \Big|\frac{\partial u_\varep}{\partial \nu_\varep} \Big|^p\, d\sigma,
\end{equation}
if $u_\varep \perp \mathcal{R}$, and
\begin{equation}\label{re-8.1-3}
\int_{\partial\Omega} |\nabla u_\varep|^p\, d\sigma
\le C  \int_{\partial\Omega} |\nabla_{\tan} u_\varep|^p\, d\sigma
+C\int_{\partial\Omega} |u_\varep|^p\, d\sigma.
\end{equation}
As in the case $p=2$,
by the method of layer potentials, estimates (\ref{re-8.1-2})-(\ref{re-8.1-3})
lead to the uniform solvability of the $L^p$ Dirichlet, regularity, and Neumann problems
in $C^1$ domains.
The details will be given elsewhere.
}
\end{remark}



\section{Lipschitz estimates in $C^{1,\alpha}$ domains, part I}
\setcounter{equation}{0}

In this section we investigate the Lipschitz estimates, down to the scale $\varep$,
 in $C^{1, \alpha}$ domains
with Dirichlet boundary conditions and give the proof of
Theorem \ref{main-theorem-3}. The Neumann boundary conditions will be treated in the next section.
The proof of Theorems \ref{main-theorem-3} and \ref{main-theorem-4}
is based on a general scheme for establishing Lipschitz estimates at large scales 
in homogenization, recently formulated in \cite{Armstrong-Smart-2014}
for interior estimates. Our approach to the boundary Lipschitz estimates in $C^{1, \alpha}$
domains is similar to that used in \cite{Armstrong-Shen-2014} for elliptic systems with
almost-periodic coefficients.
We remark that Lemma \ref{G-lemma} is a continuous version of  
Lemma 3.1 in \cite{Armstrong-Shen-2014}.

Let $D_r$ and $\Delta_r$ be defined by (\ref{D}) with $\psi (0)=0$ and
$\|\nabla \psi\|_\infty \le M$.

\begin{lemma}\label{lemma-4.2} 
Let $u_\varep \in H^1(D_2; \rd)$ be a weak solution of
$\mathcal{L}_\varep (u_\varep)=F$ in $D_2$ with $u_\varep =f$ on $\Delta_2$.
Then there exists $v\in H^1(D_1; \rd)$ such that $\mathcal{L}_0 (v)=F$ in $D_1$,
$v= f$ on $\Delta_1$, and
\begin{equation}\label{4.2-0}
\| u_\varep -v\|_{L^2(D_1)}
  \le C\varep^{1/2}
\Big\{\| u_\varep\|_{L^2(D_2)} 
+ \| F\|_{L^2(D_2)}
+\|  f\|_{L^\infty(\Delta_2)} +\| \nabla_{\tan} f\|_{L^\infty(\Delta_2)} \Big\},
\end{equation}
where $C$ depends only on $d$, $\kappa_1$, $\kappa_2$, and $M$.
\end{lemma}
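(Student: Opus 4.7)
\medskip
\noindent\textbf{Proof proposal for Lemma \ref{lemma-4.2}.}
The plan is to mimic the strategy of Lemma \ref{lemma-6.1}, but in the presence of non-homogeneous data. I would first reduce to a problem with zero boundary values by subtracting off a Lipschitz extension of $f$, so as to apply Cacciopoli's inequality in a clean form. Concretely, choose $\widetilde{f}\in W^{1,\infty}(D_2;\rd)$ with $\widetilde{f}|_{\Delta_2}=f$ and $\|\widetilde{f}\|_{W^{1,\infty}(D_2)}\le C\bigl(\|f\|_{L^\infty(\Delta_2)}+\|\nabla_{\tan}f\|_{L^\infty(\Delta_2)}\bigr)$; then $u_\varep-\widetilde{f}\in H^1(D_2;\rd)$ vanishes on $\Delta_2$ and satisfies $\mathcal{L}_\varep(u_\varep-\widetilde{f})=F-\mathcal{L}_\varep(\widetilde{f})$. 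Using $\eta^2(u_\varep-\widetilde{f})$ as a test function with a standard cutoff $\eta$ supported in $D_2$ and equal to one on $D_{3/2}$ gives the Cacciopoli-type bound
\begin{equation*}
\int_{D_{3/2}}|\nabla u_\varep|^2\,dx\le C\Bigl\{\|u_\varep\|^2_{L^2(D_2)}+\|F\|^2_{L^2(D_2)}+\|f\|^2_{L^\infty(\Delta_2)}+\|\nabla_{\tan}f\|^2_{L^\infty(\Delta_2)}\Bigr\}=:C\,\mathcal{M}^2.
\end{equation*}

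Next, by the co-area formula applied to the function $x\mapsto\operatorname{dist}(x,\partial D_2)$ (or an equivalent defining function), there exists $t\in[1,3/2]$ such that
\begin{equation*}
\|\nabla u_\varep\|_{L^2(\partial D_t\setminus\Delta_2)}+\|u_\varep\|_{L^2(\partial D_t\setminus\Delta_2)}\le C\,\mathcal{M}.
\end{equation*}
Let $v\in H^1(D_t;\rd)$ be the unique weak solution to the Dirichlet problem $\mathcal{L}_0(v)=F$ in $D_t$ with $v=u_\varep$ on $\partial D_t$ (so in particular $v=f$ on $\Delta_t$, because $u_\varep=f$ there). Since $D_1\subset D_t$ and $\Delta_1\subset\Delta_t$, the restriction of $v$ to $D_1$ will meet the requirements of the lemma.

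The key step is then to apply the $O(\varep^{1/2})$ convergence estimate in $L^2$ recorded in Remark \ref{remark-2.1}, namely (\ref{L-2-D}), on the Lipschitz domain $D_t$:
\begin{equation*}
\|u_\varep-v\|_{L^2(D_1)}\le\|u_\varep-v\|_{L^2(D_t)}\le C\,\varep^{1/2}\Bigl\{\|u_\varep\|_{H^1(\partial D_t)}+\|F\|_{L^2(D_t)}\Bigr\}.
\end{equation*}
Here the boundary trace $\|u_\varep\|_{H^1(\partial D_t)}$ splits over $\Delta_t$ (bounded by $\|f\|_{L^\infty(\Delta_2)}+\|\nabla_{\tan}f\|_{L^\infty(\Delta_2)}$ via $f$ itself) and $\partial D_t\setminus\Delta_t$ (bounded by $C\mathcal{M}$ via the co-area choice of $t$). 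Combining the two contributions yields the desired estimate (\ref{4.2-0}).

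The main technical point is really just this splitting and the necessity of a Lipschitz domain on which Theorem \ref{theorem-2.1} applies; the Lipschitz character of $D_t$ is uniform in $t\in[1,3/2]$ because $\partial D_t$ consists of the graph piece $\Delta_t$ (with the same Lipschitz constant $M$) glued to a flat spherical/cylindrical part sitting at height $t$ above $\Delta$. There is no need to verify a $C^{1,\alpha}$ property for $D_t$, since Remark \ref{remark-2.1} only requires Lipschitz regularity. The rest is bookkeeping.
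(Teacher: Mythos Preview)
Your proposal is correct and follows essentially the same approach as the paper's proof: Cacciopoli's inequality to control $\nabla u_\varep$ on $D_{3/2}$, a co-area slicing to select a good $t$, the Dirichlet problem for $\mathcal{L}_0$ on $D_t$ with data $u_\varep$, and then the $O(\varep^{1/2})$ estimate (\ref{L-2-D}) from Remark \ref{remark-2.1}. The only cosmetic differences are that the paper takes $t\in[5/4,3/2]$ rather than $[1,3/2]$ and simply states the Cacciopoli inequality directly instead of first subtracting a Lipschitz extension of $f$.
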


\begin{proof}
By Cacciopoli's inequality,
$$
\int_{D_{3/2}} |\nabla u_\varep|^2 \le C \left\{ \int_{D_2} |u_\varep|^2 +\int_{D_2} |F|^2
+\| f\|_{L^\infty(\Delta_2)}^2 +\|\nabla_{\tan} f\|^2_{L^\infty(\Delta_2)} \right\}.
$$
By the co-area formula this implies that  there exists some $t\in [5/4,3/2]$ such that
$$
\int_{\partial D_t\setminus \Delta_2}\Big( |\nabla u_\varep|^2+ |u_\varep|^2 \Big)
 \le C \left\{ \int_{D_2} |u_\varep|^2 +\int_{D_2} |F|^2
+\| f\|_{L^\infty(\Delta_2)}^2 +\|\nabla_{\tan} f\|^2_{L^\infty(\Delta_2)} \right\}.
$$
Let $v$ be the weak solution to the Dirichlet problem,
$$
\mathcal{L}_0 (v)=F \quad \text{ in } D_t \quad \text{ and } \quad v=u_\varep \quad \text{ on } \partial D_t.
$$
It follows from Remark \ref{remark-2.1} that
$$
\aligned
\| u_\varep -v\|_{L^2(D_1)}
&\le \| u_\varep -v\|_{L^2(D_t)}\\
&\le C\varep^{1/2} \Big\{ \| u_\varep\|_{H^1(\partial D_t)} +\| F\|_{L^2(D_t)}\Big\}\\
& \le C \varep^{1/2}
\Big\{ \| u_\varep\|_{L^2(D_2)} +\| F\|_{L^2(D_2)} +\| f\|_{L^\infty(\Delta_2)}
+\|\nabla_{\tan} f\|_{L^\infty(\Delta_2)} \Big\},
\endaligned
$$
where $C$ depends only on $d$, $\kappa_1$, $\kappa_2$, and $M$.
\end{proof}

\begin{lemma}\label{lemma-4.3} 
Let $\varep\le r<1$.
Let $u_\varep \in H^1(D_{2r}; \rd)$ be a weak solution of
$\mathcal{L}_\varep (u_\varep)=F$ in $D_{2r}$ with $u_\varep =f$ on $\Delta_{2r}$.
Then there exists $v\in H^1(D_r; \rd)$ such that $\mathcal{L}_0 (v)=F$ in $D_r$,
$v= f$ on $\Delta_r$, and
\begin{equation}\label{4.3-0}
\aligned
\left(\average_{D_r} |u_\varep -v|^2\right)^{1/2}
   \le C\left(\frac{\varep}{r} \right)^{1/2}
& \Bigg\{ \left(\average_{D_{2r}} |u_\varep|^2 \right)^{1/2}
 + r^2 \left(\average_{D_{2r}} |F|^2\right)^{1/2}\\
&\qquad  +\|  f\|_{L^\infty(\Delta_{2r})} + r\| \nabla_{\tan} f\|_{L^\infty(\Delta_{2r})} \Bigg\},
\endaligned
\end{equation}
where $C$ depends only on $d$, $\kappa_1$, $\kappa_2$, and $M$.
\end{lemma}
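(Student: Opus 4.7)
The plan is to reduce Lemma~\ref{lemma-4.3} directly to Lemma~\ref{lemma-4.2} via the rescaling described at the start of this section. Set $w(x) = u_\varep(rx)$; then $\mathcal{L}_\varep(u_\varep) = F$ in $D_{2r}$ transforms into $\mathcal{L}_{\varep/r}(w) = G$ in $\widetilde{D}_2$ with $G(x) = r^2 F(rx)$, and $u_\varep = f$ on $\Delta_{2r}$ becomes $w = g := f(r\,\cdot)$ on $\widetilde{\Delta}_2$, where $\widetilde{D}_2$ and $\widetilde{\Delta}_2$ are determined by $\psi_r(x') = r^{-1}\psi(rx')$. The crucial point is that $\psi_r(0) = 0$ and $\|\nabla \psi_r\|_\infty = \|\nabla \psi\|_\infty \le M$, so $\widetilde{D}_2$ has the same Lipschitz character as $D_{2r}$, with constants depending only on $M$. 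Since $\varep/r \le 1$, Lemma~\ref{lemma-4.2} applies to $w$ with $\varep/r$ in the role of $\varep$.

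Invoking that lemma produces $\tilde v \in H^1(\widetilde{D}_1; \rd)$ with $\mathcal{L}_0(\tilde v) = G$ in $\widetilde{D}_1$, $\tilde v = g$ on $\widetilde{\Delta}_1$, and
$$
\| w - \tilde v \|_{L^2(\widetilde{D}_1)} \le C \left(\varep/r\right)^{1/2} \Bigl\{ \|w\|_{L^2(\widetilde{D}_2)} + \|G\|_{L^2(\widetilde{D}_2)} + \|g\|_{L^\infty(\widetilde{\Delta}_2)} + \|\nabla_{\tan} g\|_{L^\infty(\widetilde{\Delta}_2)} \Bigr\}.
$$
I would then define $v(x) = \tilde v(x/r)$ on $D_r$; a chain-rule computation shows $\mathcal{L}_0(v) = F$ in $D_r$ and $v = f$ on $\Delta_r$, so $v$ is the function we seek.

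It remains to translate the inequality back through the change of variables. Direct substitution gives
$$
\|w - \tilde v\|_{L^2(\widetilde{D}_1)} = r^{-d/2}\|u_\varep - v\|_{L^2(D_r)}, \qquad \|w\|_{L^2(\widetilde{D}_2)} = r^{-d/2}\|u_\varep\|_{L^2(D_{2r})},
$$
together with $\|G\|_{L^2(\widetilde{D}_2)} = r^{(4-d)/2}\|F\|_{L^2(D_{2r})}$, $\|g\|_{L^\infty(\widetilde{\Delta}_2)} = \|f\|_{L^\infty(\Delta_{2r})}$, and $\|\nabla_{\tan} g\|_{L^\infty(\widetilde{\Delta}_2)} = r\,\|\nabla_{\tan} f\|_{L^\infty(\Delta_{2r})}$. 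Multiplying through by $r^{d/2}$ and reinterpreting each norm as a normalized $L^2$ average on $D_s$ (using $|D_s| \simeq s^d$ with constants depending only on $M$) yields exactly the factors $1$, $r^2$, $1$, $r$ that appear in \eqref{4.3-0}.

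There is no serious obstacle here: the argument is a routine rescaling. The only point requiring attention is the bookkeeping of powers of $r$ under the change of variables, which is dictated by the homogeneity of each term in the source estimate from Lemma~\ref{lemma-4.2}.
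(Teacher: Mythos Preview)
Your proposal is correct and follows exactly the same route as the paper, whose proof consists of the single sentence ``This follows from Lemma~\ref{lemma-4.2} by rescaling.'' You have simply supplied the bookkeeping that the paper omits, and the powers of $r$ you record are all correct.
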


\begin{proof}
This follows from Lemma \ref{lemma-4.2} by rescaling.
\end{proof}


In the rest of this section we will assume that the defining function $\psi$ in the definition of
$D_r$ and $\Delta_r$ is $C^{1, \alpha}$ for some $\alpha \in (0,1)$ with $\psi(0)=0$ and
$
\|\nabla \psi \|_{C^{\alpha}(\mathbb{R}^{d-1})} \le M.
$

\begin{lemma}\label{lemma-4.4}
Let $v$ be a solution of $\mathcal{L}_0 (v)=F$ in $D_r$ with $v=f$ on $\Delta_r$.
For $0<t\le r$, define
\begin{equation}\label{G}
\aligned
G(t; v)=  \frac{1}{t} \inf_{\substack{ M\in \mathbb{R}^{d\times d}\\ q\in \rd}} &
\Bigg\{  \left(\average_{D_t} |v- Mx -q|^2 \right)^{1/2}
+ t^2\left(\average_{D_t} |F|^p\right)^{1/p} \\ 
&+\| f- Mx-q\|_{L^\infty(\Delta_t)}
+ t\, \|\nabla_{\tan} (f-Mx-q) \|_{L^\infty(\Delta_t)}\\
&+t^{1+\sigma}\,  \|\nabla_{\tan} (f -Mx-q)\|_{C^{0, \sigma} (\Delta_t)} \Bigg\},
\endaligned
\end{equation}
where $p>d$ and $\sigma \in (0, \alpha)$.
Then there exists $\theta\in (0,1/4)$, depending only on $d$, $p$, $\kappa_1$,
$\kappa_2$, $\sigma$, $\alpha$ and $M$, such that
\begin{equation}\label{4.4-0}
G(\theta r; v) \le (1/2) G(r; v).
\end{equation}
\end{lemma}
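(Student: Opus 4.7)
The proof is a classical Campanato-type flatness-improvement step. The key structural input is that the homogenized operator $\mathcal L_0$ has \emph{constant} coefficients, so sharp boundary Schauder theory applies on the $C^{1,\alpha}$ domain $D_r$. By the natural rescaling $v\mapsto r^{-1}v(r\cdot)$, $F\mapsto rF(r\cdot)$, $f\mapsto r^{-1}f(r\cdot)$, $\psi\mapsto r^{-1}\psi(r\cdot)$ (which preserves the $C^{1,\alpha}$-norm of $\psi$ uniformly in $r\le1$ and transforms each ingredient of $G$ proportionately), it suffices to prove $G(\theta;v)\le \tfrac12 G(1;v)$ for some $\theta\in(0,1/4)$ depending only on the listed parameters.

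To that end, pick $(M_0,q_0)$ achieving the infimum defining $G(1;v)$ up to a factor of $2$, and set $\bar v:=v-M_0 x-q_0$, $\bar f:=f-M_0 x-q_0$. Then $\mathcal L_0(\bar v)=F$ in $D_1$, $\bar v=\bar f$ on $\Delta_1$, and $\|\bar v\|_{L^2(D_1)}+\|F\|_{L^p(D_1)}+\|\bar f\|_{C^{1,\sigma}(\Delta_1)}\le C\,G(1;v)$. The core input is the boundary $C^{1,\sigma'}$ estimate for the constant-coefficient system $\mathcal L_0$ in the $C^{1,\alpha}$ domain $D_1$, with $\sigma':=\min(\sigma,\,1-d/p)>0$:
\begin{equation*}
\|\bar v\|_{C^{1,\sigma'}(\overline{D_{1/2}})}\le C\bigl(\|\bar v\|_{L^2(D_1)}+\|F\|_{L^p(D_1)}+\|\bar f\|_{C^{1,\sigma}(\Delta_1)}\bigr)\le C\,G(1;v).
\end{equation*}
This is obtained in the standard way: flatten $\Delta_1$ via the $C^{1,\alpha}$ diffeomorphism $(x',x_d)\mapsto(x',x_d-\psi(x'))$, split off the Newtonian potential $\int\Gamma_0(x-y)F(y)\,dy$ (which lies in $C^{1,1-d/p}$ by Morrey), and apply classical constant-coefficient boundary Schauder to the homogeneous remainder with $C^{1,\sigma}$ Dirichlet data. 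This Schauder-plus-Calder\'on--Zygmund step is the only non-routine ingredient of the whole argument.

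Now let $M_1:=\nabla\bar v(0)$, $q_1:=\bar v(0)$, and use $(M_0+M_1,\,q_0+q_1)$ as the trial affine function in $G(\theta;v)$; the residual is $w:=\bar v-M_1 x-q_1$. Integrating $\nabla\bar v-\nabla\bar v(0)$ along straight segments from $0$ (legitimate since $D_1$ is locally star-shaped near the origin by the $C^{1,\alpha}$ condition) yields $|w(x)|\le CG(1;v)|x|^{1+\sigma'}$ and $|\nabla w(x)|\le CG(1;v)|x|^{\sigma'}$ on $\overline{D_{1/2}}$. Inserting these bounds into the five summands of $G(\theta;v)$: the interior $L^2$ term becomes $\theta^{-1}\cdot C\theta^{1+\sigma'}G(1;v)=C\theta^{\sigma'}G(1;v)$; the boundary $L^\infty$ and tangential-$L^\infty$ terms yield the same estimate; the tangential-$C^{0,\sigma}$ term (which carries outer weight $\theta^\sigma$) is controlled by combining $[\nabla_{\tan}\bar f]_{C^{0,\sigma}(\Delta_1)}\le CG(1;v)$ with the elementary bound $[\nabla_{\tan}(M_1 x+q_1)]_{C^{0,\sigma}(\Delta_\theta)}\le C|M_1|\theta^{\alpha-\sigma}[\nabla\psi]_{C^{0,\alpha}}$ (using $\sigma<\alpha$), producing $C\theta^\sigma G(1;v)$; finally the $F$-contribution is $\theta\,(\average_{D_\theta}|F|^p)^{1/p}\le C\theta^{1-d/p}G(1;v)$. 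Summing, $G(\theta;v)\le C_\star(\theta^{\sigma'}+\theta^{1-d/p})G(1;v)$; since $\sigma'>0$ and $1-d/p>0$, choosing $\theta\in(0,1/4)$ small enough forces this below $\tfrac12 G(1;v)$.
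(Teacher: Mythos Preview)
Your proof is correct and follows essentially the same approach as the paper: the paper's own proof is a two-line reference stating that the lemma follows from boundary $C^{1,\alpha}$ estimates for constant-coefficient elasticity systems, citing \cite[Lemma~7.1]{Armstrong-Shen-2014}. What you have written is a careful, self-contained expansion of exactly that argument---rescale to $r=1$, subtract the near-optimal affine function, apply boundary Schauder for $\mathcal{L}_0$, and then test with the first-order Taylor polynomial at the origin to gain the factor $\theta^{\sigma'}+\theta^{1-d/p}$.
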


\begin{proof}
The lemma follows from the boundary $C^{1, \alpha}$ estimates for elasticity systems with constant coefficients.
We refer the reader to \cite[Lemma 7.1]{Armstrong-Shen-2014}  for the case $\mathcal{L}_0 (v)=0$. 
The argument for the general case $F\in L^p$ with $p>d$ is the same.
\end{proof}

\begin{lemma}\label{lemma-4.5}
Let $0<\varep<1/2$.
Let $u_\varep$ be a solution of $\mathcal{L}_\varep (u_\varep)=F$ in $D_1$ with $u_\varep=f$ on $\Delta_1$.
Define
\begin{equation}\label{H}
\aligned
H(r)=  \frac{1}{r} \inf_{\substack{ M\in \mathbb{R}^{d\times d}\\ q\in \rd}} &
\Bigg\{  \left(\average_{D_r} |u_\varep- Mx -q|^2 \right)^{1/2}
+ r^2\left(\average_{D_r} |F|^p\right)^{1/p} \\ 
&+\| f- Mx-q\|_{L^\infty(\Delta_r)}
+ r\, \|\nabla_{\tan} (f-Mx-q) \|_{L^\infty(\Delta_r)}\\
&+r^{1+\sigma}\,  \|\nabla_{\tan} (f -Mx-q)\|_{C^{0, \sigma} (\Delta_r)} \Bigg\},
\endaligned
\end{equation}
and
\begin{equation}\label{Phi}
\aligned
\Phi (r)
=\inf_{q\in \rd}
 \Bigg\{  &\left(\average_{D_{2r}}   |u_\varep -q |^2 \right)^{1/2}
  + r^2 \left(\average_{D_{2r}} |F|^p\right)^{1/p}\\
&  \qquad\qquad
+\|  f-q \|_{L^\infty(\Delta_{2r})} + r\| \nabla_{\tan} f\|_{L^\infty(\Delta_{2r})} \Bigg\},
  \endaligned
  \end{equation}
where $p>d$ and $\sigma \in (0, \alpha)$.
Then
\begin{equation}\label{4.5-0}
H(\theta r) \le (1/2) H(r) +C \left(\frac{\varep}{r}\right)^{1/2} \Phi (2r),
\end{equation}
for any $r  \in [\varep,1/2]$, where $\theta\in (0,1/4)$ is given by Lemma \ref{lemma-4.4}.
\end{lemma}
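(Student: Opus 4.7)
The plan is to compare $u_\varep$ with a homogenized solution $v$ on scale $r$, apply the boundary $C^{1,\alpha}$ one-step decay of $\mathcal{L}_0$ (Lemma \ref{lemma-4.4}) to $v$, and transfer the decay back to $u_\varep$ via a triangle-inequality comparison between $H(\cdot)$ and $G(\cdot; v)$.

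First I would fix $r \in [\varep, 1/2]$ and let $q^* \in \rd$ be a near-minimizer of the infimum defining $\Phi(r)$. Because $\mathcal{L}_\varep$ annihilates constants, $u_\varep - q^*$ is a weak solution of $\mathcal{L}_\varep(u_\varep - q^*) = F$ in $D_{2r}$ with boundary data $f - q^*$ on $\Delta_{2r}$. Applying Lemma \ref{lemma-4.3} to $u_\varep - q^*$ produces $\widetilde v \in H^1(D_r; \rd)$ satisfying $\mathcal{L}_0 \widetilde v = F$ in $D_r$ and $\widetilde v = f - q^*$ on $\Delta_r$, with
\[
\left(\average_{D_r} |u_\varep - q^* - \widetilde v|^2\right)^{1/2} \le C\left(\frac{\varep}{r}\right)^{1/2} \Phi(r),
\]
where the $L^p$ norm of $F$ inside $\Phi$ controls the $L^2$ norm from Lemma \ref{lemma-4.3} via H\"older's inequality, and $\|\nabla_{\tan}(f-q^*)\|_{L^\infty} = \|\nabla_{\tan} f\|_{L^\infty}$ since $q^*$ is constant. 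Setting $v := \widetilde v + q^*$ yields a homogenized solution with $\mathcal{L}_0 v = F$ in $D_r$, $v = f$ on $\Delta_r$, and the same $L^2$ proximity bound for $u_\varep - v$.

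Next, Lemma \ref{lemma-4.4} applied to $v$ gives $G(\theta r; v) \le \tfrac{1}{2}G(r; v)$. Then I would compare $H$ with $G(\cdot; v)$. For any affine $Mx + q'$ and any $\rho \le r$, the triangle inequality yields
\[
\left(\average_{D_\rho} |u_\varep - Mx - q'|^2\right)^{1/2} \le \left(\average_{D_\rho} |v - Mx - q'|^2\right)^{1/2} + \left(\average_{D_\rho} |u_\varep - v|^2\right)^{1/2},
\]
while the remaining terms inside the infima defining $H(\rho)$ and $G(\rho; v)$ coincide, because $\mathcal{L}_0 v = F$ and $v|_{\Delta_\rho} = f|_{\Delta_\rho}$ for $\rho \le r$. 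Taking the infimum over $(M, q')$ gives the symmetric pair
\[
H(\rho) \le G(\rho; v) + \tfrac{1}{\rho}\left(\average_{D_\rho} |u_\varep - v|^2\right)^{1/2}, \qquad G(\rho; v) \le H(\rho) + \tfrac{1}{\rho}\left(\average_{D_\rho} |u_\varep - v|^2\right)^{1/2}.
\]
For $\rho = \theta r$, the volume ratio yields $(\average_{D_{\theta r}}|u_\varep - v|^2)^{1/2} \le C\theta^{-d/2}(\average_{D_r}|u_\varep - v|^2)^{1/2}$. Chaining these,
\[
H(\theta r) \le G(\theta r; v) + \tfrac{C_\theta}{r}\!\left(\average_{D_r}|u_\varep - v|^2\right)^{1/2} \le \tfrac{1}{2}G(r;v) + \tfrac{C_\theta}{r}(\cdots) \le \tfrac{1}{2}H(r) + \tfrac{C_\theta'}{r}\!\left(\average_{D_r}|u_\varep - v|^2\right)^{1/2},
\]
and inserting the first-step bound together with the monotonicity estimate $\Phi(r) \le C\Phi(2r)$ (from comparing nested-domain averages) yields (\ref{4.5-0}).

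The main organizational obstacle is that $v$ must be fixed as a single function, not a family parametrized by the test affine $(M, q')$, so that both Lemma \ref{lemma-4.4} and the triangle-inequality comparison apply to the same object. This forces us to freeze $q^*$ as a near-minimizer of $\Phi(r)$ before invoking Lemma \ref{lemma-4.3}, rather than taking the infimum after constructing $v$.
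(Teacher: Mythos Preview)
Your proof is correct and follows essentially the same route as the paper: approximate $u_\varep$ by a homogenized solution $v$ via Lemma~\ref{lemma-4.3}, apply the one-step decay of Lemma~\ref{lemma-4.4} to $v$, and transfer back to $u_\varep$ by the triangle-inequality comparison $|H(\rho)-G(\rho;v)|\le \rho^{-1}(\average_{D_\rho}|u_\varep-v|^2)^{1/2}$. The only organizational difference is that you subtract a near-optimal constant $q^*$ \emph{before} invoking Lemma~\ref{lemma-4.3}, whereas the paper applies Lemma~\ref{lemma-4.3} to $u_\varep$ directly and then observes afterwards that $H$ is invariant under $u_\varep\mapsto u_\varep-q$, so the resulting inequality may be optimized in $q$; the two orderings yield the same bound, and your extra step $\Phi(r)\le C\Phi(2r)$ is harmless.
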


\begin{proof}
Fix $r\in [\varep, 1/2]$.
Let $v$ be a solution of $\mathcal{L}_0 (v)=F$ in $D_r$ with $v =f$ on $\Delta_r$.
Observe that
$$
\aligned
H(\theta r)
& \le \left(\average_{D_{\theta r}} |u_\varep -v|^2\right)^{1/2}
+ G(\theta r; v)\\
&\le \left(\average_{D_{\theta r}} |u_\varep -v|^2\right)^{1/2}
+ (1/2) G(r; v)\\
&\le C \left(\average_{D_{r}} |u_\varep -v|^2\right)^{1/2} + (1/2) H(r),
\endaligned
$$
where we have used Lemma \ref{lemma-4.4} for the second inequality.
This, together with Lemma \ref{lemma-4.3}, gives
$$
\aligned
H(\theta r) \le (1/2) H(r) +
 C\left(\frac{\varep}{r} \right)^{1/2}
& \Bigg\{ \left(\average_{D_{2r}} |u_\varep|^2 \right)^{1/2}
 + r^2 \left(\average_{D_{2r}} |F|^2\right)^{1/2}\\
&\qquad  +\|  f\|_{L^\infty(\Delta_{2r})} + r\| \nabla_{\tan} f\|_{L^\infty(\Delta_{2r})} \Bigg\}.
\endaligned
$$
Since $H(r)$ remains invariant if we subtract a constant from $u_\varep$, the inequality (\ref{4.5-0})
follows.
\end{proof}

\begin{lemma}\label{G-lemma}
Let $H(r)$ and $h(r)$ be two nonnegative  continuous functions on the interval $(0, 1]$. Let
$0<\varep<(1/4)$.
Suppose that there exists a constant $C_0$  such that
\begin{equation}\label{G-1}
\left\{
\aligned
& \max_{r\le t \le 2r} H(t) \le C_0\,  H(2r),\\
& \max_{r\le t,s\le 2r} |h(t) -h(s)|   \le C_0\,  H(2r),
\endaligned
\right.
\end{equation}
for any $r\in [\varep, 1/2]$. We further assume that
\begin{equation}\label{G-3}
H(\theta r) \le (1/2) H(r) + C_0\,  \omega (\varep/r) \Big\{ H(2r) + h(2r) \Big\},
\end{equation}
for any $r\in [ \varep, 1/2]$,
where $\theta\in (0,1/4)$ and $\omega$ is a nonnegative increasing function $[0,1]$ such that
$\omega(0)=0$ and
\begin{equation}\label{G-4}
\int_0^1 \frac{\omega(t)}{t}\, dt  <\infty.
\end{equation}
Then
\begin{equation}\label{G-5}
\max_{\varep\le r\le 1}
\Big\{ H(r) +h(r) \Big\}
\le C \Big\{ H(1) +h (1) \Big\},
\end{equation}
where $C$ depends only on $C_0$, $\theta$,  and $\omega$.
\end{lemma}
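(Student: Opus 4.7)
My plan is to prove this as a Dini--Gronwall iteration. After normalizing so that $K := H(1) + h(1) = 1$, set $\Phi := \sup_{r \in [\varep, 1]}(H(r) + h(r))$, which is finite by continuity. The goal is to bound $\Phi$ by a constant depending only on $C_0$, $\theta$, and the modulus $\omega$.

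First, I would iterate (\ref{G-3}) along the geometric sequence $r_k = \theta^k r_0$ for some $r_0 \in [1/2, 1/(2\theta))$, running from $k = 0$ to the largest $k = N$ with $r_k \ge \varep$. Writing $\omega_k := \omega(\varep/r_k)$, the recursion unfolds to
\[ H(r_k) \le 2^{-k} H(r_0) + C_0 \sum_{j=0}^{k-1} 2^{-(k-1-j)} \omega_j \bigl[H(2 r_j) + h(2 r_j)\bigr]. \]
Because $\omega_j$ is increasing in $j$ while the weights $2^{-(k-1-j)}$ peak at $j = k - 1$, and the bracketed quantity is at most $\Phi$, one obtains $H(r_k) \le 2^{-k} H(r_0) + 2 C_0 \omega_{k-1} \Phi$. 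Summing over $k$ and comparing the geometric sum of $\omega_k$ to the integral appearing in (\ref{G-4}) yields $\sum_{k=1}^{N} \omega_{k-1} \le \Sigma := \frac{1}{\ln(1/\theta)} \int_0^1 \omega(s)/s\, ds < \infty$.

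Next, I would pass from $H$ to $h$ and extend from the discrete scales $\{r_k\}$ to all of $[\varep, 1]$. For each $k$, the oscillation bound for $h$ in (\ref{G-1}), chained across the $\lceil \log_2(1/\theta) \rceil$ dyadic doublings linking $r_k$ to $r_{k-1}$ and combined with the doubling bound on $H$ from (\ref{G-1}), gives $|h(r_k) - h(r_{k-1})| \le C' H(2 r_{k-1})$ for some $C' = C'(C_0, \theta)$. A parallel iteration of (\ref{G-3}) at the shifted scales $\{2 r_k\}$ controls the $H(2 r_{k-1})$ terms via the same Dini sum, and telescoping yields $|h(r_N) - h(r_0)| \le C(K + \Sigma \Phi)$. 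Combined with $H(r_0), h(r_0) \le C K$, obtained from a bounded number of applications of (\ref{G-1}) near $r = 1$, and then extended to all $r \in [\varep, 1]$ by the same doubling tools (losing only constants depending on $C_0, \theta$), one arrives at an estimate of the form $\Phi \le \tilde C K + \tilde C \Sigma \Phi$, where $\tilde C$ depends only on $C_0$ and $\theta$.

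The final step is to absorb the $\Phi$ term on the right. If $\tilde C \Sigma < 1$ this is immediate. In the general case one exploits (\ref{G-4}) to choose $\delta > 0$ with $\int_0^\delta \omega(s)/s\, ds$ arbitrarily small, partitions $[\varep, 1]$ into finitely many geometric blocks on which the corresponding local Dini sum is $< 1/(2 \tilde C)$, and runs the iteration inductively block by block from the top down; since the number of blocks depends only on $\omega$ and not on $\varep$, a finite bootstrap suffices to conclude $\Phi \le C$. Carrying out this block-wise absorption while ensuring that constants do not compound across blocks is the main technical obstacle I anticipate.
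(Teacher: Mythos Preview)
Your discrete-iteration-plus-blocking scheme is correct and does yield the lemma, but it is organized quite differently from the paper's proof. The paper never introduces the supremum $\Phi$ or unfolds the recursion along a geometric sequence; instead it works with the continuous quantity $\int_a^1 H(r)\,\frac{dr}{r}$. From the oscillation bound in (\ref{G-1}) it first derives $h(a)\le C\bigl\{H(1)+h(1)\bigr\}+C\int_a^1 H(r)\,\frac{dr}{r}$; substituting this into (\ref{G-3}) and integrating in $r$ (with a Fubini step on the resulting double integral) produces a self-referential inequality for $\int_{\alpha\varep}^1 H(r)\,\frac{dr}{r}$ in which the feedback term carries the factor $\int_0^{1/\alpha}\omega(s)\,\frac{ds}{s}$. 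A single choice of $\alpha$ large, depending only on $\omega$, makes that factor less than $1/4$, and the absorption is completed in one stroke rather than block by block. What you gain from the discrete/block route is a transparent ``induction on scales'' picture; what the paper's integral approach buys is precisely the avoidance of the obstacle you anticipated---the bookkeeping needed to keep constants from compounding across blocks---since the Dini tail handles the absorption all at once. Both arguments rest on the same mechanism (smallness of the Dini tail), packaged differently.
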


\begin{proof}
It follows from (\ref{G-1}) that
$$
h(r) \le h(2r) + C_0\, H(2r)
$$
for any $\varep\le r\le 1/2$.
Hence,
$$
\aligned
\int_a^{1/2} \frac{h(r)}{r} \, dr  &\le 
\int_a^{1/2} \frac{h(2r)}{r} \, dr + C_0 \int_a^{1/2} \frac{H(2r)}{r} \, dr \\
&= \int_{2a}^{1} \frac{h(r)}{r} \, dr +
 C_0 \int_{2a}^{1} \frac{H(r)}{r} \, dr,
 \endaligned
 $$
where $\varep\le a\le (1/4)$.
This implies that
$$
\aligned
\int_a^{2a} \frac{h(r)}{r}\, dr
&\le \int_{1/2}^1 \frac{h(r)}{r}\, dr +C\int_{2a}^1 \frac{H(r)}{r}\, dr\\
&\le C \big\{ h(1) + H(1) \big\} +C\int_{2a}^1 \frac{H(r)}{r}\, dr,
\endaligned
$$
which, by (\ref{G-1}), gives
\begin{equation}\label{G-6}
\aligned
h(a) & \le C \left\{ H(2a) + h(1) + H(1) + \int_{2a}^1 \frac{H(r)}{r}\, dr\right\}\\
 & \le C \left\{ h(1) + H(1) + \int_{a}^1 \frac{H(r)}{r}\, dr\right\},
 \endaligned
\end{equation}
for any $a\in [\varep, 1/4]$.

Next, we use (\ref{G-3}) and (\ref{G-6}) to obtain
$$
H(\theta r) \le (1/2) H(r) +C\,  \omega (\varep/r) \big\{ h(1) +H(1) \big\}
+C \, \omega(\varep/r) 
\int_{r} ^1 \frac{H(r)}{r}\, dr.
$$
It follows that 
$$
\int_{\alpha \theta \varep }^{\theta }
\frac{H( r)}{r}\, dr
\le \frac12 \int_{\alpha\varep}^1 \frac{H( r)}{r}\, dr
 +C_\alpha \big\{ h(1) +H(1) \big\} 
 + C\,  \int_{\alpha\varep}^{1} \omega(\varep/r) \left\{  \int_r^1 \frac{H(t)}{t}\,dt\right\} \frac{dr}{r},
 $$
 where $\alpha>1$ and we have used the condition (\ref{G-4}).
 Using (\ref{G-4}) and the observation that 
 $$
 \aligned
 \int_{\alpha\varep}^{1} \omega(\varep/r) \left\{  \int_r^1 \frac{H(t)}{t}\,dt\right\} \frac{dr}{r}
  &=\int_{\alpha\varep}^1 H(t) \left\{ \int_{\frac{\varep}{t}}^{\frac{1}{\alpha}}
  \frac{\omega (s)}{s}\, ds \right\} \frac{dt}{t}\\
  &\le (4C)^{-1}
  \int_{\alpha\varep}^1 H(t) \frac{dt}{t}
  \endaligned
 $$
 if $\alpha>\alpha_0(\omega)$, we see that
 $$
 \int_{\alpha \theta \varep }^{\theta }
\frac{H( r)}{r}\, dr
\le \frac12 \int_{\alpha\varep}^1 \frac{H( r)}{r}\, dr
 +C_\alpha \big\{ h(1) +H(1) \big\} 
 + \frac14 \int_{\alpha\varep}^{1}\frac{H(r)}{r}\, dr.
 $$
It follows that
\begin{equation}\label{G-10}
\int_{\varep}^1 \frac{H(r)}{r}\, dr \le C \big\{ h(1) +H(1) \big\},
\end{equation}
which, together with (\ref{G-1}) and (\ref{G-6}),  yields the estimate (\ref{G-5}).
This completes the proof.
\end{proof}

\begin{proof}[\bf Proof of Theorem \ref{main-theorem-3}]
We may assume that $0<\varep<(1/4)$.
Let $u_\varep$ be a solution of $\mathcal{L}_\varep (u_\varep)=F$ in $D_1$ with 
$u_\varep =f$ on $\Delta_1$, where $F\in L^p(D_1)$ for some $p>d$ and
$f\in C^{1, \sigma}(\Delta_1)$ for some $\sigma\in (0, \alpha)$.
For  $r\in (0,1)$, we define the function $H(r)$ by (\ref{H}).
It is easy to see that $H(t)\le C\, H(2r)$ if $t\in (r, 2r)$.

Next, we let $h(r)=|M_r |$, where $M_r $ is the $d\times d$ matrix such that
$$
\aligned
H(r)=  \frac{1}{r} \inf_{q\in \rd} &
\Bigg\{  \left(\average_{D_r} |u_\varep- M_r x -q|^2 \right)^{1/2}
+ r^2\left(\average_{D_r} |F|^p\right)^{1/p} \\ 
&\qquad +\| f- M_r x-q\|_{L^\infty(\Delta_r)}
+ r\, \|\nabla_{\tan} (f-M_r x-q) \|_{L^\infty(\Delta_r)}\\
&\qquad +r^{1+\sigma}\,  \|\nabla_{\tan} (f -M_r x-q)\|_{C^{0, \sigma} (\Delta_r)} \Bigg\}.
\endaligned
$$
Let $t, s\in [r,2r]$. Using
$$
\aligned
|M_t-M_s|
 &\le  \frac{C}{r} \inf_{q\in \rd} \left(\average_{D_r} |(M_t-M_s)x -q|^2\right)^{1/2}\\
 &\le \frac{C}{t}\inf_{q\in \rd} \left(\average_{D_t} |u_\varep -M_t x -q|^2 \right)^{1/2}
 +\frac{C}{s}\inf_{q\in \rd} \left(\average_{D_s} |u_\varep - M_s x -q|^2 \right)^{1/2}\\
 &\le C \big\{ H(t) +H(s) \big\}\\
 &\le C H(2r),
 \endaligned
 $$
 we obtain 
 $$
 \max_{r\le t,s\le 2r} | h(t)-h(s)| \le C \, H(2r).
 $$
 Furthermore, if $\Phi$ is defined by (\ref{Phi}), then
 $$
 \Phi (r)\le H(2r) + h(2r).
 $$
 In view of Lemma \ref{lemma-4.5} this gives
 $$
 H(\theta r) \le (1/2) H(r) +C \omega (\varep/r)  \big\{ H(2r) + h(2r) \Big\}
 $$
 for $r\in [\varep, 1/2]$,
 where $\omega (t)=t^{1/2}$.
 Thus the functions $H(r)$ and $h(r)$ satisfy the conditions (\ref{G-1}), (\ref{G-3}) and
 (\ref{G-4}) in Lemma \ref{G-lemma}. Consequently, we obtain that for $r\in [\varep, 1/2]$,
 $$
 \aligned
 \inf_{q\in \rd} \frac{1}{r} \left(\average_{D_r} |u_\varep -q|^2 \right)^{1/2}
&\le C \Big\{ H(r) +h (r)\Big\}  \\
 & \le C \big\{ H(1) + h(1) \big\}\\
 & \le C \left\{ \left(\average_{D_1} |u_\varep|^2\right)^{1/2}
 +\|F\|_{L^p(D_1)} + \| f\|_{C^{1,\sigma}(\Delta_1)} \right\},
 \endaligned
 $$
 which, together with Cacciopli's inequality, gives the estimate (\ref{main-estimate-3}).
 The proof is complete.
\end{proof}

The argument used in this section may be used to prove the interior Lipschitz estimates,
down to the scale $\varep$.

\begin{theorem}\label{theorem-4.1}
Suppose that $A$ satisfies (\ref{ellipticity})-(\ref{periodicity}).
Let $u_\varep\in H^1(B(x_0, R); \rd)$ be a weak solution of $\mathcal{L}_\varep (u_\varep)=F$
in $B(x_0,R)$ for some $x_0\in \rd$ and $R>0$, where $F\in L^p(B(x_0,R); \rd)$ for some $p>d$. Then, for
$\varep\le r< R$,
\begin{equation}\label{i-Lip}
\left(\average_{B(x_0, r)} |\nabla u_\varep|^2 \right)^{1/2}
\le C \left\{ \left(\average_{B(x_0, R)} |\nabla u_\varep|^2 \right)^{1/2}
+ R\left(\average_{B(x_0, R)} |F|^p\right)^{1/p} \right\},
\end{equation}
where $C$ depends only on $d$, $\kappa_1$, $\kappa_2$, and $p$.
\end{theorem}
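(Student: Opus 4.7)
The plan is to mirror, in the interior setting, the scheme developed in this section for Theorem \ref{main-theorem-3}, replacing the half-ball $D_r$ by the ball $B(x_0,r)$ and dropping everything related to the boundary data $f$. By translation and rescaling we may assume $x_0=0$ and $R=1$, and we may further assume $\varep<1/4$ since the range $\varep\ge 1/4$ is classical (covered by standard $C^{1,\sigma}$ interior estimates applied after rescaling to the equation $\mathcal{L}_1(u_\varep(\varep x))=\varep^2 F(\varep x)$ on a ball of radius comparable to $1$, where $A$ has no smoothness; in fact the full argument below, with $\varep$-ball replaced by a $1$-ball, handles this case at the very last step via Cacciopoli).

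The first step is an interior analogue of Lemma \ref{lemma-4.3}: for $\varep\le r<1$, I would produce $v\in H^1(B_r;\rd)$ with $\mathcal{L}_0(v)=F$ in $B_r$ and
\begin{equation*}
\left(\average_{B_r}|u_\varep -v|^2\right)^{1/2}
\le C\Big(\frac{\varep}{r}\Big)^{1/2}\left\{\left(\average_{B_{2r}}|u_\varep -q|^2\right)^{1/2}+r^2\left(\average_{B_{2r}}|F|^p\right)^{1/p}\right\}
\end{equation*}
for any constant vector $q\in\rd$. By rescaling it suffices to prove this for $r=1$. One applies Cacciopoli in $B_{3/2}$ to $u_\varep -q$, uses the co-area formula to choose $t\in[5/4,3/2]$ with good $H^1$ trace on $\partial B_t$, and then sets $v$ to be the solution of $\mathcal{L}_0(v)=F$ in $B_t$ with $v=u_\varep$ on $\partial B_t$. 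The Dirichlet $O(\varep^{1/2})$ estimate in Remark \ref{remark-2.1} applies since $B_t$ is Lipschitz, producing the desired closeness. This is the step where the results of Section 2 are used; it is the analogue of Lemma \ref{lemma-4.2}.

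Next, define the excess-decay functionals
\begin{equation*}
H(r)=\frac{1}{r}\inf_{M\in\mathbb{R}^{d\times d},\, q\in\rd}\left\{\left(\average_{B_r}|u_\varep -Mx-q|^2\right)^{1/2}+r^2\left(\average_{B_r}|F|^p\right)^{1/p}\right\},
\end{equation*}
and $h(r)=|M_r|$, where $M_r$ is a minimizer for $H(r)$. The interior $C^{1,\sigma}$ estimate for the constant-coefficient operator $\mathcal{L}_0$ (with $p>d$ supplying $C^{1,\sigma}$ regularity for some $\sigma\in(0,1)$) yields, exactly as in Lemma \ref{lemma-4.4}, a $\theta\in(0,1/4)$ such that any solution $v$ of $\mathcal{L}_0(v)=F$ in $B_r$ satisfies $G(\theta r;v)\le \tfrac12 G(r;v)$, where $G(t;v)$ is the interior analogue of (\ref{G}). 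Combining this with the approximation above gives, as in Lemma \ref{lemma-4.5},
\begin{equation*}
H(\theta r)\le \tfrac12 H(r)+C\Big(\frac{\varep}{r}\Big)^{1/2}\big\{H(2r)+h(2r)\big\},\qquad \varep\le r\le 1/2,
\end{equation*}
after one checks that $|M_t-M_s|\le C\,H(2r)$ for $t,s\in[r,2r]$ by comparing the minimizers on $B_r$ (same computation as in the proof of Theorem \ref{main-theorem-3}).

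Finally I would invoke Lemma \ref{G-lemma} with $\omega(t)=t^{1/2}$ (whose $\int_0^1\omega(t)/t\,dt<\infty$) to conclude that $H(r)+h(r)\le C\{H(1)+h(1)\}$ for all $r\in[\varep,1]$. Bounding $H(1)+h(1)$ by the Poincar\'e inequality and Cacciopoli in $B_1$ gives a bound in terms of $(\average_{B_1}|\nabla u_\varep|^2)^{1/2}+\|F\|_{L^p(B_1)}$. Applying Cacciopoli one last time on $B_r\subset B_{2r}$ converts the oscillation bound $r^{-1}\inf_q(\average_{B_r}|u_\varep-q|^2)^{1/2}\le C(\cdots)$ into the gradient bound (\ref{i-Lip}).

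The main obstacle is the proof of the $L^2$ approximation step, which rests on Remark \ref{remark-2.1}; the rest is a faithful transcription of the boundary scheme (Lemmas \ref{lemma-4.4}--\ref{lemma-4.5} and Lemma \ref{G-lemma}) with $D_r$ replaced by $B_r$ and with the boundary-data contributions dropped. Everything else is bookkeeping: checking the two hypotheses of Lemma \ref{G-lemma} on $H$ and $h$, and verifying the iteration inequality.
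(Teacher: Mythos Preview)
Your proposal is correct and is precisely the approach the paper intends: the paper states only that ``the argument used in this section may be used to prove the interior Lipschitz estimates, down to the scale $\varep$,'' and your outline faithfully transcribes the scheme of Lemmas \ref{lemma-4.2}--\ref{lemma-4.5} and Lemma \ref{G-lemma} to balls, dropping the boundary-data terms. One minor remark: your parenthetical about the range $\varep\ge 1/4$ is slightly garbled (no $C^{1,\sigma}$ estimate is available for $\mathcal{L}_1$ without smoothness on $A$), but as you also note, that range is in fact trivial since then $r$ and $R$ are comparable and the estimate follows directly from the inclusion $B_r\subset B_R$.
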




\section{Lipschitz estimates in $C^{1, \alpha}$ domains, part II}
\setcounter{equation}{0}

In this section we study the Lipschitz estimate, down to the scale $\varep$,
with Neumann boundary conditions, and 
give the proof of Theorem \ref{main-theorem-4}.
Throughout this section we will assume that the defining function $\psi$
in $D_r$ and $\Delta_r$ is $C^{1,\alpha}$ for some $\alpha\in (0,1)$ and
$\|\nabla \psi\|_{C^\alpha(\mathbb{R}^{d-1})}\le M$.

\begin{lemma}\label{lemma-5.1}
Let $\Omega$ be a bounded Lipschitz domain.
Let $u_\varep\in H^1(\Omega; \rd)$ be a weak solution to the Neumann problem:
$\mathcal{L}_\varep (u_\varep)=F$ in $\Omega$ and $\partial u_\varep/\partial\nu_\varep =g$ on $\partial\Omega$.
Then there exists $w \in H^1(\Omega; \rd)$ such that
$\mathcal{L}_0 (w)=F$ in $\Omega$, $\partial w/\partial\nu_0=g$ on $\partial\Omega$, and
\begin{equation}\label{5.1-0}
\| u_\varep -w\|_{L^2(\Omega)}
\le C \, \varep^{1/2} 
\Big\{ \| g\|_{L^2(\partial\Omega)} + \| F\|_{L^2(\Omega)} \Big\}.
\end{equation} 
\end{lemma}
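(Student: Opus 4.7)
The plan is to reduce the lemma to a direct application of the $L^2$ convergence rate for the Neumann problem recorded in Remark~2.1 (estimate (4.12)); the only non-routine work is bookkeeping the rigid-displacement kernel inherent to the Neumann problem in elasticity. First I would record that, thanks to the elasticity symmetries $a_{ij}^{\alpha\beta}=a_{ji}^{\beta\alpha}=a_{\alpha j}^{i\beta}$, every $\phi\in\mathcal{R}$ satisfies both $\mathcal{L}_\varep(\phi)=0$ in $\Omega$ and $\partial\phi/\partial\nu_\varep=0$ on $\partial\Omega$: indeed $\nabla\phi$ is antisymmetric, and under these symmetries $A(x/\varep)\nabla\phi$ sees only the symmetric part of $\nabla\phi$, which vanishes. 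Testing the weak formulation satisfied by $u_\varep$ against an arbitrary $\phi\in\mathcal{R}$ therefore yields the compatibility condition
$$\int_\Omega F\cdot\phi+\int_{\partial\Omega} g\cdot\phi\,d\sigma=0\qquad\text{for all }\phi\in\mathcal{R},$$
so the homogenized Neumann problem $\mathcal{L}_0(w)=F$ in $\Omega$, $\partial w/\partial\nu_0=g$ on $\partial\Omega$ is solvable, with solution unique modulo $\mathcal{R}$.

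Next I would fix the particular solution $w_0\in H^1(\Omega;\mathbb{R}^d)$ with $w_0\perp\mathcal{R}$, and choose $\phi_\varep\in\mathcal{R}$ so that $\tilde u_\varep:=u_\varep-\phi_\varep\perp\mathcal{R}$. Because rigid displacements lie in the kernel of $\mathcal{L}_\varep$ and of the conormal derivative, $\tilde u_\varep$ is still a weak solution of the Neumann problem with the original data $F,g$. Now both $\tilde u_\varep$ and $w_0$ are orthogonal to $\mathcal{R}$ and satisfy the hypotheses of Theorem~2.2, so estimate~(4.12) of Remark~2.1 applies to the pair $(\tilde u_\varep,w_0)$ and gives
$$\|\tilde u_\varep-w_0\|_{L^2(\Omega)}\le C\varep^{1/2}\bigl\{\|g\|_{L^2(\partial\Omega)}+\|F\|_{L^2(\Omega)}\bigr\}.$$

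Finally I would set $w:=w_0+\phi_\varep$; by the kernel property of $\mathcal{R}$, $w$ still solves $\mathcal{L}_0(w)=F$ in $\Omega$ with $\partial w/\partial\nu_0=g$ on $\partial\Omega$, and $\|u_\varep-w\|_{L^2(\Omega)}=\|\tilde u_\varep-w_0\|_{L^2(\Omega)}$, yielding (5.1). I expect no real obstacle: the argument is a short quotienting by $\mathcal{R}$ on each side followed by invocation of the $L^2$ convergence rate from Section~2. The only point that must be verified carefully, though standard, is that the elasticity symmetries genuinely annihilate rigid displacements at the level of both $\mathcal{L}_\varep$ and $\partial/\partial\nu_\varep$, which is what legitimizes the translation $u_\varep\mapsto\tilde u_\varep$ without altering the data.
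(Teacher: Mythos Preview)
Your proposal is correct and follows essentially the same approach as the paper: project $u_\varep$ onto $\mathcal{R}^\perp$ by subtracting a rigid displacement $\phi_\varep$, take the $\mathcal{R}$-orthogonal homogenized solution $u_0$, invoke the $L^2$ convergence rate from Remark~\ref{remark-2.1} (estimate (\ref{L-2-N})), and set $w=u_0+\phi_\varep$. Your additional verification that rigid displacements lie in the kernel of both $\mathcal{L}_\varep$ and $\partial/\partial\nu_\varep$ (hence that the compatibility condition is inherited) is a detail the paper leaves implicit, but otherwise the arguments are identical.
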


\begin{proof}
Choose $\phi_\varep \in \mathcal{R}$ such that $u_\varep -\phi_\varep \perp \mathcal{R}$
in $L^2(\Omega; \rd)$.
Let $u_0$ be the weak solution to the Neumann problem: $\mathcal{L}_0 (u_0)=F$ in $\Omega$ and
$\partial u_0/\partial \nu_0=g$ on $\partial\Omega$ with the property $u_0 \perp \mathcal{R}$.
It follows from Remark \ref{remark-2.1} that
$$
\| u_\varep -\phi_\varep -u_0\|_{L^2(\Omega)}
\le C \, \varep^{1/2} 
\Big\{ \| g\|_{L^2(\partial\Omega)} + \| F\|_{L^2(\Omega)} \Big\}.
$$
By letting $w=u_0 +\phi_\varep$ this gives (\ref{5.1-0}).
\end{proof}

\begin{lemma}\label{lemma-5.2} 
Let $\varep\le r<1$.
Let $u_\varep \in H^1(D_{2r}; \rd)$ be a weak solution of
$\mathcal{L}_\varep (u_\varep)=F$ in $D_{2r}$ with $\partial u_\varep/\partial\nu_\varep=g$ on $\Delta_{2r}$.
Then there exists $w\in H^1(D_r; \rd)$ such that $\mathcal{L}_0 (w)=F$ in $D_r$,
$\partial w/\partial \nu_0=g$ on $\Delta_r$, and
\begin{equation}\label{5.2-0}
\aligned
&\left(\average_{D_r} |u_\varep -w|^2\right)^{1/2}\\
  &  \le C\left(\frac{\varep}{r} \right)^{1/2}
 \Bigg\{   \left(\average_{D_{2r}} |u_\varep|^2 \right)^{1/2}
  + r^2 \left(\average_{D_{2r}} |F|^2\right)^{1/2}
  + r\, \|  g\|_{L^\infty(\Delta_{2r})}  \Bigg\},
\endaligned
\end{equation}
where $C$ depends only on $d$, $\kappa_1$, $\kappa_2$, and $M$.
\end{lemma}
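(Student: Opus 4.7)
The plan is to follow the template of Lemma \ref{lemma-4.3} for the Dirichlet case, with Lemma \ref{lemma-5.1} playing the role of Remark \ref{remark-2.1}. First I would rescale to $r=1$: setting $\tilde u(\tilde x) = u_\varep(r\tilde x)$ transforms the problem into
\[
\mathcal{L}_{\varep/r}(\tilde u) = r^2 F(r\,\cdot) \ \text{ in } \tilde D_2, \qquad \frac{\partial \tilde u}{\partial \tilde\nu_{\varep/r}} = r\, g(r\,\cdot) \ \text{ on } \tilde\Delta_2,
\]
where $\tilde D_r, \tilde\Delta_r$ correspond to the rescaled defining function $\psi_r(x') = r^{-1}\psi(rx')$, whose Lipschitz constant is still $M$. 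An $r=1$ bound on $\tilde D_2$ will, after scaling back, produce precisely the factor $(\varep/r)^{1/2}$ and the correctly weighted volume averages of $F$ and $L^\infty$ norm of $g$ in (\ref{5.2-0}).

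At scale $r=1$, the first step is a boundary Caccioppoli inequality
\[
\int_{D_{3/2}} |\nabla u_\varep|^2 \le C\left\{ \int_{D_2} |u_\varep|^2 + \int_{D_2} |F|^2 + \|g\|_{L^\infty(\Delta_2)}^2\right\}.
\]
This follows from testing $\mathcal{L}_\varep u_\varep = F$ against $\eta^2 u_\varep$ with a cutoff $\eta$ supported in $D_2$, equal to $1$ on $D_{3/2}$, and vanishing on $\partial D_2 \setminus \Delta_2$ (but not on $\Delta_2$). The boundary contribution $\int_{\Delta_2}\eta^2 g\cdot u_\varep$ is bounded by $\|g\|_{L^\infty(\Delta_2)}\|u_\varep\|_{L^2(\Delta_2)}$, controlled via the trace inequality, and absorbed by a standard Widman hole-filling iteration over a nested family $D_\rho$, $1\le \rho \le 2$. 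Since (\ref{ellipticity}) controls only the symmetric gradient, Korn's second inequality on $D_2$ is used to recover the full gradient, producing an extra $\|u_\varep\|_{L^2(D_2)}^2$ term already present on the right-hand side.

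Next, by the co-area formula on $[5/4, 3/2]$, I can pick $t \in [5/4, 3/2]$ for which
\[
\int_{\partial D_t \setminus \Delta_2} |\nabla u_\varep|^2\, d\sigma \le C\int_{D_{3/2}} |\nabla u_\varep|^2.
\]
At such $t$ the conormal derivative $h := \partial u_\varep/\partial \nu_\varep$ lies in $L^2(\partial D_t;\rd)$, with $h = g$ on $\Delta_t$ and $|h|\le C|\nabla u_\varep|$ on $\partial D_t\setminus \Delta_t$, so combining with the previous step gives
\[
\|h\|_{L^2(\partial D_t)} \le C\left\{\|u_\varep\|_{L^2(D_2)} + \|F\|_{L^2(D_2)} + \|g\|_{L^\infty(\Delta_2)}\right\},
\]
and the compatibility $\int_{D_t} F + \int_{\partial D_t} h = 0$ is automatic from integrating the equation on $D_t$. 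Applying Lemma \ref{lemma-5.1} on the Lipschitz domain $D_t$ (whose Lipschitz character depends only on $M$) with data $(F, h)$ yields $w\in H^1(D_t;\rd)$ with $\mathcal{L}_0 w = F$ in $D_t$, $\partial w/\partial \nu_0 = h$ on $\partial D_t$, and
\[
\|u_\varep - w\|_{L^2(D_1)} \le \|u_\varep - w\|_{L^2(D_t)} \le C\varep^{1/2}\bigl\{\|h\|_{L^2(\partial D_t)} + \|F\|_{L^2(D_t)}\bigr\}.
\]
Since $\Delta_1 \subset \Delta_t \subset \Delta_2$, the restriction of $w$ to $D_1$ satisfies $\mathcal{L}_0 w = F$ in $D_1$ and $\partial w/\partial \nu_0 = g$ on $\Delta_1$, and combining the displayed inequalities and unrescaling yields (\ref{5.2-0}).

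The main obstacle is executing the Caccioppoli step cleanly for the elasticity system with non-zero Neumann data: the ellipticity condition only controls the symmetric part of $\nabla u_\varep$, so Korn's second inequality on $D_2$ is required to pass to the full gradient, and the boundary contribution of $g$ must be handled by trace plus iteration. These are standard but are precisely what is responsible for the appearance of the $L^\infty$ norm of $g$ (rather than an $L^2$ norm) on the right of (\ref{5.2-0}).
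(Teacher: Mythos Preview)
Your proposal is correct and follows essentially the same approach as the paper: rescale to $r=1$, invoke the Neumann Caccioppoli inequality (\ref{Ca-N}), use the co-area formula to select a good level $t$, and apply Lemma \ref{lemma-5.1} on $D_t$. The paper's own proof is in fact terser than yours, merely citing these ingredients without spelling out the Korn/trace details for the Caccioppoli step.
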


\begin{proof}
By rescaling we may assume $r=1$.
As in the case of Dirichlet  conditions in Lemma \ref{lemma-4.3},
the desired estimate follows from Lemma \ref{lemma-5.1}
by using the co-area formula and the following Cacciopoli's inequality 
\begin{equation}\label{Ca-N}
\int_{D_{3/2}} |\nabla u_\varep|^2
\le C \left\{ \int_{D_2} |u_\varep|^2 +\int_{D_2} |F|^2 +\| g\|^2_{L^\infty(\Delta_2)} \right\},
\end{equation}
where $\mathcal{L}_\varep (u_\varep)=F$ in $D_2$ and $\partial u_\varep/\partial\nu_\varep
=g$ on $\Delta_2$.
\end{proof}

\begin{lemma}\label{lemma-5.3}
Let $w$ be a solution of $\mathcal{L}_0 (w)=F$ in $D_r$ with $\partial w/\partial \nu_0=g$ on $\Delta_r$.
For $0<t\le r$, define
\begin{equation}\label{I}
\aligned
I(t; w)=  \frac{1}{t} \inf_{\substack{ M\in \mathbb{R}^{d\times d}\\ q\in \rd}} &
\Bigg\{  \left(\average_{D_t} |w- Mx -q|^2 \right)^{1/2}
+ t^2\left(\average_{D_t} |F|^p\right)^{1/p} \\ 
&+t\, \big\| \frac{\partial}{\partial\nu_0}
\big( w- Mx\big)\big \|_{L^\infty(\Delta_t)}
+t^{1+\sigma}\,  \big\|\frac{\partial}{\partial\nu_0}
\big(w-Mx\big)\big\|_{C^{0, \sigma} (\Delta_t)} \Bigg\},
\endaligned
\end{equation}
where $p>d$ and $\sigma\in (0, \alpha)$.
Then there exists $\theta\in (0,1/4)$, depending only on $d$, $p$, $\kappa_1$,
$\kappa_2$, $\sigma$, $\alpha$ and $M$, such that
\begin{equation}\label{5.3-0}
I(\theta r; w) \le (1/2) I(r; w).
\end{equation}
\end{lemma}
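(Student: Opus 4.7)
The plan is to imitate the proof of Lemma \ref{lemma-4.4}, replacing the Dirichlet Schauder estimate by the boundary $C^{1,\sigma}$ Schauder estimate for the Neumann problem with the constant-coefficient elasticity operator $\mathcal{L}_0$ on a $C^{1,\alpha}$ domain. A rescaling $x\mapsto rx$ reduces matters to $r=1$. Since $\mathcal{L}_0$ annihilates every affine function and the Neumann derivative of a constant vanishes, I would fix a pair $(M_0,q_0)\in\mathbb{R}^{d\times d}\times\rd$ nearly realizing the infimum in $I(1;w)$ and set $\tilde{w}:=w-M_0 x-q_0$. Then $\mathcal{L}_0(\tilde{w})=F$ in $D_1$ and $\partial\tilde{w}/\partial\nu_0=\partial(w-M_0x)/\partial\nu_0$ on $\Delta_1$, so the four quantities in the definition of $I(1;w)$ evaluated at $(M_0,q_0)$ are, respectively, the $L^2(D_1)$ norm of $\tilde{w}$, the $L^p(D_1)$ norm of $F$, and the $L^\infty$ and $C^{0,\sigma}$ norms of $\partial\tilde{w}/\partial\nu_0$ on $\Delta_1$.

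Apply the standard up-to-the-boundary $C^{1,\sigma}$ Schauder estimate for the Neumann problem on the $C^{1,\alpha}$ domain $D_1$ (with $\sigma<\alpha$ and $p>d$) to obtain
\begin{equation*}
\|\tilde{w}\|_{C^{1,\sigma}(\overline{D_{1/2}})}\le C\bigl[\|\tilde{w}\|_{L^2(D_1)}+\|F\|_{L^p(D_1)}+\|\partial\tilde{w}/\partial\nu_0\|_{C^{0,\sigma}(\Delta_1)}\bigr]\le C\,I(1;w).
\end{equation*}
Set $M:=\nabla\tilde{w}(0)$ and $q:=\tilde{w}(0)$. A Taylor expansion yields $\sup_{D_\theta}|\tilde{w}-Mx-q|\le C\theta^{1+\sigma}\|\tilde{w}\|_{C^{1,\sigma}(\overline{D_{1/2}})}$. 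For the Neumann term, writing $\partial(\tilde{w}-Mx)/\partial\nu_0=n(x)\cdot\widehat{A}(\nabla\tilde{w}(x)-M)$ and using $\nabla\tilde{w}(0)=M$ together with the H\"older continuity of $\nabla\tilde{w}$ and of the unit outer normal $n$ (which is $C^{\alpha}\subset C^{\sigma}$), one obtains
\begin{equation*}
\|\partial(\tilde{w}-Mx)/\partial\nu_0\|_{L^\infty(\Delta_\theta)}+\theta^\sigma\,[\partial(\tilde{w}-Mx)/\partial\nu_0]_{C^{0,\sigma}(\Delta_\theta)}\le C\theta^\sigma\|\tilde{w}\|_{C^{1,\sigma}(\overline{D_{1/2}})}.
\end{equation*}
Finally, the $F$ term satisfies $(\average_{D_\theta}|F|^p)^{1/p}\le C\theta^{-d/p}(\average_{D_1}|F|^p)^{1/p}$.

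Using $(M_0+M,\,q_0+q)$ as a competitor in the infimum defining $I(\theta;w)$ and dividing through by $\theta$, each of the four contributions is bounded by $C(\theta^\sigma+\theta^{1-d/p})\,I(1;w)$. Since $\sigma>0$ and $1-d/p>0$, I would choose $\theta\in(0,1/4)$ small enough (depending only on $d,p,\kappa_1,\kappa_2,\sigma,\alpha,M$) so that $C(\theta^\sigma+\theta^{1-d/p})\le 1/2$, which yields (\ref{5.3-0}).

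The main obstacle is the correct invocation of the up-to-the-boundary $C^{1,\sigma}$ Schauder estimate for the constant-coefficient elasticity system with Neumann data on a $C^{1,\alpha}$ domain. This is classical, but one must be careful because the homogeneous Neumann problem for $\mathcal{L}_0$ has the rigid-motion kernel $\mathcal{R}$; keeping $\|\tilde{w}\|_{L^2(D_1)}$ on the right-hand side of the Schauder estimate is what makes it well posed, and this term is harmless here because the subtraction of $M_0 x+q_0$ already absorbs the symmetric affine modes into the corresponding part of $I(1;w)$.
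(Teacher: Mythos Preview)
Your proposal is correct and follows exactly the approach indicated in the paper's proof, which simply rescales to $r=1$ and invokes the boundary $C^{1,\sigma}$ Schauder estimate for the Neumann problem for the constant-coefficient elasticity operator $\mathcal{L}_0$ in a $C^{1,\alpha}$ domain. You have supplied the standard details (subtracting a near-optimal affine function, applying the Schauder estimate, Taylor-expanding $\tilde w$ at the origin, and tracking the powers of $\theta$) that the paper leaves implicit.
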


\begin{proof}
By rescaling we may assume $r=1$.
The lemma then follows from the boundary $C^{1, \sigma }$ estimates with Neumann boundary conditions
in $C^{1, \alpha}$ domains
for elasticity systems with constant coefficients.
\end{proof}

\begin{lemma}\label{lemma-5.4}
Let $0<\varep<1/2$.
Let $u_\varep$ be a solution of $\mathcal{L}_\varep (u_\varep)=F$ in $D_1$ with
 $\partial u_\varep/\partial\nu_\varep=g$ on $\Delta_1$,
 where $F\in L^p(D_1; \rd)$ for some $p>d$ and $g\in C^\sigma(\Delta_1;\rd)$ for some $\sigma\in (0, \alpha)$.
Define
\begin{equation}\label{J}
\aligned
J(r)=  \frac{1}{r} \inf_{\substack{ M\in \mathbb{R}^{d\times d}\\ q\in \rd}} &
\Bigg\{  \left(\average_{D_r} |u_\varep- Mx -q|^2 \right)^{1/2}
+ r^2\left(\average_{D_r} |F|^p\right)^{1/p} \\ 
&+ r\, \big\|  g-\frac{\partial}{\partial \nu_0} \big( Mx\big) \big\|_{L^\infty(\Delta_r)}
+r^{1+\sigma}\,  \big\| g-\frac{\partial}{\partial \nu_0}
\big(Mx\big)\big\|_{C^{0, \sigma} (\Delta_r)} \Bigg\},
\endaligned
\end{equation}
and
\begin{equation}\label{Psi}
\Psi (r)
=\frac{1}{r} \inf_{q\in \rd}
 \Bigg\{  \left(\average_{D_{2r}}   |u_\varep -q |^2 \right)^{1/2}
  + r^2 \left(\average_{D_{2r}} |F|^p\right)^{1/p}
+r\, \| g \|_{L^\infty(\Delta_{2r})}  \Bigg\}.
  \end{equation}
Then
\begin{equation}\label{5.4-0}
J(\theta r) \le (1/2) J(r) +C \left(\frac{\varep}{r}\right)^{1/2} \Psi (2r),
\end{equation}
for any $r  \in [\varep,1/2]$, where $\theta\in (0,1/4)$ is given by Lemma \ref{lemma-5.3}.
\end{lemma}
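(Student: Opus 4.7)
The plan is to mirror the argument for Lemma \ref{lemma-4.5} in the Dirichlet case, substituting Lemmas \ref{lemma-5.2} and \ref{lemma-5.3} for their Dirichlet analogues (Lemmas \ref{lemma-4.3} and \ref{lemma-4.4}). Fix $r \in [\varep, 1/2]$ and let $w \in H^1(D_r; \rd)$ be the solution of $\mathcal{L}_0(w) = F$ in $D_r$ with $\partial w/\partial \nu_0 = g$ on $\Delta_r$ produced by Lemma \ref{lemma-5.2}. The essential observation that lets the boundary contributions in the comparison of $J$ and $I(\,\cdot\,; w)$ cancel is that, since $\partial w/\partial \nu_0 = g$ on $\Delta_r$, one has $\partial (w - Mx)/\partial \nu_0 = g - \partial(Mx)/\partial \nu_0$ on $\Delta_r$ for every matrix $M$; consequently the $L^\infty$ and $C^{0,\sigma}$ contributions to $I(r; w)$ in the definition (\ref{I}) coincide identically with the corresponding contributions to $J(r)$ in (\ref{J}).

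Writing $u_\varep = w + (u_\varep - w)$ and applying the triangle inequality only to the volume averages then yields
$$
I(r; w) \le J(r) + \frac{1}{r}\Bigl(\average_{D_r} |u_\varep - w|^2\Bigr)^{1/2},
$$
and, on $D_{\theta r}$,
$$
J(\theta r) \le I(\theta r; w) + \frac{C\theta^{-d/2-1}}{r}\Bigl(\average_{D_r} |u_\varep - w|^2\Bigr)^{1/2},
$$
where we have used the trivial inclusion $D_{\theta r} \subset D_r$ to replace the $D_{\theta r}$-average by the $D_r$-average at the cost of a factor $\theta^{-d/2}$. Chaining these two inequalities with the decay $I(\theta r; w) \le (1/2) I(r; w)$ from Lemma \ref{lemma-5.3} gives
$$
J(\theta r) \le (1/2) J(r) + \frac{C}{r}\Bigl(\average_{D_r} |u_\varep - w|^2\Bigr)^{1/2}.
$$

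To conclude I would invoke translation invariance: replacing the pair $(u_\varep, w)$ by $(u_\varep - q, w - q)$ for any $q \in \rd$ leaves the Neumann data $(F, g)$, the quantity $J(r)$, and the difference $u_\varep - w$ all unchanged, since constants lie in the kernels of $\mathcal{L}_\varep$, $\mathcal{L}_0$, and both conormal derivatives $\partial/\partial \nu_\varep$ and $\partial/\partial \nu_0$. The right-hand side of Lemma \ref{lemma-5.2} then becomes one in which $u_\varep$ is replaced by $u_\varep - q$. Taking the infimum over $q \in \rd$, applying Hölder's inequality to pass from the $L^2$ norm of $F$ to its $L^p$ norm (with $p > d$), and using the trivial comparison $\Psi(r) \le C\,\Psi(2r)$ that comes from enlarging the underlying domain $D_{2r} \subset D_{4r}$, produces
$$
\frac{1}{r}\Bigl(\average_{D_r} |u_\varep - w|^2\Bigr)^{1/2} \le C\Bigl(\frac{\varep}{r}\Bigr)^{1/2} \Psi(2r),
$$
which combined with the previous display gives (\ref{5.4-0}). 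The main obstacle is nothing deeper than this bookkeeping — in particular, verifying that the output of Lemma \ref{lemma-5.2} really is invariant under $u_\varep \mapsto u_\varep - q$ so that the infimum over $q$ may be taken freely — which goes through cleanly because all relevant operators and boundary conditions annihilate constants.
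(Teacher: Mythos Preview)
Your argument is correct and follows essentially the same route as the paper's own proof: both fix $r\in[\varep,1/2]$, invoke the approximating function $w$ from Lemma \ref{lemma-5.2}, compare $J(\theta r)$ and $J(r)$ to $I(\theta r;w)$ and $I(r;w)$ via the triangle inequality on the volume averages (the boundary terms matching because $\partial w/\partial\nu_0=g$), apply the decay from Lemma \ref{lemma-5.3}, and finally exploit the invariance of $J$ under $u_\varep\mapsto u_\varep-q$ to pass to the infimum defining $\Psi$. Your write-up is in fact slightly more explicit than the paper's, which omits mention of the H\"older step from the $L^2$ to the $L^p$ average of $F$ and of the trivial comparison $\Psi(r)\le C\,\Psi(2r)$.
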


\begin{proof}
Fix $r\in [\varep, 1/2]$.
Let $w$ be the function in $H^1(D_r; \rd)$ given by Lemma \ref{lemma-5.2}.
Then
$$
\aligned
J(\theta r) &\le  I (\theta r; w) + \frac{1}{\theta r} \left(\average_{D_{\theta r}} |u_\varep -w|^2\right)^{1/2}\\
&\le  (1/2) I (r; w) + \frac{1}{\theta r} \left(\average_{D_{\theta r}} |u_\varep -w|^2\right)^{1/2}\\
& \le (1/2) J (r) + \frac{C}{r} \left(\average_{D_{r}} |u_\varep -w|^2\right)^{1/2},
\endaligned
$$
where we have used Lemma \ref{lemma-5.3} for the second inequality.
In view of Lemma \ref{lemma-5.2}, this gives
$$
J(\theta r)
\le (1/2) J(r)
+\frac{C}{r} 
 \Bigg\{  \left(\average_{D_{2r}}   |u_\varep |^2 \right)^{1/2}
  + r^2 \left(\average_{D_{2r}} |F|^p\right)^{1/p}
+r\, \| g \|_{L^\infty(\Delta_{2r})}  \Bigg\},
$$
from which the estimate (\ref{5.4-0}) follows, as the function $J(r)$ is invariant if we replace $u_\varep$
by $u_\varep -q$ for any $q\in \rd$.
\end{proof}

\begin{proof}[\bf Proof of Theorem \ref{main-theorem-4}]
With Lemma \ref{lemma-5.4} at our disposal,
 Theorem \ref{main-theorem-4} follows from Lemma \ref{G-lemma},
as in the case of Dirichlet boundary conditions.
We omit the details.
\end{proof}

As we indicate in the Introduction, under additional smoothness conditions,
the full Lipschitz estimates, uniform in $\varep$,
 follow from Theorem \ref{main-theorem-3}, Theorem \ref{main-theorem-4},
 and local Lipschitz estimates by a blow-up argument.

\begin{corollary}\label{cor-5.0}
Suppose that $A$ satisfies (\ref{ellipticity})-(\ref{periodicity}).
Also assume that $A$ is H\"older continuous.
Let $u_\varep\in H^1(B(0,1); \rd)$ be a weak solution of $\mathcal{L}_\varep (u_\varep)
=F$ in $B(0,1)$, where $F\in  L^p(B(0,1); \rd)$ for some $p>d$.
Then 
\begin{equation}\label{Lip-1}
\|\nabla u_\varep\|_{L^\infty(B(0,1/2))}
\le C _p\Big\{ \| u_\varep\|_{L^2(B(0,1))}
+\| F \|_{L^p(B(0,1))} \Big\},
\end{equation}
where $C_p$ depends only on $d$, $p$ and $A$.
\end{corollary}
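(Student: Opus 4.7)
The plan is to combine the large-scale Lipschitz estimate of Theorem \ref{theorem-4.1}, which is valid down to the microscopic scale $\varep$ with no smoothness assumption on $A$, with the classical small-scale Lipschitz estimate for operators with H\"older continuous coefficients applied to a rescaled problem. I may assume $\varep < 1/8$, since when $\varep$ is bounded below the conclusion follows directly from classical Schauder theory applied to $\mathcal{L}_\varep$, whose coefficients are then uniformly H\"older continuous on $B(0,1)$.

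First, fix $x_0 \in B(0,1/2)$ and rescale by $v(y) := u_\varep(x_0+\varep y)$ for $y \in B(0,1)$. A direct computation gives
\begin{equation*}
\mathcal{L}_1(v)(y) = \varep^2 F(x_0+\varep y) =: \widetilde{F}(y),
\end{equation*}
where the coefficients $a_{ij}^{\alpha\beta}(y+x_0/\varep)$ of $\mathcal{L}_1$ are H\"older continuous with the same norm as $A$ and still satisfy (\ref{ellipticity}). Classical interior Lipschitz estimates for second-order elliptic systems with H\"older continuous coefficients then yield
\begin{equation*}
\|\nabla v\|_{L^\infty(B(0,1/2))} \le C\bigl\{\|\nabla v\|_{L^2(B(0,1))} + \|\widetilde{F}\|_{L^p(B(0,1))}\bigr\}.
\end{equation*}
Undoing the rescaling, and using the hypothesis $p > d$ so that the forcing term enters in a scale-invariant way, this converts into
\begin{equation*}
\|\nabla u_\varep\|_{L^\infty(B(x_0,\varep/2))} \le C\left\{\left(\average_{B(x_0,\varep)}|\nabla u_\varep|^2\right)^{1/2} + \varep\left(\average_{B(x_0,\varep)}|F|^p\right)^{1/p}\right\}.
\end{equation*}

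Next, I would invoke Theorem \ref{theorem-4.1} with the radii $\varep$ and $R = 1/4$ (noting $B(x_0,1/4) \subset B(0,3/4)$) to absorb the $\varep$-dependence on the right:
\begin{equation*}
\left(\average_{B(x_0,\varep)}|\nabla u_\varep|^2\right)^{1/2} \le C\left\{\left(\average_{B(x_0,1/4)}|\nabla u_\varep|^2\right)^{1/2} + \left(\average_{B(x_0,1/4)}|F|^p\right)^{1/p}\right\}.
\end{equation*}
Finally, Cacciopoli's inequality on $B(x_0,1/4) \subset B(0,1)$ bounds the $L^2$ norm of $\nabla u_\varep$ by $\|u_\varep\|_{L^2(B(0,1))} + \|F\|_{L^p(B(0,1))}$, and taking the supremum over $x_0 \in B(0,1/2)$ delivers (\ref{Lip-1}).

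The only nontrivial ingredient is Theorem \ref{theorem-4.1}, which propagates the scale-$\varep$ average of $|\nabla u_\varep|^2$ up to scale one without inserting any $\varep^{-1}$ factor; periodicity enters only through that theorem, while H\"older continuity is the natural complementary hypothesis powering the small-scale blow-up step. Everything else is a routine rescaling reduction, and I do not foresee any real obstacle beyond the correct accounting of scaling exponents.
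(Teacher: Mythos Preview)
Your argument is correct and is exactly the blow-up strategy the paper indicates: combine the large-scale interior Lipschitz estimate of Theorem~\ref{theorem-4.1} (valid down to scale $\varep$ without smoothness) with the classical small-scale Lipschitz estimate for $\mathcal{L}_1$, obtained by rescaling and using the H\"older continuity of $A$. The paper does not spell out a proof beyond that sentence, and your scaling computations and use of Cacciopoli's inequality fill in the details accurately.
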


\begin{corollary}\label{cor-5.1}
Suppose that $A$ satisfies  (\ref{ellipticity})-(\ref{periodicity}).
Also assume that $A$ is H\"older continuous.
Let $u_\varep \in H^1(D_1; \rd)$ be a weak solution of $\mathcal{L}(u_\varep)=F$ in $D_1$
with $u_\varep =f$ on $\Delta_1$,
where the defining function $\psi$ in $D_1$ and $\Delta_1$ is $C^{1, \alpha}$
with $\|\nabla \psi\|_{C^\alpha(\mathbb{R}^{d-1})} \le M$
for some $\alpha>0$. Then 
\begin{equation}\label{Lip-2}
\| \nabla u_\varep\|_{L^\infty(D_{1/2})}
\le C \Big\{ \| u_\varep\|_{L^2(D_1)} +\| F\|_{L^p(D_1)} +\| f\|_{C^{1,\sigma}(\Delta_1)} \Big\},
\end{equation}
where $p>d$, $\sigma\in (0, \alpha)$, and $C$ depends only on $d$, $p$, $\sigma$, $A$, $\alpha$ and $M$.
\end{corollary}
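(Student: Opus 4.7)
The plan is the standard blow-up argument indicated by the author: combine the large-scale boundary $L^2$-average bound for $\nabla u_\varep$ furnished by Theorem \ref{main-theorem-3} (down to the microscopic scale $\varep$) with the classical Schauder-type boundary Lipschitz estimate for $\mathcal{L}_1$ at sub-$\varep$ scales. Throughout, set
\[
N := \| u_\varep\|_{L^2(D_1)} +\| F\|_{L^p(D_1)}+\| f\|_{C^{1,\sigma}(\Delta_1)}.
\]

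First I would show that for every $z_0 \in \overline{\Delta_{1/2}}$ one has $\left(\average_{D_\varep(z_0)} |\nabla u_\varep|^2\right)^{1/2} \le C\,N$. The case $\varep \ge c_0 > 0$ is immediate from interior Schauder theory, so assume $\varep$ is small. Applying Theorem \ref{main-theorem-3} (in a standard chart at $z_0$, rescaled so that $D_1$ is replaced by a half-disk $D_{3/4}(z_0) \subset D_1$) together with a boundary Cacciopoli inequality (to absorb the $L^2$-average of $\nabla u_\varep$ on $D_{3/4}(z_0)$ into $\| u_\varep\|_{L^2(D_1)}$ plus the data) yields the desired bound by $CN$.

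Next I blow up at the microscopic scale. Define
\[
v(y) := \varep^{-1}\bigl(u_\varep(z_0+\varep y)-f(z_0)\bigr),\qquad \widetilde{A}(y) := A\!\Big(\tfrac{z_0}{\varep}+y\Big),
\]
so that $\widetilde{\mathcal{L}}_1 v = \widetilde F$ on a rescaled half-disk $\widetilde D_1$, with boundary data $\widetilde f(y') := \varep^{-1}\bigl[f(z_0+\varep y')-f(z_0)\bigr]$ on $\widetilde \Delta_1$, where $\widetilde F(y)=\varep F(z_0+\varep y)$. The rescaled coefficient matrix $\widetilde A$ retains ellipticity, periodicity and the same H\"older semi-norm as $A$ (since H\"older semi-norms are translation invariant), and the rescaled defining function $\widetilde \psi(y') = \varep^{-1}\psi(\varep y')$ is $C^{1,\alpha}$ with $\|\nabla \widetilde\psi\|_{C^\alpha}\le M$ for $\varep\le 1$. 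A routine computation using the mean value theorem, $p>d$, and $\varep \le 1$ yields
\[
\|\widetilde f\|_{C^{1,\sigma}(\widetilde\Delta_1)}\le C\,\| f\|_{C^{1,\sigma}(\Delta_1)},\qquad \|\widetilde F\|_{L^p(\widetilde D_1)}\le C\,\| F\|_{L^p(D_1)}.
\]
The $L^2$-average bound from the first step and a change of variables give $\|\nabla v\|_{L^2(\widetilde D_1)}\le C\,N$, and a Poincar\'e-type inequality with partial Dirichlet trace $v|_{\widetilde\Delta_1}=\widetilde f$ then furnishes $\|v\|_{L^2(\widetilde D_1)}\le C\,N$.

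At this point I would invoke the classical boundary Lipschitz (Agmon-Douglis-Nirenberg/Schauder-type) estimate for linear elasticity systems with H\"older continuous coefficients in a $C^{1,\alpha}$ domain applied to $\widetilde{\mathcal{L}}_1 v = \widetilde F$ with inhomogeneous $C^{1,\sigma}$ data $\widetilde f$:
\[
\|\nabla v\|_{L^\infty(\widetilde D_{1/2})}\le C\Big\{\|v\|_{L^2(\widetilde D_1)}+\|\widetilde F\|_{L^p(\widetilde D_1)}+\|\widetilde f\|_{C^{1,\sigma}(\widetilde\Delta_1)}\Big\}\le C\,N.
\]
Unwinding the change of variables (note $\nabla v(y)=\nabla u_\varep(z_0+\varep y)$) gives $\|\nabla u_\varep\|_{L^\infty(D_{\varep/2}(z_0))}\le C\,N$. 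For an interior point $x\in D_{1/2}$ with $\mathrm{dist}(x,\Delta_1)\ge \varep$, the analogous blow-up argument applied with Theorem \ref{theorem-4.1} in place of Theorem \ref{main-theorem-3} and the standard interior Schauder Lipschitz estimate for $\mathcal{L}_1$ on $B(0,1)$ gives $\|\nabla u_\varep\|_{L^\infty(B(x,\varep/2))}\le C\,N$. Covering $D_{1/2}$ by finitely many $\varep/2$-half-disks at boundary points and $\varep/2$-balls at interior points closes the argument.

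I expect the main obstacle to be purely bookkeeping: one must subtract the constant $f(z_0)$ in the definition of $v$ (otherwise $\|\widetilde f\|_{L^\infty}$ blows up like $\varep^{-1}\|f\|_\infty$), verify that the rescaled coefficients retain H\"older continuity with the same constants, and check that the rescaled boundary remains $C^{1,\alpha}$ uniformly in $\varep\le 1$. Each of these is either immediate or a direct change-of-variables calculation; no new analytic ingredient beyond Theorem \ref{main-theorem-3}, Theorem \ref{theorem-4.1}, and classical Schauder theory for $\mathcal{L}_1$ is required.
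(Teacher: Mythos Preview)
Your proposal is correct and follows exactly the approach the paper indicates: the paper does not give a detailed proof of this corollary but simply states that the full Lipschitz estimates follow from Theorem~\ref{main-theorem-3} (and Theorem~\ref{theorem-4.1} for the interior) combined with local Lipschitz estimates for $\mathcal{L}_1$ via a blow-up argument. The bookkeeping points you flag (subtracting $f(z_0)$, uniform $C^{1,\alpha}$ bound on the rescaled graph, translation invariance of the H\"older seminorm of $A$) are precisely the ones needed, and no additional ideas are required.
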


\begin{corollary}\label{cor-5.2}
Suppose that $A$, $D_1$ and $\Delta_1$ satisfy the same conditions as in Corollary \ref{cor-5.1}.
Let $u_\varep \in H^1(D_1; \rd)$ be a weak solution of $\mathcal{L}(u_\varep)=F$ in $D_1$
with $\frac{\partial u_\varep}{\partial \nu_\varep} =g$ on $\Delta_1$.
Then 
\begin{equation}\label{Lip-3}
\| \nabla u_\varep\|_{L^\infty(D_{1/2})}
\le C \Big\{ \| u_\varep\|_{L^2(D_1)} +\| F\|_{L^p(D_1)} +\| g\|_{C^{\sigma}(\Delta_1)} \Big\},
\end{equation}
where $p>d$, $\sigma\in (0, \alpha)$, and $C$ depends only on $d$, $p$, $\sigma$, $A$, 
 $\alpha$ and $M$.
\end{corollary}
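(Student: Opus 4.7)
The plan is to derive the full Lipschitz estimate (\ref{Lip-3}) by combining the mesoscopic control supplied by Theorem \ref{main-theorem-4} with the classical boundary Lipschitz theory for $\mathcal{L}_1$, available here because $A$ is H\"older continuous and $\Delta_1$ is $C^{1,\alpha}$. The structure mirrors the derivation of Corollary \ref{cor-5.1} from Theorem \ref{main-theorem-3}; the only new element is the correct rescaling of the Neumann datum.

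First, the case $\varep \ge c_0$ is classical: when $\varep$ is bounded below, $A(\cdot/\varep)$ has $C^{0,\gamma}$ norm bounded in terms of that of $A$, and the classical boundary Lipschitz theorem for $\mathcal{L}_\varep$ applies directly. Assuming henceforth $0<\varep<c_0$, I would fix $x_0 \in \overline{D_{1/2}}$, set $\delta = \text{dist}(x_0,\Delta_1)$, and pick $y_0 \in \Delta_1$ with $|x_0-y_0| = \delta$. In the mesoscopic regime $\delta \ge \varep$, Theorem \ref{main-theorem-4} applied at the boundary point $y_0$ at scale $2\delta$ would give
\[
\Big(\average_{D_{2\delta}(y_0)} |\nabla u_\varep|^2\Big)^{1/2} \le C\Big\{ \|u_\varep\|_{L^2(D_1)} + \|g\|_{C^\sigma(\Delta_1)} + \|F\|_{L^p(D_1)}\Big\},
\]
and then the classical interior Lipschitz estimate of Corollary \ref{cor-5.0}, applied on the ball $B(x_0,\delta/2)$ (which lies well inside $D_{2\delta}(y_0)$, away from the boundary), would promote this average into a pointwise bound for $|\nabla u_\varep(x_0)|$.

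In the microscopic regime $\delta<\varep$, I would blow up at scale $\varep$: set $v(y) = u_\varep(y_0+\varep y)$ on the rescaled region $\widetilde{D}_2$ defined by $\widetilde{\psi}(y') = \varep^{-1}[\psi(y_0'+\varep y')-y_{0,d}]$. Since $[\nabla\widetilde{\psi}]_{C^\alpha} = \varep^\alpha[\nabla\psi]_{C^\alpha}\le M$, the rescaled region is uniformly $C^{1,\alpha}$, and $v$ satisfies
\[
\mathcal{L}_1(v) = \varep^2 F(y_0+\varep y) \text{ in } \widetilde{D}_2, \qquad \frac{\partial v}{\partial \nu_1} = \varep\, g(y_0+\varep y) \text{ on } \widetilde{\Delta}_2.
\]
The classical boundary Lipschitz estimate for $\mathcal{L}_1$ in $C^{1,\alpha}$ domains would then bound $\|\nabla v\|_{L^\infty(\widetilde{D}_1)}$, and rescaling back, applying Poincar\'e to replace $\|v-c\|_{L^2(\widetilde{D}_2)}$ by $\varep\,(\average_{D_{2\varep}(y_0)} |\nabla u_\varep|^2)^{1/2}$, and a final appeal to Theorem \ref{main-theorem-4} at scale $2\varep$ would produce the desired pointwise bound at $x_0$. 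Since $\delta<\varep$, the rescaled base point $(x_0-y_0)/\varep$ automatically lies in $\widetilde{D}_1$.

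I expect the only delicate step to be the bookkeeping of powers of $\varep$ in the blow-up. The conormal derivative scales as $\partial v/\partial\nu_1 = \varep\, g(y_0+\varep\cdot)$, so after dividing by $\varep$ in the final Lipschitz estimate the $L^\infty$ part of $g$ appears with a universal constant, while the H\"older seminorm of $g(y_0+\varep \cdot)$ picks up a harmless factor $\varep^\sigma$. This is precisely why the right-hand side of (\ref{Lip-3}) involves $\|g\|_{C^\sigma(\Delta_1)}$ rather than merely $\|g\|_{L^\infty(\Delta_1)}$, and confirms that no stronger smoothness of $g$ is required.
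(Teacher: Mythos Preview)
Your proposal is correct and follows precisely the route the paper itself indicates: combine Theorem \ref{main-theorem-4} (the large-scale Neumann estimate) with the classical boundary Lipschitz theory for $\mathcal{L}_1$ via a blow-up at scale $\varep$, treating interior points through Corollary \ref{cor-5.0}. The paper does not spell out the argument beyond the sentence preceding Corollary \ref{cor-5.0}, so your write-up, including the careful scaling of the conormal datum and the use of Poincar\'e to pass from $\|v-c\|_{L^2}$ to the gradient average, is exactly the intended proof.
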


As we mentioned in Introduction, for $\mathcal{L}_\varep$ with coefficients satisfying (\ref{s-ellipticity}),
(\ref{periodicity}) and the H\"older continuity condition,
estimates (\ref{Lip-1}) and (\ref{Lip-2}) were proved in \cite{AL-1987},
while (\ref{Lip-3}) was established in \cite{KLS1, Armstrong-Shen-2014}.

\bibliography{s37.bbl}

\providecommand{\bysame}{\leavevmode\hbox to3em{\hrulefill}\thinspace}
\providecommand{\MR}{\relax\ifhmode\unskip\space\fi MR }
\providecommand{\MRhref}[2]{%
  \href{http://www.ams.org/mathscinet-getitem?mr=#1}{#2}
}
\providecommand{\href}[2]{#2}
\begin{thebibliography}{10}

\bibitem{Armstrong-Daniel-2015}
S.N. Armstrong and J.-P. Daniel, \emph{{Calder\'on-Zygmund estimates for
  stochastic homogenization}}, arXiv:1504.04560 (2015).

\bibitem{Armstrong-Mourrat-2015}
S.N. Armstrong and J.-C. Mourrat, \emph{Lipschitz regularity for elliptic
  equations with random coefficients}, arXiv:1411.3668 (2014).

\bibitem{Armstrong-Shen-2014}
S.N. Armstrong and Z.~Shen, \emph{Lipschitz estimates in almost-periodic
  homogenization}, Comm. Pure Appl. Math. (to appear).

\bibitem{Armstrong-Smart-2014}
S.N. Armstrong and C.K. Smart, \emph{Quantitative stochastic homogenization of
  convex integral functionals}, Ann. Sci. \'Ec. Norm. Sup\'er (to appear).

\bibitem{AL-1987}
M.~Avellaneda and F.~Lin, \emph{Compactness methods in the theory of
  homogenization}, Comm. Pure Appl. Math. \textbf{40} (1987), 803--847.

\bibitem{AL-1989-II}
\bysame, \emph{Compactness methods in the theory of homogenization {II}:
  {E}quations in nondivergent form}, Comm. Pure Appl. Math. \textbf{42} (1989),
  139--172.

\bibitem{AL-1991}
\bysame, \emph{${L}^p$ bounds on singular integrals in homogenization}, Comm.
  Pure Appl. Math. \textbf{44} (1991), 897--910.

\bibitem{Byun-Wang-2004}
S.~Byun and L.~Wang, \emph{{Elliptic equations with BMO coefficients in
  {R}eifenberg domains}}, Comm. Pure Appl. Math. \textbf{57} (2004), no.~10,
  1283--1310.

\bibitem{Byun-Wang-2005}
\bysame, \emph{{The conormal derivative problem for elliptic equations with BMO
  coefficients on Reifenberg flat domains}}, Proc. London Math. Soc.
  \textbf{90} (2005), 245--272.

\bibitem{Byun-Wang-2008}
\bysame, \emph{Gradient estimates for elliptic systems in non-smooth domains},
  Math. Ann. \textbf{341} (2008), no.~3, 629--650.

\bibitem{CP-1998}
L.~Caffarelli and I.~Peral, \emph{On ${W}^{1,p}$ estimates for elliptic
  equations in divergence form}, Comm. Pure Appl. Math. \textbf{51} (1998),
  1--21.

\bibitem{Dahlberg-1988}
B.~Dahlberg, C.~Kenig, and G.~Verchota, \emph{Boundary value problems for the
  system of elastostatics in {L}ipschitz domains}, Duke Math. J. \textbf{57}
  (1988), no.~3, 795--818.

\bibitem{Dong-2010}
H.~Dong and D.~Kim, \emph{{E}lliptic {e}quations in {d}ivergence {f}orm with
  {p}artially {BMO} {c}oefficients}, Arch. Rational Mech. Anal. \textbf{196}
  (2010), 25--70.

\bibitem{Fabes-1978}
E.~Fabes, M.~Jodeit~Jr., and N.~Rivi\'ere, \emph{Potential techniques for
  boundary value problems on ${C}^1$ domains}, Acta. Math. \textbf{141} (1978),
  165--186.

\bibitem{Fabes-1988}
E.~Fabes, C.~Kenig, and G.~Verchota, \emph{The {D}irichlet problem for the
  {S}tokes system on {L}ipschitz domains}, Duke Math. J. \textbf{57} (1988),
  no.~3, 769--793.

\bibitem{Geng-2012}
J.~Geng, \emph{${W}^{1,p}$ estimates for elliptic equations with {N}eumann
  boundary conditions in {L}ipschitz domains}, Adv. Math. \textbf{229} (2012),
  2427--2448.

\bibitem{Geng-Shen-Song-2}
J.~Geng, Z.~Shen, and L.~Song, \emph{{Uniform boundary Korn and Rellich
  inequalities in homogenization}}, in preparation.

\bibitem{GSS}
\bysame, \emph{Uniform ${W}^{1,p}$ estimates for systems of linear elasticity
  in a periodic medium}, J. Funct. Anal. \textbf{262} (2012), 1742--1758.

\bibitem{Otto-2014}
A.~Gloria, S.~Neukamm, and F.~Otto, \emph{A regularity theory for random
  elliptic operators}, arXiv:1409.2678 (2014).

\bibitem{Otto-2015}
\bysame, \emph{Quantification of ergodicity in stochastic homogenization:
  optimal bounds via spectral gap on {G}lauder dynamics}, Invent. Math.
  \textbf{199} (2015), 455--515.

\bibitem{Otto-2011}
A.~Gloria and F.~Otto, \emph{An optimal variance estimate in stochastic
  homogenization of discrete elliptic equations}, Ann. Probab. \textbf{39}
  (2011), 779--856.

\bibitem{Otto-2012}
\bysame, \emph{An optimal error estimate in stochastic homogenization of
  discrete elliptic equations}, Ann. Appl. Probab. \textbf{22} (2012), 1--28.

\bibitem{Griso-2004}
G.~Griso, \emph{Error estimate and unfolding for periodic homogenization},
  Asymptot. Anal. \textbf{40} (2004), 269--286.

\bibitem{Gu-Shen-2015}
S.~Gu and Z.~Shen, \emph{{Homogenization of Stokes systems and uniform
  regularity estimates}}, arXiv:1501.03392 (2015).

\bibitem{Hofmann-C-1}
C.~Hofmann, M.~Mitrea, and M.~Taylor, \emph{{Symbol calculus for operators of
  layer potential type on Lipschitz surfaces with VMO normals, and related
  pseudofifferential operator calculus}}, Analysis \& PDE \textbf{8} (2015),
  115--181.

\bibitem{Jikov-1994}
V.V. Jikov, S.M. Kozlov, and O.A. Oleinik, \emph{Homogenization of
  {D}ifferential {O}perators and {I}ntegral {F}unctionals}, Springer-Verlag,
  Berlin, 1994.

\bibitem{Kenig-book}
C.~Kenig, \emph{Harmonic {A}nalysis {T}echniques for {S}econd {O}rder
  {E}lliptic {B}oundary {V}alue {P}roblems}, CBMS Regional Conference Series in
  Math., vol.~83, AMS, Providence, RI, 1994.

\bibitem{KLS2}
C.~Kenig, F.~Lin, and Z.~Shen, \emph{Convergence rates in ${L}^2$ for elliptic
  homogenization problems}, Arch. Rational Mech. Anal. \textbf{203} (2012),
  no.~3, 1009--1036.

\bibitem{KLS1}
\bysame, \emph{Homogenization of elliptic systems with {N}eumann boundary
  conditions}, J. Amer. Math. Soc. \textbf{26} (2013), 901--937.

\bibitem{KLS3}
\bysame, \emph{Periodic homogenization of {G}reen and {N}eumann functions},
  Comm. Pure Appl. Math. \textbf{67} (2014), 1219--1262.

\bibitem{Kenig-Prange-2014}
C.~Kenig and C.~Prange, \emph{{Uniform Lipschitz estimates in bumpy
  hald-speces}}, Arch. Rational Mech. Anal. \textbf{216} (2015), 703--765.

\bibitem{Kenig-Shen-1}
C.~Kenig and Z.~Shen, \emph{Homogenization of elliptic boundary value problems
  in {L}ipschitz domains}, Math. Ann. \textbf{350} (2011), 867--917.

\bibitem{Kenig-Shen-2}
\bysame, \emph{Layer potential methods for elliptic homogenization problems},
  Comm. Pure Appl. Math. \textbf{64} (2011), 1--44.

\bibitem{Krylov-2007}
N.V. Krylov, \emph{Parabolic and elliptic equations with {VMO} coefficients},
  Comm. Partial Diff. Eq. \textbf{32} (2007), 453--475.

\bibitem{Lewis-1993}
J.E. Lewis, R.~Selvaggi, and I.~Sisto, \emph{{Singular integral operators on
  $C^1$ manifolds}}, Trans. Amer. Math. Soc. \textbf{340} (1993), no.~1,
  293--308.

\bibitem{Oleinik-1992}
O.~A. Ole{\u\i}nik, A.~S. Shamaev, and G.~A. Yosifian, \emph{Mathematical
  problems in elasticity and homogenization}, Studies in Mathematics and its
  Applications, vol.~26, North-Holland Publishing Co., Amsterdam, 1992.

\bibitem{Onofrei-2007}
D.~Onofrei and B.~Vernescu, \emph{Error estimates for periodic homogenization
  with non-smooth coefficients}, Asymptot. Anal. \textbf{54} (2007), 103--123.

\bibitem{Pas-2006}
S.E. Pastukhova, \emph{Some estimates from homogenized elasticity problems},
  Dokl. Math. \textbf{73} (2006), 102--106.

\bibitem{Shen-2005}
Z.~Shen, \emph{Bounds of {R}iesz transforms on ${L}^p$ spaces for second order
  elliptic operators}, Ann. Inst. Fourier (Grenoble) \textbf{55} (2005),
  173--197.

\bibitem{Shen-2007}
\bysame, \emph{The ${L}^p$ boundary value problems on {L}ipschitz domains},
  Adv. Math. \textbf{216} (2007), 212--254.

\bibitem{Shen-2008}
\bysame, \emph{${W}^{1,p}$ estimates for elliptic homogenization problems in
  nonsmooth domains}, Indiana Univ. Math. J. \textbf{57} (2008), 2283--2298.

\bibitem{Suslina-2013}
T.A. Suslina, \emph{{Homogenization of the Dirichlet problem for elliptic
  systems: $L_2$-operator error estimates}}, Mathematika \textbf{59} (2013),
  463--476.

\bibitem{Suslina-2013-siam}
\bysame, \emph{{Homogenization of the Neumann problem for elliptic systems with
  periodic coefficients}}, SIAM J. Math. Anal. \textbf{45} (2013), 3453--3493.

\bibitem{Verchota-1986}
G.~Verchota, \emph{{Remarks on second order elliptic systems in Lipschitz
  domains}}, Proc. of Cent. Math. An., ANU \textbf{14} (1986), 303--325.

\bibitem{Wang-2003}
L.~Wang, \emph{{A geometric approach to the Calder\'on-Zygmund estimates}},
  Acta Math. Sinica (Engl. Ser.) \textbf{19} (2003), 381--396.

\end{thebibliography}

\medskip

\begin{flushleft}
Zhongwei Shen

 Department of Mathematics
 
University of Kentucky

Lexington, Kentucky 40506,
USA. 


E-mail: zshen2@uky.edu
\end{flushleft}

\medskip

\noindent \today

\end{document}